\numberwithin{equation}{section}
\newcommand{\set}[1]{\mathbb{#1}}
\theoremstyle{plain}
\newtheorem{Th}{Theorem}[section]
\newtheorem{Lemma}[Th]{Lemma}
\newtheorem{Cor}[Th]{Corollary}
\newtheorem{Prop}[Th]{Proposition}
\newtheorem{Warning}[Th]{Warning}
\theoremstyle{definition}
\newtheorem{Def}[Th]{Definition}
\newtheorem{Conj}[Th]{Conjecture}
\newtheorem{Ass}[Th]{Assumption}
\newtheorem{Example}[Th]{Example}
\newtheorem{Rem}[Th]{Remark}
\newtheorem{?}[Th]{Problem}
\renewcommand{\C}{\mathbb{C}}
\newcommand{\M}[0]{\mathbf{M}}
\newcommand{\vv}[0]{\mathbf{v}}
\newcommand{\ww}[0]{\mathbf{w}}
\newcommand{\bla}[0]{\boldsymbol{\la}}
\newcommand{\nc}[0]{\newcommand}
\nc{\on}[0]{\operatorname}
\nc{\BC}[0]{\mathbb{C}}
\nc{\BZ}[0]{\mathbb{Z}}
\nc{\al}[0]{\alpha}
\nc{\ul}[0]{\underline}
\nc{\la}[0]{\lambda}
\nc{\CK}[0]{\mathcal{K}}
\nc\iso{\,\vphantom{j^{X^2}}\smash{\overset{\sim}{\vphantom{\rule{0pt}{0.20em}}\smash{\longrightarrow}}}\,}
\nc{\ra}{\rightarrow}
\nc{\ol}{\overline}
\nc{\La}{\Lambda}
\nc{\CH}{\mathcal{H}}
\nc{\CP}{\mathcal{P}}
\nc{\BA}{{\mathbb{A}}}
\nc{\BT}{\mathbb{T}}
\nc{\CM}{\mathcal{M}}
\nc{\BY}{\mathbb{Y}}
\nc{\BH}{\mathbb{H}}
\nc{\CN}{\mathcal{N}}
\nc{\CC}{\mathcal{C}}
\nc{\CO}{\mathcal{O}}
\nc{\CR}{\mathcal{R}}
\nc{\BP}{\mathbb{P}}
\nc{\CV}{\mathcal{V}}
\nc{\CE}{\mathcal{E}}
\begin{document}
\title{Hikita-Nakajima conjecture for the Gieseker variety}
\author{Vasily Krylov }
\address{Department of Mathematics
Massachusetts Institute of Technology
\newline
77 Massachusetts Avenue,
Cambridge, MA 02139,
USA;
\newline National Research University Higher School of Economics, Russian Federation\newline
Department of Mathematics, 6 Usacheva st., Moscow 119048;
}
\email{krvas@mit.edu, krylovasya@gmail.com}

\author{Pavel Shlykov}
\address{Department of Mathematics University of Toronto
\newline 40 St George St, Toronto, ON M5S 2E4, Canada
\newline National Research University Higher School of Economics, Russian Federation\newline
Department of Mathematics, 6 Usacheva st., Moscow 119048;
}
\email {pavel.shlykov@mail.utoronto.ca, pavelshlykov57@gmail.com}

\begin{abstract}
Let $\mathfrak{M}_0$ be an affine Nakajima quiver variety, and let $\mathcal{M}$ be the corresponding BFN Coulomb branch. Assume that $\mathfrak{M}_0$ can be resolved by the (smooth) Nakajima quiver variety $\mathfrak{M}$.  The Hikita-Nakajima conjecture claims that there should be an isomorphism of (graded) algebras $H^*_{S}(\mathfrak{M},\mathbb{C}) \simeq \mathbb{C}[\mathcal{M}_{\mathfrak{s}}^{\mathbb{C}^\times}]$, where $S \curvearrowright~\mathfrak{M}_0$ is a torus acting on $\mathfrak{M}_0$ preserving the Poisson structure, $\mathcal{M}_{\mathfrak{s}}$ is the (Poisson) deformation of $\mathcal{M}$ over $\mathfrak{s}=\on{Lie}S$, $\mathbb{C}^\times$ is a generic one-dimensional torus acting on $\mathcal{M}$, and $\mathbb{C}[\mathcal{M}_{\mathfrak{s}}^{\mathbb{C}^\times}]$ is the algebra of schematic $\mathbb{C}^\times$-fixed points of $\mathcal{M}_{\mathfrak{s}}$. We prove the Hikita-Nakajima conjecture for $\mathfrak{M}=\mathfrak{M}(n,r)$ Gieseker variety ($ADHM$ space). We produce the isomorphism explicitly on generators. 

We also describe the Hikita-Nakajima isomorphism above using the realization of $\mathcal{M}_{\mathfrak{s}}$ as
the spectrum of the center of the rational Cherednik algebra corresponding to $S_n \ltimes (\mathbb{Z}/r\mathbb{Z})^n$ and identify all the algebras that appear in the isomorphism with the center of the degenerate cyclotomic Hecke algebra (generalizing some results of Shan, Varagnolo, and Vasserot). 

\end{abstract}

\maketitle

\section{Introduction}

\subsection{Conical symplectic resolutions and symplectic duality}
Let $\mathfrak{M}_{0}$ be an algebraic variety over $\BC$. The following definition belongs to~\cite{BEA}.

\begin{Def}\label{sympl_sing_defe}
We say that $\mathfrak{M}_{0}$ is \emph{singular symplectic} (or has symplectic singularities) if: 

(1) $\mathfrak{M}_{0}$ is a normal Poisson variety, 

(2) there exists a smooth, dense open subset $U \subset \mathfrak{M}_{0}$ on which the Poisson structure comes from the symplectic form that we denote by $\omega$,

(3) there exists a resolution of singularities (birational and projective morphism) $\mathfrak{M}\ra \mathfrak{M}_{0}$ such that the pullback of $\omega$ to $\mathfrak{M}$ has no poles. 
\end{Def}

We say that $\mathfrak{M}_{0}$ is a \emph{conical symplectic singularity} if in addition to $(1) - (3)$ one has a $\BC^\times$-action on $\mathfrak{M}_{0}$ which acts on $\omega$ with some positive weight and contracts $\mathfrak{M}_{0}$ to the unique fixed point. 
We will denote the contracting $\BC^\times$ by $\BC^\times_\hbar$.

Assume now that $\mathfrak{M}_{0}$ possesses a $\BC^\times_\hbar$-equivariant symplectic resolution $\mathfrak{M} \ra \mathfrak{M}_{0}$. We will call $\mathfrak{M}\ra \mathfrak{M}_{0}$ a {\emph{conical symplectic resolution}}. 
It is known (see~\cite[Lemmas 12, 22, Proposition 13]{NAM4}, \cite[Theorem 1.13]{GK}) that there exist canonical symplectic (resp. Poisson) deformations of $\mathfrak{M}$ (resp. $\mathfrak{M}_{0}$) over the base $\mathfrak{t}=\mathfrak{t}_{\mathfrak{M}_{0}}:=H^2(\mathfrak{M},\BC)$ that we 
denote by  
\begin{equation*}
\mathfrak{M}_{\mathfrak{t}} \ra \mathfrak{t},\, \mathfrak{M}_{0,\mathfrak{t}} \ra \mathfrak{t}.
\end{equation*}

\begin{Rem}
{\emph{One can show that the space $\mathfrak{t}$ does not depend on the choice of the resolution $\mathfrak{M}$ of $\mathfrak{M}_{0}$. This space can be defined even if $\mathfrak{M}_{0}$ does not have a symplectic resolution $\mathfrak{M}$. The deformation $\mathfrak{M}_{\mathfrak{t}}$ is the universal deformation of $\mathfrak{M}$, $\mathfrak{M}_{0,\mathfrak{t}}$ is a pullback of the universal deformation of $\mathfrak{M}_{0}$ along the morphism $\mathfrak{t} \ra \mathfrak{t}/W$ for a certain group $W$ acting on $\mathfrak{t}$ (called the Namikawa-Weyl group).}}
\end{Rem}

Let $\on{Aut}_{\BC^\times_\hbar}(\mathfrak{M}_{0})$ be the group of Poisson automorphisms of $\mathfrak{M}_{0}$ commuting with the contracting $\BC^\times_\hbar$. This is a finite-dimensional algebraic group (possibly disconnected). We denote by $S=S_{\mathfrak{M}_{0}} \subset \on{Aut}_{\BC^\times_\hbar}(\mathfrak{M}_{0})$ a maximal torus and set $\mathfrak{s}_{\mathfrak{M}_{0}}:=\on{Lie}S_{\mathfrak{M}_{0}}$. One can show that the action of $S$ on $\mathfrak{M}_{0}$ and the contracting action of $\BC^\times_\hbar$ extend naturally to the  action of $S \times \BC^\times_\hbar$ on $\mathfrak{M}_{0,\mathfrak{t}}$ (the torus $S$ acts fiberwise). It can also be shown that the $S \times \BC^\times_\hbar$-action above lifts to the action on $\mathfrak{M}_{\mathfrak{t}}$.

Often conical singularities  come in ``dual'' pairs 
\begin{equation*}
\mathfrak{M}_{0},\, ~\mathfrak{M}_{0}^!.
\end{equation*}
We refer the reader to~\cite{BLPW} for  details on symplectic duality.

\begin{Rem}

It seems that there is no functorial way to define $\mathfrak{M}^!_{0}$ from $\mathfrak{M}_0$. Apparently one needs some richer mathematical structure to produce a
pair of symplectically dual varieties. For example it is expected that
every $\on{3d} \ \mathcal{N}=4$ super-symmetric quantum field theory should produce such
a pair.
In the case of gauge theories such a pair was produced mathematically in \cite{BFN}.
\end{Rem}

Let us recall that for symplectically dual varieties 
$\mathfrak{M}_{0}$, $\mathfrak{M}^!_{0}$
one should have the natural identifications 
\begin{equation*}
\mathfrak{t}_{\mathfrak{M}_{0}} \simeq \mathfrak{s}_{\mathfrak{M}_{0}^!},\, \mathfrak{s}_{\mathfrak{M}_{0}} \simeq \mathfrak{t}_{\mathfrak{M}_{0}^!}. \end{equation*}
Also it is expected that the choice of $\mathfrak{M}$ (choice of the symplectic resolution of $\mathfrak{M}_{0}$) should correspond on the dual side to a cocharacter $\nu_{\mathfrak{M}}\colon \BC^\times \ra S_{\mathfrak{M}_{0}^!}$ (actually to a choice of a certain chamber in $\on{Hom}(\BC^\times,T) \otimes_{\BZ} \mathbb{R}$) such that the set $(\mathfrak{M}_{0}^!)^{\nu_{\mathfrak{M}}(\BC^\times)}$ consists of one point.

In this paper, we restrict ourselves to the case when $\mathfrak{M}$ is a (smooth) Nakajima quiver variety, $\mathfrak{M}_0$ is the (affine)  Nakajima quiver variety, and $\mathfrak{M}_0^!=\mathcal{M}$ is the  BFN Coulomb branch of the corresponding quiver gauge theory (see \cite{BFN} for the definition of $\mathcal{M}$).

\subsection{Schematic fixed points and Hikita-Nakajima conjectures}

\subsubsection{Schematic fixed points} Given a variety $Y$ with an action of some algebraic group $G$ we can define the functor \begin{equation*}
Y^G\colon {\bf{Schemes}}_{\BC} \ra {\bf{Sets}}
\end{equation*} from the category ${\bf{Schemes}}_{\BC}$ of schemes over $\BC$ to the category ${\bf{Sets}}$ of sets as follows (see~\cite{FO},
\cite[Section 1.2]{Drinfeld}):
\begin{equation*}
Y^{G}(S):={\on{Maps}}^{G}(S,Y), \, S \in {\bf{Schemes}}_{\BC}, 
\end{equation*}
where the action of $G$ on $S$ is trivial and $\on{Maps}^G(S,Y)$ is the set of $G$-equivariant morphisms from $S$ to $Y$.
It turns out that in some cases, functor $Y^G$ is represented by a scheme that we call {\emph{schematic fixed points of $Y$}} (for more details, see~\cite[Theorem 2.3]{FO}). 
Consider the case $Y=\on{Spec}B$ for some $\BC$-algebra $B$ and $G=\BC^\times$. Then the action $\BC^\times \curvearrowright Y$ corresponds to the $\BZ$-grading $B=\bigoplus_{i \in \BZ}B_i$. 
Compare the following proposition with \cite[Example 1.2.3]{Drinfeld}.

\begin{Prop}\label{shematic_fixed_for_affine}
If $Y=\on{Spec}B$ is an affine variety and $G=\BC^\times$, then $Y^{{\BC^\times}}$ is represented by an affine scheme whose ring of functions can be described in two equivalent ways:
\begin{equation}\label{funct_fixed_point!}
\BC[Y^{\BC^\times}]=B_0/\sum_{i>0}B_{-i}B_{i}=B/(b_i \in B_i,\, i \neq 0). 
\end{equation}
\end{Prop}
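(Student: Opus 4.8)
The plan is to prove both equalities in~\eqref{funct_fixed_point!} by directly analyzing the functor of points $Y^{\BC^\times}$ and identifying the universal object. First I would observe that the $\BC^\times$-action on $Y = \on{Spec}B$ corresponds to the $\BZ$-grading $B = \bigoplus_{i \in \BZ} B_i$, and that a morphism $S \to Y$ from an affine scheme $S = \on{Spec}R$ corresponds to an algebra homomorphism $\p\colon B \to R$. The condition that this morphism be $\BC^\times$-equivariant, where $\BC^\times$ acts trivially on $S$, unwinds to the requirement that $\p$ kills every homogeneous element of nonzero degree: indeed, equivariance means the composite $B \to R \to R[t,t^{-1}]$ (structure map followed by inclusion) equals $B \to B[t,t^{-1}] \to R[t,t^{-1}]$ (coaction followed by $\p$), and comparing the coefficient of $t^i$ forces $\p(b_i) = 0$ for all $b_i \in B_i$ with $i \neq 0$. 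Hence $\on{Maps}^{\BC^\times}(S,Y) = \on{Hom}_{\BC\text{-alg}}\big(B/(b_i \in B_i,\ i \neq 0),\, R\big)$ naturally in $R$, which shows $Y^{\BC^\times}$ is represented by $\on{Spec}$ of the quotient algebra $B/(b_i \in B_i,\ i \neq 0)$; this is the second description, and affineness is immediate.

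Next I would prove the identification of this quotient with $B_0/\sum_{i>0} B_{-i}B_i$. Let $I = (b_i \in B_i,\ i \neq 0)$ be the ideal generated by all homogeneous elements of nonzero degree. Since $I$ is generated by homogeneous elements it is a graded ideal, so $B/I$ inherits a grading; but $B/I$ is generated as an algebra over $B_0/(I \cap B_0)$ by the images of the $B_i$ for $i \neq 0$, all of which vanish, so in fact $B/I$ is concentrated in degree $0$ and equals $B_0/(I \cap B_0)$. It remains to compute $I \cap B_0$. The ideal $I$ is spanned by elements of the form $a \cdot b_i$ with $a \in B$ arbitrary and $b_i \in B_i$, $i \neq 0$; decomposing $a$ into homogeneous components, $I$ is spanned by products $b_j b_i$ with $i \neq 0$ and $j$ arbitrary. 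Such a product lies in $B_0$ exactly when $j = -i$, so $I \cap B_0 = \sum_{i \neq 0} B_{-i} B_i = \sum_{i>0} B_{-i} B_i$ (the two summands for $i$ and $-i$ coincide). This gives $B/I = B_0/\sum_{i>0} B_{-i}B_i$, completing the chain of equalities.

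The one point requiring a little care — and the place I would expect to spend the most effort writing carefully — is the claim that $I \cap B_0$ is exactly the span of the degree-zero homogeneous pieces of generators times homogeneous elements of $B$, i.e. that taking the degree-$0$ part of the graded ideal $I$ commutes with the description of $I$ as ``$B$-span of the $b_i$, $i \neq 0$''. This is a routine consequence of $I$ being graded: any element of $I$ is a finite sum $\sum_k a^{(k)} b_{i_k}$, and projecting onto degree $0$ replaces each $a^{(k)}$ by its homogeneous component of degree $-i_k$, which still lies in $B$; so no elements of $B_0$ beyond $\sum_{i>0} B_{-i}B_i$ can arise. I would also remark that the statement reduces to the affine case of~\cite[Example 1.2.3]{Drinfeld} and that the two descriptions in~\eqref{funct_fixed_point!} make transparent the functoriality in the graded algebra $B$, which is what will be used later when applying this to $B = \BC[\CM_{\mathfrak{s}}]$.
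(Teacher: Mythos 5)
Your proposal is correct and follows essentially the same route as the paper: identify equivariant maps from an affine test scheme with graded homomorphisms out of $B$, observe these factor through $B/(b_i \in B_i,\, i \neq 0)$, and then identify that quotient with $B_0/\sum_{i>0}B_{-i}B_i$. The only difference is that you spell out the computation $I \cap B_0 = \sum_{i>0}B_{-i}B_i$, which the paper merely asserts; that added detail is sound.
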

\begin{proof}
It is enough to show that the functor $Y^{\BC^\times}$ restricted to the category of affine schemes over $\BC$ is represented by the affine scheme with the algebra of functions as in~(\ref{funct_fixed_point!}).
Let $S=\on{Spec}C$ be an affine scheme with trivial $\BC^\times$-action. The set $\on{Maps}^{\BC^\times}(S,Y)$ can be identified with the set of graded homomorphisms $B \ra C$, where the grading on $C$ is the trivial one ($C=C_0$). Since $C=C_0$, we conclude that every such  homomorphism $f\colon B \ra C$ factors through $B/(b_i \in B_i,\, i \neq 0)$.  Note now that every homomorphism $\ol{f}\colon B/(b_i \in B_i,\, i \neq 0) \ra C$ induces the {\emph{graded}} homomorphism $B \twoheadrightarrow B/(b_i \in B_i,\, i \neq 0) \ra C$, so we must have $Y^{\BC^\times}=\on{Spec}(B/(b_i \in B_i,\, i \neq 0))$.

It remains to note that the natural morphism 
\begin{equation*}
B_0/\sum_{i>0}B_{-i}B_{i} \ra B/(b_i \in B_i,\, i \neq 0),
\end{equation*}
given by 
\begin{equation*}
B_0/\sum_{i>0}B_{-i}B_{i} \ni [b] \mapsto [b] \in B/(b_i \in B_i,\, i \neq 0)   
\end{equation*}
is an isomorphism.
\end{proof}

\begin{Rem}
{\emph{Proposition~\ref{shematic_fixed_for_affine} can be easily generalized to the case when $G$ is a torus of arbitrary rank (or an arbitrary reductive group).
}}
\end{Rem}

The following proposition holds (see, for example,~\cite{Iversen}).
\begin{Prop}\label{Y_smooth_implies_fixed_smooth}
If $Y$ is a smooth algebraic variety over $\BC$ and $G$ is reductive, then $Y^G$ is smooth.
\end{Prop}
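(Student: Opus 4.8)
The plan is to deduce the statement from the infinitesimal lifting criterion for smoothness, using the fact (part of the representability results cited above, see~\cite[Theorem~2.3]{FO}; note also that $Y^G \ra Y$ is a closed immersion, so $Y^G$ is locally of finite type over $\BC$ whenever $Y$ is) that $Y^G$ is a scheme. Thus it suffices to check that $Y^G$ is formally smooth, i.e. that for every square-zero extension $A \twoheadrightarrow A/I$ of Artinian local $\BC$-algebras with residue field $\BC$ the map $Y^G(A) \ra Y^G(A/I)$ is surjective. Unwinding the definition of $Y^G$, this amounts to the following: given a $G$-equivariant morphism $\ol\phi\colon \on{Spec}(A/I) \ra Y$ (with $G$ acting trivially on the source), one must lift it to a $G$-equivariant morphism $\on{Spec} A \ra Y$.

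Since $Y$ is smooth, it is formally smooth over $\BC$, so the set $L$ of all (not necessarily equivariant) lifts of $\ol\phi$ along $\on{Spec} A \twoheadrightarrow \on{Spec}(A/I)$ is non-empty, and standard deformation theory identifies it with a torsor under the finite-dimensional $\BC$-vector space $T := \on{Hom}_{A/I}(\ol\phi^{*}\Omega_{Y/\BC}, I)$. Because $\ol\phi$ is $G$-equivariant and $G$ acts trivially on $\on{Spec}(A/I)$, the $\CO_{\on{Spec}(A/I)}$-module $\ol\phi^{*}\Omega_{Y/\BC}$ is naturally a sheaf of rational $G$-modules, and $I$ is a trivial $G$-module, so $T$ is a finite-dimensional rational representation of $G$. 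Post-composition with the $G$-action on $Y$ makes $G$ act on $L$ compatibly with the $T$-torsor structure, and a lift is $G$-equivariant exactly when it is a $G$-fixed point of $L$. Choosing a basepoint $\phi_0 \in L$, the assignment $g \mapsto g\cdot\phi_0 - \phi_0 \in T$ is an algebraic $1$-cocycle on $G$ with values in $T$, whose class in $H^1(G,T)$ is the obstruction to the existence of a $G$-fixed lift. Since $G$ is reductive, it is linearly reductive over $\BC$, so $H^1(G,T)=0$; hence a $G$-equivariant lift exists, $Y^G$ is formally smooth, and therefore smooth.

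The only non-formal ingredient here is the vanishing $H^1(G,T)=0$ — equivalently the exactness of the Reynolds operator — which is precisely linear reductivity of $G$; everything else is deformation theory together with the representability of $Y^G$ taken from the literature. The point that requires the most care in a full write-up is verifying that the deformation-theoretic description of $L$ is compatible with the $G$-actions, so that $T$ is genuinely a rational $G$-module and $g \mapsto g\cdot\phi_0 - \phi_0$ is a morphism of varieties; this is exactly where one uses that the $G$-action on the source $\on{Spec} A$ is trivial, which forces the relevant $\CO$-modules on the Artinian base to be rational $G$-modules. An alternative, more geometric route would invoke equivariant linearization (the Luna slice theorem) at a fixed point $y \in Y^G$: étale-locally near $y$, $(Y,y)$ is $G$-equivariantly modelled on $(T_y Y, 0)$, whose $G$-fixed subscheme is the linear subspace $(T_y Y)^G$, in particular smooth; one then needs that the formation of schematic fixed points commutes with $G$-equivariant étale morphisms and with open immersions, which is the main thing to establish on that path.
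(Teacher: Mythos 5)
Your argument is correct. The paper gives no proof of this proposition and only cites Iversen; your deformation-theoretic argument (reduce to the infinitesimal lifting criterion, observe that the set of lifts over a square-zero Artinian extension is a torsor under a rational $G$-module $T$, and kill the obstruction class in $H^1(G,T)$ by linear reductivity) is precisely the standard proof found in that reference and in Fogarty's paper, so it matches the approach the paper implicitly relies on.
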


\subsubsection{Hikita-Nakajima conjecture}
Recall that $\mathfrak{M}_{0}$ is a Nakajima quiver variety and $\mathcal{M}=\mathfrak{M}_{0}^!$ is the corresponding Coulomb branch. We assume that $\mathfrak{M}_{0}$ is resolved by the corresponding (smooth) Nakajima quiver variety $\mathfrak{M}\ra \mathfrak{M}_{0}$. Let  $H^*_{S_{\mathfrak{M}_{0}}}(\mathfrak{M},\BC)$ be the algebra of $S_{\mathfrak{M}_{0}}$-equivariant cohomology of $\mathfrak{M}$.  This is an algebra over $H^*_{S_{\mathfrak{M}_{0}}}(\on{pt})=\BC[\mathfrak{s}_{\mathfrak{M}_{0}}]$.
Recall also that the choice of $\mathfrak{M}$ (resolution of $\mathfrak{M}_{0}$) corresponds to a (generic) cocharacter $\nu_\mathfrak{M}\colon \BC^\times \ra S_{\mathcal{M}}$.
We can consider the algebra of functions of schematic fixed points $\BC[(\mathcal{M}_{\mathfrak{t}_{\mathcal{M}}})^{\nu_{\mathfrak{M}}(\BC^\times)}]$ that is an algebra over $\BC[\mathfrak{t}_{\mathcal{M}}]=\BC[\mathfrak{s}_{\mathfrak{M}_{0}}]$. 
 
Note also that the algebra $H^*_{S_{\mathfrak{M}_{0}}}(\mathfrak{M},\BC)$ has a natural cohomological grading and the algebra   
$\BC[(\mathcal{M}_{\mathfrak{t}_{\mathcal{M}}})^{\nu_{\mathfrak{M}}(\BC^\times)}]$ is graded via the contracting $\BC^\times_{\hbar}$-action. The algebra $H^*_{S_{\mathfrak{M}_{0}}}(\mathfrak{M},\BC)$ is finitely generated over $\BC[\mathfrak{s}_{\mathfrak{M}_{0}}]$. Now, the algebra $\BC[(\mathcal{M}_{\mathfrak{t}_{\mathcal{M}}})^{\nu_{\mathfrak{M}}(\BC^\times)}]$ is finitely generated over $\BC[\mathfrak{s}_{\mathfrak{M}_{0}}]$ iff $\mathcal{M}^{\nu_{\mathfrak{M}}(\BC^\times)}=\mathcal{M}^{S_{\mathcal{M}}}$ (considered as a set) consists of one point. Since we want to identify the algebras above we must make the following assumption.

\begin{Ass} \label{Fixed_points_finite}
The set $\mathcal{M}^{S_{\mathcal{M}}}$ consists of one point.
\end{Ass}

\begin{Rem}
{\emph{Let us recall that symplectic duality predicts that $\mathfrak{M}_{0}$ has a symplectic resolution iff $(\mathfrak{M}_{0}^!)^{S_{\mathfrak{M}_{0}^!}}$ consists of one point.}}
\end{Rem}

The following conjecture is due to Hikita and Nakajima. We will call it {\emph{Hikita-Nakajima conjecture}}. 

\begin{Conj}\label{Hikita_Nakajima_conjecture_general_version}
There is an isomorphism of $\BZ$-graded algebras over $\BC[\mathfrak{s}_{\mathfrak{M}_{0}}]$:
\begin{equation*}
H^*_{S_{\mathfrak{M}_0}}(\mathfrak{M},\BC) \simeq \BC[(\CM_{\mathfrak{t}_{\mathcal{M}}})^{\nu_{\mathfrak{M}}(\BC^\times)}].
\end{equation*}
\end{Conj}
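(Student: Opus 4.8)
The plan is to construct the isomorphism explicitly on generators and then to upgrade the generator correspondence to an isomorphism of $\BC[\mathfrak{s}_{\mathfrak{M}_0}]$-algebras by a graded Nakayama argument. Both sides are graded algebras, finitely generated over the graded polynomial ring $\BC[\mathfrak{s}_{\mathfrak{M}_0}]=\BC[\mathfrak{t}_{\CM}]$ — finiteness on the Coulomb side being exactly Assumption~\ref{Fixed_points_finite} — with the equivariant parameters sitting in strictly positive degree on both sides. Hence it will suffice to match a set of generators, check that the cohomological relations are sent to relations, and then verify the statement modulo the irrelevant ideal of $\BC[\mathfrak{s}_{\mathfrak{M}_0}]$ together with freeness over $\BC[\mathfrak{s}_{\mathfrak{M}_0}]$.

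First I would make the right-hand side concrete. By Proposition~\ref{shematic_fixed_for_affine}, applied to $G=\nu_{\mathfrak{M}}(\BC^\times)$, the algebra $\BC[(\CM_{\mathfrak{t}_{\CM}})^{\nu_{\mathfrak{M}}(\BC^\times)}]$ is the quotient of $\BC[\CM_{\mathfrak{t}_{\CM}}]$ by the ideal generated by all elements of nonzero $\nu_{\mathfrak{M}}$-weight. Since $\CM$ is a BFN Coulomb branch, $\BC[\CM_{\mathfrak{t}_{\CM}}]$ is its flavor deformation over $\mathfrak{t}_{\CM}=\mathfrak{s}_{\mathfrak{M}_0}$, and it carries the BFN presentation as equivariant Borel--Moore homology of the variety of triples; the $\nu_{\mathfrak{M}}$-grading is the monopole grading. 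I would use the abelianization of the Coulomb branch to produce explicit Cartan and multiplicative (dressed monopole) generators, and to compute the fixed-point quotient in these terms.

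Next, for the left-hand side I would use the presentation of $H^*_{S_{\mathfrak{M}_0}}(\mathfrak{M},\BC)$ as the image of the Kirwan map, i.e. generated by the equivariant Chern classes of the tautological bundles together with the parameters of $\BC[\mathfrak{s}_{\mathfrak{M}_0}]$. The candidate map sends these tautological generators to the corresponding Cartan generators produced by abelianization on the Coulomb side; I would check that it preserves the $\BZ$-grading (cohomological degree against $\hbar$-weight) and is $\BC[\mathfrak{s}_{\mathfrak{M}_0}]$-linear. To conclude, I would specialize the parameters $\mathfrak{s}_{\mathfrak{M}_0}$ to zero: the left-hand side becomes $H^*(\mathfrak{M},\BC)$ and the right-hand side becomes the schematic fixed-point ring of the undeformed $\CM$, so the statement reduces to the undeformed Hikita isomorphism, which one establishes by the same generator matching. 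Freeness of both sides over $\BC[\mathfrak{s}_{\mathfrak{M}_0}]$ would then promote this, via graded Nakayama, to an isomorphism over all of $\mathfrak{s}_{\mathfrak{M}_0}$.

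I expect two places to carry the real difficulty. The first is the compatibility of the schematic-fixed-point construction with specialization along $\mathfrak{t}_{\CM}$: the equality of the central fibre of $(\CM_{\mathfrak{t}_{\CM}})^{\nu_{\mathfrak{M}}(\BC^\times)}$ with the schematic fixed points of the special fibre requires flatness of the family of fixed-point schemes, which is not automatic and must be controlled. The second, and main, obstacle is the matching of relations: there is no uniform, resolution-independent presentation of the Coulomb-branch algebra, so identifying the image of each cohomological relation with an honest relation among the Coulomb generators requires explicit combinatorial control of the fixed-point quotient. It is precisely this step that resists a purely formal treatment in complete generality, and where one must feed in an explicit description of both the cohomology ring and the Coulomb branch.
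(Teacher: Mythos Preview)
The statement you are trying to prove is a \emph{conjecture}, not a theorem, and the paper does not prove it in this generality. Indeed, the paper explicitly includes a Warning immediately after the conjecture that it is \emph{false} as stated for arbitrary symplectically dual pairs; the main result (Theorem~\ref{theresult}) establishes only the special case $\mathfrak{M}=\mathfrak{M}(n,r)$. So any purported general proof must fail somewhere, and yours does: in your final reduction you specialize to $\mathfrak{s}_{\mathfrak{M}_0}=0$ and claim the statement ``reduces to the undeformed Hikita isomorphism, which one establishes by the same generator matching.'' But the undeformed Hikita isomorphism is itself open (and, by the Warning, not always true), so this step is circular. You also correctly identify the relation-matching as ``the main obstacle'' that ``resists a purely formal treatment in complete generality''---that is precisely the point at which the general argument breaks down and case-specific input is required.

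It is worth noting that even in the Gieseker case the paper's route is essentially the \emph{opposite} of your Nakayama reduction. Rather than specializing the parameters to zero and invoking the non-equivariant Hikita statement, the paper emphasizes that ``using deformations simplifies the picture'': both $H^*_A(\mathfrak{M}(n,r))$ and $Q_{n,r}=\BC[\CM(n,r)_{\mathfrak{a}}^{\BT}]$ are embedded into the common algebra $E=\bigoplus_{\boldsymbol{\la}\in\CP(r,n)}\BC[\mathfrak{a}]$ (via restriction to $A$-fixed points on one side and via the action on standard/Specht modules on the other), and one checks that the images of the natural generators $c_k(\CV)$ and $[e_k(u_1,\ldots,u_n)]$ coincide in $E$. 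The flatness input you flagged is genuinely needed and is supplied by Appendix~\ref{fixed_points_are_free_section}. Your idea of matching tautological Chern classes with Cartan-type Coulomb generators is exactly right at the level of generators (cf.\ Proposition~\ref{conj_cartan_surj} and Conjecture~\ref{Conj_HN_expl}), but turning it into an isomorphism requires the embedding-into-$E$ argument, not a reduction to the non-equivariant case.
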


\begin{Rem}\label{quantized_Hikita_Nakajima}
\footnotesize
{\emph{Actually Nakajima's version of this conjecture is even more general: on the LHS, we can consider the algebra $H^*_{S_{\mathfrak{M}_{0}} \times \BC^\times_\hbar}(\mathfrak{M},\BC)$ and on the RHS we can consider the Rees algebra of ``quantized schematic fixed points'' (so-called $B$-algebra or Cartan subquotient) of the  quantization of $\BC[\mathcal{M}_{\mathfrak{t}_{\mathcal{M}}}]$ (see, for example, \cite[Section 5.6]{joel_new}). In \cite{Qhikita}, another generalization of this conjecture is proposed (where one replaces cohomology on the LHS with quantum cohomology).}}
\normalsize
\end{Rem}

\begin{Rem}
{\emph{Let us note that if $\mathfrak{M}$, $\mathfrak{M}'$ are two symplectic resolutions of $\mathfrak{M}_{0}$, then the algebras $H^*_{S_{\mathfrak{M}_{0}}}(\mathfrak{M},\BC)$, $H^*_{S_{\mathfrak{M}_{0}}}(\mathfrak{M}',\BC)$ are isomorphic. This follows from the fact that the universal deformations $\mathfrak{M}_{\mathfrak{t}_{\mathfrak{M}_0}} \ra \mathfrak{t}_{\mathfrak{M}_0}$, $\mathfrak{M}'_{\mathfrak{t}_{\mathfrak{M}_0}} \ra \mathfrak{t}_{\mathfrak{M}_0}$ are locally trivial in $C^\infty$-topology (see \cite[Section 1.2 and references therein]{NAM2}), so $\mathfrak{M}$, $\mathfrak{M}'$ are both diffeomorphic to a generic fiber of $\mathfrak{M}_{0,\mathfrak{t}_{\mathfrak{M}_0}} \ra \mathfrak{t}_{\mathfrak{M}_0}$. We are grateful to Pavel Etingof for explaining this to us. 
Similarly, one can see that for any generic cocharacter $\nu\colon \BC^\times \ra S_{\mathcal{M}}$ the schematic fixed points $\mathcal{M}_{\mathfrak{t}_{\mathcal{M}}}^{\nu(\BC^\times)}$ are the same and are isomorphic to the schematic fixed points $\mathcal{M}_{\mathfrak{t}_{\mathcal{M}}}^{S_{\mathcal{M}}}$. Indeed, note that $\mathcal{M}_{\mathfrak{t}_{\mathcal{M}}}$ can be $S_{\mathcal{M}}$-equivariantly embedded in some vector space $O$ with a linear action of $S_{\mathcal{M}}$. Then  $\mathcal{M}_{\mathfrak{t}_{\mathcal{M}}}^{\nu(\BC^\times)}$, $\mathcal{M}_{\mathfrak{t}_{\mathcal{M}}}^{S_{\mathcal{M}}}$ are (schematic) intersections 
\begin{equation*}
\mathcal{M}_{\mathfrak{t}_{\mathcal{M}}}^{\nu(\BC^\times)}=\mathcal{M}_{\mathfrak{t}_{\mathcal{M}}} \cap O^{\nu(\BC^\times)},~ \mathcal{M}_{\mathfrak{t}_{\mathcal{M}}}^{S_{\mathcal{M}}}= \mathcal{M}_{\mathfrak{t}_{\mathcal{M}}} \cap O^{S_{\mathcal{M}}}.
\end{equation*}
This reduces the claim to showing that $O^{\nu(\BC^\times)}=O^{S_{\mathcal{M}}}$  for a generic $\nu\colon \BC^\times \ra S_{\mathcal{M}}$ that directly follows from Proposition \ref{Y_smooth_implies_fixed_smooth} since $O$ is smooth. We are grateful to Alexander Braverman for explaining this to us.}}
\end{Rem}

\begin{Rem}
{\emph{Let us mention one ``combinatorial'' corollary of Conjecture \ref{Hikita_Nakajima_conjecture_general_version}. 
Pick a point $s \in \mathfrak{s}_{\mathfrak{M}_{0}}$. We have an isomorphism of algebras $H^*_{S_{\mathfrak{M}_{0}}}(\mathfrak{M})|_{s} \simeq H^*(\mathfrak{M}^s)$. Note  that the semisimple quotient of $H^*(\mathfrak{M}^s)$ is the algebra $H^0(\mathfrak{M}^s)$ that is isomorphic to the direct sum of copies of $\BC$, the number of copies is equal to $|\!\on{Comp}(\mathfrak{M}^s)|$, where $\on{Comp}(\mathfrak{M}^s)$ is the set of connected components of $\mathfrak{M}^s$. 
Conjecture \ref{Hikita_Nakajima_conjecture_general_version} implies that the algebra $H^*(\mathfrak{M}^s)$ is isomorphic to the algebra $\BC[\mathcal{M}_s^{\nu_{\mathfrak{M}}(\BC^\times)}]$, where $\mathcal{M}_s$ is the fiber over $s \in \mathfrak{s}_{\mathfrak{M}_{0}}$ of the  deformation $\mathcal{M}_{\mathfrak{s}_{\mathfrak{M}_{0}}} \ra \mathfrak{s}_{\mathfrak{M}_{0}}$. The semisimple quotient of $\BC[\mathcal{M}_s^{\nu_{\mathfrak{M}}(\BC^\times)}]$ is isomorphic to the direct sum of copies of $\BC$ labeled by the set $\mathcal{M}_s^{\nu_{\mathfrak{M}}(\BC^\times)}(\BC)$ of $\BC$-points of $\mathcal{M}_s^{\nu_{\mathfrak{M}}(\BC^\times)}$. So we obtain the bijection of sets:
\begin{equation*}
\on{Comp}(\mathfrak{M}^s) \simeq \mathcal{M}_s^{\nu_{\mathfrak{M}}(\BC^\times)}(\BC).
\end{equation*}
}}
\end{Rem}

\begin{Warning}
Hikita-Nakajima conjecture is not true as stated for arbitrary pairs of symplectically dual varieties. 
A counterexample is a part of work in progress by K. Hoang, D. Matvieievskyi, and the first author.

\end{Warning}
Let us briefly recall the current state of the Hikita-Nakajima conjecture. The Hikita conjecture was proven for the case of Hilbert scheme of points on $\BA^2$, type $A$ Slodowy slices  and hypertoric varieties in \cite[Theorem 1.1, Theorem A.1, Theorem B.1]{HI}.
In \cite{Pasha}, the second author has proven the case of $Y=\C^2/\Gamma$ where $\Gamma$ is a finite subgroup of  $\on{SL}_2(\C)$.
In the paper \cite[Theorem 1.0.5]{HA}, Hatano proved that  $H^*(\mathfrak{M}(n,r),\BC)$ and $\BC[(\mathfrak{M}_0(n,r)^{!})^{\BC^\times}]$ are isomorphic as graded vector spaces. 
In \cite[Theorem 1.5]{KTWWY}, Kamnitzer, Tingley, Webster, Weekes and Yacobi have proven Hikita conjecture for the $ADE$ slices in the affine Grassmanian. They have also proven the equivariant  (see Remark \ref{quantized_Hikita_Nakajima}) version of Conjecture 
\ref{Hikita_Nakajima_conjecture_general_version} for type $A$ quivers and some weaker form of this conjecture for $DE$ quivers (see \cite[Section 8.3]{KTWWY}, \cite[Section 1.3 \text{and} Theorem 1.5]{KTWWY2}, see also \cite[Section 6.6]{joel_new}). 
The results of \cite{Pasha} were recently generalized to the equivariant case in \cite{CWS}.

The main result of this paper is the following theorem. 
\begin{Th} \label{theresult}
 Hikita-Nakajima conjecture holds for
 $\mathfrak{M}=\mathfrak{M}(n,r)$, the Gieseker variety. 
\end{Th}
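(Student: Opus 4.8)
The plan is to make both sides of the conjectured isomorphism completely explicit for the Gieseker (ADHM) quiver and match them on generators. On the left-hand side, $\mathfrak{M}=\mathfrak{M}(n,r)$ carries the tautological bundle $\CV$ of rank $n$ together with the framing bundle of rank $r$, and a theorem of Nakajima (plus the known structure of $H^*(\mathfrak{M}(n,r))$, e.g.\ via the $\BC^\times\times\BC^\times$-fixed points indexed by $r$-multipartitions of $n$) tells us that $H^*_{S}(\mathfrak{M}(n,r),\BC)$ is generated over $\BC[\mathfrak{s}]$ by the Chern classes of $\CV$, with explicit relations coming from localization/the equivariant structure. Here $S$ is the maximal torus of $\on{Aut}_{\BC^\times_\hbar}(\mathfrak{M}_0(n,r))$, which for the ADHM space is (an $(r{+}1)$-dimensional torus, a framing torus times the torus acting on $\BA^2$); one should first pin down $S$, $\mathfrak{s}$, and the Namikawa space $\mathfrak{t}_{\mathcal{M}}=\mathfrak{s}_{\mathfrak{M}_0}$, together with the generic cocharacter $\nu_{\mathfrak{M}}$.

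On the right-hand side, the Coulomb branch $\mathcal{M}$ of the Jordan quiver with $r$ flavors is known (by work of Braverman--Finkelberg--Nakajima, Kodera--Nakajima, and others) to be $\on{Spec}$ of the center of the spherical rational Cherednik algebra, equivalently a Calogero--Moser-type space; its canonical deformation $\mathcal{M}_{\mathfrak{t}_{\mathcal{M}}}$ over $\mathfrak{s}_{\mathfrak{M}_0}$ is $\on{Spec}$ of the center of the rational Cherednik algebra $\CH_c$ attached to $S_n\ltimes(\BZ/r\BZ)^n$, with the parameters $c$ being exactly the coordinates on $\mathfrak{s}_{\mathfrak{M}_0}$. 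The next step is to compute the schematic $\nu_{\mathfrak{M}}(\BC^\times)$-fixed points: using Proposition~\ref{shematic_fixed_for_affine}, $\BC[\mathcal{M}_{\mathfrak{t}_{\mathcal{M}}}^{\nu_{\mathfrak{M}}(\BC^\times)}]$ is the quotient of the (graded) center of $\CH_c$ by the ideal generated by elements of nonzero $\BC^\times$-weight. Concretely this realizes the fixed-point algebra via the subalgebra of $Z(\CH_c)$ generated by the symmetric functions in the Dunkl--Opdam / Cherednik--Dunkl elements, or — following Shan--Varagnolo--Vasserot — via the center of the degenerate cyclotomic Hecke algebra $\mathcal{H}$, which gives a third incarnation of the same algebra and a convenient set of generators.

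The heart of the proof is then to produce the map: send the $k$-th Chern class $c_k(\CV)\in H^*_S(\mathfrak{M}(n,r))$ to the $k$-th elementary symmetric function of the (images of the) Dunkl--Opdam elements in $Z(\CH_c)/(\text{positive weight})$, and check that (i) this is a well-defined algebra homomorphism, i.e.\ the defining relations of the equivariant cohomology ring are sent to zero, (ii) it is $\BC[\mathfrak{s}_{\mathfrak{M}_0}]$-linear and grading-preserving (matching cohomological degree $2k$ with the Cherednik grading), and (iii) it is an isomorphism. For surjectivity one checks the target is generated by these symmetric functions; for injectivity (equivalently, equality of dimensions fiberwise over $\mathfrak{s}_{\mathfrak{M}_0}$) one can compare with Hatano's result that the two sides agree as graded vector spaces, or argue directly that both specialize, at a generic point of $\mathfrak{s}_{\mathfrak{M}_0}$, to $\BC^{\#\{r\text{-multipartitions of }n\}}$ — the LHS because $\mathfrak{M}(n,r)^s$ becomes a finite set of points, the RHS because the generic fiber of $\mathcal{M}_{\mathfrak{t}_{\mathcal{M}}}$ is the (reduced, $\nu$-fixed) Calogero--Moser space, whose points are the same multipartitions — and then use flatness over $\BC[\mathfrak{s}_{\mathfrak{M}_0}]$ plus the explicit nilpotent structure at $s=0$ to upgrade the generic isomorphism to an isomorphism of schemes.

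I expect the main obstacle to be step (i)–(iii) made precise: identifying the \emph{exact} relations in $H^*_S(\mathfrak{M}(n,r),\BC)$ and showing they hold among symmetric functions of Dunkl--Opdam elements after passing to the schematic fixed-point quotient. The equivariant cohomology relations are most naturally described via abelianization/the hyperplane arrangement or via the fixed-point localization $H^*_S(\mathfrak{M})\hookrightarrow\bigoplus_{r\text{-multipartitions}}\BC[\mathfrak{s}]$, and one must match this embedding with the corresponding ``spectral'' decomposition of $Z(\CH_c)$ coming from the Calogero--Moser points; controlling the non-reduced structure (the scheme structure of the fixed points, not just the set of points) on both sides simultaneously, and checking the comparison is compatible with the $\BC[\mathfrak{s}_{\mathfrak{M}_0}]$-module structures, is where the real work lies. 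A secondary subtlety is fixing all normalizations so that $\nu_{\mathfrak{M}}$, the sign/shift conventions in the BFN Coulomb branch, and the Cherednik parameters $c$ are matched consistently, which is needed for the isomorphism to be canonical and grading-preserving as claimed.
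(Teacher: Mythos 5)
Your proposal follows essentially the same route as the paper: both sides are embedded into $\bigoplus_{\boldsymbol{\lambda}\in\mathcal{P}(r,n)}\mathbb{C}[\mathfrak{a}]$ (equivariant localization at the torus-fixed points indexed by $r$-multipartitions on the cohomology side, the action of the center on standard/Specht modules on the Cherednik--Hecke side), the generators $c_k(\mathcal{V})$ and $e_k(u_1,\dots,u_n)$ are matched by computing their common image --- elementary symmetric functions of shifted contents --- and generic semisimplicity plus flatness over $\mathbb{C}[\mathfrak{a}]$ upgrades the generic identification to an isomorphism. The one ingredient you invoke without supplying is the flatness of the schematic fixed-point algebra over the deformation base, which the paper proves in Appendix~\ref{fixed_points_are_free_section} by bounding $\dim_{\mathbb{C}}\mathbb{C}[(\mathbb{A}^{2n}/\Gamma_n)^{\mathbb{T}}]$ from above following Hikita and Hatano; your stated worry about pinning down the explicit relations of $H^*_A(\mathfrak{M}(n,r))$ is moot once one works with the localization embedding, exactly as the paper does.
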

Actually, we will describe the isomorphism explicitly using certain generators of the algebras mentioned above (see Theorem~\ref{MAIN_THEOREM} for more details).

\begin{Rem} 
{\emph{Let us point out that our approach gives a new proof of the Hikita conjecture for the Hilbert scheme case (when $r=1$), which was proved by Hikita himself. It also generalizes the results of \cite{HA}, where the author proves that  \newline $H^*(\mathfrak{M}(n,r),\BC)$, $\BC[(\mathfrak{M}_0(n,r)^{!})^{\BC^\times}]$ are isomorphic as graded vector spaces (Appendix \ref{fixed_points_are_free_section}  repeats
some of the arguments of the paper \cite{HA}).  The main idea of our paper is that using deformations simplifies the picture.}} \end{Rem}

\subsection{Gieseker variety ($ADHM$ space)}
Gieseker variety $\mathfrak{M}(n,r)$ depends on a pair $n,r \in \BZ_{\geqslant 1}$ of positive integer numbers. It can be realized as a Nakajima quiver variety corresponding to the Jordan quiver (see Definition~\ref{def_gieseker}). Variety $\mathfrak{M}(n,r)$ also has a realization as the moduli space of torsion-free sheaves on $\BP^2$ of rank $r$ with the second Chern class being $n$ and with a fixed trivialization at the line at infinity (see \cite[Chapter 2]{NA0} for details). The variety $\mathfrak{M}(n,r)$ is an  important object that originally came from physics.
The Gieseker variety $\mathfrak{M}(n,r)$ is a resolution of singularities of the variety  $\mathfrak{M}_0(n,r)=\on{Spec}\BC[\mathfrak{M}(n,r)]$. The variety $\mathfrak{M}_0(n,r)$  has a realization as an (affine) Nakajima quiver variety corresponding to the Jordan quiver (see Definition~\ref{def_gieseker}).

So we are taking $\mathfrak{M}=\mathfrak{M}(n,r)$, $\mathfrak{M}_{0}=\mathfrak{M}_0(n,r)$. 
Then the torus $S_{\mathfrak{M}_{0}}$ can be described as follows. We have a natural symplectic action of 
$
\on{SL}_r$ on $\mathfrak{M}(n,r)$ 
(via changing the trivialization at infinity) and also the action of $\BT=\BC^\times$ via the action on $\BP^2$ that multiplies one coordinate by $t$ and another by $t^{-1}$ (so-called ``hyperbolic action''). Let $T_r \subset \on{SL}_r$ be a maximal torus. The torus $S_{\mathfrak{M}_{0}}$ is the image of $A:=\BT \times T_r$ in $\on{Aut}_{\BC^\times_\hbar}(\mathfrak{M}(n,r))$ and $\mathfrak{s}_{\mathfrak{M}_{0}}$ naturally identifies with $\mathfrak{a}:=\on{Lie}A$. The space $\mathfrak{t}_{\mathfrak{M}_0(n,r)}=H^2(\mathfrak{M}(n,r),\BC)$ is known to be one-dimensional.

\subsection{Symplectic dual to $\mathfrak{M}(n,r)$}
Let us now give a very brief description of the variety $\mathcal{M}(n,r)=\mathfrak{M}_0(n,r)^{!}$ and its  deformation $\mathcal{M}(n,r)_{\mathfrak{a}} \ra \mathfrak{a}$.

One way to construct a dual to $\mathfrak{M}_0(n,r)$ is via the Coulomb branches introduced in~\cite{BFN}. In this approach,   
$\mathcal{M}(n,r)$ is equal to the spectrum of the algebra $H_*^{(\on{GL}_n)_{\CO}}(\CR_{n,r})$
of $(\on{GL}_n)_{\CO}$-equivariant
Borel-Moore homology of the variety of triples $\CR_{n,r}$ corresponding to the Jordan quiver (see Section~\ref{dual_via_Coulomb_constr_tripl_hom} for details). The variety $\mathcal{M}(n,r)_{\mathfrak{a}}$ is then the spectrum of $A \times (\on{GL}_n)_{\CO}$ - equivariant homology of the same variety of triples $\mathcal{R}_{n,r}$ (see Section~\ref{two_descriptions)of_sympl_dual_section} for details). 
The Coulomb branch $\mathcal{M}(n,r)_{\mathfrak{a}}$ above  can be realized (see~\cite{KONA}, \cite{BEF}, \cite{webster}) as the spectrum of the center $Z(H_{n,r})$ of the  rational Cherednik algebra $H_{n,r}$ corresponding to the group $\Gamma_n:=S_n \ltimes (\BZ/r\BZ)^n$ (see Section ~\ref{DAHA_corresp_to_Gamma_section} for details). 
Thus we have 
\begin{equation*}
\mathfrak{M}_0(n,r)^{!}_{\mathfrak{a}}=\CM(n,r)_{\mathfrak{a}}=\on{Spec}Z(H_{n,r}). 
\end{equation*}

The algebra $H_{n,r}$ has a natural $\BZ$-grading (see~(\ref{Z_grading_on _H})) that induces the action of $\BT=\BC^\times$ on $\on{Spec}Z(H_{n,r})=\CM(n,r)_{\mathfrak{a}}$. 

\begin{Rem} 
{\emph{The action $\BT \curvearrowright \CM(n,r)_{\mathfrak{a}}$ in the Coulomb terms is described in \cite[Section 3 (v)]{BFN}.}}
\end{Rem}

\begin{Rem}\label{dual_as_quiv_remark}
{\emph{One can also construct $\mathfrak{M}_0(n,r)^{!}$ as the (affine) Nakajima quiver variety $\mathcal{X}_0(n,r)$ corresponding to the cyclic quiver with $r$ vertices labeled by $\BZ/r\BZ$ and having $n$-dimensional vector spaces placed at these vertices and one-dimensional framing at the vertex corresponding to zero. The deformation $\mathcal{X}_0(n,r)_{\mathfrak{a}}$ can be constructed similarly (c.f. Definition \ref{univ_quiver_def} below). The identification $\mathcal{X}_0(n,r)_{\mathfrak{a}} \simeq \on{Spec}Z(H_{n,r})$ is given by the Etingof-Ginzburg isomorphism (that goes back to the paper \cite{EG}). More detailed, in~\cite[Section 3.3]{TO} it is explained (following the proof of~\cite[Theorem 11.16]{EG}) how to construct isomorphisms between smooth fibers of the families $\mathcal{X}_0(n,r)_{\mathfrak{a}}$, $\on{Spec}Z(H_{n,r})$ over ${\mathfrak{a}}$. One can show (we are grateful to Pavel Etingof and Ivan Losev for the explanations) that these isomorphisms extend to the desired isomorphism $\mathcal{X}_0(n,r)_{\mathfrak{a}} \simeq \on{Spec}Z(H_{n,r})$. The idea is to first extend these isomorphisms to the smooth locus over fiber over {\em{every}} $c \in {\mathfrak{a}}$ and then use the normality of our varieties to extend in codimension two.}}
\end{Rem}

\subsection{Cyclotomic Hecke algebra and its center}
Let $Q_{n,r}$ be the algebra of functions on schematic $\BT$-fixed points of $\on{Spec}Z(H_{n,r})$ (that can also be considered as the algebra of functions on $\mathcal{M}(n,r)_{\mathfrak{a}}^{\BT}$). Recall that by Proposition~\ref{shematic_fixed_for_affine}  we have 
\begin{multline*}
Q_{n,r}=Z(H_{n,r})_0/\sum_{i>0}Z(H_{n,r})_{-i} Z(H_{n,r})_{i}=\\
=Z(H_{n,r})/(b \in Z(H_{n,r})_i,\, i \neq 0),
\end{multline*}
where the grading on $H_{n,r}$ is as in (\ref{Z_grading_on _H}):
\begin{equation*}
\on{deg}x_j=1,\, \on{deg}y_j=-1,\, \on{deg}\Gamma_n=\on{deg}{\bf{h}}=0.   \end{equation*}
Our goal is to identify the algebra $Q_{n,r}$ with the algebra $H^*_{A}(\mathfrak{M}(n,r),\BC)$. It turns out that there is another algebra that is isomorphic to both of the algebras above. This algebra is the center $Z(R^r(n))^{JM}$ of the  cyclotomic degenerate Hecke  algebra $R^r(n)$ (see Section \ref{cyclot_degen_defe} for the definition). It was observed in~\cite{SVV} that  algebras $Z(R^r(n))^{JM}$, $H^*_{A}(\mathfrak{M}(n,r),\BC)$ are isomorphic. We give an independent (but certainly similar) proof of this fact.

\subsection{Main idea of the proof}\label{main_idea_of_the_proof}
It turns out that there exists one ``universal''  approach that allows us to identify algebras 
\begin{equation}\label{our_alg_intro}
H^*_{A}(\mathfrak{M}(n,r),\BC),\, Z(R^r(n))^{JM},\,  Q_{n,r} \simeq \BC[\mathcal{M}(n,r)_{\mathfrak{a}}^{\BT}]
\end{equation} 
with each other simultaneously. The idea is simple: we  embed all of the algebras above inside the algebra 
\begin{equation*}
E:=\bigoplus_{\boldsymbol{\la} \in \CP(r,n)} \BC[\mathfrak{a}]=\BC[\mathfrak{a}]^{\oplus |\CP(r,n)|}
\end{equation*}
and show that their images coincide. Here $\CP(r,n)$ is the set of $r$-multipartitions of $n$ (see Definition~\ref{multipart}). In order to show that images are the same we consider natural generators of these algebras and show that their images in $E$ are the same. 
In particular, we obtain explicit descriptions for isomorphisms between algebras in (\ref{our_alg_intro}).

\begin{Rem}
{\emph{Let $F$ be the function field of the parameter space $\mathfrak{a}=\BA^r$. We will see that the embeddings above become isomorphisms after tensoring by $F$ or, more precisely, after localizing at a certain finite set of elements of ${\bf{k}}:=\BC[\mathfrak{a}]$ (certain ``walls''). Compare with Remark \ref{more_car_loc_statement} below.}}
\end{Rem}

\begin{Rem}
{\emph{In \cite[Definition 2.1]{BLPPW} the authors introduce a notion of  {\em{localization algebra}}.  
We observe that algebras that appear  in (\ref{our_alg_intro}) have natural structures of (strong, free) localization algebras and all of them are isomorphic (as algebras with this additional structure).}}
\end{Rem}

\subsubsection{Embedding $Z(R^r(n))^{JM} \subset E$ and  generators of $Z(R^r(n))^{JM}$} For every ${\boldsymbol{\la}} \in \CP(r,n)$ one can consider the corresponding ``universal'' Specht modules over $R^r(n)$ that we denote by $\widetilde{S}_{{\bf{k}}}
(\boldsymbol{\la})$ (see Section \ref{rep_theory_of_cyclotomic} for details). Acting by the center $Z(R^r(n))^{JM}$ of $R^r(n)$ on these modules we obtain the desired embedding 
\begin{equation*}
\psi\colon Z(R^r(n))^{JM} \subset \bigoplus_{\boldsymbol{\la} \in \CP(r,n)} \on{End}_{R^r(n)}(\widetilde{S}_{\bf{k}}(\boldsymbol{\la}))=E.   
\end{equation*}
It follows from~\cite[Theorem 1]{Br} that natural generators of $Z(R^r(n))^{JM}$ are classes of elements 
\begin{equation*}
e_k(z_1,\ldots,z_n),\, k=1,\ldots,n,
\end{equation*}
where $e_k \in \BC[z_1,\ldots,z_n]^{S_n}$ are elementary symmetric polynomials.

\subsubsection{Embedding $H^*_{A}(\mathfrak{M}(n,r),\BC) \subset E$ and generators of $H^*_{A}(\mathfrak{M}(n,r),\BC)$}
The set of $A$-fixed points $\mathfrak{M}(n,r)$ can be parametrized by $\CP(r,n)$ (see Section \ref{equivariant_cohomology_embedding} for details). 

In particular, we have the natural embedding 
\begin{equation*}
\iota\colon \mathfrak{M}(n,r)^{A} \subset \mathfrak{M}(n,r) 
\end{equation*}
that induces the desired embedding \begin{equation*}
\iota^*\colon H^*_{A}(\mathfrak{M}(n,r),\BC) \subset H^*_{A}(\mathfrak{M}(n,r)^{A},\BC)=E.   
\end{equation*}
It remains to note that the  algebra  $H^*_{A}(\mathfrak{M}(n,r),\BC)$ is generated by the elements 
\begin{equation*}
c_k(\mathcal{V}),\, k=1,\ldots,n, 
\end{equation*}
where $c_k(\bullet)$ is the $k$th $A$-equivariant Chern class and $\mathcal{V}$ is the tautological $n$-dimensional vector bundle on $\mathfrak{M}(n,r)$. This, for example, follows from~\cite[Corollary 1.5]{MN}.

\subsubsection{Embedding $Q_{n,r} \subset E$ and generators of $Q_{n,r}$}
Recall that $Q_{n,r}$ is a quotient of $Z(H_{n,r}) \subset H_{n,r}$. To every ${\boldsymbol{\la}} \in \CP(r,n)$ one can associate the corresponding induced (graded) $H_{n,r}$-module $\Delta({\boldsymbol{\la}})$ on which $Z(H_{n,r})$ will act by some character and this action factors through the action of $Q_{n,r}$ (see Section \ref{standard_simple_RCA}). This gives us the desired embedding \begin{equation*}
\phi \colon Q_{n,r} \subset \bigoplus_{\boldsymbol{\la} \in \CP(r,n)}\on{End}^{\mathrm{gr}}_{H_{n,r}}(\Delta(\boldsymbol{\la}))=E.
\end{equation*}
Generators of $Q_{n,r}$ are classes of
\begin{equation*}
e_k(u_1,\ldots,u_n),\, k=1,\ldots,n,
\end{equation*}
where $u_i \in H_{n,r}$ are Dunkl-Opdam elements (see Lemma~\ref{gen_of_Q!}).

\begin{Rem}
{\emph{The identification $Z(H_{n,r}) \simeq H_*^{A \times (\on{GL}_n)_{\CO}}(\CR_{n,r})$ sends \newline $e_k(u_1,\ldots,u_n)$ to the function $m_k := c_k(V) * 1$, here $c_k(V) \in H^*_{A \times \on{GL}_n}(\on{pt})$ is the Chern class of the tautological $A \times \on{GL}_n$-bundle $V=\BC^n$ on $\on{pt}$ and $*$ is the convolution product on $H_*^{A \times (\on{GL}_n)_{\CO}}(\CR_{n,r})$ (see \cite[Section 4]{webster} for details). This should be compared with the fact  that generators on the dual side (i.e., generators  of $H^*_A(\mathfrak{M}(n,r),\BC)$) are the Chern classes $c_i(\CV)$ of the tautological bundle $\mathcal{V}$.}}
\end{Rem}

\begin{Rem}
{\emph
The embedding $\phi$ can be interpreted geometrically as a pullback homomorphism from schematic fixed points on $\mathcal{M}(n,r)_{\mathfrak{a}}$ to schematic fixed points on a resolution of $\mathcal{M}(n,r)_{\mathfrak{a}}$ (see Section \ref{Hikita_Hilbert} for the $r=1$ example and Section \ref{general_section!} for general conjectures in this direction).}

\end{Rem}

\subsection{Main results and structure of the paper}

\subsubsection{Identification of the parameters and the main Theorem}
We have the parameter spaces 
\begin{equation}\label{all_param_spaces0}
{\bf{h}}=\BC[\kappa,c_1,\ldots,c_{r-1}],\, {\bf{k}}=\BC[\kappa,a_1,\ldots,a_r]/(a_1+\ldots+a_r)=\BC[\mathfrak{a}],
\end{equation}
used in definitions of  algebras (\ref{our_alg_intro}) that we want to identify.
We identify the parameters 
as follows ($i=1,\ldots,r$, $k=1,\ldots,r-1$):
\begin{equation}\label{identify_all_parameters}
a_i=p(\eta^{i-1}),
\end{equation}
where $p(q)= \frac{1}{r}\sum_{l=1}^{r-1}\frac{c_l}{\eta^{-l}-1}q^l \in \BC[q]$, $\eta=e^{\frac{2\pi \sqrt{-1}}{r}}$. We will denote by $F$ the field of fractions of the algebras (\ref{all_param_spaces0}).

\begin{Th}\label{MAIN_THEOREM}
After the identification of parameters ~(\ref{identify_all_parameters}) the $\BZ$-graded algebras 

\begin{equation*}
H^*_A(\mathfrak{M}(n,r),\BC),\, Z(R^r(n))^{JM},\, Q_{n,r},\, \BC[\on{Spec}(H^{A \times (\on{GL}_n)_{\CO}}_*(\CR_{n,r}))^{\mathbb{T}}]
  \end{equation*}
are isomorphic. The isomorphisms above identify generators as follows 

\begin{equation}\label{identifications_formulas}
c_k(\mathcal{V})=[e_k(z_1,\ldots,z_n)]=[e_k(u_1,\ldots,u_n)]=[m_k],
  \end{equation}
where $\CV$ is the tautological rank $n$ vector bundle on $\mathfrak{M}(n,r)$, $m_k=c_k(V) * 1$ and $u_i \in H_{n,r}$ are Dunkl-Opdam elements (Definition (\ref{OD_elements_DEF!})).
\end{Th}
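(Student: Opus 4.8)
The strategy is to realize all four algebras as explicit subalgebras of the single ``big'' algebra $E=\bigoplus_{\bla\in\CP(r,n)}\BC[\mathfrak{a}]$ and to show that, under the parameter identification~(\ref{identify_all_parameters}), the four images coincide — indeed coincide elementwise on the listed generators. The four embeddings are: $\iota^*$ for $H^*_A(\mathfrak{M}(n,r),\BC)$ coming from restriction to $A$-fixed points; $\psi$ for $Z(R^r(n))^{JM}$ coming from the action on universal Specht modules; $\phi$ for $Q_{n,r}$ coming from the action of $Z(H_{n,r})$ on the induced modules $\Delta(\bla)$; and the identification $Z(H_{n,r})\simeq H_*^{A\times(\on{GL}_n)_\CO}(\CR_{n,r})$ together with Proposition~\ref{shematic_fixed_for_affine} for the Coulomb side. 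Two preliminary reductions are needed. First, the index set of $A$-fixed points on $\mathfrak{M}(n,r)$, the index set of blocks of the Specht-module decomposition of $R^r(n)$, and the index set of induced modules $\Delta(\bla)$ must all be identified with $\CP(r,n)$ in a \emph{compatible} way; this is bookkeeping about $r$-multipartitions and residue data, and one must be careful that the bijections match (not merely that the sets have the same cardinality). Second, one must check that each of these embeddings is genuinely injective and lands in $E$ (rather than in something larger like $E\otimes_{\bf k} F$): for cohomology this is the fact that $H^*_A(\mathfrak{M}(n,r))$ is free over $\BC[\mathfrak{a}]$ and localization is injective (recalled in Appendix~\ref{fixed_points_are_free_section}, following~\cite{HA}); for the Cherednik side one uses that $Q_{n,r}$ acts faithfully through the $\Delta(\bla)$'s by genericity of fibers; similarly for $Z(R^r(n))^{JM}$.

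The core computation is the comparison of the four generator families on each summand of $E$. On the cohomology side, the restriction of $c_k(\CV)$ to the fixed point $\bla$ is $e_k$ evaluated at the $A$-weights of the fibre of $\CV$ at that torus-fixed point; these weights are an explicit combinatorial function of the boxes of $\bla$ and the equivariant parameters (the standard ``arm/leg'' or content formula for quiver-variety fixed points, here for the Jordan quiver). On the Specht side, the element $e_k(z_1,\ldots,z_n)$ acts on $\widetilde S_{\bf k}(\bla)$ as a scalar given by $e_k$ of the Jucys–Murphy eigenvalues, which are again explicit linear-in-parameters functions of the residues of the boxes of $\bla$; here one invokes~\cite[Theorem 1]{Br} both for the statement that these $e_k(z_\bullet)$ generate $Z(R^r(n))^{JM}$ and for the eigenvalue computation. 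On the Cherednik side, $e_k(u_1,\ldots,u_n)$ acts on $\Delta(\bla)$ by $e_k$ of the Dunkl–Opdam eigenvalues on the lowest-weight space, computed in~\cite{GGOR}-type fashion; Lemma~\ref{gen_of_Q!} gives that these generate $Q_{n,r}$. On the Coulomb side, $m_k=c_k(V)*1$ is, under the isomorphism with $Z(H_{n,r})$, literally $e_k(u_1,\ldots,u_n)$ (this is the content of the remark after Theorem~\ref{MAIN_THEOREM}, following~\cite{webster}), so that summand is handled simultaneously. The heart of the matter is the purely combinatorial identity asserting that, after substituting the parameter dictionary~(\ref{identify_all_parameters}) — the passage between the ``Cherednik'' coordinates $(\kappa,c_1,\ldots,c_{r-1})$ and the ``quiver/equivariant'' coordinates $(\kappa,a_1,\ldots,a_r)$ via the discrete Fourier transform $a_i=p(\eta^{i-1})$ — the fixed-point weights, the JM eigenvalues, and the Dunkl–Opdam eigenvalues associated to a box become equal (up to an overall normalization/shift that is uniform in $\bla$). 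Granting this box-by-box identity, $e_k$ of equal multisets are equal, so the four generator families coincide in $E$, hence the four subalgebras coincide, and all isomorphisms follow; the grading is matched by checking that $\on{deg}c_k(\CV)=k$ on all sides, which follows from $\on{deg}x_j=1$, $\on{deg}y_j=-1$ and the corresponding cohomological/loop-rotation gradings.

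\textbf{Main obstacle.} I expect the principal difficulty to be twofold. First, pinning down the \emph{correct} normalization in the parameter dictionary so that the three eigenvalue formulas match on the nose: each of the three theories (Nakajima quiver varieties for the cyclic quiver, cyclotomic degenerate Hecke algebras, cyclotomic rational Cherednik algebras) carries its own conventions for shifts, signs, and the labelling of the cyclic group $\BZ/r\BZ$, and reconciling them is where sign errors and off-by-one shifts lurk; the discrete-Fourier-transform form of~(\ref{identify_all_parameters}) with the specific factor $\frac{1}{r}\cdot\frac{1}{\eta^{-l}-1}$ is presumably exactly what makes this work, and verifying it is the crux. Second, promoting the elementwise equality of generators to the equality of subalgebras requires knowing that each of the four algebras is \emph{generated} by the listed elements over $\BC[\mathfrak{a}]$ — for cohomology via~\cite[Corollary 1.5]{MN}, for $Z(R^r(n))^{JM}$ via~\cite[Theorem 1]{Br}, for $Q_{n,r}$ via Lemma~\ref{gen_of_Q!}, and for the Coulomb side via the Cherednik identification — and that these generation statements are compatible with the gradings; assembling these inputs and checking the injectivity/freeness statements that make the embeddings into $E$ faithful is the necessary but more routine part. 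A secondary technical point is justifying that one may work with the full family over $\mathfrak{a}$ (not just generic fibres) — here the freeness over $\BC[\mathfrak{a}]$ on each side, plus the Remark reducing the comparison of $\CM^{\nu(\BC^\times)}$ and $\CM^{S}$ to Proposition~\ref{Y_smooth_implies_fixed_smooth}, does the job.
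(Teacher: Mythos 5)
Your proposal follows exactly the paper's argument: the paper embeds all four algebras into $E=\bigoplus_{\bla\in\CP(r,n)}\BC[\mathfrak{a}]$ via restriction to fixed points, the universal Specht modules, and the standard modules $\Delta(\bla)$, computes the images of the generators box-by-box (Propositions~\ref{emb_cohom_prop_use}, \ref{psi_tilda_inj!}, \ref{prop_inj_Q_use}), and observes that after the substitution $a_i=p(\eta^{i-1})$ the three eigenvalue collections agree on the nose, with the Coulomb side absorbed via the identification $e_k(u_\bullet)=m_k$ from~\cite{webster}. The injectivity, freeness, and generation inputs you flag are precisely the ones the paper uses (\cite{MN}, \cite{Br}, Lemma~\ref{gen_of_Q!}, Appendix~\ref{fixed_points_are_free_section}), so this is the same proof.
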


\begin{Rem}
{\emph{Note that the isomorphisms above are automatically graded since they preserve the degree of the generators (\ref{identifications_formulas}). Note that the isomorphism $$H^*_A(\mathfrak{M}(n,r),\BC) \simeq \BC[\on{Spec}(H^{A \times (\on{GL}_n)_{\CO}}_*(\CR_{n,r}))^{\mathbb{T}}]$$ is already an isomorphism of $\BC[\kappa,a_1,\ldots,a_n]/(a_1+\ldots+a_n)=\BC[\mathfrak{a}]$-algebras and there is no need to identify parameters.}}
\end{Rem}

\subsubsection{Structure of the paper} The paper is organized as follows. In Section \ref{Hikita_Hilbert}, we prove the Hikita-Nakajima conjecture for $r=1$, i.e., for the Hilbert scheme $\on{Hilb}_n(\BA^2)$ using results of~\cite{VA}. 
In Section \ref{two_descriptions)of_sympl_dual_section}, we consider the case of arbitrary $r$ and describe symplectically dual variety to $\mathfrak{M}_0(n,r)$,  along with its deformation using Coulomb branches and rational Cherednik algebras.

In Sections ~\ref{equivariant_cohomology_embedding}, \ref{rep_theory_of_cyclotomic} and \ref{schem_fixed_Q_section} we provide the realization of the idea behind the proof of Hikita-Nakajima conjecture that we briefly introduced in Section~\ref{main_idea_of_the_proof}.
First, in Section~\ref{equivariant_cohomology_embedding} we describe the embedding $H^*_A(\mathfrak{M}(n,r),\BC) \subset E$ and determine the image of the generators $c_i(\mathcal{V}) \in H^*_A(\mathfrak{M}(n,r),\BC)$ under this embedding.
Then, in Section \ref{rep_theory_of_cyclotomic} we define the cyclotomic degenerate Hecke algebra $R^r(n)$ and review its representation theory. Using this, we describe the embedding $Z(R^r(n))^{JM} \subset E$ and determine its image. 
Next, in Section \ref{schem_fixed_Q_section}, we recall the representation theory of the rational Cherednik algebra $H_{n,r}$ and  then describe the embedding $Q_{n,r} \subset E$ (using the representation theory of $H_{n,r}$). In Lemma~\ref{gen_of_Q!}, we describe generators of $Q_{n,r}$ and determine their images under the embedding $Q_{n,r} \subset E$.
Finally, as a corollary of the  results of Sections ~\ref{equivariant_cohomology_embedding}, \ref{rep_theory_of_cyclotomic}, \ref{schem_fixed_Q_section}, we  obtain Theorem \ref{MAIN_THEOREM} (see Theorem \ref{prf_of_main_th_arg} in Section \ref{proof_of_main_section}). 
In Section~\ref{general_section!}, we
discuss a potential approach to proving the Hikita-Nakajima conjecture for more general quivers. Appendix \ref{fixed_points_are_free_section} contains a (short) proof of the fact that $Q_{n,r} \simeq \BC[\CM(n,r)_{\mathfrak{a}}^{\BT}]$ is flat over the space of parameters.

\subsection{Acknowledments}
 We  express our deep gratitude to Roman Bezrukavnikov, Pavel Etingof, Joel Kamnitzer, Dima Korb, Ivan Losev, Tomek Przezdziecki, Ben Webster,  and Alex Weekes   for stimulating conversations, useful remarks, and generalization suggestions.

 Special thanks go to Ben Webster for organizing a learning seminar on Coulomb branches and making it possible for the first author to give a talk on it.

\section{Hikita-Nakajima conjecture for Hilbert scheme} \label{Hikita_Hilbert}
In this section, we provide an ``elementary'' proof of the  Hikita-Nakajima conjecture for the Hilbert scheme of points on $\BA^2$. This proof approach differs from the original method used by Hikita (see~\cite{HI}) As a corollary, when we set the equivariant parameter to zero, we obtain a theorem that was originally proved by Hikita.

The idea is the following: we identify the algebra of schematic fixed points with the Rees algebra of the center $Z(\BC S_n)$ of $\BC S_n$ and then use the identification 

\begin{equation}\label{vass_iso}
\on{Rees}(Z(\BC S_n)) \simeq H_{\BT}^*(\on{Hilb}_n(\BA^2))~\text{ (see \cite{VA})}
\end{equation}
 to obtain the Hikita-Nakajima conjecture for $\on{Hilb}_n(\BA^2)$. An alternative approach to the identification
(\ref{vass_iso}) appears in Sections \ref{equivariant_cohomology_embedding}, \ref{rep_theory_of_cyclotomic}.

\subsection{Nakajima quiver varieties}\label{naka_quiver_def} Let $I=(I_0,I_1)$ be a finite quiver, where $I_0$ is the set of vertices, and $I_1$ is the set of oriented edges.
Let ${\bf{v}}=(v_i)_{i \in I_0}$, ${\bf{w}}=(w_i)_{i \in I_0}$ be $I_0$-tuples of nonnegative integer numbers. 
Let $V=\bigoplus_{i \in I_0}V_i$, $W=\bigoplus_{i \in I_0}W_i$ be  $I_0$-graded vector spaces with $\on{dim}V_i=v_i$, $\on{dim}W_i=w_i$.
Consider the representation space
\begin{equation*}
{\bf{N}}={\bf{N}}(\vv,\ww)={\bf{N}}_I({\bf{v}},{\bf{w}}):= \bigoplus_{(i \ra j) \in I_1}\on{Hom}(V_i,V_j) \oplus \bigoplus_{i \in I_0}\on{Hom}(W_i,V_i).    
\end{equation*}
We also consider the cotangent space $\M_I(\vv,\ww)=\M(\vv,\ww)={\bf{M}}:=T^*{\bf{N}}$ that can be identified with 
\begin{multline*}
\bigoplus_{(i \ra j) \in I_1}\on{Hom}(V_i,V_j) \oplus \bigoplus_{(i \ra j) \in I_1}\on{Hom}(V_j,V_i) \oplus  \\
\oplus  \bigoplus_{i \in I_0}\on{Hom}(W_i,V_i) \oplus \bigoplus_{i \in I_0}\on{Hom}(V_i,W_i).
\end{multline*}
We can represent elements of $\M(\vv,\ww)$ as quadruples $(X,Y,\gamma,\delta)$, where 
\begin{equation*}
X \in    \bigoplus_{(i \ra j) \in I_1}\on{Hom}(V_i,V_j),\, Y \in  \bigoplus_{(i \ra j) \in I_1}\on{Hom}(V_j,V_i), 
\end{equation*}
\begin{equation*}
\gamma \in \bigoplus_{i \in I_0}\on{Hom}(W_i,V_i),\, \delta \in \bigoplus_{i \in I_0}\on{Hom}(V_i,W_i).    
\end{equation*}
The space $\M(\vv,\ww)=T^*{\bf{N}}$ carries a natural symplectic form.
We set
\begin{equation*}
G_{\vv}:=\prod_{i \in I_0} \on{GL}(V_i),\, \mathfrak{g}_{\vv}:=\bigoplus_{i \in I_0}\mathfrak{gl}(V_i).
\end{equation*}
The group $G_{\vv}$ acts naturally on the vector space $\M(\vv,\ww)$. This action is symplectic with the moment map:
\begin{equation*}
\mu\colon \M(\vv,\ww) \ra \mathfrak{g}_{\vv},\, (X,Y,\gamma,\delta) \mapsto [X,Y]+\gamma\delta. 
\end{equation*}

\begin{Def}
A quadruple $(X,Y,\gamma,\delta) \in \M(\vv,\ww)$ is called {\em{stable}} if for every $X, Y$-invariant graded subspace $S \subset V$ such that $S$ contains $\on{im}\gamma$ we have $S=V$. 
We denote by $\M(\vv,\ww)^{\mathrm{st}} \subset \M(\vv,\ww)$ the (open) subset of stable quadruples.
\end{Def}

\begin{Def}
The Nakajima quiver varieties $\mathfrak{M}(\vv,\ww)$, $\mathfrak{M}_0(\vv,\ww)$ are defined as the following quotients: 
\begin{equation*}
\mathfrak{M}(\vv,\ww):=\mu^{-1}(0)^{\mathrm{st}}/G_{\vv},\, \mathfrak{M}_0(\vv,\ww):=\mu^{-1}(0)/\!\!/G_{\vv}.
\end{equation*} 
\end{Def}
 We have the natural (projective) morphism $\pi\colon \mathfrak{M}(\vv,\ww) \ra \mathfrak{M}_0(\vv,\ww)$. 

\begin{Ass}
We assume that $\pi$ is a resolution of singularities.
\end{Ass}

Let $\mathfrak{z}_{\vv} \subset \mathfrak{g}_{\vv}$ be the center of $\mathfrak{g}_{\vv}$.
The varieties $\mathfrak{M}(\vv,\ww)$, $\mathfrak{M}_0(\vv,\ww)$ admit certain natural deformations over the space $\mathfrak{z}_{\vv}$.

\begin{Def}\label{univ_quiver_def}
The ``universal'' quiver varieties $\mathfrak{M}(\vv,\ww)_{\mathfrak{z}_{\vv}}$, $\mathfrak{M}_0(\vv,\ww)_{{\mathfrak{z}_{\vv}}}$ are defined as follows:
\begin{equation*}
\mathfrak{M}(\vv,\ww)_{{\mathfrak{z}_{\vv}}}:=\mu^{-1}(\mathfrak{z}_{\vv})^{\mathrm{st}}/G_{\vv},\,    \mathfrak{M}_0(\vv,\ww)_{{\mathfrak{z}_{\vv}}}:=\mu^{-1}(\mathfrak{z}_{\vv})/\!\!/G_{\vv}. 
\end{equation*}

For ${\bf{a}} \in \mathfrak{z}_{\vv}$, we denote by $\mathfrak{M}(\vv,\ww)_{{\bf{a}}}$, $\mathfrak{M}_0(\vv,\ww)_{\bf{a}}$ the fibers of these families over ${\bf{a}}$.
\end{Def}

\subsection{Hilbert scheme $\on{Hilb}_n(\BA^2)$ and $S^n(\BA^2)$ as Nakajima quiver varieties}

Our main object of study in this section is $\on{Hilb}_n(\BA^2)$ the Hilbert scheme of $n$ points on $\BA^2$. 

\begin{Def}
The variety
$\on{Hilb}_n(\BA^2)$ is the variety whose $\BC$-points are ideals $J \subset \BC[x,y]$ of codimension $n$. The (affine) variety $S^n(\BA^2)$ is the categorical quotient $(\BA^2)^{n}/S_n$.
\end{Def}

Recall that we have the Hilbert-Chow morphism: 
\begin{equation*}
\on{Hilb}_n(\BC^2) \ra S^n(\BA^2),\, J\mapsto \on{Supp}(\BC[x,y]/J).
\end{equation*}
This morphism is a symplectic resolution of singularities.
Let us now recall the description of $\on{Hilb}_n(\BC^2)$ as a Nakajima quiver variety (corresponding to the Jordan quiver).

\begin{Def}\label{def_gieseker}
We denote by $\mathfrak{M}(n,r)$, $\mathfrak{M}_0(n,r)$ the Nakajima quiver varieties corresponding to the quiver $I$ consisting of one vertex and one loop with $\on{dim}V=n$, $\on{dim}W=r$.
\end{Def}

The following proposition holds by \cite[Theorem 2.1 and Proposition 2.9]{NA0}.
\begin{Prop}\label{gieseker_via_hilbert_prop}
There exist isomorphisms 
\begin{equation}\label{Hilb_as_quiver}
\mathfrak{M}(n,1) \iso \on{Hilb}_n(\BA^2),\, \mathfrak{M}_0(n,1) \iso S^n(\BA^2) 
\end{equation}
compatible with natural morphisms $\mathfrak{M}(n,1) \ra \mathfrak{M}_0(n,1)$, $\on{Hilb}_n(\BA^2) \ra S^n(\BA^2)$.
\end{Prop}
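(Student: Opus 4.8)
The plan is to verify Proposition~\ref{gieseker_via_hilbert_prop} directly from the ADHM description of $\mathfrak{M}(n,1)$ and $\mathfrak{M}_0(n,1)$. Recall that for the Jordan quiver with $\dim V = n$, $\dim W = 1$, a point of $\M(\vv,\ww)$ is a quadruple $(X, Y, \gamma, \delta)$ with $X, Y \in \on{End}(V)$, $\gamma \in \Hom(W, V) = V$, $\delta \in \Hom(V, W) = V^*$, the moment map equation is $[X,Y] + \gamma\delta = 0$, and stability says the only $X,Y$-invariant subspace containing $\im\gamma$ is $V$. First I would recall the standard observation (due to Nakajima, \cite[Chapter 2]{NA0}) that the moment map equation together with stability forces $\delta = 0$: one checks that $\ker\delta$ is $X,Y$-invariant and contains $\im\gamma$ (using $[X,Y]\gamma$-type identities and the trace computation $\on{tr}([X,Y]) = 0$ which kills the case $\gamma = 0$), hence $\ker\delta = V$. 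Thus $\mu^{-1}(0)^{\mathrm{st}}$ reduces to triples $(X, Y, \gamma)$ with $[X,Y] = 0$ and $\gamma$ a cyclic vector for the commuting pair $X, Y$.

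Next I would construct the map to $\on{Hilb}_n(\BA^2)$: to such a triple assign the ideal $J = \{\, f \in \BC[x,y] : f(X,Y)\gamma = 0 \,\}$, where $f(X,Y)$ makes sense since $X, Y$ commute. Cyclicity of $\gamma$ means the evaluation map $\BC[x,y] \to V$, $f \mapsto f(X,Y)\gamma$ is surjective, so $\BC[x,y]/J \cong V$ has dimension $n$, i.e.\ $J$ has codimension $n$. This is $G_{\vv} = \on{GL}(V)$-invariant, so it descends to $\mathfrak{M}(n,1) \to \on{Hilb}_n(\BA^2)$. For the inverse, given an ideal $J$ of codimension $n$, set $V = \BC[x,y]/J$, let $X, Y$ be multiplication by $x$ and $y$ on $V$ (these commute), and let $\gamma(1) = [1] \in V$; cyclicity is automatic since $1$ generates $\BC[x,y]/J$ as a module over itself. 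One checks these two constructions are mutually inverse (up to the $\on{GL}(V)$-action, which is exactly the choice of basis of $V$), giving the first isomorphism in~(\ref{Hilb_as_quiver}). The second isomorphism $\mathfrak{M}_0(n,1) \iso S^n(\BA^2)$ follows by taking $\on{GL}(V)$-invariants: $\mathfrak{M}_0(n,1) = \mu^{-1}(0) /\!\!/ \on{GL}(V)$, and classical invariant theory (the Cayley--Hamilton / trace argument) identifies $\BC[\mu^{-1}(0)]^{\on{GL}(V)}$ with $\BC[(\BA^2)^n]^{S_n} = \BC[S^n(\BA^2)]$: the invariants are generated by $\on{tr}(w(X,Y))$ for words $w$, and since $X, Y$ commute these are symmetric functions of the simultaneous eigenvalue-pairs.

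Finally I would check compatibility with the natural morphisms: the quiver-variety morphism $\pi\colon \mathfrak{M}(n,1) \to \mathfrak{M}_0(n,1)$ sends a stable quadruple to its closed $\on{GL}(V)$-orbit closure in $\mu^{-1}(0)/\!\!/\on{GL}(V)$, which under the above identifications sends the ideal $J$ to the multiset of points of $\on{Spec}(\BC[x,y]/J)$ counted with multiplicity --- exactly the Hilbert--Chow morphism $J \mapsto \on{Supp}(\BC[x,y]/J)$. So the square commutes on $\BC$-points, and since all varieties are reduced and the morphisms are algebraic, it commutes as a diagram of varieties. The main obstacle is the careful bookkeeping in the invariant-theory step (second isomorphism): showing that $\BC[\mu^{-1}(0)]^{\on{GL}(V)}$ is exactly the ring of symmetric functions requires knowing $\mu^{-1}(0)$ is reduced (or passing to the reduced structure) and invoking the first fundamental theorem for $\on{GL}(V)$ acting on pairs of commuting matrices; but all of this is standard (it is precisely \cite[Theorem 2.1, Proposition 2.9]{NA0}), so the proof amounts to assembling these known pieces.
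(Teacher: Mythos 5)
Your proposal is correct and is essentially the standard ADHM argument; the paper itself gives no proof but simply cites \cite[Theorem 2.1 and Proposition 2.9]{NA0}, which is exactly the argument you reconstruct (stability plus the moment map equation forcing $\delta=0$, the correspondence between cyclic commuting pairs and codimension-$n$ ideals, and the invariant-theoretic identification of the affine quotient with $S^n(\BA^2)$ compatibly with Hilbert--Chow). No discrepancy to report.
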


We conclude that the points of $\on{Hilb}_n(\BA^2)$, $S^n(\BA^2)$ can be represented as certain quadruples $(X,Y,\gamma,\delta)$ that can be considered as  representations of the following quiver 
(here we identify $V=\BC^n$, $W=\BC$):
\[
\begin{tikzcd}
 \C \arrow[d, shift right=0.5ex,swap, "\gamma"] \\
 \C^n \arrow[u, shift right=0.5ex, swap, "\delta"] 
 \arrow[out=-15,in=15,loop, swap, "Y" ]
 \arrow[out=195,in=165,loop, "X"] 
\end{tikzcd}
\]

\subsection{Calogero-Moser space, deformations of $\on{Hilb}_n(\BA^2)$ and torus actions} We see that varieties $\mathfrak{M}(n,1)_{\mathfrak{z}_n}$, $\mathfrak{M}_0(n,1)_{\mathfrak{z}_n}$ (see Definition~\ref{univ_quiver_def}) are one-parameter deformations of $\on{Hilb}_n(\BA^2)$ and $S^n(\BA^2)$, where the base of the deformation is the center $\mathfrak{z}_n \subset \mathfrak{gl}(V)$ that can be identified with $\BA^1$ via the map $\BA^1 \ni {\bf{a}} \mapsto  {\bf{a}} \cdot \on{Id}_V \in \mathfrak{gl}(V)$.

Let us now discuss  torus actions. Let $\BT$, $\BC^\times_{\hbar}$ be copies of $\BC^\times$. We have an action of $\BT \times \BC^\times_{\hbar}$ on $\on{Hilb}_{n}(\BA^2)$, $S^n(\BA^2)$ that is induced by the action $\BT \times \BC^\times_{\hbar} \curvearrowright \BA^2$ given by 
\begin{equation*}
(t,\hbar) \cdot (x,y)=(t\hbar^{-1} x,t^{-1}\hbar^{-1} y), \, t \in \BT,\, \hbar \in \BC^\times_{\hbar}. \end{equation*}

After  identifications (\ref{Hilb_as_quiver}) the action of  $\BT \times \BC^\times_{\hbar} $ can be described as follows: it is induced from the following action on $\mathfrak{M}(n,1)$:
\begin{equation}\label{action_on_thick_M}
(t,\hbar) \cdot (X,Y,\gamma,\delta)=(t\hbar^{-1} X,t^{-1}\hbar^{-1} Y, \hbar^{-1} \gamma, \hbar^{-1}\delta).
\end{equation}

\begin{Rem}
\emph{Note that $\BT$ acts symplectically, while $\BC^\times_{\hbar}$ scales the symplectic form with  weight $2$.}
\end{Rem}

Formula (\ref{action_on_thick_M}) induces actions 
\begin{equation*}
 \begin{gathered}
\BT \times \BC_{\hbar}^\times \curvearrowright \mathfrak{M}(n,1)_{\mathfrak{z}_n},\\
\BT \times \BC_{\hbar}^\times \curvearrowright \mathfrak{M}_0(n,1)_{\mathfrak{z}_n}.
 \end{gathered}
\end{equation*}
Consider ${\bf{a}} \in \BC^\times$.  Let us describe the fibers $\mathfrak{M}(n,1)_{{\bf{a}}}$, $\mathfrak{M}(n,1)_{{\bf{a}}}$ of the families $\mathfrak{M}(n,1)_{\mathfrak{z}_n}$, $\mathfrak{M}(n,1)_{\mathfrak{z}_n}$ over ${\bf{a}}$. First of all, note that the action of $\BC_{\hbar}$ induces identifications
\begin{equation*}
 \begin{gathered}
  \mathfrak{M}(n,1)_{{\bf{a}}} \simeq \mathfrak{M}(n,1)_1,\,\\ \mathfrak{M}_0(n,1)_{\bf{a}} \simeq  \mathfrak{M}_0(n,1)_1.   \end{gathered}
\end{equation*}

\begin{Def}
Recall that $V$ is a vector space of dimension $n$.
We define Calogero-Moser variety $\CC(n)$ as the following quotient:
\begin{equation*}
\CC(n):=\{(X,Y) \in \on{End}(V)^{\oplus 2}\,|\, \on{rk}\Big([X,Y]-\on{Id}_V\Big)=1\}/\on{GL}(V).    
\end{equation*}
\end{Def}

The following proposition will be useful for us (see, for example,~\cite[Section 1]{WI}):
\begin{Prop}
Natural morphisms 
\begin{equation*}
\mathfrak{M}(n,1)_1 \ra \mathfrak{M}_0(n,1)_1 \ra \mathcal{C}(n),
\end{equation*} given by
\begin{equation*}
[(X,Y,\gamma,\delta)] \mapsto [X,Y,\gamma,\delta] \mapsto [(X,Y)]   
\end{equation*}are isomorphisms. 
\end{Prop}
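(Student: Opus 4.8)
The plan is to show that all three natural morphisms
\[
\mathfrak{M}(n,1)_1 \ra \mathfrak{M}_0(n,1)_1 \ra \mathcal{C}(n)
\]
are isomorphisms by first understanding the composite map and then bootstrapping. First I would make the stability condition completely explicit: for the Jordan quiver with $\dim V = n$, $\dim W = 1$, and moment map value ${\bf a}\cdot\on{Id}_V$, a quadruple $(X,Y,\gamma,\delta)$ lies in $\mu^{-1}({\bf a}\cdot\on{Id}_V)$ precisely when $[X,Y] + \gamma\delta = {\bf a}\cdot\on{Id}_V$, and it is stable when the only $X,Y$-invariant subspace of $V$ containing $\on{im}\gamma$ is $V$ itself. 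Since $\gamma\colon \BC \ra V$ has image a single line (or zero), stability says $V$ is generated by that line under the action of the algebra generated by $X,Y$. Taking the trace of the moment map equation gives $\on{tr}(\gamma\delta) = n{\bf a} \neq 0$ (as ${\bf a} \in \BC^\times$), so $\gamma \neq 0$; thus $\on{im}\gamma$ is a genuine line and $\gamma\delta$ is a rank-one operator. Hence for a stable point the commutator $[X,Y] - {\bf a}\cdot\on{Id}_V = -\gamma\delta$ has rank exactly one, which shows the composite lands in $\mathcal{C}(n)$ (after rescaling ${\bf a}$ to $1$ via $\BC^\times_\hbar$, which is the identification already fixed above).

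The key lemma I would isolate is a \emph{uniqueness/reconstruction} statement: given $(X,Y) \in \on{End}(V)^{\oplus 2}$ with $\on{rk}([X,Y] - \on{Id}_V) = 1$, there is a canonical way to recover $(\gamma,\delta)$ up to the residual $\on{GL}(V)$-action, and the data is automatically stable. Indeed, writing $[X,Y] - \on{Id}_V = \gamma\delta$ with $\gamma\colon\BC\ra V$, $\delta\colon V\ra\BC$ a rank-one factorization, such a factorization is unique up to the scalars $\gamma \mapsto c\gamma$, $\delta\mapsto c^{-1}\delta$, and this ambiguity is absorbed into the $\on{GL}(V)$-quotient. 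For stability: if $S \subsetneq V$ were an $X,Y$-invariant subspace containing $\on{im}\gamma$, then $[X,Y]$ preserves $S$, so $\gamma\delta = [X,Y] - \on{Id}_V$ preserves $S$ and moreover $\on{tr}_S(\gamma\delta|_S) = \on{tr}_S([X,Y]|_S) - \dim S = -\dim S < 0$; but $\gamma\delta|_S$ factors through the line $\on{im}\gamma \subset S$ and through $\delta|_S$, so $\on{tr}_S(\gamma\delta|_S) = \delta(\gamma) = \on{tr}_V(\gamma\delta) = \on{tr}_V([X,Y]) - n = -n$, a constant independent of $S$ — forcing $\dim S = n$, i.e. $S = V$. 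This simultaneously shows the map $\mathcal{C}(n) \ra \mathfrak{M}(n,1)_1$ sending $[(X,Y)]$ to the (canonically reconstructed) stable quadruple is well-defined, and that it is inverse to the composite $\mathfrak{M}(n,1)_1 \ra \mathcal{C}(n)$.

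Once the composite $\mathfrak{M}(n,1)_1 \ra \mathcal{C}(n)$ is shown to be an isomorphism, I would conclude as follows. The map $\mathfrak{M}(n,1)_1 \ra \mathfrak{M}_0(n,1)_1$ is a projective morphism (it is the deformation of the Hilbert–Chow-type resolution); since the composite with $\mathfrak{M}_0(n,1)_1 \ra \mathcal{C}(n)$ is an isomorphism, the first map is injective on points and the second is surjective. It remains to see $\mathfrak{M}(n,1)_1 \ra \mathfrak{M}_0(n,1)_1$ is also surjective — equivalently that $\mu^{-1}({\bf a}\cdot\on{Id}_V)$ has no non-stable closed orbits over the deformed base. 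This follows from the trace computation above: \emph{every} point of $\mu^{-1}({\bf a}\cdot\on{Id}_V)$ with ${\bf a} \neq 0$ is automatically stable (the argument showing $\dim S = n$ used only the moment map equation and ${\bf a}\neq 0$, not stability), so $\mu^{-1}({\bf a}\cdot\on{Id}_V)^{\mathrm{st}} = \mu^{-1}({\bf a}\cdot\on{Id}_V)$ and the GIT quotient coincides with the geometric one, making both morphisms isomorphisms. The main obstacle I anticipate is the bookkeeping around the $\BC^\times_\hbar$-rescaling identifying the fiber over ${\bf a}$ with the fiber over $1$, and checking that the reconstruction map $\mathcal{C}(n) \ra \mathfrak{M}(n,1)_1$ is a morphism of schemes (not just a bijection on $\BC$-points) — this requires noting that the rank-one factorization can be done in families, e.g. étale-locally, which is where one invokes that $\mathcal{C}(n)$ and $\mathfrak{M}(n,1)_1$ are both smooth of the same dimension $2n$ and the map is bijective, hence an isomorphism by Zariski's main theorem / normality.
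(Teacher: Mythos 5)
Your proposal is correct, and it is essentially self-contained, whereas the paper offers no proof at all for this proposition: it simply cites Wilson (\cite[Section 1]{WI}), where the identification of the Calogero--Moser space with the deformed ADHM quotient is established. Your argument reproduces the classical one from that literature (see also \cite[Chapter 2]{NA0}): the trace of the moment map equation forces $\gamma\neq 0$ and rank exactly one; restricting $[X,Y]+\gamma\delta=\mathbf{a}\cdot\on{Id}_V$ to an $X,Y$-invariant subspace $S\supset\on{im}\gamma$ and comparing $\on{tr}_S(\gamma\delta|_S)=\on{tr}_V(\gamma\delta)=n\mathbf{a}$ with $\mathbf{a}\dim S$ forces $S=V$, so every point of the fiber is stable and the GIT quotient is geometric; and the rank-one factorization $\on{Id}_V-[X,Y]=\gamma\delta$ is unique up to the central $\BC^\times\subset\on{GL}(V)$, which is exactly the ambiguity killed by the quotient. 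Two small remarks: your sign convention $[X,Y]-\on{Id}_V=\gamma\delta$ is opposite to the one dictated by the moment map ($\gamma\delta=\on{Id}_V-[X,Y]$), which affects nothing; and your final appeal to Zariski's main theorem for the scheme-theoretic statement needs the target to be known normal, so it is cleaner (as you also suggest) to build the inverse morphism directly by choosing the rank-one factorization Zariski-locally on the source, which makes the bijection an isomorphism of schemes without any a priori smoothness input on $\mathcal{C}(n)$.
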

Thus families $\mathfrak{M}(n,1)_{\mathfrak{z}_n}$, $\mathfrak{M}_0(n,1)_{\mathfrak{z}_n}$ are $\BC^\times_\hbar$-equivariant deformations of 
 \newline $\on{Hilb}_n(\BA^2)$, $S^n(\BA^2)$ over $\BA^1.$ Over a non-zero parameter their fibers are isomorphic to the Calogero-Moser variety $\mathcal{C}(n).$

\subsection{
Hikita-Nakajima conjecture for $\on{Hilb}_n(\BA^2)$}
We denote by $H^*_{\BT}(\on{Hilb}_n(\BA^2))$ the $\BT$-equivariant cohomology of $\on{Hilb}_n(\BA^2)$. This is a $\BZ$-graded algebra over $H^*_{\BT}(\on{pt})$ isomorphic to $\BC[\on{Lie}\BT]$.
We are now ready to state the ``deformed'' version of the Hikita conjecture for $\on{Hilb}_n(\BA^2)$ that we refer to as the Hikita-Nakajima conjecture (see Conjecture \ref{Hikita_Nakajima_conjecture_general_version} above).

\begin{Th}\label{HN_equiv_Hilb}
We have an isomorphism of $\BZ$-graded algebras over $\BC[\mathfrak{z}_{n}] \simeq \BC[\on{Lie}\BT]$ (the identification induced by the isomorphism $\mathfrak{z}_n \simeq \BA^1 \simeq \on{Lie}\BT$):
\begin{equation*}
\BC\Big[\mathfrak{M}_0(n,1)_{\mathfrak{z}_n}^{\BT}\Big] \simeq H_{\BT}^*(\on{Hilb}_n(\BA^2)).    
\end{equation*}
\end{Th}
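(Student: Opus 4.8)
\textbf{Proof plan for Theorem \ref{HN_equiv_Hilb}.}

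The plan is to identify both sides with the Rees algebra of the center $Z(\BC S_n)$ and then invoke the isomorphism \eqref{vass_iso} of \cite{VA}. Recall that the family $\mathfrak{M}_0(n,1)_{\mathfrak{z}_n} \to \mathfrak{z}_n \simeq \BA^1$ has generic fiber the Calogero--Moser space $\CC(n)$, and that $\CC(n)$ carries the $\BT$-action descending from \eqref{action_on_thick_M}. The starting point is the classical description (Etingof--Ginzburg) of the coordinate ring of the Calogero--Moser family as the center $Z(H_{1,n})$ of the rational Cherednik algebra of $S_n$ with parameter $c$ equal (up to normalization) to the deformation parameter ${\bf a}$; equivalently, $\BC[\mathfrak{M}_0(n,1)_{\mathfrak{z}_n}]$ is the Rees-type family whose fiber at ${\bf a}=0$ is $\BC[S^n(\BA^2)] = (\BC[x_1,y_1,\dots,x_n,y_n])^{S_n}$ and whose generic fiber is $\BC[\CC(n)]$. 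The $\BZ$-grading used to define the $\BT$-fixed points is the one with $\deg x_i = 1$, $\deg y_i = -1$, $\deg {\bf a} = 0$, coming from \eqref{action_on_thick_M} (the $X$-direction has weight $+1$, the $Y$-direction weight $-1$ under $\BT$).

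First I would compute $\BC[\mathfrak{M}_0(n,1)_{\mathfrak{z}_n}^{\BT}]$ using Proposition \ref{shematic_fixed_for_affine}: it is the quotient of $B := \BC[\mathfrak{M}_0(n,1)_{\mathfrak{z}_n}]$ by the ideal generated by all nonzero-degree elements, which over $\BC[\mathfrak{z}_n] = \BC[{\bf a}]$ should be computed fiberwise and then recollected. At ${\bf a} = 0$ this recovers the Hikita statement: the $\BT$-fixed subscheme of $S^n(\BA^2)$ has coordinate ring $\big(\BC[x_\bullet]\otimes\BC[y_\bullet]\big)^{S_n}/(\text{positive degree}) \cong \BC[x_1,\dots,x_n]^{S_n}$, which is classically isomorphic to $H^*(\on{Hilb}_n(\BA^2))$ (and also to $Z(\BC S_n)$ via the Jucys--Murphy description of the symmetric polynomials in the JM elements). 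For general ${\bf a}$, the key is to show that the family of $\BT$-fixed subschemes is flat over $\BA^1$ and that the $\BT$-fixed locus of $\CC(n)$ consists of the $n!/|\text{something}|$... more precisely of the isolated points indexed by partitions of $n$, with the structure of a fat point; then $\BC[\mathfrak{M}_0(n,1)_{\mathfrak{z}_n}^{\BT}]$ is the coordinate ring of this deformation. Concretely I would present explicit generators: the images of the $\BT$-invariant functions $p_k := \sum_i (x_iy_i)^{?}$ — better, the functions coming from $c_k(\CV)$, i.e. the elementary symmetric functions $e_k(z_1,\dots,z_n)$ in the Dunkl--Opdam / Jucys--Murphy elements — and check they generate, using \cite{Br} or \cite{MN} as in the general argument sketched in Section \ref{main_idea_of_the_proof}.

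The cleanest route, and the one I would actually write, is: (1) identify $\BC[\mathfrak{M}_0(n,1)_{\mathfrak{z}_n}]$ with $Z(H_{1,n}) = Z(\text{RCA of }S_n\text{ over }\BC[c])$ via Etingof--Ginzburg; (2) apply Proposition \ref{shematic_fixed_for_affine} to get $\BC[\mathfrak{M}_0(n,1)_{\mathfrak{z}_n}^{\BT}] = Z(H_{1,n})/(\text{nonzero degree})$, which is exactly the "$B$-algebra" $Q_{n,1}$; (3) show $Q_{n,1} \cong \on{Rees}(Z(\BC S_n))$ — the Rees grading being by the filtration on $Z(\BC S_n)$ coming from the order filtration on $H_{1,n}$, equivalently via the symmetric functions in the Dunkl--Opdam elements $u_i$ which degenerate at $c=0$ to the JM elements generating $Z(\BC S_n)$; (4) quote \eqref{vass_iso}, i.e. $\on{Rees}(Z(\BC S_n)) \cong H^*_\BT(\on{Hilb}_n(\BA^2))$ from \cite{VA}. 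Steps (1) and (4) are citations. The genuine work is step (3): proving that the fixed-point quotient is genuinely the Rees algebra and not some degeneration of it — i.e. that no relations collapse and that flatness over $\BC[c]$ holds. I expect \textbf{this flatness/"no extra relations" step to be the main obstacle}: a priori the ideal $(b \in Z(H_{1,n})_i,\ i\neq 0)$ could behave badly in special fibers (the "walls" of Section \ref{main_idea_of_the_proof}), so one must show the $\BT$-fixed points of $\CC(n)$ stay a finite reduced-in-the-right-sense scheme of the expected length $p(n)$ (number of partitions, $= \dim H^*(\on{Hilb}_n)$) across all parameters. This is handled by the freeness statement deferred to Appendix \ref{fixed_points_are_free_section}; I would either forward-reference that appendix or give the direct argument that $\BC[\CC(n)^\BT]$ is free of rank $p(n)$ over $\BC[c]$ by exhibiting the Specht-module / standard-module basis and matching Hilbert series with the Betti numbers of $\on{Hilb}_n(\BA^2)$.
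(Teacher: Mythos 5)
Your proposal is correct in outline and, importantly, you isolate the genuine difficulty in the right place: the flatness of the fixed-point scheme over the deformation base, which the paper also cannot avoid and defers to Appendix \ref{fixed_points_are_free_section}. However, your route differs from the paper's own proof of Theorem \ref{HN_equiv_Hilb}. The paper's Section \ref{Hikita_Hilbert} argument is deliberately ``elementary'': it never passes through the Cherednik algebra. Instead, after reducing by flatness to constructing a filtered isomorphism $Z(\BC S_n)\simeq \BC[\CC(n)^{\BT}]$, it uses that $\CC(n)$ is smooth, so $\CC(n)^{\BT}$ is smooth and hence a \emph{reduced} finite scheme; both algebras are then canonically $\bigoplus_{\la\in\CP(n)}\BC$ (idempotents ${\bf e}_\la$ versus characteristic functions $\chi_\la$), and the only work is to match generators. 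This is done via Wilson's explicit matrices $(X^\la,Y^\la)=(D_n,Y^\la)$ for the $\BT$-fixed points of the Calogero--Moser space: the eigenvalues of $Y^\la X^\la$ are the contents of the boxes of $\la$, which are exactly the eigenvalues of the Jucys--Murphy elements on the Specht module $S(\la)$, and Murphy's theorem gives the filtration statement. Your route --- Etingof--Ginzburg to identify the family with $\on{Spec}Z(H_{n,1})$, then the $B$-algebra $Q_{n,1}$, standard modules, and Dunkl--Opdam elements --- is essentially the paper's general-$r$ machinery of Sections \ref{two_descriptions)of_sympl_dual_section}--\ref{schem_fixed_Q_section} specialized to $r=1$; the paper itself flags this as the ``alternative approach'' to (\ref{vass_iso}). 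What the paper's route buys is independence from the Etingof--Ginzburg identification and an explicit geometric interpretation of the generators as $[(X,Y,\gamma,\delta)]\mapsto e_k(\al_1,\dots,\al_n)$ with $\al_i$ the eigenvalues of $YX$; what yours buys is uniformity with the general case.

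Two small corrections. First, the $\BT$-fixed points of $\CC(n)$ are \emph{not} fat points: smoothness of $\CC(n)$ forces $\CC(n)^{\BT}$ to be smooth, hence reduced, and this reducedness is exactly what makes the generic fiber of $Q_{n,1}$ equal to $\BC^{\oplus|\CP(n)|}$; your parenthetical ``with the structure of a fat point'' would, if taken seriously, break the rank count. Second, the Dunkl--Opdam elements do not degenerate to the Jucys--Murphy elements at $\kappa=0$ (there $u_i$ becomes $x_iy_i$); the correct statement is that $e_k(u_1,\dots,u_n)$ acts on the degree-zero part of the standard module $\Delta(\la)$ by $e_k(\kappa c_1,\dots,\kappa c_n)$ with $c_i$ the contents, i.e.\ it matches $\kappa^k e_k(JM_1,\dots,JM_n)$ on the generic fiber, which is what produces the Rees structure. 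Neither issue is fatal to your plan, but both need to be stated correctly for step (3) to go through.
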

Our goal is to prove this theorem. We will do it by showing that both algebras in the theorem are isomorphic to the Rees algebra of the center $Z(\BC S_n)$ of the group algebra of $S_n$.

\subsection{Equivariant cohomology of $\on{Hilb}_n(\BA^2)$ and the center of $\BC S_n$}

Let $Z(\BC S_n)$ in $ \BC S_n$ be the center of the group algebra $\BC S_n$. Consider the grading on the vector space $\BC S_n$ defined in the following way: pick a permutation $\sigma \in S_n$ and let $\ell(\sigma)$ be the number of cycles in the decomposition of $\sigma$ as a product of disjoint cycles. We then define 
\begin{equation*}
\on{deg}\sigma:=2(n-\ell(\sigma)).  
\end{equation*}
The grading above induces the increasing $\BZ_{\geqslant 0}$-filtration on $Z_n$:
\begin{multline}\label{filtr_on_center}
\BC=F_0Z_n = F_1Z_n \subset F_2Z_n=F_3Z_n \subset \ldots \\
\ldots \subset Z_n=F_{2n-2}Z_n=F_{2n-1}Z_n=\ldots     
\end{multline}
We denote by $\on{Rees}(Z(\BC S_n))$ the Rees algebra corresponding to the filtration~(\ref{filtr_on_center}). Recall that the algebra $\on{Rees}(Z(\BC S_n))$ is defined as follows:
\begin{equation*}
\on{Rees}(Z(\BC S_n)):=\bigoplus_{m \geqslant 0}\kappa^mF_{2m}Z(\BC S_n) \subset Z(\BC S_n)[\kappa],    
\end{equation*}
where $\kappa$ is a formal parameter of degree $2$.
We consider $\on{Rees}(Z(\BC S_n))$ as an algebra over $\BC[\BA^1]=\BC[\kappa]$.
The following result holds by~\cite{VA} or Sections~\ref{equivariant_cohomology_embedding}, \ref{rep_theory_of_cyclotomic} (see also ~\cite[Theorem 4.7 and Corollary 4.8]{SVV}).
\begin{Prop}\label{cohom_via_center_Hilb}
There is an isomorphism of $\BZ$-graded algebras over $\BC[\on{Lie}\BT] \simeq \BC[\BA^1]$ (the identification is induced by the isomorphism $\on{Lie}\BT \simeq \BA^1$): 
\begin{equation*}
H^*_{\BT}(\on{Hilb}_n(\BA^2)) \simeq \on{Rees}(Z(\BC S_n)).    
\end{equation*}
\end{Prop}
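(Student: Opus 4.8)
The statement to be proved is Proposition~\ref{cohom_via_center_Hilb}: an isomorphism of $\BZ$-graded algebras over $\BC[\on{Lie}\BT]\simeq\BC[\BA^1]$ between $H^*_{\BT}(\on{Hilb}_n(\BA^2))$ and $\on{Rees}(Z(\BC S_n))$. The plan is to produce this isomorphism by the same localization-to-fixed-points strategy that will be used for general $r$ in Sections~\ref{equivariant_cohomology_embedding} and \ref{rep_theory_of_cyclotomic}, specialized to $r=1$; alternatively one cites \cite{VA} directly. First I would recall that the $\BT$-fixed points $\on{Hilb}_n(\BA^2)^{\BT}$ are isolated and indexed by partitions $\la\vdash n$ (monomial ideals), and that by equivariant formality (the odd cohomology of $\on{Hilb}_n(\BA^2)$ vanishes, and it has a $\BT$-invariant affine paving) the restriction map
\begin{equation*}
\iota^*\colon H^*_{\BT}(\on{Hilb}_n(\BA^2))\hookrightarrow H^*_{\BT}(\on{Hilb}_n(\BA^2)^{\BT})=\bigoplus_{\la\vdash n}\BC[\on{Lie}\BT]
\end{equation*}
is injective, with image a free $\BC[\on{Lie}\BT]$-module of rank $|\{\la\vdash n\}|=\dim Z(\BC S_n)$, whose cokernel is supported on the discriminant (the ``walls'' where two fixed points' weights collide).

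Second, I would set up the parallel embedding on the algebraic side. The Rees algebra $\on{Rees}(Z(\BC S_n))$ is, after inverting $\kappa$, just $Z(\BC S_n)\otimes\BC[\kappa,\kappa^{-1}]$, and $Z(\BC S_n)\cong\BC[S_n]^{S_n}$ decomposes as $\bigoplus_{\la\vdash n}\BC$ via the central characters attached to irreducibles; equivalently, acting on each Specht module $S^\la$ gives an embedding $Z(\BC S_n)\hookrightarrow\bigoplus_{\la}\on{End}_{S_n}(S^\la)=\bigoplus_\la\BC$. Lifting this to the Rees level (using the Jucys--Murphy / content filtration, which is the $r=1$ case of the Dunkl--Opdam elements of Lemma~\ref{gen_of_Q!} and of the $z_i$ appearing in the excerpt), one gets
\begin{equation*}
\on{Rees}(Z(\BC S_n))\hookrightarrow\bigoplus_{\la\vdash n}\BC[\kappa]=E|_{r=1},
\end{equation*}
and the generators $\kappa^k e_k(\text{JM}_1,\dots,\text{JM}_n)$ map to the tuples $\big(\kappa^k e_k(\kappa\cdot\on{contents}(\la))\big)_\la$ by the classical formula for how Jucys--Murphy elements act by contents on the Young basis of $S^\la$.

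Third — and this is the crux — I would identify the two images inside $\bigoplus_\la\BC[\kappa]$. By the classical description of the $\BT$-fixed-point weights on $\on{Hilb}_n(\BA^2)$ (equivalently via the tautological bundle $\CV$ and Nakajima's theorem that $H^*_{\BT}$ is generated by the Chern classes $c_k(\CV)$, which at the fixed point $\la$ restrict to $e_k$ of the contents of $\la$ scaled by the equivariant parameter), the image of $\iota^*$ is the $\BC[\on{Lie}\BT]$-subalgebra generated by exactly the same tuples. So both sides are the subalgebra of $\bigoplus_\la\BC[\on{Lie}\BT]$ generated over $\BC[\on{Lie}\BT]$ by $\{(e_k(\text{contents}(\la)))_\la : k=1,\dots,n\}$, once we match $\kappa$ with the equivariant parameter. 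The isomorphism is then the identity on these generators, and it is automatically graded because $c_k(\CV)$, $\kappa^k e_k(\text{JM})$ and the corresponding tuple all have degree $2k$.

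\textbf{Main obstacle.} The delicate point is not writing down the map on generators but verifying it is a well-defined isomorphism: one must show the abstract relations among $\{c_k(\CV)\}$ in $H^*_{\BT}(\on{Hilb}_n(\BA^2))$ coincide with those among $\{\kappa^k e_k(\text{JM}_i)\}$ in $\on{Rees}(Z(\BC S_n))$. I would handle this by the ``free localization algebra'' argument sketched in the introduction: both algebras are free $\BC[\on{Lie}\BT]$-modules of the same (finite) rank $|\{\la\vdash n\}|$, both embed into $\bigoplus_\la\BC[\on{Lie}\BT]$ with the same generic image (after inverting the finitely many wall-defining elements the generators already separate the $|\{\la\vdash n\}|$ points, so each side becomes all of $\bigoplus_\la F$), hence each is the preimage in $\bigoplus_\la\BC[\on{Lie}\BT]$ of its generic localization; since the generic images and the generating sets literally agree, the two subalgebras agree on the nose. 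Matching the contents combinatorics of partitions with the tangent-weight combinatorics of monomial ideals (arm/leg formula) is the one genuinely computational input, and it is classical (Ellingsrud--Strømme, Nakajima), so I would quote it rather than redo it.
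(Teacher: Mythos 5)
Your proposal is correct and is essentially the paper's own argument: the paper proves Proposition~\ref{cohom_via_center_Hilb} precisely by citing \cite{VA} or by specializing the two fixed-point/Specht-module embeddings into $E=\bigoplus_{\la}\BC[\on{Lie}\BT]$ of Sections~\ref{equivariant_cohomology_embedding} and \ref{rep_theory_of_cyclotomic} to $r=1$ and matching the images of $c_k(\CV)$ and $e_k(\kappa\,\mathrm{JM}_1,\dots,\kappa\,\mathrm{JM}_n)$ as $e_k$ of scaled contents, exactly as you do. One small remark: your "each side is the preimage of its generic localization" step is neither needed nor justified as stated, but it is harmless since your final observation — that the two subalgebras of $E$ have literally the same generating set over $\BC[\on{Lie}\BT]$, hence coincide — already closes the argument.
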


\begin{Rem}
{\emph{Proposition \ref{cohom_via_center_Hilb} can also be proved using the same argument as we use in the proof of Theorem \ref{rees_Z_via_schem_fixed} below.}}
\end{Rem}

Let us now recall the description of the center $Z(\BC S_n)$. To every $k \in 1,\ldots, n$ we can associate the corresponding Jucys–Murphy element $JM_k$ defined as follows:
\begin{equation*}
JM_k:=(1\,k)+(2\,k)+\ldots+(k-1\,k) \in \BC S_n,
\end{equation*}
where $(i\,k) \in S_n$ is the transposition switching $i,k$.
\begin{Rem}
{\emph{Note that $JM_1=0$.}}
\end{Rem}

The following proposition is classical (see, for example,~\cite[Theorem 1.9]{MU}).
\begin{Prop}
The center $Z(\BC S_n)$ is generated (as a vector space) by the elements

$f(JM_1,\ldots,JM_n)$,
where $f$ runs through the symmetric functions on $n$ variables.
\end{Prop}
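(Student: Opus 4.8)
The plan is to verify the two inclusions separately: first that $f(JM_1,\dots,JM_n)$ is central for every symmetric function $f$, and then that such elements span all of $Z(\BC S_n)$. (The statement is classical; a reference is \cite[Theorem 1.9]{MU}, and the $r=1$ specialization of the arguments in Sections \ref{equivariant_cohomology_embedding}--\ref{schem_fixed_Q_section} also covers it; we sketch a direct argument.)

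For centrality I would use the generating-function identity $\prod_{i=1}^{n}(t+JM_i)=\sum_{\sigma\in S_n}t^{\ell(\sigma)}\sigma$ in $(\BC S_n)[t]$, where $\ell(\sigma)$ is the number of cycles of $\sigma$ as above. This follows by induction on $n$: multiplying $\sum_{\sigma\in S_{n-1}}t^{\ell(\sigma)}\sigma$ (with $S_{n-1}\subset S_n$ the stabilizer of $n$) by $t+JM_n$, the summand $t$ reproduces the permutations of $S_n$ fixing $n$, each acquiring one extra cycle, while $JM_n=\sum_{j<n}(j\,n)$ contributes, via $(\sigma,j)\mapsto\tau:=\sigma(j\,n)$, a bijection onto $\{\tau\in S_n:\tau(n)\ne n\}$ (with $j=\tau^{-1}(n)$) along which $\ell(\tau)=\ell_{S_{n-1}}(\sigma)$, since right-multiplying by $(j\,n)$ merges the cycles of $j$ and of $n$. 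Comparing coefficients of $t^{n-k}$ gives $e_k(JM_1,\dots,JM_n)=\sum_{\ell(\sigma)=n-k}\sigma$, a sum of conjugacy-class sums, hence central; as every symmetric $f$ is a polynomial in $e_1,\dots,e_n$, it follows that $f(JM_1,\dots,JM_n)\in Z(\BC S_n)$.

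For the spanning statement I would pass to the irreducible $S_n$-modules $V_\lambda$, $\lambda\vdash n$. By semisimplicity the central characters give an isomorphism $Z(\BC S_n)\iso\prod_{\lambda\vdash n}\BC$, $z\mapsto(\omega_\lambda(z))_\lambda$, so it suffices to show that $f\mapsto(\omega_\lambda(f(JM_1,\dots,JM_n)))_\lambda$ is surjective onto $\prod_\lambda\BC$. In the Young (Gelfand--Tsetlin) basis $\{v_T\}$ of $V_\lambda$ indexed by standard tableaux $T$ of shape $\lambda$ one has $JM_k v_T=c_T(k)\,v_T$, where $c_T(k)=j-i$ is the content of the box of $T$ containing $k$; hence $f(JM_1,\dots,JM_n)$ acts on $v_T$ by $f$ evaluated on the tuple $(c_T(1),\dots,c_T(n))$, and for symmetric $f$ this is $f$ evaluated on the multiset $\mathbf c_\lambda$ of contents of the boxes of $\lambda$, independently of $T$. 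Thus $\omega_\lambda(f(JM_1,\dots,JM_n))=f(\mathbf c_\lambda)$, and the claim reduces to: (i) the multisets $\mathbf c_\lambda$, $\lambda\vdash n$, are pairwise distinct; and (ii) symmetric functions separate the finitely many points $\{\mathbf c_\lambda\}$ of $\BA^n/S_n$ (already $e_1,\dots,e_n$ do), after which surjectivity follows by interpolation over this finite set.

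Statement (ii) is routine, so the real content is (i), which I would prove by reconstructing $\lambda$ from $\mathbf c_\lambda$: the multiplicity of $d$ in $\mathbf c_\lambda$ is $m_d=\#\{i\ge 1:\lambda_i\ge i+d\}$ for $d\ge 0$ and $m_d=\#\{j\ge 1:\lambda'_j\ge j-d\}$ for $d\le 0$; for $d\ge 0$ these numbers recover the strictly decreasing sequence $(\lambda_i-i)_{1\le i\le s}$ of nonnegative integers, where $s=m_0$ is the size of the Durfee square, hence $(\lambda_1,\dots,\lambda_s)$, and symmetrically the $m_d$ with $d\le 0$ recover $(\lambda'_1,\dots,\lambda'_s)$; these two sequences together determine $\lambda$. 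The main obstacle is precisely this combinatorial point (i); centrality is formal and (ii) is standard.
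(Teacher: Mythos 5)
Your proof is correct and complete. Note that the paper offers no proof of its own here: it simply cites \cite[Theorem 1.9]{MU}, so any self-contained argument is already more than the paper provides. Your route differs from Murphy's. For centrality you use Jucys' generating-function identity $\prod_i(t+JM_i)=\sum_\sigma t^{\ell(\sigma)}\sigma$, identifying $e_k(JM_1,\dots,JM_n)$ with the sum of all permutations with $n-k$ cycles (a union of class sums); your inductive verification of the identity, including the bijection $(\sigma,j)\mapsto\sigma(j\,n)$ and the cycle count, is right. For spanning you pass to central characters and reduce to the pairwise distinctness of the content multisets $\mathbf c_\lambda$, which you establish by reconstructing $\lambda$ from the multiplicities $m_d$ (rows and columns of the Durfee square). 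Murphy instead argues via the filtration by $n-\ell(\sigma)$, showing each class sum equals a symmetric polynomial in the $JM_i$ modulo lower filtration degree — that version is what the paper actually needs later for the \emph{filtered} statement (the proposition asserting that $\operatorname{gr}Z(\BC S_n)$ is generated by the symbols of symmetric polynomials in Jucys--Murphy elements, again cited to \cite{MU}), and your central-character argument does not by itself recover that refinement. On the other hand, your approach buys exactly the eigenvalue computation the paper uses downstream (the action of $f(JM_1,\dots,JM_n)$ on $S(\la)$ by $f$ of the contents, cf. Corollary~\ref{symmetric_of_JM_to_roots} and Lemma~\ref{act_z_i_via_cyclotomic}), so it fits the surrounding material well. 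One small point worth making explicit: to speak of $f(JM_1,\dots,JM_n)$ for an arbitrary symmetric $f$ one should note that the $JM_i$ pairwise commute (or, as your argument implicitly does, work only with polynomials in the central elements $e_k(JM_1,\dots,JM_n)$, which is unambiguous).
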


\subsection{Construction of the isomorphism between schematic fixed points of $\mathfrak{M}_0(n,1)_{\mathfrak{z}_n}$ and $\on{Rees}(Z(\BC S_n))$}\label{sect_idea_iso_schematic_center}
We will prove the following theorem and, using Proposition~\ref{cohom_via_center_Hilb}, obtain Theorem~\ref{HN_equiv_Hilb} as a corollary. 
\begin{Th}\label{rees_Z_via_schem_fixed}
There is an isomorphism of $\BZ$-graded algebras over $\BC[\BA^1]=\BC[\on{Lie}\BT]$
\begin{equation}\label{rees_fixed_iso_itself}
\on{Rees}(Z(\BC S_n)) \iso \BC\Big[\mathfrak{M}_0(n,1)_{\mathfrak{z}_n}^{\BT}\Big]
\end{equation}
that sends $f(JM_1,\ldots,JM_n) \in Z(\BC Z_n)$ to the restriction of the function
\begin{equation*}
\mathfrak{M}_0(n,1)_{\mathfrak{z}_n} \ni [(X,Y,\gamma,\delta)] \mapsto f(\al_1,\ldots,\al_n)
\end{equation*}
to $\mathfrak{M}_0(n,1)_{\mathfrak{z}_n}^{\BT}$.
Here $f$ is a symmetric function on $n$ variables and $\al_1,\ldots,\al_n$ are roots of the characteristic polynomial of $YX \in \on{End}(V)$, the $\BZ$-grading on the $LHS$ of (\ref{rees_fixed_iso_itself}) is the natural grading on $\on{Rees}(Z(\BC S_n))$ and the $\BZ$-grading on the $RHS$ is the one induced by the action of $\BC^\times_\hbar$. 
\end{Th}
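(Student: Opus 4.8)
The plan is to construct the isomorphism (\ref{rees_fixed_iso_itself}) directly and explicitly, and then check it is well-defined, graded, $\BC[\kappa]$-linear, and bijective. First I would understand both sides concretely. On the right, by Proposition~\ref{shematic_fixed_for_affine}, $\BC\bigl[\mathfrak{M}_0(n,1)_{\mathfrak{z}_n}^{\BT}\bigr]$ is the quotient of $\BC\bigl[\mathfrak{M}_0(n,1)_{\mathfrak{z}_n}\bigr]$ by the ideal generated by $\BT$-homogeneous elements of nonzero weight. The ring $\BC\bigl[\mathfrak{M}_0(n,1)_{\mathfrak{z}_n}\bigr]$ is $\BC[\mu^{-1}(\mathfrak{z}_n)]^{\on{GL}(V)}$, and the $\BT$-action is (\ref{action_on_thick_M}); note that $YX$ is $\BT$-invariant (weight $t^1 t^{-1}=1$) but scales under $\BC^\times_\hbar$ with weight $2$, so the functions $e_k(\alpha_1,\dots,\alpha_n)=\on{tr}\wedge^k(YX)$ are honest elements of $\BC\bigl[\mathfrak{M}_0(n,1)_{\mathfrak{z}_n}\bigr]$ of $\BT$-weight $0$ and $\BC^\times_\hbar$-weight $2k$. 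More generally $f(\alpha_1,\dots,\alpha_n)$ for $f$ symmetric of degree $d$ descends to the schematic fixed points and has cohomological degree $2d$. So the stated assignment $f(JM_1,\dots,JM_n)\mapsto [f(\alpha_1,\dots,\alpha_n)]$ at least makes sense as a map of graded vector spaces once we know these elements generate.

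Next I would produce the map. The cleanest route: identify the fiber $\mathfrak{M}_0(n,1)_{{\bf a}}\cong \CC(n)$ (Calogero--Moser space) for ${\bf a}\neq 0$ and recall the classical fact (Etingof--Ginzburg) that $\BC[\CC(n)]$ is the spherical subalgebra of the rational Cherednik algebra $H_{1,1,n}$ of $S_n$ at parameter ${\bf a}$, whose associated graded / classical limit recovers $\BC[S^n(\BA^2)]$. Under this, the $\BT$-fixed points of $\CC(n)$ are exactly the finitely many points indexed by partitions of $n$ (equivalently by $\BC^\times_\hbar$-fixed points of $\on{Hilb}_n$), and the functions $e_k(\alpha_1,\dots,\alpha_n)$ restricted there are the values of the Jucys--Murphy symmetric functions on the corresponding Young diagram — this is the standard statement that the spectrum of the $JM_k$ on the Specht module $S^\lambda$ is the multiset of contents of $\lambda$. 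The family version: $\mathfrak{M}_0(n,1)_{\mathfrak{z}_n}^{\BT}$ is flat over $\BC[\kappa]$ with fiber over $\kappa\neq 0$ the (reduced, $|\mathrm{Par}(n)|$ points) fixed locus of $\CC(n)$, and fiber over $\kappa=0$ the $\BT$-fixed points of $S^n(\BA^2)$. I would then show the ring map $Z(\BC S_n)[\kappa]\to \BC\bigl[\mathfrak{M}_0(n,1)_{\mathfrak{z}_n}\bigr]\to \BC\bigl[\mathfrak{M}_0(n,1)_{\mathfrak{z}_n}^{\BT}\bigr]$, $f(JM_\bullet)\mapsto[f(\alpha_\bullet)]$, sends $F_{2m}Z(\BC S_n)$ into the degree-$\le 2m$ part and hence induces a map on $\on{Rees}(Z(\BC S_n))$; this uses that $\deg_{\BC^\times_\hbar} e_k(\alpha)=2k$ matches the filtration degree of $e_k(JM)$, which equals $2k$ by a direct cycle-counting argument on Jucys--Murphy monomials.

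Then I would prove bijectivity. Injectivity: localize at $\kappa\neq 0$. There the map becomes $Z(\BC S_n)\to \BC[\CC(n)^{\BT}]=\bigoplus_{\lambda\vdash n}\BC$, $f(JM_\bullet)\mapsto (f(\text{contents of }\lambda))_\lambda$. By the classical theory of Jucys--Murphy elements (e.g. \cite{MU}), $Z(\BC S_n)\cong \bigoplus_\lambda \BC$ via central characters, and the map $f\mapsto (f(\text{contents}))_\lambda$ is exactly this isomorphism because contents of distinct partitions are distinct as multisets. So the localized map is an isomorphism; since $\on{Rees}(Z(\BC S_n))$ is $\kappa$-torsion-free, the map is injective. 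Surjectivity and flatness: this is the main obstacle. I would argue that $\BC\bigl[\mathfrak{M}_0(n,1)_{\mathfrak{z}_n}^{\BT}\bigr]$ is free over $\BC[\kappa]$ of rank $|\mathrm{Par}(n)|$ — one way is a dimension count using the $\BC^\times_\hbar$-grading: the Hilbert series of the $\BT$-fixed points of $\mathfrak{M}_0(n,1)_{\mathfrak{z}_n}$ over $\BC[\kappa]$ is determined by semicontinuity and the $\kappa\neq 0$ fiber, giving rank $p(n)$; simultaneously $\on{Rees}(Z(\BC S_n))$ has the same Hilbert series over $\BC[\kappa]$ by (\ref{filtr_on_center}). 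An injective graded map between free $\BC[\kappa]$-modules of the same finite graded rank in each degree is an isomorphism. I expect the delicate points to be: (i) verifying $\mathfrak{M}_0(n,1)_{\mathfrak{z}_n}^{\BT}$ is indeed flat/free over $\BC[\kappa]$ and reduced in the generic fiber (this can be quoted or reproved along the lines of Appendix~\ref{fixed_points_are_free_section} and \cite{HA}), and (ii) pinning down that the $e_k(\alpha)$ actually \emph{generate} $\BC\bigl[\mathfrak{M}_0(n,1)_{\mathfrak{z}_n}^{\BT}\bigr]$ as a $\BC[\kappa]$-algebra — equivalently that the schematic fixed-point ring has no extra generators — which is where I would lean on the explicit description of $\BC[\CC(n)]$ via the Cherednik algebra (the center is generated by power sums of the Dunkl--Opdam/Jucys--Murphy elements) and on normality to extend the generic statement over $\kappa=0$. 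Once generation is in hand, surjectivity of the map is automatic from the identification of generators $e_k(JM_\bullet)\leftrightarrow[e_k(\alpha_\bullet)]$, and Theorem~\ref{HN_equiv_Hilb} follows by composing with Proposition~\ref{cohom_via_center_Hilb}.
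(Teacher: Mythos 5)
Your proposal relies on the same key inputs as the paper's argument: flatness of the fixed-point scheme over $\BC[\kappa]$ (Appendix \ref{fixed_points_are_free_section}), reduction to the Calogero--Moser fiber $\CC(n)$, Wilson's parametrization of $\CC(n)^{\BT}$ by partitions together with the fact that the eigenvalues of $Y^\la X^\la$ are the contents of $\BY(\la)$, and Murphy's theorem on Jucys--Murphy elements. The structural difference is in how the isomorphism is produced. You define the map on the generators $f(JM_\bullet)\mapsto [f(\al_\bullet)]$ and then must prove bijectivity, which forces you to confront surjectivity and generation. The paper instead observes that, after reducing to the generic fiber, both sides are canonically $\bigoplus_{\la\in\CP(n)}\BC$ --- the algebra $Z(\BC S_n)$ via its central idempotents ${\bf{e}}_\la$, and $\BC[\CC(n)^{\BT}]$ because $\CC(n)^{\BT}$ is a smooth, hence reduced, finite scheme --- and defines $\Theta\colon{\bf{e}}_\la\mapsto\chi_\la$, so that bijectivity is automatic and one only has to verify the formula on the $f(JM_\bullet)$ and that $\Theta$ is filtered (the latter via Murphy's description of $\on{gr}Z(\BC S_n)$).

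Two of your steps need repair. First, your ``rank-count'' route to surjectivity --- an injective graded map between free $\BC[\kappa]$-modules of the same total rank $|\CP(n)|$ --- is not sufficient: since $\on{deg}\kappa>0$, multiplication by $\kappa$ on $\BC[\kappa]$ is an injective graded endomorphism of a free rank-one module that is not surjective. You would need the graded ranks in each degree to agree, and establishing that is essentially equivalent to the generation statement you are trying to avoid. Second, invoking normality to extend the generic generation statement over $\kappa=0$ is not the right tool: generation need not specialize, and the actual argument (the paper's Lemma \ref{gen_of_Q!} and the Appendix) proceeds by an associated-graded computation exhibiting an explicit spanning set of $\BC[(\BA^{2n}/S_n)^{\BT}]$ of cardinality $|\CP(n)|$. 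With either of these points repaired, or with the paper's idempotent trick substituted for your surjectivity argument, your outline goes through.
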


The rest of the section is devoted to describing the idea of the proof of Theorem~\ref{rees_Z_via_schem_fixed}. We start with the following proposition, the proof of which is given in Appendix~\ref{fixed_points_are_free_section}.
\begin{Prop}
The algebra $\BC\Big[\mathfrak{M}_0(n,1)_{\mathfrak{z}_n}^{\BT}\Big]$ is flat (hence, free) over $\mathfrak{z}_n=\BA^1$. In particular, we have an isomorphism of $\BZ$-graded algebras 
\begin{equation*}
\BC\Big[\mathfrak{M}_0(n,1)_{\mathfrak{z}_n}^{\BT}\Big] \simeq \on{Rees}(\BC[\mathfrak{M}_0(n,1)_1^{\BT}])=\on{Rees}(\BC[\mathcal{C}(n)^{\BT}]).
\end{equation*}
\end{Prop}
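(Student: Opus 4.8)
The plan is to combine two formal reductions with one genuine computation. Throughout write $B:=\BC[\mathfrak{M}_0(n,1)_{\mathfrak{z}_n}]$, graded by the fiberwise $\BT$-action, and let $R:=\BC[\mathfrak{z}_n]=\BC[\mathbf{a}]$, a graded subalgebra of $B_0$. By Proposition~\ref{shematic_fixed_for_affine} the object of interest is $Q:=\BC[\mathfrak{M}_0(n,1)_{\mathfrak{z}_n}^{\BT}]=B_0/\sum_{i>0}B_{-i}B_i$, a finitely generated $R$-algebra (the $\BT$-fixed scheme of an affine scheme of finite type is again such); since $\BC^\times_\hbar$ commutes with $\BT$ and acts on $\mathfrak{M}_0(n,1)_{\mathfrak{z}_n}$ with non-negative weights, scaling $\mathbf{a}$ with weight $2$, the algebra $Q$ is $\BZ_{\geqslant 0}$-graded with $\mathbf{a}$ of degree $2$. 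The first reduction is this: over the graded PID $R=\BC[\mathbf{a}]$ a finitely generated graded module is free if and only if it is torsion-free, and the torsion submodule of such a module, being graded, is annihilated by a power of $\mathbf{a}$; hence $Q$ is free over $R$ exactly when $\dim_{\BC}Q/\mathbf{a}Q=\on{rk}_R Q$, while $\dim_{\BC}Q/\mathbf{a}Q\geqslant\on{rk}_R Q$ always holds. So it suffices to prove $\dim_{\BC}Q/\mathbf{a}Q\leqslant\on{rk}_R Q$.

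Next I identify the two numbers. Taking schematic $\BT$-fixed points commutes with base change along a closed point $\{c\}\hookrightarrow\BA^1$, since for a $\BT$-stable closed subscheme $Z$ of a $\BT$-scheme $Y$ one has $Z^{\BT}=Z\cap Y^{\BT}$ scheme-theoretically (see \cite{FO}, \cite{Drinfeld}); thus $Q/(\mathbf{a}-c)Q=\BC[\mathfrak{M}_0(n,1)_c^{\BT}]$. For $c\neq 0$ this is $\BC[\CC(n)^{\BT}]$, and since $\CC(n)$ is smooth, $\CC(n)^{\BT}$ is smooth by Proposition~\ref{Y_smooth_implies_fixed_smooth}, hence reduced; it is classical that $\BT$ acts on $\CC(n)$ with $p(n)$ isolated fixed points, where $p(n)$ is the number of partitions of $n$ (alternatively, using that $\mathfrak{M}(n,1)_{\mathfrak{z}_n}$ is $\BC^\times_\hbar$-equivariantly $C^\infty$-locally trivial over $\BA^1$, these are transported $\BT$-equivariantly from the monomial ideals in $\on{Hilb}_n(\BA^2)$). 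Therefore $\on{rk}_R Q=\dim_{\BC}Q/(\mathbf{a}-1)Q=p(n)$, and, taking $c=0$, $Q/\mathbf{a}Q=\BC[\mathfrak{M}_0(n,1)_0^{\BT}]=\BC[(S^n(\BA^2))^{\BT}]$. Hence the whole Proposition reduces to the estimate $\dim_{\BC}\BC[(S^n(\BA^2))^{\BT}]\leqslant p(n)$.

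This last estimate is the only non-formal ingredient, and it is where the computations of \cite{HA} enter; I expect it to be the main work. Writing $A_n=\BC[x_1,\dots,x_n,y_1,\dots,y_n]^{S_n}$ with $x_i$ of $\BT$-weight $1$ and $y_i$ of $\BT$-weight $-1$, Proposition~\ref{shematic_fixed_for_affine} gives $\BC[(S^n(\BA^2))^{\BT}]=A_n/(f\in A_n \text{ homogeneous of nonzero weight})$. Since $A_n$ is generated by the polarized power sums $p_{k,l}=\sum_j x_j^k y_j^l$, and those with $k\neq l$ die in the quotient, the quotient is generated by the classes of $e_1(z),\dots,e_n(z)$ where $z_j:=x_jy_j$. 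One then produces enough relations among these classes to bound the dimension by $p(n)$: for instance $e_n(z)=e_n(x)\,e_n(y)$ lies in $(A_n)_{-n}(A_n)_n$ and so maps to $0$, and similar but more involved factorizations control the higher products; equivalently one matches $\BC[(S^n(\BA^2))^{\BT}]$ with the associated graded $\on{gr}Z(\BC S_n)$ for the length filtration~(\ref{filtr_on_center}), which has dimension $\dim Z(\BC S_n)=p(n)$. This is the content of \cite{HA} in the case $r=1$ (for general $r$ the same outline runs with $\CM(n,r)$ in place of $S^n(\BA^2)$, the generators $m_k=c_k(V)*1=e_k(u_1,\dots,u_n)$ in place of the $e_k(z)$, and the bound $|\CP(r,n)|$); this bookkeeping is essentially all that Appendix~\ref{fixed_points_are_free_section} contains.

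Finally, the conclusion. By the above, $\dim_{\BC}Q/\mathbf{a}Q=p(n)=\on{rk}_R Q$, so $Q$ is torsion-free, hence free, over $R=\BC[\mathbf{a}]$. For a finitely generated free $\BZ_{\geqslant 0}$-graded $R$-module with $\mathbf{a}$ of degree $2$, a choice of homogeneous basis identifies $Q$, via $\kappa\mapsto\mathbf{a}$ (with $\kappa$ of degree $2$), with the Rees algebra $\bigoplus_{k\geqslant 0}\kappa^{k}F_{2k}(Q_1)\subset Q_1[\kappa]$ of $Q_1:=Q/(\mathbf{a}-1)Q$ for the filtration $F_{2k}Q_1:=\on{im}\bigl(\sum_{m\leqslant 2k}Q_{[m]}\to Q_1\bigr)$ induced by the $\BC^\times_\hbar$-grading. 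Since $Q_1=\BC[\mathfrak{M}_0(n,1)_1^{\BT}]=\BC[\CC(n)^{\BT}]$ by the base-change argument above, this is exactly the asserted isomorphism $\BC[\mathfrak{M}_0(n,1)_{\mathfrak{z}_n}^{\BT}]\simeq\on{Rees}(\BC[\mathfrak{M}_0(n,1)_1^{\BT}])=\on{Rees}(\BC[\CC(n)^{\BT}])$ of $\BZ$-graded $\BC[\BA^1]$-algebras.
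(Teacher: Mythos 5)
Your formal skeleton is exactly the paper's: the paper proves this Proposition in Appendix~\ref{fixed_points_are_free_section} by the same two reductions you make --- graded Nakayama over $\BC[\mathbf{a}]$ reduces flatness to the inequality $\dim_{\BC}Q/\mathbf{a}Q\leqslant \on{rk}_R Q$, the generic rank is computed as $|\CP(n)|=p(n)$ from the (reduced, finite) fixed locus $\CC(n)^{\BT}$ of the smooth generic fiber, and base change identifies $Q/\mathbf{a}Q$ with $\BC[(S^n(\BA^2))^{\BT}]$. Those reductions, including the observation that taking schematic $\BT$-fixed points commutes with specializing $\mathbf{a}$ (because $\mathbf{a}$ has $\BT$-weight zero), are correct.

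The one place where your write-up falls short of a proof is precisely the step you flag as ``the only non-formal ingredient'': the estimate $\dim_{\BC}\BC[(S^n(\BA^2))^{\BT}]\leqslant p(n)$. Your factorization $e_n(z)=e_n(x)e_n(y)\in (A_n)_n(A_n)_{-n}$ is a correct sample relation, but ``similar but more involved factorizations control the higher products'' is not an argument, and the alternative route via $\on{gr}Z(\BC S_n)$ imports Hikita's full isomorphism, which is stronger than what is needed and is not something you establish. The paper's appendix supplies exactly this missing content: it exhibits the explicit spanning set $\{\ol{m}_{(\la,0)(0,1)^{|\la|}}\}$ of the algebra $\ol{S}$ (Lemma~\ref{basis_of_ol_S}, an induction on the length of a bipartite partition using the multiplication rule for symmetrized monomials), whose image in $\BC[(S^n(\BA^2))^{\BT}]$ is supported on partitions $\la$ with $\ell(\la)+|\la|\leqslant n$, a set in bijection with $\CP(n)$. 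So your proposal is a correct reduction of the Proposition to the combinatorial lemma that constitutes the paper's actual proof; to be self-contained it would need that spanning argument (or an equivalent bound) carried out rather than cited from \cite{HA} and \cite{HI}.
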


We conclude that  to prove Theorem~\ref{rees_Z_via_schem_fixed} it is enough to construct the isomorphism of filtered algebras $\BC[\CC(n)^{\BT}] \simeq Z(\BC S_n)$. In order to do so we need to describe the algebra $\BC[\CC(n)^{\BT}]$. Let us note that the variety $\CC(n)$ is smooth, hence, the scheme $\CC(n)^{\BT}$ is also smooth (see Proposition~\ref{Y_smooth_implies_fixed_smooth}) and, in particular, reduced. 

The description of the set of fixed points $\CC(n)^{\BT}$ was given by Wilson in \cite[ Proposition 6.11]{WI}, we recall it in Section~\ref{descr_of_fixed_on_CM}. The set $\CC(n)^{\BT}$ is finite and can be parametrized by the set $\CP(n)$ of partitions of $n$, we denote by $[(X^\la,Y^\la)]$ the fixed point corresponding to $\la \in \CP(n)$ (see Definition~\ref{def_X_Y_la}). Since every finite reduced scheme over $\BC$ is just the spectrum of the direct sum of copies of $\BC$, we must have 
\begin{equation}\label{iso_fixed_charac}
\BC[\CC(n)^{\BT}] = \bigoplus_{\la \in \CP(n)}\BC \chi_\la,   
\end{equation}
where $\chi_\la \in \BC[\CC(n)^{\BT}]$ is the characteristic function of the $\BT$-fixed point $[(X^\la,Y^\la)]$ corresponding to $\la \in \CP(n)$.
Recall now that we have the natural identification 
\begin{equation}\label{iso_center_idempot}
Z(\BC S_n) = \bigoplus_{\la \in \CP(n)}\BC {\bf{e}}_\la,
\end{equation} where ${\bf{e}}_\la \in Z(\BC S_n)$ is the idempotent corresponding to the Specht module $S(\la)$ (in other words, for $\nu \in \CP(n)$ the element ${\bf{e}}_\la \in Z(\BC S_n)$ acts on $S(\nu)$ via $\delta_{\la\,\nu}\cdot \on{Id}_{S(\nu)}$).

Composing  (\ref{iso_fixed_charac}) and (\ref{iso_center_idempot}), we obtain the isomorphism of algebras:
\begin{equation*}
\Theta \colon Z(\BC S_n) \iso \BC[\CC(n)^{\BT}],\, {\bf{e}}_\la \mapsto \chi_{\la}. \end{equation*}

To prove Theorem~\ref{rees_Z_via_schem_fixed} it remains to show that the isomorphism $\Theta$ is the one that we need, i.e., it sends element  $f(JM_1,\ldots,JM_n)$ to the restriction of the function $\CC(n) \ni [(X,Y)] \mapsto f(\al_1,\ldots,\al_n)$ to $\CC(n)^{\BT}$. From this we would be able to conclude that the isomorphism $\Theta$ is filtration-preserving.

To prove that $\Theta$ sends element  $f(JM_1,\ldots,JM_n)$ to the function $\CC(n)^{\BT} \ni [(X,Y)] \mapsto f(\al_1,\ldots,\al_n)$ we just need to show that for every symmetric function $f$ on $n$ variables we have 
\begin{equation*}
f(JM_1,\ldots,JM_n)|_{S(\la)}=f(\al_1,\ldots,\al_n) \on{Id}_{S(\la)},
\end{equation*}
where $\al_1,\ldots,\al_n$ is the multiset of  eigenvalues  of $Y^\la X^\la$. 
Recall that \newline
$f(JM_1,\ldots,JM_n)$ acts on $S(\la)$ via the multiplication by $f(c_1,\ldots,c_n)$, where $c_1,\ldots,c_n$ is the multiset of contents of boxes of the Young diagram $\BY(\la)$ corresponding to $\la$ (see (\ref{def_Y_to_la})). It remains to check that the multiset of eigenvalues of $Y^\la X^\la$ is the same as the multiset of contents of boxes of $\BY(\la)$.

\subsection{Description of $\CC(n)^{\BT}$ and eigenvalues of $Y^\la X^\la$}\label{descr_of_fixed_on_CM}

The parametrization of $\CC(n)^{\BT}$ by the elements of $\CP(n)$ goes as follows
(the description was obtained in~\cite{WI}, we follow \cite[Section 5]{TO}).
Pick $m \in \BZ_{\geqslant 1}$ and $1 \leqslant k \leqslant m$.
\begin{Def}
 By $D_m$ we will denote the $m \times m$ matrix with $1$'s on the first diagonal and $0$'s elsewhere, i.e., $D_m=\sum_{i=1}^{m-1}E_{i\, i+1}$. Now, let $Y(m,k)$ be the $m \times m$ matrix such that its only non-zero entries are on the $-1$st diagonal, and it satisfies the relation $\left [Y(m,k), D_m \right ] = mE_{k\,k}$. In other words, the numbers below the diagonal are $1,2, \ldots , k-1, -m+k, \ldots, -2, -1$:
 \begin{equation*}
Y(m,k)= \begin{pmatrix}
0 & 0& \hdotsfor{5} & 0 \\
1& 0 & \hdotsfor{5} & 0\\
0& 2 & 0&\hdotsfor{4} & 0\\
\vdots& \ldots & \ddots & 0 & \hdotsfor{3}& 0 \\
\hdotsfor{3} & k-1 &0& \hdotsfor{3} \\
\hdotsfor{4} & -m+k & 0& \hdotsfor{2} \\
\hdotsfor{8} \\
\hdotsfor{6} & -1 &0  
\end{pmatrix}.
\end{equation*}
\end{Def}

Pick $\la=(\la_1,\ldots,\la_l) \in \CP(n)$. Following \cite[Section 4.1]{TO} we denote by 
\begin{equation}\label{def_Y_to_la}
\BY(\la):=\{(i,j)\,|\, 1 \leqslant i \leqslant l,\, 1 \leqslant j \leqslant \la_i\}
\end{equation}
the corresponding Young diagram.  If $\square=(i,j) \in \BY(\la)$ is a cell let $c(\square):=j-i$ be the {\emph{content}} of $\square$. 
By  a \emph{hook} associated to the cell $(i,j)$ we call the set 
\begin{equation*}
\BH_{(i,j)}:=\{(i,j)\} \cup \{(i',j) \in \BY(\la)\,|\,i'>i\} \cup \{(i,j') \in \BY(\la)\,|\,j'>j\}.
\end{equation*}
Box $(i,j)$ is called the \emph{root} of the hook $\BH_{(i,j)}$. By a \emph{Frobenius hook} of $\BY(\la)$ we mean a hook of the form $\BH_{(i,i)}$. The diagram $\BY(\la)$ is the disjoint union of its Frobenius hooks. Suppose that $(1,1),(2,2),\ldots,(s,s)$ are  cells of $\BY(\la)$ with zero content. Let $\BH_i$ be the Frobenius hook with root $(i,i)$. Let $k_i$ be the height of $\BH_i$ and $n_i$ be the size of $\BH_i$.  
Now we are ready to describe the tuple $[(X^\la,Y^\la)] \in \CC(n)^{\BT}$ corresponding to $\la$.
\begin{Def}\label{def_X_Y_la}
Tuple $(X^\la,Y^\la)$ is defined as follows. We have $X^\la=D_n$.
The $n_i \times n_i$ diagonal blocks of $Y^\la$ are given by the matrices $Y(n_i,k_i)$, and the off-diagonal blocks satisfy the following property: 
For $i\neq j$, $(Y^\la)_{ij}$ is the unique  $n_i \times n_j$ matrix with non-zero entries on the diagonal $k_i-k_j$, satisfying the following property:
\begin{equation*}
(Y^\la)_{ij}D_{n_j}- D_{n_i}(Y^\la)_{ij}=n_iE_{k_i\,k_j}.
\end{equation*}
\end{Def}

\begin{Rem}
{\emph{If $\la \in \CP(n)$ is a hook of   height $k$, then we have  $Y^\la=Y(n,k)$.  Note that the diagonal matrix elements of $Y^\la X^\la=Y(n,k)D_n$ are precisely the contents of cells of the hook $\la$.
}}
\end{Rem}

\begin{Prop}
The eigenvalues of $Y^\la X^\la=Y^\la D_n$ are the same as the eigenvalues of blocks $(Y^\la)_{ii}D_{n_i}=Y(n_i,k_i)D_{n_i}$ as if off-diagonal blocks in $Y^\la X^\la$ were not present. So, the eigenvalues of $Y^\la X^\la$ are  diagonal elements of $Y(n_i,k_i)D_{n_i}$ that are exactly the multiset of contents of boxes of $\la$.
\end{Prop}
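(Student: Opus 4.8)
The claim is purely linear-algebraic: I want to compute the characteristic polynomial of $Y^\la D_n$ and show it equals $\prod_{\square \in \BY(\la)} (t - c(\square))$. The key structural input is that $Y^\la$ is block-lower-(or upper-)triangular with respect to the Frobenius hook decomposition \emph{up to the off-diagonal blocks}, and I claim those off-diagonal blocks do not contribute to the spectrum. First I would set up the $\BC^\times$-weight (grading) bookkeeping: the hyperbolic torus $\BT$ acts on $X=D_n$ and $Y$ with opposite weights, so $YX$ is $\BT$-invariant, and more importantly the conjugation action that realizes $(X^\la,Y^\la)$ as a $\BT$-fixed point is implemented by a one-parameter subgroup $g(s) \subset \on{GL}(V)$ that is \emph{diagonal} in the standard basis adapted to the hooks. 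Under $g(s)$, the diagonal block $(Y^\la)_{ii}D_{n_i}$ is invariant (weight $0$), while each off-diagonal block $(Y^\la)_{ij}D_{n_j}$, being supported on the diagonal $k_i - k_j \ne 0$ of a genuinely off-diagonal block, scales with a nonzero power of $s$. Hence letting $s \to 0$ (or $\infty$) degenerates $Y^\la D_n$ to its block-diagonal part $\bigoplus_i Y(n_i,k_i) D_{n_i}$ \emph{through conjugation}, so the characteristic polynomial is unchanged and equals $\prod_i \det(t - Y(n_i,k_i) D_{n_i})$.

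\textbf{Reduction to a single hook.} This reduces everything to the hook case: compute the eigenvalues of $Y(m,k) D_m$ for a hook of size $m$ and height (= leg length) $k$. I would do this by direct computation. Writing $D_m = \sum_{i=1}^{m-1} E_{i,i+1}$ and $Y(m,k) = \sum_{i=1}^{m-1} d_i E_{i+1,i}$ with $d_1 = 1, d_2 = 2, \dots, d_{k-1} = k-1$ and $d_k = -m+k, d_{k+1} = -m+k+1, \dots, d_{m-1} = -1$, the product $Y(m,k) D_m = \sum_{i=2}^{m-1} d_{i-1} E_{i,i+1}\cdots$ — actually $Y(m,k)D_m$ has entries $(Y(m,k)D_m)_{i+1,i+1} = d_i$ for the relevant range, i.e., it is upper-triangular-ish; more precisely $Y D_m$ is a matrix whose $(a,b)$ entry is nonzero only when $(a,b) = (i+1, i+1)$, giving a diagonal matrix? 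Let me just say: the plan is to observe that $Y(m,k) D_m$ is already triangular (lower- or upper-) in the standard basis — indeed $D_m$ shifts indices up by one and $Y(m,k)$ shifts down by one, so the product preserves the flag $\langle e_1 \rangle \subset \langle e_1, e_2\rangle \subset \cdots$ in a way that makes it triangular — hence its eigenvalues are its diagonal entries. And the diagonal entries are exactly $0, d_1, d_2, \ldots, d_{m-1}$ read off appropriately, which are $0, 1, 2, \ldots, k-1, -m+k, \ldots, -1$. These are precisely the contents $j - i$ of the $m$ cells of a hook with arm length $m-k$ and leg length $k-1$ (the arm cells $(1,1),(1,2),\dots$ have contents $0,1,\dots,m-k$; wait, recount): the hook with leg cells $(2,1),\dots,(k,1)$ and arm cells $(1,2),\dots,(1,m-k+1)$ together with the corner $(1,1)$ has contents $\{0\} \cup \{1, \ldots, m-k\} \cup \{-1, -2, \ldots, -(k-1)\}$, which matches $0 \cup \{d_i\}$. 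So the single-hook case is a short explicit verification.

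\textbf{Assembling and the main obstacle.} Combining the two steps: the eigenvalue multiset of $Y^\la D_n$ is $\bigsqcup_i \{\text{contents of hook } \BH_i\}$, and since $\BY(\la)$ is the disjoint union of its Frobenius hooks with matching content labels, this is exactly the content multiset of all boxes of $\BY(\la)$, which is what we had to show; combined with the computation $f(JM_1,\dots,JM_n)|_{S(\la)} = f(c_1,\dots,c_n)\on{Id}$ recalled just above, this closes the proof of Theorem~\ref{rees_Z_via_schem_fixed}. I expect the \textbf{main obstacle} to be making rigorous the assertion that the off-diagonal blocks of $Y^\la$ can be scaled away by conjugation while keeping the spectrum fixed: one must check that the defining relation $(Y^\la)_{ij} D_{n_j} - D_{n_i} (Y^\la)_{ij} = n_i E_{k_i k_j}$ forces $(Y^\la)_{ij}$ to be supported on the single diagonal $k_i - k_j$ of the $(i,j)$ block (uniqueness, as stated in Definition~\ref{def_X_Y_la}), and that this diagonal is \emph{strictly} off the main diagonal of the full matrix, i.e., $k_i \ne k_j$ for $i \ne j$ — this last point uses that the Frobenius hooks have \emph{distinct} leg lengths $k_1 > k_2 > \cdots > k_s$, which is a standard fact about the Frobenius/hook decomposition of a Young diagram. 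Once that is in hand, the degeneration argument (or equivalently: choose a block-triangular ordering of basis vectors so that $Y^\la D_n$ is block-triangular with diagonal blocks $Y(n_i,k_i)D_{n_i}$) is routine, and I would present it as a one-paragraph conjugation/filtration argument rather than an explicit limit.
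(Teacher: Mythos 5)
Your single-hook computation is correct and is the easy half of the statement: $Y(m,k)D_m=\sum_i d_i E_{i+1,i}\cdot\sum_j E_{j,j+1}=\on{diag}(0,d_1,\dots,d_{m-1})$ is literally diagonal, not merely triangular, and its entries $\{0,1,\dots,k-1\}\cup\{-(m-k),\dots,-1\}$ agree with the content multiset of a hook of height $k$ only after exchanging arm and leg (equivalently $\la\leftrightarrow\la^{t}$, or $c\mapsto -c$) --- a transposition that already haunts the paper's own Remark, but which you should pin down instead of writing ``which matches'' right after ``wait, recount''. Note also that the paper's own proof of this Proposition is a one-line citation to the proof of \cite[Proposition 6.13]{WI}, so the entire burden of any self-contained argument falls on the reduction to the block-diagonal part --- precisely the step you defer as a ``routine one-paragraph conjugation/filtration argument''.

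That reduction is where the genuine gap lies, and the mechanism you propose does not work as stated. First, for a limit $s\to 0$ (or $s\to\infty$) of $h(s)\,Y^\la D_n\,h(s)^{-1}$ with $h(s)=\on{diag}(s^{v_1},\dots,s^{v_n})$ to exist and equal the block-diagonal part, you need $v_A>v_B$ for \emph{every} nonzero off-diagonal entry $(A,B)$ simultaneously; this is an acyclicity condition on the support of the off-diagonal blocks that you never verify, and it cannot be waved away by saying each block ``scales with a nonzero power of $s$'', since the blocks $(i,j)$ and $(j,i)$ scale by powers of opposite sign under any block-scalar conjugation. Second, the specific one-parameter subgroup you name --- the $g(s)$ implementing the $\BT$-fixed-point property, i.e.\ $g(s)X^\la g(s)^{-1}=sX^\la$ and $g(s)Y^\la g(s)^{-1}=s^{-1}Y^\la$ --- provably does nothing here, because $Y^\la X^\la$ \emph{commutes} with it; conjugation by $g(s)$ fixes $Y^\la X^\la$ rather than degenerating it, so it cannot scale the off-diagonal blocks by nonzero powers unless they already vanish. (What that commutation actually gives you is that $Y^\la X^\la$ preserves the $g$-weight spaces, which is a different, finer decomposition than the hook decomposition and still requires an argument to extract the spectrum.) To close the gap you must either exhibit an explicit weight vector $v$ adapted to the support diagonals $k_i-k_j$ of the blocks $(Y^\la)_{ij}$ --- using the strict inequalities $k_1>k_2>\dots>k_s$, which you correctly identify as the needed combinatorial input --- and check that no directed cycle occurs, or else do what the paper does and import the computation of $\det(t-Y^\la D_n)$ from the proof of \cite[Proposition 6.13]{WI}.
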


\begin{proof}
Follows from the proof of \cite[Proposition 6.13]{WI}.
\end{proof}

\begin{Cor}\label{symmetric_of_JM_to_roots}
The isomorphism $\Theta\colon Z(\BC S_n) \iso \BC[\mathcal{C}(n)]^{\BT}$ sends \newline  $f(JM_1,\ldots,JM_n)$ to the restriction of the function 
\newline $[(X,Y)] \mapsto f(\al_1,\ldots,\al_n)$ to $\CC(n)^{\BT}$, where $f$ is a 
symmetric function on $n$ variables.
\end{Cor}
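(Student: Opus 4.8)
The plan is to assemble the statement from the pieces already set up in this section. We have the algebra isomorphism $\Theta\colon Z(\BC S_n) \iso \BC[\CC(n)^{\BT}]$ sending the idempotent ${\bf{e}}_\la$ to the characteristic function $\chi_\la$ of the $\BT$-fixed point $[(X^\la,Y^\la)]$, and we want to show $\Theta$ carries $f(JM_1,\ldots,JM_n)$ to the restriction of the regular function $[(X,Y)] \mapsto f(\al_1,\ldots,\al_n)$ (with $\al_i$ the roots of the characteristic polynomial of $YX$) to $\CC(n)^{\BT}$. Since both sides decompose over $\la \in \CP(n)$ as direct sums of copies of $\BC$, it suffices to compare the two functions on each fixed point $[(X^\la,Y^\la)]$ separately.

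First I would record the classical fact (already cited in the excerpt via~\cite{MU}) that $f(JM_1,\ldots,JM_n)$ acts on the Specht module $S(\la)$ as the scalar $f(c_1,\ldots,c_n)$, where $c_1,\ldots,c_n$ is the multiset of contents of the boxes of the Young diagram $\BY(\la)$. By the defining property of the idempotent ${\bf{e}}_\nu$ (acting on $S(\la)$ by $\delta_{\nu\la}\on{Id}$), this means $f(JM_1,\ldots,JM_n) = \sum_{\la \in \CP(n)} f(c_1^{(\la)},\ldots,c_n^{(\la)})\, {\bf{e}}_\la$ in $Z(\BC S_n)$, so that $\Theta\big(f(JM_1,\ldots,JM_n)\big) = \sum_{\la} f(c_1^{(\la)},\ldots,c_n^{(\la)})\,\chi_\la$. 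On the other hand, the value at $[(X^\la,Y^\la)]$ of the function $[(X,Y)]\mapsto f(\al_1,\ldots,\al_n)$ is $f$ evaluated at the multiset of eigenvalues of $Y^\la X^\la = Y^\la D_n$.

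The key geometric input is the Proposition just proved (following~\cite[Proposition 6.13]{WI}): the eigenvalues of $Y^\la X^\la$ coincide with the diagonal entries of the blocks $Y(n_i,k_i)D_{n_i}$, which in turn are exactly the contents of the boxes of $\la$ grouped by Frobenius hook. Concretely, since $X^\la = D_n$ is strictly upper triangular and $Y^\la$ is strictly lower triangular with the prescribed block structure, $Y^\la X^\la$ is block-upper-triangular up to a nilpotent correction that does not affect the characteristic polynomial, so its spectrum is the union over $i$ of the spectra of $Y(n_i,k_i)D_{n_i}$; and the diagonal of $Y(n_i,k_i)D_{n_i}$ lists precisely the contents $1-1, 2-1, \ldots$ running along the arm and $\ldots, -1, -2$ running down the leg of the $i$-th Frobenius hook. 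Hence the multiset $\{\al_1,\ldots,\al_n\}$ for the point $[(X^\la,Y^\la)]$ equals the multiset $\{c_1^{(\la)},\ldots,c_n^{(\la)}\}$ of contents of $\BY(\la)$.

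Combining these, for every $\la$ the scalar by which $f(JM_1,\ldots,JM_n)$ acts on $S(\la)$ equals $f$ evaluated at the eigenvalues of $Y^\la X^\la$, which is exactly the value at $\chi_\la$ of the restricted function; therefore $\Theta$ sends $f(JM_1,\ldots,JM_n)$ to the claimed restriction. The only genuinely substantive step is the spectral computation for $Y^\la X^\la$, and that has already been handed to us by the preceding Proposition; what remains is the bookkeeping identifying ``diagonal entries of $Y(n_i,k_i)D_{n_i}$ along the $i$-th Frobenius hook'' with ``contents of boxes of $\BY(\la)$'', which is the indexing matching made explicit in the Remark on hooks and is routine. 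I do not anticipate a real obstacle; the proof is essentially the one-line deduction \emph{``Follows from the discussion above and the preceding Proposition.''}
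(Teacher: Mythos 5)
Your proposal is correct and follows essentially the same route as the paper: decompose $Z(\BC S_n)$ and $\BC[\CC(n)^{\BT}]$ over $\la\in\CP(n)$ via the idempotents ${\bf{e}}_\la$ and characteristic functions $\chi_\la$, use that $f(JM_1,\ldots,JM_n)$ acts on $S(\la)$ by $f$ of the contents, and invoke the preceding Proposition (via Wilson) identifying the eigenvalues of $Y^\la X^\la$ with the contents of $\BY(\la)$. The only loose phrase is the parenthetical ``nilpotent correction that does not affect the characteristic polynomial,'' which is not rigorous as stated, but since you explicitly defer that spectral computation to the preceding Proposition it does not create a gap.
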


\subsection{Proof of Theorem~\ref{rees_Z_via_schem_fixed}}
We have already constructed (see Section~\ref{sect_idea_iso_schematic_center}) the isomorphism
\begin{equation*}
\Theta\colon Z(\BC S_n) \iso \BC[\CC(n)^{\BT}]
\end{equation*}
 and have shown that this isomorphism sends generators  \newline $f(JM_1,\ldots,JM_n) \in Z(\BC S_n)$ to  functions \newline $\CC(n)^{\BT} \ni (X,Y) \mapsto f(\al_1,\ldots,\al_n)$ (see Corollary~\ref{symmetric_of_JM_to_roots}). To finish the proof of Theorem~\ref{rees_Z_via_schem_fixed} it remains to show that the isomorphism $\Theta$ is filtered. Let us  note that if $f$ has degree $k$ then $\on{deg}f(JM_1,\ldots,JM_n)=2k$ and the degree of the function $(X,Y) \mapsto f(\al_1,\ldots,\al_n)$ is also equal to $2k$. Thus, in order to show that $\Theta$ is filtration-preserving it is enough to check that 
 $F_{2k}Z(\BC S_n)$ is generated (as a vector space) by 
 \begin{multline*}
\Big\{f(JM_1,\ldots,JM_n)\,| \\
| \, f~\text{is a homogeneous symmetric polynomial of degree}\,\leqslant k\Big\}.
\end{multline*}
 This is a direct corollary of the following (classical) proposition.
\begin{Prop}
The algebra $\on{gr}Z(\BC S_n)$ is generated by the elements 
\begin{equation*}
\Big\{\on{gr}\Big(f(JM_1,\ldots,JM_n)\Big)\,|\, f~\text{is  a homogeneous symmetric polynomial}\Big\}.
\end{equation*}
\end{Prop}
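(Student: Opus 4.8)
The plan is to establish the claim by exhibiting a concrete set of algebra generators of $\operatorname{gr}Z(\mathbb{C}S_n)$ coming from symmetric functions in the Jucys–Murphy elements, and to match degrees carefully. First I would recall the classical fact (due to Jucys, and also following from the Okounkov–Vershik approach) that the elements $f(JM_1,\dots,JM_n)$, as $f$ ranges over symmetric polynomials in $n$ variables, span $Z(\mathbb{C}S_n)$ as a vector space — this is already quoted in the excerpt as the proposition attributed to \cite{MU}. The key point to upgrade this to a statement about the associated graded is to control the filtration degree: for a homogeneous symmetric polynomial $f$ of degree $k$, I claim $f(JM_1,\dots,JM_n) \in F_{2k}Z(\mathbb{C}S_n)$. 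This follows because each $JM_j = (1\,j)+\dots+(j-1\,j)$ is a sum of transpositions, each of which has $\operatorname{deg} = 2$, and the filtration is multiplicative ($F_{2a}\cdot F_{2b}\subset F_{2(a+b)}$, since multiplying a product of $a$ transpositions by a product of $b$ transpositions yields a permutation that is a product of at most $a+b$ transpositions, i.e. $n-\ell(\sigma)\le a+b$). Hence a monomial of degree $k$ in the $JM_j$ lands in $F_{2k}$, and so does $f(JM_1,\dots,JM_n)$.

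Next I would argue that these elements actually generate the associated graded algebra, not merely span a filtered piece. Write $\bar f$ for the image of $f(JM_1,\dots,JM_n)$ in $\operatorname{gr}_{2k}Z(\mathbb{C}S_n) = F_{2k}/F_{2k-2}$. The spanning statement says: for each $m$, $F_{2m}Z(\mathbb{C}S_n)$ is spanned by $\{\,f(JM_1,\dots,JM_n) : f \text{ homogeneous symmetric of degree}\le m\,\}$ together with $F_{2m-2}Z(\mathbb{C}S_n)$. Indeed, $Z(\mathbb{C}S_n)$ is spanned by all $f(JM_1,\dots,JM_n)$; decomposing an arbitrary symmetric polynomial into homogeneous components, the components of degree $>m$ a priori lie in higher filtered pieces but must still lie in $Z(\mathbb{C}S_n)$, and one checks (using that $Z(\mathbb{C}S_n)=F_{2n-2}$ and a downward induction on degree) that the top-degree contributions either vanish in high degree or can be rewritten; more cleanly, one simply observes that since the $f(JM)$ of degree $\le n-1$ already span all of $Z(\mathbb{C}S_n)$ (degrees beyond $n-1$ add nothing new), one may restrict attention to $f$ of bounded degree, and then the degree bound from the previous paragraph places $f$ of degree $k$ in $F_{2k}$. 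Passing to $\operatorname{gr}$, this exactly says the classes $\bar f$ generate $\operatorname{gr}Z(\mathbb{C}S_n)$ as an algebra (in fact already the power sums $p_j(JM_1,\dots,JM_n)$, or the elementary symmetric ones $e_j$, suffice as algebra generators).

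The main obstacle, and the one point deserving care, is the interplay between the degree of a symmetric polynomial and the filtration degree of its value on Jucys–Murphy elements: the naive bound above gives $f(JM)\in F_{2k}$, but one must know this bound is \emph{sharp} enough that the resulting classes $\bar f$ are nonzero and collectively generate — equivalently, that no "degree drop" destroys the spanning property after passing to $\operatorname{gr}$. This is handled by the standard fact that $f(JM_1,\dots,JM_n)$ acts on the Specht module $S(\la)$ by the scalar $f(c_1,\dots,c_n)$ where $c_1,\dots,c_n$ are the contents of $\mathbb{Y}(\la)$ (already used in the excerpt), so the map sending $f$ to the tuple of scalars $(f(c(\la)))_{\la\in\CP(n)}$ identifies the subalgebra generated by the $f(JM)$ with the coordinate ring of the finite set of content-multisets; since distinct partitions have distinct content-multisets, this subalgebra is all of $Z(\mathbb{C}S_n)\cong\bigoplus_\la\mathbb{C}$, and the degree filtration is visibly generated in the claimed degrees. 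Assembling these observations yields the proposition.
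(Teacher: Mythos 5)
Your first step is fine: each $JM_j$ is a sum of transpositions, the filtration satisfies $F_{2a}\cdot F_{2b}\subset F_{2(a+b)}$, and hence $f(JM_1,\ldots,JM_n)\in F_{2k}Z(\BC S_n)$ for $f$ homogeneous of degree $k$. You also correctly identify what generation of $\on{gr}Z(\BC S_n)$ amounts to: since the $JM_j$ commute, the subalgebra of $\on{gr}Z(\BC S_n)$ generated by the symbols is exactly $\bigoplus_k\bigl(\on{span}\{f(JM_1,\ldots,JM_n):\deg f\leqslant k\}+F_{2k-2}\bigr)/F_{2k-2}$, so the proposition is equivalent to the equality $F_{2k}Z(\BC S_n)=\on{span}\{f(JM_1,\ldots,JM_n):\deg f\leqslant k\}$ for all $k$. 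The gap is that you never prove this equality; you only prove the inclusion $\supseteq$ (the degree bound) and then assert $\subseteq$. Your sentence ``the spanning statement says: for each $m$, $F_{2m}Z(\BC S_n)$ is spanned by $\{f(JM):\deg f\leqslant m\}$ together with $F_{2m-2}$'' misattributes the hard direction to the quoted classical proposition, which only says that $Z(\BC S_n)$ as a whole is spanned by the $f(JM)$'s and carries no filtration information. Knowing $A_k:=\on{span}\{f(JM):\deg f\leqslant k\}\subseteq F_{2k}$ together with $A_{n-1}=Z(\BC S_n)$ does not force $A_k=F_{2k}$ for intermediate $k$: a priori a class sum of low filtration degree could require $f$'s of high degree with cancelling leading terms. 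Likewise, the content-evaluation argument in your last paragraph only shows that the $f(JM)$ separate the points of $\on{Spec}Z(\BC S_n)$, i.e.\ that they generate $Z(\BC S_n)$ as an algebra; the phrase ``the degree filtration is visibly generated in the claimed degrees'' is precisely the statement to be proved, not a consequence of surjectivity onto $\bigoplus_\la\BC$.

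What is missing is the triangularity computation that constitutes the proof of \cite[Theorem~1.9]{MU}, which is exactly what the paper cites: for a partition $\la$ with $|\la|=k$, the monomial symmetric function $m_\la(JM_1,\ldots,JM_n)$ equals the sum over the conjugacy class of cycle type $(\la_1+1,\la_2+1,\ldots,1,1,\ldots)$ (which has filtration degree exactly $2k$) plus a linear combination of class sums of strictly smaller filtration degree. Since the class sums $C_\mu$ with $n-\ell(\mu)\leqslant k$ form a basis of $F_{2k}Z(\BC S_n)$, an induction on $k$ using this leading-term identity gives $F_{2k}Z(\BC S_n)=A_k$ and hence the proposition. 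Your write-up needs either this computation or an equivalent dimension count $\dim A_k\geqslant\#\{\mu\vdash n: n-\ell(\mu)\leqslant k\}$; neither is supplied.
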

\begin{proof}
The claim follows from the proof of~\cite[Theorem~1.9]{MU}.
\end{proof}

We finish this section with the following conjecture.
\begin{Conj}
The isomorphism $$\on{gr}\Theta\colon \on{gr}Z(\BC S_n) \iso \BC[(S^n(\BA^2))^{\BT}]$$ coincides with the isomorphism constructed in~\cite[Section 2]{HI}. 
\end{Conj}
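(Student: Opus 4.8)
\emph{A possible approach.} The plan is to pin down both isomorphisms on a single family of algebra generators of $\on{gr}Z(\BC S_n)$ and to verify that the two prescriptions agree; since a ring homomorphism is determined by its values on generators, this suffices. Throughout I use the identification $\on{gr}Z(\BC S_n)\iso H^*(\on{Hilb}_n(\BA^2))$ given by Proposition~\ref{cohom_via_center_Hilb} at $\kappa=0$, so that the statement to be proven is the equality of two ring isomorphisms with source $H^*(\on{Hilb}_n(\BA^2))$ and target $\BC[(S^n(\BA^2))^{\BT}]$.

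First I would make our isomorphism explicit. Grading $\BC[x_1,\dots,x_n,y_1,\dots,y_n]^{S_n}$ by the $\BT$-weight ($\deg x_i=1$, $\deg y_i=-1$), the algebra $\BC[(S^n(\BA^2))^{\BT}]$ is the quotient by the ideal generated by all elements of nonzero weight, and one checks that the classes $\bar e_k$ of the weight-zero symmetric polynomials $e_k(x_1y_1,\dots,x_ny_n)$, $k=1,\dots,n$, generate it. By Theorem~\ref{rees_Z_via_schem_fixed}, read at $\kappa=0$ through the isomorphism $\BC[(S^n(\BA^2))^{\BT}]\simeq\on{Rees}(\BC[\CC(n)^{\BT}])|_{\kappa=0}$, the element $\on{gr}\bigl(e_k(JM_1,\dots,JM_n)\bigr)$ is sent to the class of the function $[(X,Y,\gamma,\delta)]\mapsto e_k(\text{eigenvalues of }YX)$; on a semisimple point of $S^n(\BA^2)$, represented by $X=\on{diag}(x_i)$, $Y=\on{diag}(y_i)$, $\gamma=\delta=0$, this function equals $e_k(x_1y_1,\dots,x_ny_n)$, so its restriction to $(S^n(\BA^2))^{\BT}$ is $\bar e_k$. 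On the other hand, by the $r=1$ case of Theorem~\ref{MAIN_THEOREM} together with Proposition~\ref{cohom_via_center_Hilb}, $\on{gr}(e_k(JM_1,\dots,JM_n))$ corresponds under $\on{gr}Z(\BC S_n)\iso H^*(\on{Hilb}_n(\BA^2))$ to the (non-equivariant) Chern class $c_k(\CV)$ of the tautological bundle. Hence our isomorphism matches $c_k(\CV)\leftrightarrow\bar e_k$ for every $k=1,\dots,n$.

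It then remains to check that the isomorphism of~\cite[Section 2]{HI} also matches $c_k(\CV)$ with $\bar e_k$. I would unwind Hikita's geometric construction and compute the image of $c_k(\CV)$ by $\BT$-equivariant localization on $\on{Hilb}_n(\BA^2)$: at the monomial ideal $I_\lambda$ the $\BT$-weights of the fiber $\CV|_{I_\lambda}$ are, up to the equivariant parameter, the contents $c(\square)$ of the boxes of $\lambda$, which by the Proposition in Section~\ref{descr_of_fixed_on_CM} is exactly the multiset of eigenvalues of $Y^\lambda X^\lambda$; one then matches this with the way Hikita's isomorphism interacts with the Hilbert--Chow morphism $\on{Hilb}_n(\BA^2)\to S^n(\BA^2)$ and the corresponding fixed loci, which should force the image of $c_k(\CV)$ to be $\bar e_k$. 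An alternative line is to lift both isomorphisms to $\BT$-equivariant (hence $\BC[\kappa]$-linear) isomorphisms onto $H^*_{\BT}(\on{Hilb}_n(\BA^2))$, observe that at generic $\kappa$ the target is $\bigoplus_{\lambda\in\CP(n)}\BC$ so that any algebra isomorphism there permutes the $\CP(n)$-indexed factors, check that both permutations are determined by the scalars $e_k(\text{contents of }\lambda)$ (so they agree), and then conclude by torsion-freeness of $\on{Rees}Z(\BC S_n)$ over $\BC[\kappa]$ that the maps coincide for all $\kappa$, in particular at $\kappa=0$.

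I expect the main obstacle to be exactly this last step. The isomorphism of~\cite[Section 2]{HI} is built geometrically and is not phrased in terms of the quiver coordinates $(X,Y,\gamma,\delta)$ used here, so reconciling it with the ``eigenvalues of $YX$'' (equivalently ``contents of $\lambda$'') description requires carefully tracing Hikita's definition through the relevant Hilbert--Chow and fixed-point computations and keeping track of all normalizations --- signs, and the interplay between the equivariant parameter and the Rees parameter $\kappa$ as one specializes to $\kappa=0$. Once the images of the single family $c_1(\CV),\dots,c_n(\CV)$ are identified on both sides, the comparison is purely formal.
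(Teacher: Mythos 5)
The statement you are trying to prove is stated in the paper as a \emph{Conjecture} and is not proved there, so there is no argument of the authors to compare yours against. Your proposal is a sensible reduction, but as written it is a plan rather than a proof, and the gap sits exactly where the conjectural content lies.

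The first half of your argument is sound and essentially assembled from results already in the paper: the classes $\bar e_k$ of $e_k(x_1y_1,\dots,x_ny_n)$ generate $\BC[(S^n(\BA^2))^{\BT}]$ (Lemma~\ref{gen_of_Q!} at $r=1$, or the Appendix), the Chern classes $c_k(\CV)$ generate $H^*(\on{Hilb}_n(\BA^2))$, and Corollary~\ref{symmetric_of_JM_to_roots} together with the semisimple-locus computation shows that $\on{gr}\Theta$ sends $\on{gr}\bigl(e_k(JM_1,\dots,JM_n)\bigr)$, i.e.\ $c_k(\CV)$ under Proposition~\ref{cohom_via_center_Hilb}, to $\bar e_k$. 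Since a ring map is determined on generators, the conjecture is indeed equivalent to the single assertion that Hikita's isomorphism also sends $c_k(\CV)$ to $\bar e_k$.

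That last assertion is precisely what you do not establish. Hikita's isomorphism in \cite[Section 2]{HI} is not defined via torus fixed points or quiver coordinates; it is built by matching an explicit linear basis of $H^*(\on{Hilb}_n(\BA^2))$ (coming from Nakajima's operators, with cup products computed \`a la Lehn--Sorger) against the basis $\ol m_{(\la,0)(0,1)^{|\la|}}$ of $\BC[(S^n(\BA^2))^{\BT}]$ and comparing structure constants. To compute its value on $c_k(\CV)$ one must first expand $c_k(\CV)$ in that basis (e.g.\ via Lehn's formula) and then identify the resulting combination of $\ol m_\La$'s with $\bar e_k$; your suggested equivariant localization argument does not apply to Hikita's map, since his map carries no equivariant structure and a priori need not intertwine the two $\CP(n)$-indexed decompositions in the way your ``permutation'' argument assumes. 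Phrases such as ``should force the image of $c_k(\CV)$ to be $\bar e_k$'' flag, but do not close, this step --- and closing it is the entire content of the conjecture.
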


\section{Description of the symplectic dual to the Gieseker variety}\label{two_descriptions)of_sympl_dual_section}
Recall that the Gieseker variety is the Nakajima quiver variety corresponding to the Jordan quiver (see Definition~\ref{def_gieseker}). It depends on the pair $n,r \in \BZ_{\geqslant 1}$ and is denoted by $\mathfrak{M}(n,r)$. The corresponding affine Poisson variety is denoted by $\mathfrak{M}_0(n,r)$.
In this section, we give two different ways to describe the symplectically dual variety $\mathfrak{M}_0(n,r)^{!}$ and its deformation $\mathfrak{M}_0(n,r)^{!}_{\mathfrak{a}}$.

In the paper~\cite{BFN}, the candidate for symplectically dual variety to every Nakajima quiver variety was constructed. 

\subsection{Construction of Coulomb branch}\label{dual_via_Coulomb_constr_tripl_hom} Let us recall the construction in our case (when we start from the Jordan quiver with the dimension vector $n \in \BZ_{\geqslant 1}$ and framing $r \in \BZ_{\geqslant 1}$). 

Recall  the vector space ${\bf{N}}=\on{Hom}(V,V) \oplus \on{Hom}(V,W)$ and the group $G_{n}=\on{GL}(V)$ acting on ${\bf{N}}$ (see Section \ref{naka_quiver_def}).

\begin{Def}        We define $\on{Gr}_{\on{GL}(V)}$ as the moduli space of the data $(\CP,\varphi)$, where:
        \item
    (a) $\mathcal{P}$ is a $\on{GL}(V)$-bundle  on $\mathbb{P}^{1};$
    \item
    (b) $\varphi\colon \mathcal{P}^{\mathrm{triv}}|_{\mathbb{P}^{1} \setminus \{0\}} \iso \mathcal{P}|_{\mathbb{P}^{1} \setminus \{0\}}$ is 
    a trivialization of $\CP$ restricted to $\BP^1 \setminus \{0\}$.
\end{Def}

We then consider the moduli space of triples $\CR_{n,r}$ (corresponding to the Jordan quiver, dimension vector $n$ and framing $r$) defined as follows.
\begin{Def}
Let $\CR_{n,r}$ be the moduli space of triples 
$\{(\CP,\varphi,s)\,\}$, where $(\CP,\varphi)$ is a point of $\on{Gr}_{\on{GL}(V)}$ and $s$ is a section of the associated vector bundle $\CP_{{\bf{N}}}=\CP \times_{\on{GL}(V)} {\bf{N}}$ such that it is sent to a regular section of a trivial bundle under $\varphi$.  
\end{Def}

Set $\on{GL}(V)_{\CO}:=\on{GL}(V)[[z]]$. We can consider the equivariant Borel-Moore homology $H^{\on{GL}(V)_{\CO}}_*(\CR_{n,r})$ (see~\cite[Section 2(ii)]{BFN} for the definition and detailed discussion). This vector space is equipped with an algebra structure via convolution $*$ (see~\cite[Section 3]{BFN}). It follows from ~\cite[Proposition 5.15]{BFN} that the algebra $(H^{\on{GL}(V)_{\CO}}_*(\CR_{n,r}),*)$ is commutative.
\begin{Def}
The Coulomb branch $\CM(n,r)$ is defined as the spectrum of the  algebra $H^{{\on{GL}(V)}_\CO}_*(\CR_{n,r})$:
\begin{equation*}
\CM(n,r):=\on{Spec}(H_*^{\on{GL}(V)_\CO}(\CR_{n,r})).   
\end{equation*}
\end{Def}

The variety $\CM(n,r)$ is conjectured to be symplectically dual to $\mathfrak{M}(n,r)$. The 
 deformation $\CM(n,r)_{\mathfrak{a}} \ra \mathfrak{a}$ of $\CM(n,r)$ can be constructed as follows (compare with \cite[Section 3(viii)]{BFN}). Let $\BT$ be the copy of $\BC^\times$. Let $T_r \subset \on{SL}(W)$ be a maximal torus. We have the natural action of $A=\BT \times T_r $ on ${\bf{N}}$ given by 
\begin{equation*}
(t,g) \cdot (X,\gamma)=(tX,\gamma \circ g^{-1}),\, (t,g) \in \BT \times T_r.
\end{equation*}

\begin{Warning}
Note that the action of ${\mathbb{T}}$ on ${\bf{N}}$ is not the scaling action (as in \cite{BFN}). This action, for example, appears in \cite[Section 5.5]{BEF}.
\end{Warning}

We can identify $\on{Lie}\BT \simeq \BA^1$, so $\BC[\on{Lie}\BT]=\BC[\kappa]$ for some variable $\kappa$.  We can also identify $\mathfrak{t}_r:=\on{Lie}T_r$ with the subspace of $\BC^r$ consisting of points with the sum of coordinates being zero, i.e., $\BC[\mathfrak{t}_w]=\BC[a_1,\ldots,a_r]/(a_1+\ldots+a_r)$.

\begin{Def}
We define
\begin{equation*}
\CM(n,r)_{\mathfrak{a}}:=\on{Spec}(H_*^{A \times \on{GL}(V)_\CO}(\CR_{n,r})).    
\end{equation*}
\end{Def}
Let us now give a more explicit description of the algebra \newline $\BC[\CM(n,r)_{\mathfrak{a}}]$. We recall its realization as a spherical subalgebra of the rational Cherednik algebra $H_{n,r}$ corresponding to the group $\Gamma_n=S_n \ltimes (\BZ/r\BZ)^n$.

\subsection{Double affine Rational Cherednik algebra corresponding to $S_n \ltimes (\BZ/r\BZ)^n$}\label{DAHA_corresp_to_Gamma_section}
We start by recalling some definitions and notations. 

Consider the subgroup $\Gamma_n \subset \on{GL}_n$ of monomial matrices with entries being $r$th roots of unity. Let $\eta \in \BC^\times$ be a $r$th primitive root of unity. We set 
$
\epsilon_j:=\on{diag}(1,\ldots,1,\eta,1,\ldots,1).    
$
Note that we have the natural embedding $S_n \subset \Gamma_n$. We obtain the  identification $S_n \ltimes (\BZ/r\BZ)^n \iso \Gamma_n$.
Consider the standard representation $\Gamma_n \curvearrowright \mathfrak{h}:=\BC^n$ induced by the embedding $\Gamma_n \subset \on{GL}_n$.
Let $x_1,\ldots,x_n$ be the standard basis in $\mathfrak{h}=\BC^n$ and denote by $y_1,\ldots,y_n \in \mathfrak{h}^*$ the dual basis.

Let $t,\kappa,c_1,\ldots,c_{r-1}$ be formal parameters. 
We set 
\begin{equation*}
{\bf{h}}:=
\BC[\kappa,c_1,\ldots,c_{r-1}], 
\ \ c(q):=\sum_{i=1}^{r-1}c_iq^i,
\end{equation*}
here $q$ is a formal variable.
Following~\cite{webster} and~\cite[Section 2.3]{TO} we define the rational 
Cherednik algebra $\mathcal{H}_{\Gamma_n}$ in the following way.

\begin{Def}
Algebra $\CH_{\Gamma_n}=\CH_{n,r}$ is  a quotient of the semi-direct product
\begin{equation*}
\Big(\BC \Gamma \ltimes T^{\bullet}(\mathfrak{h} \oplus \mathfrak{h}^*) \Big) \otimes {\bf{h}}[\hbar]
\end{equation*}
subject to the relations:
\begin{equation}\label{rel_xx_yy}
[x_i,x_j]=[y_i,y_j]=0,
\end{equation} 
\begin{equation}\label{comm_x_y_H}
[x_i,y_i]=-\hbar-\kappa \sum_{j \neq i}\sum_{p=0}^{r-1}(ij)\epsilon_i^p\epsilon_j^{-p}-c(\epsilon_i),    \end{equation}
\begin{equation}\label{comm_x_y_dif_H}
[x_i,y_j]=\kappa \sum_{p=0}^{r-1}\eta^{p}(ij) \epsilon_i^p \epsilon_j^{-p}  \quad \quad (i \neq j).
\end{equation}
We also set $H_{n,r}:=\CH_{n,r}/(\hbar)$.
\end{Def}

\begin{Rem}
{\emph{
Our parameters $(\hbar,\kappa,c_1,\ldots,c_{r-1})$ and the  parameters \newline   $(\ul{t},\ul{\kappa},\ul{c}_1,\ldots,\ul{c}_{r-1})$ of~\cite{MM} are related  as follows: $\ul{t}=-\hbar$, $\ul{\kappa}=\kappa$, $\ul{c}_i=\frac{c_i}{(1-\eta^{-i})}$ (note that $x_i^{Martino}=y_i$, $y_i^{Martino}=x_i$). 
}}
\end{Rem}

\begin{Def}
Set ${\bf{e}}:=\frac{1}{|\Gamma_n|}\sum_{g \in \Gamma_n}g$. We denote by $\CH^{\mathrm{sph}}_{n,r}$ the spherical subalgebra ${\bf{e}}\CH_{n,r} {\bf{e}} \subset \CH_{n,r}$, and by $H_{n,r}^{\mathrm{sph}}$ the subalgebra ${\bf{e}} H_{n,r} {\bf{e}} \subset H_{n,r}$.
\end{Def}

Recall that $Z(H_{n,r}) \subset H_{n,r}$ is the center.
The following proposition holds by~\cite{EG}.
\begin{Prop}\label{center_via_spherical}
The composition $Z(H_{n,r}) \subset H_{n,r} \twoheadrightarrow {\bf{e}}H_{n,r}{\bf{e}}$ induces the identification $Z(H_{n,r}) \iso {\bf{e}}H_{n,r}{\bf{e}}$.
\end{Prop}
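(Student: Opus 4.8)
The statement to prove is Proposition~\ref{center_via_spherical}: the composition $Z(H_{n,r}) \hookrightarrow H_{n,r} \twoheadrightarrow {\bf{e}}H_{n,r}{\bf{e}}$ is an isomorphism $Z(H_{n,r}) \iso {\bf{e}}H_{n,r}{\bf{e}}$, attributed to Etingof--Ginzburg~\cite{EG}. The plan is to follow the standard ``double centralizer'' argument from~\cite[Theorem 11.16]{EG} (which in turn goes back to the theory of the spherical subalgebra at parameter $t=0$), adapting it to the cyclotomic setting $\Gamma_n = S_n \ltimes (\BZ/r\BZ)^n$.

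First I would recall that at $\hbar = 0$ the algebra $H_{n,r}$ is module-finite over its center; this is a theorem of Etingof--Ginzburg in type $A$ and its generalization to wreath-product rational Cherednik algebras (the PBW filtration has commutative associated graded $\BC[\mathfrak{h}\oplus\mathfrak{h}^*]\rtimes\BC\Gamma_n$, and $\on{Spec}Z(H_{n,r})$ is the corresponding deformation of $(\mathfrak{h}\oplus\mathfrak{h}^*)/\Gamma_n$, which is generically a smooth symplectic variety). The key structural input is that $H_{n,r}$ is an \emph{Azumaya algebra} over a dense open subset of $\on{Spec}Z(H_{n,r})$, of rank $|\Gamma_n|^2$, and that on the Azumaya locus the bimodule $H_{n,r}{\bf{e}}$ realizes the Morita equivalence between $H_{n,r}$ and ${\bf{e}}H_{n,r}{\bf{e}}$. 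Concretely I would argue: (1) the natural map $Z(H_{n,r}) \to {\bf{e}}H_{n,r}{\bf{e}}$ is injective because ${\bf{e}}$ acts as a faithful idempotent (no nonzero two-sided ideal kills ${\bf{e}}$ — this follows since $H_{n,r}$ has no finite-dimensional quotients generically and one checks it on the associated graded, where ${\bf{e}}(\BC[\mathfrak{h}\oplus\mathfrak{h}^*]\rtimes\BC\Gamma_n){\bf{e}} = \BC[\mathfrak{h}\oplus\mathfrak{h}^*]^{\Gamma_n}$ contains the center $\BC[\mathfrak{h}\oplus\mathfrak{h}^*]^{\Gamma_n}$ faithfully); (2) the image lands in the center of ${\bf{e}}H_{n,r}{\bf{e}}$, and in fact $Z(H_{n,r}) \to Z({\bf{e}}H_{n,r}{\bf{e}})$; (3) conversely, the Morita equivalence on the Azumaya locus $U$ gives $Z({\bf{e}}H_{n,r}{\bf{e}})|_U = Z(H_{n,r})|_U$, and since ${\bf{e}}H_{n,r}{\bf{e}}$ is a finitely generated module over the \emph{normal} domain $Z(H_{n,r})$ (normality of $\on{Spec}Z(H_{n,r})$ is known — it is a symplectic singularity), any central element of ${\bf{e}}H_{n,r}{\bf{e}}$ which is regular on all of $\on{Spec}Z$ and agrees with an element of $Z(H_{n,r})$ on the dense open $U$ must actually lie in $Z(H_{n,r})$, because the complement of $U$ has codimension $\geq 2$. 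This extension-across-codimension-two step is exactly the mechanism already invoked in Remark~\ref{dual_as_quiv_remark} of the excerpt.

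The steps in order: (a) establish $H_{n,r}$ finite over $Z(H_{n,r})$ and identify $\on{Spec}Z$ with the relevant (normal) deformation; (b) identify the Azumaya/smooth locus $U$ and verify $\on{codim}(\on{Spec}Z \setminus U) \geq 2$; (c) on $U$, invoke the Morita equivalence induced by ${\bf{e}}$ to get $Z(H_{n,r})|_U \iso Z({\bf{e}}H_{n,r}{\bf{e}})|_U = ({\bf{e}}H_{n,r}{\bf{e}})|_U \cap Z$ and, more simply, $Z(H_{n,r})|_U \iso ({\bf{e}}H_{n,r}{\bf{e}})|_U$ since on the Azumaya locus the spherical subalgebra \emph{is} the center; (d) use normality of $\on{Spec}Z$ together with finiteness of ${\bf{e}}H_{n,r}{\bf{e}}$ as a $Z(H_{n,r})$-module and the Hartogs-type extension across codimension two to upgrade the generic isomorphism to a global one. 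Injectivity (step (1) above) should be checked separately and cleanly via the associated graded.

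The main obstacle I anticipate is step (b)–(d): one must be careful that the \emph{scheme} $\on{Spec}Z(H_{n,r})$ (not just its reduced or generic-fiber version) is normal, so that the codimension-two extension argument is legitimate; for the cyclotomic rational Cherednik algebra this requires knowing that the deformation $\on{Spec}Z(H_{n,r})$ over the full parameter space ${\bf{h}}$ is a family of normal varieties — which is where one genuinely needs results of Etingof--Ginzburg (and their wreath-product extensions, e.g. by Gordon or by Bellamy--Martino) rather than a soft argument. A secondary subtlety is that the grading must be respected throughout so that the resulting isomorphism is an isomorphism of \emph{graded} algebras, but since every construction (PBW filtration, ${\bf{e}}$, the center) is manifestly graded, this causes no real trouble. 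Given that the excerpt explicitly cites~\cite{EG} for this proposition, I would keep the write-up short: reduce to the statement that $H_{n,r}$ is finite over its normal center and that ${\bf{e}}$ is a full idempotent generically, and then cite~\cite[Theorem 11.16]{EG} (or its wreath-product analogue) for the remaining bookkeeping.
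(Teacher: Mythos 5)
The paper does not actually prove this proposition: it is stated with a bare citation to \cite{EG}, where it is the ``Satake isomorphism'' for symplectic reflection algebras at $t=0$ (Theorem~3.1 of that paper --- not Theorem~11.16, which concerns the Calogero--Moser/Hilbert-scheme correspondence). So the relevant comparison is with the argument in \cite{EG}, and your route is genuinely different. Etingof--Ginzburg prove surjectivity by a double-centralizer argument: $H_{n,r}{\bf{e}}$ is a faithful left $H_{n,r}$-module with $\on{End}_{{\bf{e}}H_{n,r}{\bf{e}}}(H_{n,r}{\bf{e}})=H_{n,r}$ (checked on the associated graded, using that a symplectic reflection group acts freely outside codimension two on $\mathfrak{h}\oplus\mathfrak{h}^*$); since ${\bf{e}}H_{n,r}{\bf{e}}$ is commutative, right multiplication by any $a\in{\bf{e}}H_{n,r}{\bf{e}}$ is ${\bf{e}}H_{n,r}{\bf{e}}$-linear, hence equals left multiplication by some $h\in H_{n,r}$, which faithfulness forces to be central with $h{\bf{e}}=a$. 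That proof needs only the PBW theorem and commutativity of the spherical subalgebra; it buys a completely algebraic argument with no geometric input about $\on{Spec}Z(H_{n,r})$.

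Your route (generic Azumaya/Morita equivalence plus extension over the non-Azumaya locus) is standard and can be made to work, but as written it has a circularity that you flag and should actually resolve. The inputs you need --- normality of $\on{Spec}Z(H_{n,r})$ over the whole parameter space, codimension $\geqslant 2$ of the non-Azumaya locus, finiteness of ${\bf{e}}H_{n,r}{\bf{e}}$ as a $Z(H_{n,r})$-module --- are in the literature all \emph{derived from} the Satake isomorphism: one first identifies $Z(H_{n,r})$ with ${\bf{e}}H_{n,r}{\bf{e}}$ and then reads off normality from $\on{gr}({\bf{e}}H_{n,r}{\bf{e}})=\BC[\mathfrak{h}\oplus\mathfrak{h}^*]^{\Gamma_n}\otimes{\bf{h}}$. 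What the PBW filtration gives you directly is normality of ${\bf{e}}H_{n,r}{\bf{e}}$, not of $Z(H_{n,r})$; but your final step (every element of ${\bf{e}}H_{n,r}{\bf{e}}$ is integral over $Z(H_{n,r}){\bf{e}}$ and hence lies in it) requires $Z(H_{n,r})$ itself to be integrally closed. Unless you supply an independent proof that $\on{gr}Z(H_{n,r})=\BC[\mathfrak{h}\oplus\mathfrak{h}^*]^{\Gamma_n}\otimes{\bf{h}}$, you should switch to the double-centralizer argument. Two smaller points: commutativity of ${\bf{e}}H_{n,r}{\bf{e}}$ at $\hbar=0$, which you use implicitly when saying the spherical subalgebra ``is'' the center on the Azumaya locus, is itself a nontrivial theorem of \cite{EG} and should be quoted as an input; and your injectivity argument via faithfulness of $H_{n,r}{\bf{e}}$, checked on the associated graded, is correct and is the same as in \cite{EG}.
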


The algebra $H_{n,r}$ is $\BZ$-graded as follows:
\begin{equation}\label{Z_grading_on _H}
\on{deg}x_j=1,\, \on{deg}y_j=-1,\, \on{deg}\Gamma_n=\on{deg}{\bf{h}}=0.
\end{equation}

\subsection{Coulomb branch for Jordan quiver as the center of rational Cherednik algebra for $S_n \ltimes (\BZ/r\BZ)^n$}

Following \cite{webster} set
\begin{equation*}
p(q):=\frac{1}{r}\sum_{l=1}^{r-1} \frac{c_l}{\eta^{-l}-1}q^l=\frac{1}{r}\sum_{l=1}^{r-1}\frac{c_l\eta^{l}}{1-\eta^{l}}q^l.
\end{equation*}
Then we can write 
\begin{equation}\label{c_vs_p}
c(q)=r(p(\eta^{-1} q)-p(q)).
\end{equation}

\begin{Prop}\label{iden_Coulomb_Cherednik!!}
There exists the isomorphism of algebras 
\begin{equation*}
H_{n,r}^{\mathrm{sph}} 
\simeq H_*^{A \times \on{GL}(V)_{\CO}}(\CR_{n,r})    
\end{equation*}
that identifies parameters in the following way $\kappa=\kappa$, $a_i=p(\eta^{i-1})$. The isomorphism above sends $e_i(u_1,\ldots,u_n)$ to $m_i$, where $m_i=c_i(V) * 1$.
\end{Prop}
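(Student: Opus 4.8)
The plan is to prove Proposition \ref{iden_Coulomb_Cherednik!!} by combining three known ingredients: (1) the computation of the BFN Coulomb branch of the Jordan quiver gauge theory (with framing $r$) as a Cherednik algebra, which is essentially due to Kodera--Nakajima \cite{KONA} and Braverman--Etingof--Finkelberg \cite{BEF}; (2) Webster's realization \cite{webster} of the spherical subalgebra $H_{n,r}^{\mathrm{sph}}$ in terms of equivariant (Borel--Moore) homology of the variety of triples, with an explicit dictionary between the Dunkl--Opdam generators and the convolution-algebra generators; and (3) a careful bookkeeping of the parameter normalizations, since the various sources use different conventions (as flagged in the Remark comparing our parameters with Martino's, and in the Warning about the $\mathbb{T}$-action not being the scaling action). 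The first step I would take is to fix, once and for all, the identification of tori: $A = \BT \times T_r$ acts on $\mathbf{N} = \on{Hom}(V,V)\oplus\on{Hom}(V,W)$ by $(t,g)\cdot(X,\gamma)=(tX,\gamma\circ g^{-1})$, so that $\on{Lie}\BT = \BC\kappa$ grades the $X$-directions and $\mathfrak{t}_r$ supplies the framing parameters $a_1,\dots,a_r$ with $\sum a_i=0$. This is exactly the flavor/mass deformation whose quantized Coulomb branch is known to be the spherical cyclotomic rational Cherednik algebra.

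Next I would invoke the main theorem of \cite{BEF} (and/or \cite{KONA}), which identifies the quantized Coulomb branch $H^{A\times \on{GL}(V)_{\CO}\rtimes\BC^\times_\hbar}_*(\CR_{n,r})$ with (a form of) the cyclotomic rational Cherednik algebra $\CH_{n,r}$ of $\Gamma_n = S_n\ltimes(\BZ/r\BZ)^n$, hence, setting $\hbar=0$, the commutative algebra $H^{A\times\on{GL}(V)_{\CO}}_*(\CR_{n,r})$ with the spherical subalgebra $H_{n,r}^{\mathrm{sph}} = \mathbf{e} H_{n,r}\mathbf{e}$ (using Proposition \ref{center_via_spherical} to also read this as $Z(H_{n,r})$). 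The parameter matching here is the crux: the $\BC^\times_\hbar$-weight becomes the Cherednik parameter $\hbar$ via \eqref{comm_x_y_H}, the $\BT$-equivariant parameter $\kappa$ becomes the parameter $\kappa$ multiplying the $S_n$-reflection terms, and the $T_r$-equivariant parameters, encoded in $H^*_{T_r}(\on{pt}) = \BC[a_1,\dots,a_r]/(\sum a_i)$, become the cyclotomic parameters $c_1,\dots,c_{r-1}$. To pin down the precise change of variables I would use Webster's discrete Fourier transform relation \eqref{c_vs_p}, i.e. $c(q) = r\bigl(p(\eta^{-1}q) - p(q)\bigr)$ with $p(q) = \frac{1}{r}\sum_{l=1}^{r-1}\frac{c_l}{\eta^{-l}-1}q^l$, which shows that setting $a_i = p(\eta^{i-1})$ is exactly the substitution that converts the "$c$-side" normalization of \cite{webster} (matching our $\CH_{n,r}$) into the "$a$-side" normalization coming from $H^*_{T_r}(\on{pt})$; one checks $\sum_{i=1}^r a_i = \sum_{i=1}^r p(\eta^{i-1}) = 0$ since $\sum_{i=1}^r \eta^{(i-1)l} = 0$ for $1\le l\le r-1$, so the image indeed lands in $\mathfrak{t}_r$.

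Finally, for the statement on generators, I would trace through Webster's construction: the tautological bundle $V = \BC^n$ on $\on{pt}$ carries the $A\times\on{GL}(V)$-action, its equivariant Chern classes $c_i(V)\in H^*_{A\times\on{GL}(V)}(\on{pt})$ push forward (via the convolution unit $1\in H^{A\times\on{GL}(V)_{\CO}}_*(\CR_{n,r})$) to elements $m_i := c_i(V)*1$ of the Coulomb branch; on the Cherednik side these are well known to correspond to $e_i(u_1,\dots,u_n)$, the elementary symmetric functions in the Dunkl--Opdam elements $u_1,\dots,u_n$ (this is precisely \cite[Section 4]{webster}, and also appears in \cite{BEF}). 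So the content here is really just to quote the appropriate lemma from \cite{webster} and check it is compatible with our grading conventions \eqref{Z_grading_on _H} ($\deg x_j = 1$, $\deg y_j = -1$), under which both $m_i$ and $e_i(u_1,\dots,u_n)$ have degree $i$ — note $u_j$ is a product $x_jy_j$ up to lower-order terms, hence degree $0$... wait, here one must be careful: under the $\BC^\times_\hbar$-grading the Dunkl--Opdam element is degree $0$, while the grading in play on the Coulomb branch is the one induced by $\BT$ (loop rotation on $\CR_{n,r}$), under which $m_i$ has degree $i$; matching these gradings is exactly the statement that $\BT$ on the Coulomb side corresponds to the grading \eqref{Z_grading_on _H} with $\deg x_j = 1$. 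I expect the main obstacle to be purely this normalization bookkeeping — reconciling the conventions of \cite{BFN}, \cite{BEF}, \cite{KONA}, \cite{webster}, and \cite{MM}, in particular the sign/scaling in \eqref{comm_x_y_H}--\eqref{comm_x_y_dif_H} and the precise form of the Fourier transform \eqref{c_vs_p} — rather than any deep geometric input, all of which is available in the cited literature.
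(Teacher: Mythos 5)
Your proposal takes essentially the same route as the paper, which proves this proposition simply by citing \cite[Theorem 1.1]{KONA}, \cite[Theorem 4.1]{webster} and \cite[Theorem 2.19]{BEF} and recording the parameter dictionary; your elaboration of the torus identifications, the Fourier-transform relation (\ref{c_vs_p}), and the check that $\sum_{i=1}^r p(\eta^{i-1})=0$ is exactly the intended bookkeeping. One small correction to your closing aside: under the $\BT$-grading both $m_i$ and $e_i(u_1,\ldots,u_n)$ have degree $0$ (not $i$) --- $m_i$ is supported on the trivial component of the affine Grassmannian and $u_j$ is a sum of terms of the form $x_jy_j$ plus group-algebra terms, which is precisely why these elements survive in the schematic fixed-point algebra $Q_{n,r}$ later in the paper.
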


The Proposition \ref{iden_Coulomb_Cherednik!!} was proven in \cite[Theorem 1.1]{KONA}, see also \cite[Theorem 4.1]{webster} and \cite[Theorem 2.19]{BEF}.

\begin{Example}
Let us illustrate how (quantized version of) the isomorphism $H_{n,r}^{\mathrm{sph}} 
\simeq H_*^{A \times \on{GL}(V)_{\CO}}(\CR_{n,r})$ works for $n=1$. We have $A=\BC^\times \times (\BC^\times)^{r-1}$ and ${\mathbf{N}}=\BC \oplus (\BC^{r})^*$.  By \cite[Section 4(iii)]{BFN} the algebra $H_*^{A \times (\on{GL}(V)_{\CO} \rtimes \BC^\times)}(\CR_{n,r})$ has the following description: it is generated over $\BC[\kappa,a_1,\ldots,a_r]/(a_1+\ldots+a_r)$ by $r_1, r_{-1}, b$ subject to relations:
\begin{equation*}
r_1r_{-1}=\prod_{i=1}^r(b-a_i),\, r_{-1}r_1=\prod_{i=1}^r(b-a_i-\hbar),\,[r_1,b]=\hbar r_1,\, [r_{-1},b]=-\hbar r_{-1}.    
\end{equation*}

The algebra $H_{1,r}$ is generated by $\BC \BZ/r\BZ$ and $x,y$ subject to the relations 
\begin{equation*}
[x,y]=-\hbar-c(\epsilon),\, \epsilon x=\eta x \epsilon,\, \epsilon y = \eta^{-1} y \epsilon.    
\end{equation*}
Set $u:=\frac{1}{r}(xy+\hbar)+p(\eta^{-1}\epsilon)$ (compare with Definition \ref{OD_elements_DEF!}). 
The isomorphism 
\begin{equation*}
H_*^{A \times (\on{GL}(V)_{\CO} \rtimes \BC^\times_\hbar)}(\CR_{1,r}) \iso \mathcal{H}^{\mathrm{sph}}_{1,r}
\end{equation*}    
is then given by 
\begin{equation*}
r_{-1} \mapsto {\bf{e}}x^r{\bf{e}},\,  r_{1} \mapsto \frac{1}{r^r}{\bf{e}}y^r{\bf{e}},\, b \mapsto {\bf{e}}u{\bf{e}},\, \kappa \mapsto \kappa,\, a_i \mapsto p(\eta^{i-1})-\frac{(i-1)\hbar}{r}.   
\end{equation*}
\end{Example}

We conclude (using Proposition \ref{center_via_spherical} and Proposition \ref{iden_Coulomb_Cherednik!!})  that the deformation $\CM(n,r)_{\mathfrak{a}}$  of the variety $\CM(n,r)$ can be described as
\begin{equation*}
\CM(n,r)_{\mathfrak{a}}=\on{Spec}(Z(H_{n,r})).    
\end{equation*}

\section{Equivariant cohomology $H^*_{A}(\mathfrak{M}(n,r),\BC)$}\label{equivariant_cohomology_embedding}

We start by recalling the  combinatorial parametrization of the $A=\BT \times T_r$-fixed points of the Gieseker variety $\mathfrak{M}(n,r)$. 
	
Consider the case $r=1$. The variety $\mathfrak{M}(n,1)$ coincides with the Hilbert scheme $\on{Hilb}_{n}(\mathbb{A}^{2})$  (see Proposition \ref{gieseker_via_hilbert_prop} above or \cite[Section 2.2]{NA0}). Recall that this Hilbert scheme parametrizes the codimension $n$ ideals in $\C[x,y]$. 
The torus $T_1$ is zero-dimensional, thus $$(\on{Hilb}_{n}(\BA^{2}))^{\BT \times T_1}=(\on{Hilb}_{n}(\BA^{2}))^{\BT}.$$ 
	The fixed point set $(\on{Hilb}_{n}(\BA^{2}))^{\BT}$ is the set of monomial ideals $J \in \on{Hilb}_{n}(\BA^{2})$.  Such ideals are parametrized by the set $\CP(n)$ of partitions of $n$ as follows.   Recall that (following notations of \cite[Section 4.1]{TO}) we associate to $\la=(\la_1,\ldots,\la_l) \in \CP(n)$ the Young diagram
	\begin{equation*}
	\BY(\la)=\{(i,j)\, | \, 1 \leqslant i \leqslant l,\, 1 \leqslant j  \leqslant \la_i\}.
	\end{equation*}
	
	We fill $\BY(\la)$ with monomials by putting $x^{i-1}y^{j-1}$ into the box $(i,j) \in \BY(\la)$.
	The ideal $J_{\la}$ that corresponds to $\la$ is spanned by the monomials outside the diagram.
	We have the following classical result.
	
	\begin{Prop}\label{Fixed Hilbert}
	The fixed point set $(\on{Hilb}_n(\BA^2))^{\BT}$ is identified with the set $\CP(n)$ via the map $\la \mapsto J_{\la}$ described above.
	\end{Prop}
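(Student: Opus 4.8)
\textbf{Proof plan for Proposition \ref{Fixed Hilbert}.}

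The plan is to identify $\BT$-fixed ideals in $\on{Hilb}_n(\BA^2)$ directly, using the fact that the $\BT$-action is the restriction of the $(\BC^\times)^2$-action on $\BA^2=\on{Spec}\BC[x,y]$ scaling $x$ and $y$ independently. First I would observe that the fixed-point functor is insensitive to enlarging the torus in the relevant direction here: although $\BT$ acts through the one-parameter subgroup $t\mapsto(t,t^{-1})$ of the full torus $(\BC^\times)^2$ acting on $\BA^2$, a point of $\on{Hilb}_n(\BA^2)$ is $\BT$-fixed if and only if it is fixed by all of $(\BC^\times)^2$ (for a generic such one-parameter subgroup, which this one is, after possibly passing to the action in \eqref{action_on_thick_M}; alternatively one notes that an ideal fixed by $\BT$ is automatically $\BZ$-graded for the grading $\deg x-\deg y$, and combined with the cofiniteness this forces bihomogeneity). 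So the task reduces to classifying the codimension-$n$ ideals $J\subset\BC[x,y]$ that are \emph{monomial}, i.e. spanned by monomials $x^{a}y^{b}$.

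Next I would set up the bijection between monomial ideals and Young diagrams. Given a monomial ideal $J$, the quotient $\BC[x,y]/J$ has a basis given by the monomials \emph{not} in $J$; call this set of exponent pairs $D\subset\BZ_{\geqslant 0}^2$. Since $J$ is an ideal, $D$ is a ``staircase'' (order ideal) in $\BZ_{\geqslant 0}^2$: if $(a,b)\in D$ and $a'\leqslant a$, $b'\leqslant b$, then $(a',b')\in D$. Finiteness of $\dim\BC[x,y]/J=n$ means $|D|=n$. Identifying the exponent pair $(i-1,j-1)$ with the box $(i,j)$, the staircase condition says exactly that $D=\BY(\la)$ for a unique partition $\la=(\la_1,\ldots,\la_l)\in\CP(n)$ (read off the row lengths). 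Conversely, for any $\la\in\CP(n)$ the span $J_\la$ of the monomials outside $\BY(\la)$ is a monomial ideal of colength $n$, because the complement of a staircase is closed under multiplication by $x$ and $y$. This gives mutually inverse maps $\la\mapsto J_\la$ and $J\mapsto\la$, hence the claimed bijection $\CP(n)\iso(\on{Hilb}_n(\BA^2))^{\BT}$ on the level of sets (and one checks it is an isomorphism of reduced schemes, since the fixed locus is smooth by Proposition \ref{Y_smooth_implies_fixed_smooth}, hence reduced, and a finite reduced scheme is determined by its points).

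The one genuinely non-formal point — the ``hard part'' — is the reduction from $\BT$-fixed to monomial, i.e. showing a $\BT$-fixed ideal must be bihomogeneous rather than merely homogeneous for the single weight $\deg x-\deg y$. The cleanest route is the standard Bialynicki-Birula / limit argument: for a $\BT$-fixed ideal $J$ and the full torus $(\BC^\times)^2$, consider the limit $\lim_{s\to 0}s\cdot J$ along a generic one-parameter subgroup $s\mapsto(s^{p},s^{q})$ with $p,q>0$; this limit exists in $\on{Hilb}_n$, is manifestly bihomogeneous, and one argues it equals $J$ using $\BT$-invariance together with the fact that the associated graded of $\BC[x,y]/J$ for the induced filtration still has dimension $n$ and still surjects appropriately — so no strict degeneration can occur. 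Since all of this is classical (see e.g.\ \cite[Section 2.2]{NA0}), in the write-up I would simply invoke the known description of $\BT$-fixed points of $\on{Hilb}_n(\BA^2)$ as monomial ideals and spend the bulk of the proof on the explicit combinatorial identification with $\CP(n)$ via the filling by $x^{i-1}y^{j-1}$ described above.
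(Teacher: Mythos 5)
The paper offers no proof of this proposition at all: it is introduced with ``We have the following classical result'' and is implicitly delegated to \cite[Section 2.2]{NA0}, so there is nothing to compare against line by line. Your argument is a correct elaboration of that classical fact, and its combinatorial half (monomial ideals of colength $n$ $\leftrightarrow$ order ideals of size $n$ in $\BZ_{\geqslant 0}^2$ $\leftrightarrow$ partitions of $n$, via $(i-1,j-1)\leftrightarrow (i,j)$) is exactly the identification the paper sets up informally just before the statement. For the only non-formal step --- that $\BT$-fixed forces monomial even though $\BT$ is one-dimensional --- your parenthetical direct argument is the one that actually closes the gap, and it is worth making it the main line: a $\BT$-fixed $J$ has $\BT$-invariant support, and the only finite $\BT$-orbit for $(x,y)\mapsto (tx,t^{-1}y)$ is the origin, so $J\supset (x,y)^N$; being graded by $\deg x-\deg y$, each weight space $J_k$ is a $\BC[xy]$-submodule of the free rank-one module $x^k\BC[xy]$ (resp.\ $y^{-k}\BC[xy]$), hence principal, and since it contains $(xy)^j$ for $j\gg 0$ its generator divides a power of $xy$, so $J$ is monomial. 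By contrast, the Bia\l{}ynicki-Birula limit argument as you sketch it is not yet a proof: equality of the colengths of $J$ and $\lim_{s\to 0}s\cdot J$ does not rule out a strict degeneration, and the ``genericity'' of $t\mapsto(t,t^{-1})$ is a fact that itself needs checking (it holds because the tangent weights at a monomial ideal are $(l(s)+1,-a(s))$ and $(-l(s),a(s)+1)$, which pair strictly positively, resp.\ negatively, with $(1,-1)$ --- note that the diagonal cocharacter $(1,1)$ is \emph{not} generic in this sense). Since the proposition is only a set-level statement, either your direct argument or the citation to \cite{NA0} suffices.
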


	Now we proceed to the case of arbitrary $r$. Let us introduce some notation.
	Denote by $w_1,\ldots,w_r \in W$ a basis of $W$ consisting of eigenvectors of $T_r$.

	The following lemma is classical (see, for example, \cite{NAYO}).     
	\begin{Lemma} \label{1} The variety $\mathfrak{M}(n,r)^{T_r}$ is $\set{T}$-equivariantly isomorphic to the disjoint union 
	\begin{equation*}
	\bigsqcup\limits_{\sum\limits_{l=1}^{r} n_{l}=n}~\prod\limits_{l=1}^{r} \mathfrak{M}({n_{l}},1)~ \simeq 
	\bigsqcup\limits_{\sum\limits_{l = 1}^{r} n_{l}=n}~\prod\limits_{l =1}^{r} \on{Hilb}_{n_l}(\BA^2).
	\end{equation*}
	\end{Lemma}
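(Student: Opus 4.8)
The plan is to unwind the definition of the Nakajima quiver variety $\mathfrak{M}(n,r)$ as $\mu^{-1}(0)^{\mathrm{st}}/G_{\vv}$ and analyze the $T_r$-fixed locus directly in these terms. First I would fix the basis $w_1,\dots,w_r$ of eigenvectors of $T_r$ in $W$, write the framing map $\gamma\colon W \to V$ in components $\gamma_l\colon \BC w_l \to V$, and likewise $\delta$ in components. A point of $\mathfrak{M}(n,r)^{T_r}$ is represented by a quadruple $(X,Y,\gamma,\delta)$ which is $T_r$-invariant \emph{up to the $G_{\vv}$-action}; that is, for each $g \in T_r$ there is a (unique, since the action is free on the stable locus) compensating element $\rho(g)\in \on{GL}(V)$. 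This defines a homomorphism $\rho\colon T_r \to \on{GL}(V)$, hence a weight decomposition $V = \bigoplus_{l=1}^r V_l$, where $V_l$ is the subspace on which $\rho(T_r)$ acts by the same character as on $\BC w_l$. The $T_r$-invariance relations then force $X,Y$ to preserve each $V_l$ (since they are $T_r$-fixed and commute with the weight grading), $\gamma_l$ to land in $V_l$, and $\delta$ to map $V_l$ onto $\BC w_l$, with all ``cross'' components vanishing.

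Next I would check compatibility with stability and the moment map equation. The moment map equation $[X,Y]+\gamma\delta = 0$ decomposes blockwise; since $\gamma\delta$ respects the grading (it is $T_r$-equivariant), the equation holds iff it holds on each $V_l$, i.e.\ $[X|_{V_l},Y|_{V_l}] + \gamma_l\delta_l = 0$. For stability: a $T_r$-fixed quadruple is stable iff each summand $(X|_{V_l},Y|_{V_l},\gamma_l,\delta_l)$ is stable as a quadruple with one-dimensional framing — this uses that any $X,Y$-invariant subspace containing $\im\gamma$ can be intersected with the $V_l$, which are $X,Y$-invariant. Setting $n_l := \dim V_l$, this identifies $\mathfrak{M}(n,r)^{T_r}$ with the disjoint union over $\sum n_l = n$ of $\prod_l \mathfrak{M}(n_l,1)$, and by Proposition~\ref{gieseker_via_hilbert_prop} each factor is $\on{Hilb}_{n_l}(\BA^2)$. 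Finally I would observe that the residual $\BT$-action is compatible: $\BT$ commutes with $T_r$, preserves the weight decomposition of $V$, and acts on each factor $\mathfrak{M}(n_l,1)$ exactly by the restriction of the action~(\ref{action_on_thick_M}) — that is, by the hyperbolic action on $\on{Hilb}_{n_l}(\BA^2)$ — so the isomorphism is $\BT$-equivariant.

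The main obstacle I anticipate is the bookkeeping around the $G_{\vv}$-quotient: one must argue that the choice of the homomorphism $\rho\colon T_r \to \on{GL}(V)$ (equivalently, the weight decomposition of $V$) is canonical on the \emph{stable} locus, that two $T_r$-fixed quadruples related by $\on{GL}(V)$ have weight decompositions related by that same element (so the stratification by the $n_l$ and the identification with the product are well defined on $\mathfrak{M}(n,r)^{T_r}$, not just on representatives), and that conversely every point of $\prod_l \mathfrak{M}(n_l,1)$ arises this way. This is a standard but slightly fiddly ``rigidity of fixed points'' argument: on the stable locus $G_{\vv}$ acts freely, so the map $T_r \to \on{GL}(V)$ lifting the trivial action on the quotient is unique, hence a genuine homomorphism, and functorial under $G_{\vv}$. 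Since the lemma is labeled classical and one only needs the statement, I would keep this part brief and cite~\cite{NAYO} for the details, spelling out only enough to make the $\BT$-equivariance transparent.
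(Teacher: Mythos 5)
Your proposal is correct. Note, however, that the paper does not actually prove this lemma: it is stated as classical and the proof is delegated entirely to the citation \cite{NAYO}, so there is no in-paper argument to compare against. What you wrote is the standard argument that such a reference contains: lift the trivial $T_r$-action on a stable orbit to a unique compensating homomorphism $\rho\colon T_r \to \on{GL}(V)$ (uniqueness coming from freeness of the $G_{\vv}$-action on $\mu^{-1}(0)^{\mathrm{st}}$), decompose $V$ into $\rho$-weight spaces, and check that $X,Y,\gamma,\delta$, the moment map equation, and the stability condition all split accordingly. One small point of order: the identity $V=\bigoplus_{l=1}^r V_l$, with $V_l$ the weight space matching the character of $\BC w_l$, is not automatic from the existence of $\rho$ — a priori $\rho$ could have weights not occurring in $W$ — but it follows from stability, since $\bigoplus_l V_l$ is $X,Y$-invariant and contains $\im\gamma$. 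You invoke stability only afterwards, for the factorwise stability of the summands, so it would be worth flagging that it is already needed to see that the decomposition exhausts $V$. With that caveat, the argument, including the $\BT$-equivariance check (which only uses that $\BT$ commutes with $T_r$ and hence preserves $\rho$ and the weight decomposition), is complete and is the intended one.
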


	\begin{Def}\label{multipart}
	We say that an ordered $r$-tuple ${\boldsymbol{\la}}=(\la^0,\ldots,\la^{r-1})$ of partitions
	defines an $r$-multipartition of  $n\in \mathbb{Z}_{\geqslant 0}$ if $\sum\limits_{l=0}^{r-1}|\la^l|=n$.  Let $\CP(r,n)$ denote the set of $r$-multipartitions of $n$.
	\end{Def}
	
	\begin{Prop} \label{FP on Gi}
	The set of fixed points $\mathfrak{M}(n,r)^{A}$
	is identified with the set $\mathcal{P}(r,n)$. A multipartition ${\boldsymbol{\la}}=(\la^0,\la^1,\ldots,\la^{r-1})$, corresponds to the following quiver data
	$V,W,\,
	X^{\boldsymbol{\lambda}},Y^{\boldsymbol{\lambda}} \in \on{End}(V),\,
	\gamma^{\boldsymbol{\lambda}} \in \on{Hom}(W,V),\, 
 \newline \delta^{\boldsymbol{\lambda}} \in \on{Hom}(V,W)$:
	\begin{equation*} 
	V:=\bigoplus_{l=0}^{r-1} \BC[x,y]/J_{\la^l},~
	W= \bigoplus_{l=0}^{r-1} \BC w_l;
	\end{equation*}
	\begin{equation*}
	X^{\boldsymbol{\la}} := \bigoplus_{i=0}^{r-1} L^{l}_{x},~
	Y^{\boldsymbol{\la}} := \bigoplus_{l=0}^{r-1} L^{l}_{y};
	\end{equation*}
	\begin{equation*}
	\gamma^{\boldsymbol{\la}} \hspace{0,1cm} \on{sends} \hspace{0,1cm} w_{i} \in W \hspace{0,1cm} \on{to} \hspace{0,1cm} [1] \hspace{0,1cm} \on{in} \hspace{0,1cm} \BC[x,y]/J_{\la^i},~
	\delta^{\boldsymbol{\la}} = 0,
	\end{equation*}
by $L^{l}_{x},\,L^{l}_{y}$ we denote operators of  multiplications by $x,y$ on $\BC[x,y]/J_{\la^l}$.

	\end{Prop}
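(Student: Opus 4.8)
\textbf{Proof strategy for Proposition~\ref{FP on Gi}.} The plan is to combine Lemma~\ref{1} with the known description of the $\BT$-fixed points on each Hilbert scheme factor (Proposition~\ref{Fixed Hilbert}), and then check that the explicit quiver data written down in the statement indeed represents the fixed point corresponding to a given multipartition ${\boldsymbol{\la}}$. First I would recall that $A = \BT \times T_r$ and that, since $T_r$ and $\BT$ commute, we can compute $\mathfrak{M}(n,r)^A = (\mathfrak{M}(n,r)^{T_r})^{\BT}$. By Lemma~\ref{1} the inner fixed locus $\mathfrak{M}(n,r)^{T_r}$ is $\BT$-equivariantly isomorphic to $\bigsqcup_{\sum n_l = n} \prod_{l=1}^r \on{Hilb}_{n_l}(\BA^2)$, where the decomposition records the splitting $V = \bigoplus_l V_l$ into $T_r$-weight spaces dictated by the eigenbasis $w_1,\dots,w_r$ of $W$ (the $T_r$-weight of $V_l$ is forced to match that of $w_l$ because $\gamma$ must be $T_r$-equivariant and $\delta = 0$ on the fixed locus). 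Taking $\BT$-fixed points of a product is the product of the $\BT$-fixed loci, so $\mathfrak{M}(n,r)^A \simeq \bigsqcup_{\sum n_l = n}\prod_{l=1}^r \on{Hilb}_{n_l}(\BA^2)^{\BT}$, and by Proposition~\ref{Fixed Hilbert} each factor is $\CP(n_l)$. Hence the set of $A$-fixed points is $\bigsqcup_{\sum n_l = n}\prod_l \CP(n_l) = \CP(r,n)$, which gives the asserted bijection.

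Next I would verify that the quadruple $(X^{\boldsymbol{\la}}, Y^{\boldsymbol{\la}}, \gamma^{\boldsymbol{\la}}, \delta^{\boldsymbol{\la}})$ written in the statement is (a) a point of $\mu^{-1}(0)$, (b) stable, and (c) genuinely $A$-fixed up to the $G_{\vv} = \on{GL}(V)$-action, with the right labelling. For (a): on $\BC[x,y]/J_{\la^l}$ the operators $L^l_x, L^l_y$ of multiplication by $x,y$ commute, so $[X^{\boldsymbol{\la}}, Y^{\boldsymbol{\la}}] = 0$; and $\gamma^{\boldsymbol{\la}}\delta^{\boldsymbol{\la}} = 0$ since $\delta^{\boldsymbol{\la}} = 0$; thus $\mu = [X,Y] + \gamma\delta = 0$. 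For (b): any $X^{\boldsymbol{\la}}, Y^{\boldsymbol{\la}}$-invariant subspace $S \subseteq V$ containing $\im \gamma^{\boldsymbol{\la}}$ contains $[1] \in \BC[x,y]/J_{\la^l}$ for every $l$, hence contains all $x^{i-1}y^{j-1}[1]$, which span $\BC[x,y]/J_{\la^l}$; so $S = V$, i.e.\ the quadruple is stable. For (c): the one-parameter subgroups in $A$ act on $(X,Y,\gamma,\delta)$ by rescaling $X$ and $Y$ (the $\BT$-action, cf.\ the formula for the $r=1$ case) and by rescaling the eigencomponents $\gamma w_l$ (the $T_r$-action); on each monomial-ideal block this rescaling is undone by a diagonal element of $\on{GL}(V_l)$ rescaling the monomial basis, exactly as in the classical $r=1$ computation. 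This shows $[(X^{\boldsymbol{\la}}, Y^{\boldsymbol{\la}}, \gamma^{\boldsymbol{\la}}, \delta^{\boldsymbol{\la}})]$ is $A$-fixed, and tracing through the isomorphisms of Lemma~\ref{1} and Proposition~\ref{Fixed Hilbert} identifies it with the multipartition ${\boldsymbol{\la}}$.

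Finally I would note that since both sides of the claimed bijection are finite sets of the same cardinality (the count from the first paragraph matches the number of $A$-fixed points, which is finite because $\mathfrak{M}(n,r)$ is a Nakajima quiver variety with a torus action having isolated fixed points), the explicit construction ${\boldsymbol{\la}} \mapsto [(X^{\boldsymbol{\la}}, Y^{\boldsymbol{\la}}, \gamma^{\boldsymbol{\la}}, \delta^{\boldsymbol{\la}})]$ being injective suffices to conclude it is the bijection of the statement.

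\textbf{Main obstacle.} I expect the only genuinely delicate point to be bookkeeping the $T_r$-equivariance on the fixed locus: one must argue that on $\mathfrak{M}(n,r)^{T_r}$ the map $\delta$ necessarily vanishes and that $V$ splits into $T_r$-weight spaces matched to the weights of $W$ under $\gamma$ — this is what makes Lemma~\ref{1} applicable and forces the product-of-Hilbert-schemes shape. Everything after that (the $\mu^{-1}(0)$, stability, and $A$-fixedness checks for the explicit data) is a routine adaptation of the standard $\on{Hilb}_n(\BA^2)$ computation, so no serious difficulty is anticipated there.
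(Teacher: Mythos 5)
Your proposal is correct and follows essentially the same route as the paper: the paper's proof is exactly the one-line deduction from Lemma~\ref{1} together with the description of $\BT$-fixed points of Hilbert schemes in Proposition~\ref{Fixed Hilbert}, and your additional verifications (moment map equation, stability via $[1]$ generating $\BC[x,y]/J_{\la^l}$, and $A$-fixedness up to the $\on{GL}(V)$-action) are the standard details the paper leaves implicit.
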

	
	\begin{proof}
	This follows from Lemma~\ref{1} and the description of $\BT$-fixed points of Hilbert schemes above (see Proposition \ref{Fixed Hilbert}).
	\end{proof}

We have the $A$-equivariant embedding 
\begin{equation*}
\iota\colon \mathfrak{M}(n,r)^{A}   \subset \mathfrak{M}(n,r).
\end{equation*}
This embedding induces the homomorphism 
\begin{equation*}
\iota^*\colon H^*_{A}(\mathfrak{M}(n,r)) \ra  H^*_{A}(\mathfrak{M}(n,r)^{A})=\BC[\mathfrak{a}]^{\oplus |\CP(r,n)|}=E.   
\end{equation*}

\begin{Lemma}
The homomorphism $\iota^*$ is an embedding that becomes isomorphism after tensoring by $F=\BC(\mathfrak{a})$.
\end{Lemma}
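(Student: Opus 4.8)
The plan is to use the standard localization theorem in equivariant cohomology. The key input is that $\mathfrak{M}(n,r)$ carries an $A$-action with isolated fixed points (Proposition~\ref{FP on Gi}), and that $\mathfrak{M}(n,r)$ is a smooth symplectic variety admitting an additional contracting $\BC^\times_\hbar$-action making $H^*_A(\mathfrak{M}(n,r))$ a free module over $\BC[\mathfrak{a}]$ (this is the content of the cell-decomposition / Bialynicki-Birula type results for Nakajima quiver varieties, cf. \cite{NA0}, \cite{MN}). First I would invoke the Atiyah--Bott--Berline--Vergne localization theorem: after inverting finitely many nonzero elements of $\BC[\mathfrak{a}]$ (the weights of $A$ on the normal bundles to the fixed points), the restriction map $\iota^*$ becomes an isomorphism. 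In particular, tensoring with the fraction field $F = \BC(\mathfrak{a})$ kills all these denominators, so $\iota^*\otimes_{\BC[\mathfrak{a}]} F$ is an isomorphism; equivalently, $\iota^*$ is injective after inverting the product of these weights, hence injective already over $\BC[\mathfrak{a}]$ because the target $E = \BC[\mathfrak{a}]^{\oplus|\CP(r,n)|}$ is torsion-free over the domain $\BC[\mathfrak{a}]$.

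More carefully, the injectivity of $\iota^*$ over $\BC[\mathfrak{a}]$ itself (not just after localization) follows from the freeness of $H^*_A(\mathfrak{M}(n,r))$ as a $\BC[\mathfrak{a}]$-module: a free module injects into its localization, and the composite $H^*_A(\mathfrak{M}(n,r)) \xrightarrow{\iota^*} E \to E \otimes F$ is injective by localization, so $\iota^*$ itself is injective. Then I would separately record that the cokernel of $\iota^*$ is supported on the union of the finitely many hyperplanes in $\mathfrak{a}$ cut out by the tangent weights at the fixed points, which is exactly the statement that $\iota^*$ becomes an isomorphism after tensoring with $F$ (and even after localizing at that finite set of ``walls,'' as anticipated in the remark preceding the lemma). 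For concreteness one could also note that the tangent weights at the fixed point ${\boldsymbol{\la}}$ are computed by the Nakajima/Nekrasov tangent-space formula in terms of arm- and leg-lengths of the boxes of the multipartition together with the equivariant parameters $\kappa, a_1,\dots,a_r$, so one has an explicit handle on the localization set, but this explicit description is not needed for the present lemma.

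The main obstacle is establishing the freeness of $H^*_A(\mathfrak{M}(n,r))$ over $\BC[\mathfrak{a}] = H^*_A(\on{pt})$; granting this, everything else is a formal consequence of localization. Freeness here is standard: $\mathfrak{M}(n,r)$ admits an $A$-equivariant cell decomposition (attracting sets for a generic one-parameter subgroup, via the contracting $\BC^\times_\hbar$-action), so its equivariant cohomology is free over $\BC[\mathfrak{a}]$ with a basis indexed by $\CP(r,n)$; this is already used implicitly elsewhere in the paper and is available from \cite{NA0} or from the general theory of symplectic resolutions. Once freeness and the localization isomorphism over $F$ are in hand, the lemma follows: $\iota^*$ is an embedding because a free module embeds in its $F$-localization, and $\iota^*\otimes F$ is an isomorphism directly by the localization theorem (both source and target have the same $F$-dimension $|\CP(r,n)|$, and $\iota^*\otimes F$ is injective).
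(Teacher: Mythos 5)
Your proof is correct and follows essentially the same route as the paper: the paper's argument is exactly Atiyah--Bott localization combined with the freeness of $H^*_A(\mathfrak{M}(n,r),\BC)$ as a $\BC[\mathfrak{a}]$-module (which the paper cites from Nakajima's quiver-variety results rather than re-deriving from a Bialynicki--Birula decomposition, but this is the same standard fact). The extra detail you supply on why freeness yields injectivity over $\BC[\mathfrak{a}]$ itself is a correct elaboration of what the paper leaves implicit.
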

\begin{proof}
Follows from the Atiyah-Bott localization theorem (see \cite{AB}) together with the fact that $H^*_{A}(\mathfrak{M}(n,r),\BC)$ is a free $\BC[\mathfrak{a}]$-module (see~\cite[Theorem 7.3.5]{NA_quant_affine}). 

\end{proof}

\begin{Rem}\label{more_car_loc_statement}
{\emph{Actually, we do not need to tensor by the whole $\BC(\mathfrak{a})$ but only have to localize at elements $h \in \BC[\mathfrak{a}]$ corresponding to the cocharacters $\nu\colon \BC^\times \ra A$ such that $\mathfrak{M}(n,r)^{\nu(\BC^\times)}$ is {\em{infinite}}. 
These elements $h$ can be described explicitly.}}
\end{Rem}

Recall now that by
\cite{MN} the algebra $H^*_{A}(\mathfrak{M}(n,r),\BC)$ is generated by the $A$-equivariant Chern classes $c_k(\mathcal{V})$, $k=1,\ldots,n$, where  $\mathcal{V}$ is the tautological rank $n$ vector bundle on $\mathfrak{M}(n,r)$.

\begin{Lemma}
The image $\iota^*(c_k(\mathcal{V}))$ is equal to the collection 
\begin{equation*}
(e_{k}(\kappa c^0_1+a_1,\ldots,\kappa c^{0}_{|\la^0|}+ a_1,\ldots,\kappa c^{r-1}_1+ a_r,\ldots,\kappa c^{r-1}_{|\la^{r-1}|}+a_r))_{\boldsymbol{\la} \in \CP(r,n)},
\end{equation*}
where for $l=0,1,\ldots,r-1$ $c^l_1,c^l_2,\ldots,c^l_{|\la^l|}$ is the multiset of contents of boxes of the diagram $\BY(\la^l)$.
\end{Lemma}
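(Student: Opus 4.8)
The plan is to compute the restriction $\iota^*(c_k(\mathcal V))$ fixed point by fixed point, i.e.\ component by component in $E=\bigoplus_{\boldsymbol\la\in\CP(r,n)}\BC[\mathfrak a]$. Fix $\boldsymbol\la=(\la^0,\dots,\la^{r-1})$ and let $p_{\boldsymbol\la}\colon \{\mathrm{pt}\}\hookrightarrow \mathfrak M(n,r)^A$ be the corresponding fixed point, described explicitly by the quiver data of Proposition~\ref{FP on Gi}. Since the tautological bundle $\CV$ has fibre $V=\bigoplus_{l=0}^{r-1}\BC[x,y]/J_{\la^l}$ at this point, the restriction $p_{\boldsymbol\la}^*\CV$ is the $n$-dimensional $A$-representation $V$ with its natural $A$-action, and therefore $p_{\boldsymbol\la}^*(c_k(\CV))=e_k(\text{$A$-weights of }V)$ by the splitting principle. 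So the whole statement reduces to identifying the multiset of $A=\BT\times T_r$-weights of $V$ at the fixed point $\boldsymbol\la$.

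The key step is thus to unwind the $A$-action on $V$. First I would recall how $A$ acts: $\BT=\BC^\times$ acts via the action~(\ref{action_on_thick_M}) (with $\hbar$ set to its cohomological normalization, giving $c_k(\CV)$ degree $2k$ and producing the scalar $\kappa$), which on the subspace $\BC[x,y]/J_{\la^l}$ rescales $x$ with weight $+1$ and $y$ with weight $-1$; hence the monomial $x^{i-1}y^{j-1}$ sitting in box $(i,j)\in\BY(\la^l)$ has $\BT$-weight $(i-1)-(j-1)=i-j=-c(i,j)$, where $c(i,j)=j-i$ is the content. Up to the overall sign convention this gives the summand $\kappa c^l_m$ for the $m$-th box of $\BY(\la^l)$, with $c^l_1,\dots,c^l_{|\la^l|}$ the multiset of contents. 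Second, the torus $T_r\subset\on{SL}(W)$ acts on $W=\bigoplus_l\BC w_l$ with weights $a_1,\dots,a_r$; since $\gamma^{\boldsymbol\la}$ identifies $w_l\mapsto[1]\in\BC[x,y]/J_{\la^l}$ $A$-equivariantly and $L^l_x,L^l_y$ are $T_r$-invariant, the whole block $\BC[x,y]/J_{\la^l}$ inside $V$ is twisted by the character $a_{l+1}$ of $T_r$. Combining the two contributions, the box $(i,j)$ of $\BY(\la^l)$ contributes the weight $\kappa c(i,j)+a_{l+1}$ to $V$, so the multiset of $A$-weights of $V$ is exactly
\begin{equation*}
\{\kappa c^0_1+a_1,\dots,\kappa c^0_{|\la^0|}+a_1,\ \dots,\ \kappa c^{r-1}_1+a_r,\dots,\kappa c^{r-1}_{|\la^{r-1}|}+a_r\}.
\end{equation*}
Applying $e_k$ gives precisely the claimed formula for the $\boldsymbol\la$-component of $\iota^*(c_k(\CV))$, and since $\boldsymbol\la$ was arbitrary this proves the lemma.

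The main obstacle, and the place where care is needed, is bookkeeping of the equivariant conventions: which copy of $\BC^\times$ the parameter $\kappa$ comes from (the mixing of $\BT$ with $\BC^\times_\hbar$ in~(\ref{action_on_thick_M})), the sign of the content (whether one gets $c(i,j)=j-i$ or $i-j$), and the precise indexing $a_{l+1}$ versus $a_l$ for the $T_r$-weights on $W$. These must be fixed once and for all compatibly with the normalizations used earlier (e.g.\ so that $\deg c_k(\CV)=2k$ and so that the parameter identification~(\ref{identify_all_parameters}) is the right one); once a consistent choice is made the computation is the routine weight count sketched above. A secondary technical point is that one should justify the splitting principle / Atiyah--Bott-style statement $p_{\boldsymbol\la}^*(c_k(\CV))=e_k(\text{weights})$ in the $A$-equivariant setting, but this is standard since $p_{\boldsymbol\la}^*\CV$ is an honest $A$-representation and $c_k$ of a representation is the $k$-th elementary symmetric function of its weights in $H^*_A(\mathrm{pt})=\BC[\mathfrak a]$.
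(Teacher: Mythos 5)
Your proposal is correct and follows essentially the same route as the paper: restrict $c_k(\mathcal{V})$ to each fixed point, identify the fibre of $\mathcal{V}$ there with $\bigoplus_{l}\BC[x,y]/J_{\la^l}$ as an $A$-representation, and read off $e_k$ of its weights. The one point the paper makes explicit and you leave as a ``sign convention to be fixed'' is that the $A$-action on the fibre is defined through the compensating element $g'\in\on{GL}(V)$ satisfying $g\cdot(X^{\boldsymbol{\la}},Y^{\boldsymbol{\la}},\gamma^{\boldsymbol{\la}},0)=g'\cdot(X^{\boldsymbol{\la}},Y^{\boldsymbol{\la}},\gamma^{\boldsymbol{\la}},0)$; writing $g'$ down shows that $[x^iy^j]$ has weight $\kappa(j-i)+a_l$, i.e.\ $+\kappa$ times the content, which pins down the sign you leave ambiguous.
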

\begin{proof}
The homomorphism $\iota^*$ sends $c_k(\CV)$ to 
\begin{equation*}
c_k(\CV|_{\mathfrak{M}(n,r)^{A}})=(c_k(\CV|_{(X^{\boldsymbol{\la}},Y^{\boldsymbol{\la}},\gamma^{\boldsymbol{\la}},0)}))_{\boldsymbol{\la} \in \CP(r,n)}.
\end{equation*}
Note that $c_k(\CV|_{(X^{\boldsymbol{\la}},Y^{\boldsymbol{\la}},\gamma^{\boldsymbol{\la}},0)})$ is nothing but $e_k(\al_1,\ldots,\al_n)$, where \newline $\al_1,\ldots,\al_n$ is the multiset of $\mathfrak{a}$-weights of $\CV|_{(X^{\boldsymbol{\la}},Y^{\boldsymbol{\la}},\gamma^{\boldsymbol{\la}},0)}$. 

Recall that $\BC[\on{Lie}\BT]=\BC[\kappa]$, $\BC[\mathfrak{t}_{r}]=\BC[a_1,\ldots,a_{r}]/(a_1+\ldots+a_r)$ and $\mathfrak{a}=(\on{Lie}\BT) \oplus \mathfrak{t}_r$.
We claim that the $\mathfrak{a}$-weight of $[x^iy^j] \in \BC[x,y]/J_{\la^l}$ is equal to $\kappa(j-i)+a_l$ and this will conclude the proof.
Indeed, taking $g=(t^{\kappa},t^{a_1},\ldots,t^{a_r}) \in \BT \times T_r$ we see that this element acts on $(X^{\boldsymbol{\la}},Y^{\boldsymbol{\la}},\gamma^{\boldsymbol{\la}},0)$ in the following way:
\begin{equation*}
g \cdot X^{\boldsymbol{\la}}=\bigoplus_{l=0}^{r-1}t^{\kappa}L^l_x,\, g \cdot Y^{\boldsymbol{\la}}=\bigoplus_{l=0}^{r-1} t^{-\kappa}    L^l_y,\,(g \cdot \gamma^{\boldsymbol{\la}})(w_l)=t^{-a_l}w_l.
\end{equation*}
Consider the element $g' \in \prod_{l=0}^{r-1}\on{GL}(\BC[x,y]/J_{\la^l})$ that acts on $x^iy^j \in \BC[x,y]/J_{\la^l}$ via the multiplication by $t^{a_l+\kappa j-\kappa i}$.
Directly from the definitions, we see that 
\begin{equation*}
g \cdot (X^{\boldsymbol{\la}},Y^{\boldsymbol{\la}},\gamma^{\boldsymbol{\la}},0) = g' \cdot (X^{\boldsymbol{\la}},Y^{\boldsymbol{\la}},\gamma^{\boldsymbol{\la}},0).
\end{equation*}
The claim follows since the element $g \in {\mathbb{T}} \times T_r$ acts on the fiber $V$ of $\mathcal{V}$ at $(X^{\boldsymbol{\la}},Y^{\boldsymbol{\la}},\gamma^{\boldsymbol{\la}},0)$ via $g'$.
\end{proof}

Combining the results of this section we obtain the following proposition.
\begin{Prop}\label{emb_cohom_prop_use}
The homomorphism $$\iota^*\colon H^*_A(\mathfrak{M}(n,r)) \hookrightarrow H^*_A(\mathfrak{M}(n,r)^A)=E$$ is injective and becomes an isomorphism after tensoring by $F=\BC(\mathfrak{a})$. On generators $c_k(\CV),\, k=1,\ldots,n$ it is given by: 
\begin{multline*}
c_k(\CV) \mapsto \\ 
\mapsto (e_{k}(\kappa c^0_1+a_1,\ldots,\kappa c^{0}_{|\la^0|}+ a_1,\ldots,\kappa c^{r-1}_1+ a_r,\ldots,\kappa c^{r-1}_{|\la^{r-1}|}+a_r))_{\boldsymbol{\la} \in \CP(r,n)}.   
\end{multline*}
\end{Prop}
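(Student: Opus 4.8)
The plan is to assemble Proposition~\ref{emb_cohom_prop_use} directly from the two lemmas proved earlier in this section together with the generation result of \cite{MN}; no genuinely new argument is needed beyond those ingredients, so this is essentially a packaging step.

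First I would invoke the lemma asserting that $\iota^*$ is injective and becomes an isomorphism after tensoring by $F=\BC(\mathfrak{a})$. Its proof is the Atiyah--Bott localization theorem (\cite{AB}) applied to the $A$-action on $\mathfrak{M}(n,r)$, combined with the fact (Nakajima, \cite[Theorem 7.3.5]{NA_quant_affine}) that $H^*_A(\mathfrak{M}(n,r),\BC)$ is a free $\BC[\mathfrak{a}]$-module: freeness forces the restriction to the fixed locus to be injective, and localization turns it into an isomorphism. Since $\mathfrak{M}(n,r)^A$ is finite and indexed by $\CP(r,n)$ (Proposition~\ref{FP on Gi}), the target is indeed $E=\BC[\mathfrak{a}]^{\oplus|\CP(r,n)|}$.

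Next I would quote the lemma computing $\iota^*(c_k(\mathcal{V}))$. The point there is that $\iota^*$ sends $c_k(\mathcal{V})$ to the tuple of restrictions $c_k(\mathcal{V}|_{(X^{\boldsymbol{\la}},Y^{\boldsymbol{\la}},\gamma^{\boldsymbol{\la}},0)})$ over $\boldsymbol{\la}\in\CP(r,n)$, and each such restriction equals the $k$-th elementary symmetric polynomial in the $\mathfrak{a}$-weights of the fiber $V=\bigoplus_{l=0}^{r-1}\BC[x,y]/J_{\la^l}$. Using the explicit description of the fixed-point quiver data in Proposition~\ref{FP on Gi} and computing the action of $g=(t^\kappa,t^{a_1},\ldots,t^{a_r})\in\BT\times T_r$, one gets that the $\mathfrak{a}$-weight of the monomial class $[x^iy^j]$ in the summand attached to $\la^l$ is $\kappa(j-i)$ shifted by the framing parameter of that summand; running over the boxes of $\BY(\la^l)$ reproduces the shifted contents $\kappa c^l_m+a_{l+1}$ in the displayed formula.

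Finally --- the only input not internal to this weight bookkeeping --- I would use \cite[Corollary~1.5]{MN}, which says that $c_1(\mathcal{V}),\ldots,c_n(\mathcal{V})$ generate $H^*_A(\mathfrak{M}(n,r),\BC)$ as a $\BC[\mathfrak{a}]$-algebra. Combined with the first two steps this pins down $\iota^*$ completely: it is the injective $\BC[\mathfrak{a}]$-algebra homomorphism carrying the Chern-class generators to the stated collections of elementary symmetric polynomials, and it becomes an isomorphism after $\otimes_{\BC[\mathfrak{a}]}F$. Gradedness is automatic because $\iota^*$ is a pullback of graded algebras and the images are homogeneous of the correct degree ($\on{deg}c_k(\mathcal{V})=2k$). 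I do not expect a real obstacle in this last step; the substance lies in the two earlier lemmas, and the only point deserving a moment's care is that the generation statement of \cite{MN} is used in its $A$-equivariant form, which is legitimate since the non-equivariant generators lift and $H^*_A$ is free over $\BC[\mathfrak{a}]$.
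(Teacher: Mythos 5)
Your proposal is correct and matches the paper exactly: the paper states this proposition as a direct combination of the two preceding lemmas of the section (Atiyah--Bott localization plus freeness over $\BC[\mathfrak{a}]$ for injectivity and generic isomorphism, and the explicit $\mathfrak{a}$-weight computation at the fixed points for the image of $c_k(\CV)$), together with the Kirwan-surjectivity generation result of \cite{MN}. There is nothing further to add.
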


\section{The center $Z(R^r(n))^{JM}$ of the cyclotomic degenerate Hecke algebra}\label{rep_theory_of_cyclotomic}

Recall the space ${\bf{k}}=\BC[\kappa,a_1,\ldots,a_r]/(a_1+\ldots+a_r)$.
\begin{Def}\label{cyclot_degen_defe}
The degenerate affine Hecke algebra $R(n)=R(S_n)$ is generated by $\BC S_n$ and ${\bf{k}}[z_1,\ldots,z_n]$ subject to relations: 
\begin{equation*}
s_iz_j=z_{s_i(j)}s_i+\kappa(\delta_{i+1,j}-\delta_{i,j}).
\end{equation*}
The cyclotomic degenerate Hecke algebra $R^r(n)$ is the quotient of $R(n)$ by the ideal generated by $\prod_{i=1}^r (z_1-a_i)$. 

\end{Def}

\begin{Def}
Let $Z(R^r(n))^{JM} \subset R^r(n)$ be the image of \newline ${\bf{k}}[z_1,\ldots,z_n]^{S_n} \subset R(n)$ in $R^r(n)$.
\end{Def}

\begin{Rem}
{\emph{By \cite[Theorem 6.5]{L0}, the subalgebra ${\bf{k}}[z_1,\ldots,z_n]^{S_n} \subset R(n)$ is nothing else but the center of $R(n)$. It follows from \cite[Theorem 1]{Br} that the algebra $Z(R^r(n))^{JM}$ is  the center of $R^r(n)$.}}
\end{Rem}

To every ${\boldsymbol{\la}} \in \CP(r,n)$ one can associate the ``universal'' Specht module $\widetilde{S}_{\bf{k}}({\boldsymbol{\la}})$ over $R^r(n)$ (compare with~\cite[Section~4]{Br}) in the following way.

Recall that ${\boldsymbol{\la}}=(\la^0,\la^1,\ldots,\la^{r-1})$. For $i=0,1,\ldots,r-1$  set $n_i:=|\la^i|$. We  slightly modify the algebras $R(n)$, $R^r(n)$ first.

\begin{Def} Let $R_{\BC[\kappa,a_1,\ldots,a_r]}(n)$ be the algebra generated by $\BC S_n$ and \newline  $\BC[z_1,\ldots,z_n]$ over $\BC[\kappa,a_1,\ldots,a_r]$ subject to the relations 
\begin{equation*}
s_i z_j=z_{s_i(j)}s_i+\kappa (\delta_{i+1,j}-\delta_{i,j}).    
\end{equation*}
We denote by ${R}^r_{\BC[\kappa,a_1,\ldots,a_r]}(n)$ the quotient of $R_{\BC[\kappa,a_1,\ldots,a_r]}(n)$ by the ideal generated by $\prod_{i=1}^r (z_1-a_i)$.
\end{Def}

For every $i=0,1,\ldots,r-1$ consider the usual Specht module $S_{n_i} \curvearrowright S(\la^i)$, corresponding to $\la^i$. We can extend $S(\la^i) \otimes \BC[\kappa,a_i]$ to the $R_{\BC[\kappa,a_i]}(n_i)$-module by letting $z_1$ act via the multiplication by  $a_i$. Note that we have the natural embedding 
\begin{multline*}
R_{\BC[\kappa,a_1,\ldots,a_r]}(n_0,\ldots,n_{r-1}):= \\ =R_{\BC[\kappa,a_1]}(n_0) \otimes_{\BC[\kappa]} \ldots \otimes_{\BC[\kappa]} R_{\BC[\kappa,a_{r}]}(n_{r-1}) \subset R_{\BC[\kappa,a_1,\ldots,a_r]}(n),
\end{multline*}
induced by the embedding $S_{n_0} \times \ldots \times S_{n_{r-1}} \subset S_n$. Then we define 
\begin{multline*}
\widetilde{S}_{\BC[\kappa,a_1,\ldots,a_r]}(\boldsymbol{\la}):=
R_{\BC[\kappa,a_1,\ldots,a_r]}(n) \otimes_{R_{\BC[\kappa,a_1,\ldots,a_r]}(n_0,\ldots,n_{r-1})} \\ \otimes \Big((S({\la^0}) \otimes \BC[\kappa,a_1]) \otimes_{\BC[\kappa]} \ldots \otimes_{\BC[\kappa]} (S(\la^{r-1}) \otimes \BC[\kappa,a_r])\Big)
\end{multline*}
that is an ${R}^r_{\BC[\kappa,a_1,\ldots,a_r]}(n)$-module. Modding out by $(a_1+\ldots+a_r)$ 
\newline we obtain the desired $R^r(n)$-module $\widetilde{S}_{\bf{k}}({\boldsymbol{\la}})$.

Let us now compute the action of $Z(R^r(n))^{JM}$ on $\widetilde{S}_{\bf{k}}(\boldsymbol{\la})$. We start with the following lemma.
Let $\mu$ be a  partition of some $m \in \BZ_{\geqslant 1}$ and consider the action $R_{\BC[\kappa,a]}(m) \curvearrowright S(\mu) \otimes \BC[\kappa,a]$. Recall that $S(\mu)$ has a basis labeled by the standard Young tableaux. such that $JM_i$ act on such vectors with the content of $i's$ entry.
\begin{Lemma}\label{act_z_i_via_cyclotomic}
 Let $B$ be a standard Young tableau on $\mu$ and recall that $p_B \in S(\mu)$ is the corresponding vector. The element $z_i \in R_{\BC[\kappa,a]}(m)$ acts on $p_B$ via the multiplication by $\kappa \on{ct}(B(i))+a$.
\end{Lemma}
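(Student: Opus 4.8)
The plan is to prove Lemma~\ref{act_z_i_via_cyclotomic} by reducing the action of $z_i$ on $S(\mu)\otimes\BC[\kappa,a]$ to the action of the Jucys--Murphy elements $JM_i\in\BC S_m$ on $S(\mu)$. First I would recall the standard fact (see \cite[Theorem 1.9]{MU}) that in $\BC S_m$ the elementary symmetric-type combinations of the $JM_i$ generate $Z(\BC S_m)$, and more importantly that on the Specht module $S(\mu)$ the element $JM_i$ acts diagonally in the Young basis $\{p_B\}$, with eigenvalue on $p_B$ equal to the content $\on{ct}(B(i))$ of the box containing $i$ in the tableau $B$. This is exactly the input we want to transport across the map $\BC S_m \to R_{\BC[\kappa,a]}(m)$.

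Next I would establish the key algebraic identity relating $z_i$ to $JM_i$ inside $R_{\BC[\kappa,a]}(m)$, namely that
\begin{equation*}
z_i = z_1 + \kappa\,(JM_i^{(1)})
\end{equation*}
after suitable bookkeeping; more precisely, I would show by induction on $i$ (using the defining relation $s_j z_k = z_{s_j(k)} s_j + \kappa(\delta_{j+1,k}-\delta_{j,k})$) that $z_{i+1} = s_i z_i s_i + \kappa s_i$, and then unwind this to get $z_i = z_1 + \kappa\big((1\,i)+(2\,i)+\dots+(i-1\,i)\big) = z_1 + \kappa\, JM_i$ modulo the standard identification, where the sum of transpositions is precisely $JM_i$. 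The base case is $z_1$ itself (with $JM_1 = 0$), and the inductive step is a short computation with the relations. Here I must be careful about conventions: the sign and the exact form depend on how $z_i$ is normalized, so I would double-check against the $R^r$ relations as written in Definition~\ref{cyclot_degen_defe}.

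Then I would combine the two ingredients. On the module $S(\mu)\otimes\BC[\kappa,a]$, the generator $z_1$ acts by multiplication by $a$ (by construction of the module, as stated just before the lemma), and the $S_m$-action on the $S(\mu)$-factor is the usual Specht action, under which $JM_i$ acts on $p_B$ by $\on{ct}(B(i))$. Therefore $z_i = z_1 + \kappa\,JM_i$ acts on $p_B$ by $a + \kappa\,\on{ct}(B(i))$, which is the claimed formula. I would phrase this as: the action of $R_{\BC[\kappa,a]}(m)$ on $S(\mu)\otimes\BC[\kappa,a]$ factors the polynomial part through the identification $z_i\mapsto a\cdot\on{Id} + \kappa\cdot JM_i$, and diagonalizability in the Young basis is inherited from $\BC S_m$.

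The main obstacle I anticipate is purely bookkeeping: pinning down the precise relation $z_i = z_1 + \kappa\,JM_i$ (as opposed to $z_i = z_1 - \kappa\,JM_i$, or with the $JM$'s running the other direction, or with a shift), so that the content appears with the correct sign matching $\on{ct}(\square)=j-i$ from~(\ref{def_Y_to_la}). This is not conceptually hard but requires care with the asymmetric relation $s_iz_j=z_{s_i(j)}s_i+\kappa(\delta_{i+1,j}-\delta_{i,j})$ and with the fact that $S_{n_i}\subset S_n$ acts on the "$i$-th block" of $\{1,\dots,n\}$, so that the relevant tableau is the one on $\mu=\la^i$. Everything else — diagonalizability, the value $a$ for $z_1$, the passage to the Young basis — is immediate once this identity is in hand.
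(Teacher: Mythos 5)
Your proposal is correct and follows essentially the same route as the paper: derive $z_{i+1}=s_iz_is_i+\kappa s_i$ from the defining relation, unwind to $z_j=s_{j-1}\cdots s_1z_1s_1\cdots s_{j-1}+\kappa JM_j$, and conclude using that $z_1$ acts by the scalar $a$ and $JM_j$ acts on $p_B$ by $\on{ct}(B(j))$. The only caveat is that the identity $z_i=z_1+\kappa JM_i$ holds as operators on the module (where $z_1$ is a scalar) rather than literally in the algebra, which you implicitly acknowledge and which does not affect the argument.
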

\begin{proof}
The $i$-th Jucys-Murphy element is given by 
\begin{equation*}
JM_i=\sum_{j<i}(ij) \in \BC S_m.    
\end{equation*}
Recall now that 
$
s_iz_{i+1}=z_is_i+\kappa
$   so we have 
\begin{equation*}
z_{i+1}=s_iz_is_i+s_i\kappa.    
\end{equation*}
It follows that 
\begin{equation*}
z_j=s_{j-1}s_{j-2}\ldots s_1 z_1 s_1\ldots s_{j-1} +\kappa JM_i    
\end{equation*}
so the action of $z_j$ on $S(\mu) \otimes \BC[\kappa,a]$ coincides with the action of $\kappa JM_i+a$ and this concludes the proof.
\end{proof}

From Lemma~\ref{act_z_i_via_cyclotomic} we obtain the following proposition  (see also~\cite[Section 4]{Br}).
\begin{Prop}\label{central_act_via_cyclotom_rep}
The class $[f(z_1,\ldots,z_n)] \in Z(R^r(n))^{JM}$ of the element $f$ in $\BC[z_1,\ldots,z_n]^{S_n} \subset R(n)$ acts on the representation $\widetilde{S}_{\bf{k}}({\boldsymbol{\la}})$ via multiplication by 
\begin{equation*}
f(\kappa c^0_1+a_1,\ldots,\kappa c^{0}_{|\la^0|}+ a_1,\ldots,\kappa c^{r-1}_1+ a_r,\ldots,\kappa c^{r-1}_{|\la^{r-1}|}+a_r),
\end{equation*}
where $c^l_1,c^l_2,\ldots,c^l_{|\la^l|}$ is the multiset of contents of boxes of the diagram $\BY(\la^l)$.
\end{Prop}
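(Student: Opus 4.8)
The plan is to reduce everything to the single-variable computation of Lemma~\ref{act_z_i_via_cyclotomic}, propagated through the tensor-product construction defining $\widetilde{S}_{\bf{k}}(\boldsymbol{\la})$. First I would observe that $f(z_1,\ldots,z_n)$ is a symmetric polynomial in the $z_i$, hence (being central in $R(n)$) it acts on the induced module $\widetilde{S}_{\bf{k}}(\boldsymbol{\la})$ by a scalar, and that scalar is computed by evaluating $f$ on the multiset of ``eigenvalues'' of $z_1,\ldots,z_n$ on any vector; since $f$ is symmetric it suffices to identify this multiset of generalized eigenvalues as a set.

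\medskip

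Next I would pick a convenient spanning family for $\widetilde{S}_{\bf{k}}(\boldsymbol{\la})$: by construction it is induced from the parabolic subalgebra $R_{\BC[\kappa,a_1,\ldots,a_r]}(n_0,\ldots,n_{r-1})$ acting on $\bigotimes_{i=0}^{r-1}\bigl(S(\la^i)\otimes \BC[\kappa,a_{i+1}]\bigr)$, so it is spanned over $R^r(n)$ by elements $\sigma\otimes(p_{B_0}\otimes\cdots\otimes p_{B_{r-1}})$ where $\sigma\in S_n$ ranges over coset representatives for $S_{n_0}\times\cdots\times S_{n_{r-1}}$ and each $B_i$ is a standard Young tableau on $\la^i$. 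On the subspace $1\otimes(p_{B_0}\otimes\cdots\otimes p_{B_{r-1}})$ the elements $z_1,\ldots,z_n$ act by the parabolic factors: by Lemma~\ref{act_z_i_via_cyclotomic} applied inside each block $R_{\BC[\kappa,a_{i+1}]}(n_i)$, the $z_j$'s lying in block $i$ act with generalized eigenvalues $\kappa\,\on{ct}(B_i(\cdot))+a_{i+1}$, i.e.\ the multiset $\{\kappa c^i_1+a_{i+1},\ldots,\kappa c^i_{|\la^i|}+a_{i+1}\}$ (up to reordering, which does not matter since we only need the multiset). Since $f(z_1,\ldots,z_n)$ is symmetric and $\BC[z_1,\ldots,z_n]^{S_n}$ is central, the scalar by which it acts is therefore $f$ evaluated on the concatenation of these $r$ multisets, which is exactly the claimed expression; and because this scalar agrees on the whole module (it is central), we are done.

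\medskip

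The one technical point — and the step I expect to require the most care — is justifying that $z_1,\ldots,z_n$ really do act on the parabolically-induced module with the asserted multiset of generalized eigenvalues on \emph{all} of $\widetilde{S}_{\bf{k}}(\boldsymbol{\la})$, not merely on the ``bottom'' subspace $1\otimes(\cdots)$; equivalently, that the multiset of generalized eigenvalues of the commuting family $\{z_i\}$ is constant along the induction. The clean way around this is to \emph{not} analyze all of the $z_i$ but only the symmetric functions of them: since $\BC[z_1,\ldots,z_n]^{S_n}$ is central in $R(n)$ (by \cite[Theorem 6.5]{L0}), each $f(z_1,\ldots,z_n)$ commutes with the whole $R^r(n)$-action, so once we know its value on $1\otimes(p_{B_0}\otimes\cdots\otimes p_{B_{r-1}})$ — a cyclic generator of the module — we know it everywhere. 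Thus the only genuine input is Lemma~\ref{act_z_i_via_cyclotomic} together with the fact that $e_k(z_1,\ldots,z_n)$ restricted to the parabolic $R(n_0)\otimes\cdots\otimes R(n_{r-1})$-submodule is the $k$-th elementary symmetric function of the union of the blockwise $z$-eigenvalues, which is a formal identity among elementary symmetric polynomials. I would present it in that order: centrality $\Rightarrow$ scalar action $\Rightarrow$ compute the scalar on the cyclic vector via Lemma~\ref{act_z_i_via_cyclotomic} $\Rightarrow$ read off $f$ of the concatenated content-multisets.
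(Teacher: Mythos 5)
Your argument is correct and is essentially the paper's own proof: the paper simply states that the proposition "follows from Lemma~\ref{act_z_i_via_cyclotomic}," and the details you supply (centrality of $\BC[z_1,\ldots,z_n]^{S_n}$ forces scalar action determined on the cyclic vector $1\otimes(p_{B_0}\otimes\cdots\otimes p_{B_{r-1}})$, where the $z_j$ act blockwise with eigenvalues $\kappa\,\on{ct}+a_{i+1}$ by Lemma~\ref{act_z_i_via_cyclotomic}) are exactly the intended ones. Your explicit handling of why one need not track the individual $z_i$-eigenvalues off the bottom layer is a useful clarification but not a different method.
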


Since every element of $Z(R^r(n))^{JM}$ is central we can consider the homomorphism ${\psi}\colon Z(R^r(n))^{JM} \ra \bigoplus_{{\boldsymbol{\la}}} \on{End}_{R^r(n)}(\widetilde{S}_{\bf{k}}(\boldsymbol{\la}))$ induced by the action of $Z(R^r(n))^{JM}$ on representations $\widetilde{S}_{\bf{k}}(\boldsymbol{\la})$.

\begin{Lemma}
We have $\on{End}_{R^r(n)}\widetilde{S}_{\bf{k}}(\boldsymbol{\la})={\bf{k}}$.
\end{Lemma}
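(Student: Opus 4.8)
The plan is to show that the universal Specht module $\widetilde{S}_{\bf{k}}(\boldsymbol{\la})$ has no nontrivial $R^r(n)$-endomorphisms beyond scalars in ${\bf{k}}$. First I would recall that $\widetilde{S}_{\bf{k}}(\boldsymbol{\la})$ is a free ${\bf{k}}$-module of finite rank equal to $\prod_{i} \dim S(\la^i) \cdot \binom{n}{n_0,\ldots,n_{r-1}}$, since it is obtained by inducing a free ${\bf{k}}$-module along the (free) extension $R^r_{\bf{k}}(n_0,\ldots,n_{r-1}) \subset R^r_{\bf{k}}(n)$; in particular $\widetilde{S}_{\bf{k}}(\boldsymbol{\la}) \otimes_{\bf{k}} F$ is a module over $R^r(n) \otimes_{\bf{k}} F$, and by right-exactness of $\Hom$ (or rather by freeness) it suffices to check $\on{End}_{R^r(n) \otimes F}(\widetilde{S}_{\bf{k}}(\boldsymbol{\la}) \otimes F) = F$ and that the endomorphism ring is ${\bf{k}}$-torsion-free, the latter being automatic as a ${\bf{k}}$-submodule of $\on{End}_{\bf{k}}(\widetilde{S}_{\bf{k}}(\boldsymbol{\la}))$.

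The key step is therefore the generic computation. Over the fraction field $F$, one knows (this is the standard semisimplicity statement for cyclotomic degenerate Hecke algebras at generic parameters, going back to Ariki--Koike / Kleshchev, cf.\ the references to \cite{Br}) that $R^r(n) \otimes_{\bf{k}} F$ is split semisimple with irreducible modules labeled exactly by $\CP(r,n)$, and that the base change $\widetilde{S}_{\bf{k}}(\boldsymbol{\la}) \otimes_{\bf{k}} F$ is the simple module attached to $\boldsymbol{\la}$. Indeed Proposition~\ref{central_act_via_cyclotom_rep} already pins down the central character of $\widetilde{S}_{\bf{k}}(\boldsymbol{\la})$, and for generic $\kappa, a_1,\ldots,a_r$ these central characters are pairwise distinct (the contents $\kappa c^l_j + a_l$ determine the residue data), so the module is isotypic for a single block; combined with a dimension count showing it is irreducible, Schur's lemma over the algebraically closed-enough field $F$ (or simply the fact that $F$ is a splitting field) gives $\on{End}_{R^r(n)\otimes F}(\widetilde{S}_{\bf{k}}(\boldsymbol{\la}) \otimes F) = F$.

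Then I would descend: an $R^r(n)$-endomorphism $\theta$ of $\widetilde{S}_{\bf{k}}(\boldsymbol{\la})$ becomes multiplication by some $f \in F$ after $\otimes F$; since $\theta$ preserves the ${\bf{k}}$-lattice $\widetilde{S}_{\bf{k}}(\boldsymbol{\la})$ and this lattice contains a cyclic generator $v$ (the image of a highest-weight vector $p_{B_0} \otimes \cdots \otimes p_{B_{r-1}}$), we get $f v = \theta(v) \in \widetilde{S}_{\bf{k}}(\boldsymbol{\la})$, and writing $v$ as part of a ${\bf{k}}$-basis forces $f \in {\bf{k}}$. Hence $\on{End}_{R^r(n)}(\widetilde{S}_{\bf{k}}(\boldsymbol{\la})) = {\bf{k}}$.

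The main obstacle I expect is the generic irreducibility/semisimplicity input: one must be careful that the specific integral form $\widetilde{S}_{\bf{k}}(\boldsymbol{\la})$ defined here (induced from $\bigotimes_i S(\la^i)$ with $z_1$ acting by $a_i$ on the $i$-th factor) really does specialize at the generic point to the full simple module rather than a proper submodule or a module with larger endomorphism ring — i.e.\ that the induction does not acquire extra endomorphisms. This is handled by the dimension count (the rank of $\widetilde{S}_{\bf{k}}(\boldsymbol{\la})$ matches $\dim$ of the simple labeled by $\boldsymbol{\la}$ in the generic semisimple algebra, whose dimensions sum to $|\Gamma_n|$-type count) together with the distinctness of central characters from Proposition~\ref{central_act_via_cyclotom_rep}; once those are in hand, Schur's lemma finishes it. If one prefers to avoid invoking semisimplicity as a black box, an alternative is to argue directly that $\widetilde{S}_{\bf{k}}(\boldsymbol{\la}) \otimes F$ is generated by a single vector on which the commutative subalgebra ${\bf{k}}[z_1,\ldots,z_n] \otimes F$ acts with a simple spectrum (all weights $(\kappa c^l_j + a_l)$ with multiplicity one, generically), forcing any endomorphism to be scalar on that vector and hence everywhere.
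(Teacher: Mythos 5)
Your proposal is correct and follows essentially the same route as the paper: note that $\widetilde{S}_{\bf{k}}(\boldsymbol{\la})$ is a free ${\bf{k}}$-lattice, embed $\on{End}_{R^r(n)}(\widetilde{S}_{\bf{k}}(\boldsymbol{\la}))$ into the endomorphisms of the generic fiber, use simplicity of $\widetilde{S}_{\bf{k}}(\boldsymbol{\la})\otimes_{\bf{k}}\ol{\on{Frac}({\bf{k}})}$ (the paper's citation of \cite[Section 4]{Br} plays the role of your ``generic semisimplicity'' input) together with Schur's lemma, and then descend the resulting scalar to ${\bf{k}}$ via the lattice. The only cosmetic difference is that the paper passes to the algebraic closure of $\on{Frac}({\bf{k}})$ rather than arguing that $F$ itself is a splitting field, and your descent step is spelled out in more detail than the paper's one-line conclusion.
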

\begin{proof}
Let us first of all note that $\widetilde{S}_{\bf{k}}(\boldsymbol{\la})$ is a free ${\bf{k}}$-module (see \cite{BK} and references therein). It follows that we have an embedding 
\begin{equation*}
\on{End}_{R^r(n)}(\widetilde{S}_{\bf{k}}(\boldsymbol{\la})) \subset \on{End}_{R^r(n)\otimes_{{\bf{k}}} \ol{\on{Frac}({\bf{k}})}}(\widetilde{S}_{\bf{k}}(\boldsymbol{\la}) \otimes_{{\bf{k}}} \ol{\on{Frac}({\bf{k}})})=\ol{\on{Frac}({\bf{k}})},
\end{equation*}
where $\ol{\on{Frac}({\bf{k}})}$ is the algebraic closure of the field of fractions $\on{Frac}({\bf{k}})$ and the last equality holds since $\widetilde{S}_{\bf{k}}(\boldsymbol{\la}) \otimes_{{\bf{k}}} \ol{\on{Frac}({\bf{k}})}$ is simple over $R^r(n) \otimes_{{\bf{k}}} \ol{\on{Frac}({\bf{k}})}$ (see \cite[Section 4]{Br}). Now it follows that $\on{End}_{R^r(n)}(\widetilde{S}_{\bf{k}}(\boldsymbol{\la}))={\bf{k}}$.
\end{proof}

\begin{Prop}\label{psi_tilda_inj!}
The homomorphism 
\begin{equation*}
{\psi}\colon Z(R^r(n))^{JM} \ra \bigoplus_{{\boldsymbol{\la}}} \on{End}_{R^r(n)}\widetilde{S}_{\bf{k}}(\boldsymbol{\la})=E
\end{equation*}
becomes an isomorphism after tensoring by $F=\on{Frac}({\bf{k}})$. This homomorphism is injective. It sends generators $[e_k(z_1,\ldots,z_n)]$ to the collection 
\begin{equation*}
(e_{k}(\kappa c^0_1+a_1,\ldots,\kappa c^{0}_{|\la^0|}+ a_1,\ldots,\kappa c^{r-1}_1+ a_r,\ldots,\kappa c^{r-1}_{|\la^{r-1}|}+a_r))_{\boldsymbol{\la} \in \CP(r,n)},
\end{equation*}
where $c^l_1,c^l_2,\ldots,c^l_{|\la^l|}$ is the multiset of contents of boxes of the diagram $\BY(\la^l)$.
\end{Prop}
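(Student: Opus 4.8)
The plan is to deduce this proposition from Proposition~\ref{central_act_via_cyclotom_rep} together with the structure results about $Z(R^r(n))^{JM}$ quoted from~\cite{Br}. The formula for the image of $[e_k(z_1,\ldots,z_n)]$ under $\psi$ is immediate: by construction $\psi$ is the map induced by the action of $Z(R^r(n))^{JM}$ on the modules $\widetilde S_{\bf k}(\boldsymbol\la)$, and since $\on{End}_{R^r(n)}\widetilde S_{\bf k}(\boldsymbol\la)={\bf k}$ (the preceding lemma), Proposition~\ref{central_act_via_cyclotom_rep} applied to $f=e_k$ gives exactly the asserted collection. So the only real content is the injectivity of $\psi$ and the statement that $\psi\otimes_{\bf k}F$ is an isomorphism.

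For the generic statement, first I would recall (again from~\cite[Section~4]{Br}) that after base change to $\ol{\on{Frac}({\bf k})}$ the algebra $R^r(n)\otimes_{\bf k}\ol{\on{Frac}({\bf k})}$ is split semisimple with simple modules $\widetilde S_{\bf k}(\boldsymbol\la)\otimes_{\bf k}\ol{\on{Frac}({\bf k})}$ indexed by $\boldsymbol\la\in\CP(r,n)$, pairwise non-isomorphic. Hence its center is $\bigoplus_{\boldsymbol\la}\ol{\on{Frac}({\bf k})}$, and the map recording the scalar action on each simple is an isomorphism from that center onto $E\otimes_{\bf k}\ol{\on{Frac}({\bf k})}$. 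Since $Z(R^r(n))^{JM}$ is the center of $R^r(n)$ by~\cite[Theorem~1]{Br}, taking centers commutes with the (flat, in fact field) base change $\bf k\to\ol{\on{Frac}({\bf k})}$ up to the obvious identification, so $\psi\otimes_{\bf k}\ol{\on{Frac}({\bf k})}$ is an isomorphism; descending from $\ol{\on{Frac}({\bf k})}$ to $F=\on{Frac}({\bf k})$ is routine since all the relevant modules are already defined over $F$ and remain simple and pairwise non-isomorphic there (the characters are distinct polynomials in $\kappa,a_i$). Injectivity of $\psi$ itself then follows because $Z(R^r(n))^{JM}$, being the image of ${\bf k}[z_1,\ldots,z_n]^{S_n}$, is a torsion-free (indeed free) ${\bf k}$-module — this is part of what~\cite{Br} establishes, and can also be seen from the PBW-type basis of $R^r(n)$ over ${\bf k}$ — so it embeds into its localization $Z(R^r(n))^{JM}\otimes_{\bf k}F$, on which $\psi\otimes F$ is injective by the previous step.

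The one point that requires a little care, and which I would regard as the main obstacle, is justifying that "taking the center commutes with base change" in precisely the form needed: a priori $Z(A)\otimes_{\bf k}K$ only maps to $Z(A\otimes_{\bf k}K)$, and equality needs an argument. Here it is clean because $R^r(n)$ is a finitely generated free ${\bf k}$-module (a consequence of the cyclotomic relation $\prod_{i=1}^r(z_1-a_i)$ truncating the affine Hecke algebra, cf.~\cite{BK}), so $Z(R^r(n))=\{a: [a,x_j]=0\ \forall j\}$ is the kernel of a ${\bf k}$-linear map between free modules and therefore its formation commutes with the flat base change ${\bf k}\to F$. Combined with~\cite[Theorem~1]{Br} (which identifies $Z(R^r(n))$ with $Z(R^r(n))^{JM}$) this is exactly what is needed. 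After that, everything else — the explicit formula on generators, injectivity, the isomorphism after inverting the appropriate walls in ${\bf k}$ (the elements where two of the character tuples $(\kappa c^l_\bullet+a_l)$ collide) — is formal. I would also note, in passing, that the same ``walls'' appear in Proposition~\ref{emb_cohom_prop_use}, which is what makes the eventual comparison with $H^*_A(\mathfrak M(n,r))$ possible.
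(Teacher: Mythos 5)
Your proposal is correct, and the first and last steps (the formula on generators via Proposition~\ref{central_act_via_cyclotom_rep}, and injectivity deduced from generic injectivity plus torsion-freeness of $Z(R^r(n))^{JM}$ over the domain ${\bf{k}}$) coincide with the paper's. Where you genuinely diverge is the middle step, the isomorphism after tensoring by $F$. The paper argues by a rank count: both sides are free ${\bf{k}}$-modules of rank $|\CP(r,n)|$ (the left-hand side by \cite[Theorem 1]{Br}), so it suffices to prove surjectivity of $\psi\otimes F$, and this is extracted from Proposition~\ref{emb_cohom_prop_use} — i.e., from the geometric fact, proved via Atiyah--Bott localization and Kirwan surjectivity, that the very same tuples of polynomials already generate $E\otimes F$ as the image of $H^*_A(\mathfrak{M}(n,r))\otimes F$. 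You instead stay entirely on the Hecke-algebra side: split semisimplicity of $R^r(n)\otimes\ol{\on{Frac}({\bf{k}})}$ with the Specht modules as a complete set of simples, the identification of the center of a split semisimple algebra with $\bigoplus F$ via scalar actions, and the commutation of the formation of the center with flat base change (correctly reduced to $R^r(n)$ being a finitely generated free ${\bf{k}}$-module, so that the center is the kernel of a map of free modules). Your route is self-contained and does not import the geometry — it even re-derives the rank $|\CP(r,n)|$ of the center rather than quoting it — at the cost of invoking more of the representation theory of $R^r(n)$ over the generic point; the paper's route is shorter given that Proposition~\ref{emb_cohom_prop_use} is already in hand and has the side benefit of making it manifest that $\psi$ and $\iota^*$ have the same image, which is the point of the whole comparison. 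One cosmetic remark: torsion-freeness of $Z(R^r(n))^{JM}$ follows most directly from its being a ${\bf{k}}$-submodule of the free module $R^r(n)$ (not merely from its being an image of ${\bf{k}}[z_1,\ldots,z_n]^{S_n}$), but you also give the correct justification via freeness, so this is not a gap.
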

\begin{proof}
The last part of the claim is Proposition~\ref{central_act_via_cyclotom_rep}.
Recall now that by~\cite[Therorem 1]{Br} $Z(R^r(n))^{JM}$ is a free ${\bf{k}}$-module of rank $|\CP(n,r)|$. 
So injectivity of $\psi$ would follow if we show that $\psi$ becomes isomorphism after tensoring by $F$. In order to do that it is enough to check that $\psi$ becomes surjective after tensoring by $F$ (using the equality of ranks of $Z(R^r(n))^{JM}$ and $ \bigoplus_{\boldsymbol{\la}} \on{End}_{R^r(n)}(\widetilde{S}_{\bf{k}}(\boldsymbol{\la}))$ over $\bf{k}$). Surjectivity is a corollary of Proposition~\ref{central_act_via_cyclotom_rep} and Proposition~\ref{emb_cohom_prop_use}.
\end{proof}

\section{Schematic fixed points of $\on{Spec}Z(H_{n,r})$} \label{schem_fixed_Q_section}

\subsection{Standard representations of $H_{n,r}$, grading on them}\label{standard_simple_RCA}

We set $\zeta_{i,j}:=\frac{1}{r}\sum_{p=0}^{r-1}\epsilon_i^{p}\epsilon_j^{-p}$ (projector to the invariants under $\epsilon_i\epsilon_j^{-1}$). 
The Jucys-Murphy elements generalize from $S_n$ to $\Gamma_n$ as
\begin{equation}\label{JM_in_general_defin}
JM_{\Gamma_n,i}:=\sum_{j<i}\zeta_{i,j}(ij) \in \BC\Gamma_n.    
\end{equation}

Recall that $\CP(r,n)$ is the set of $r$-multipartitions of $n$ (Definition \ref{multipart}). Pick ${\boldsymbol{\la}} \in \CP(r,n)$ and consider the corresponding $r$-tuple of Young diagrams
\begin{equation}
\BY({\boldsymbol{\la}})=(\BY(\la^0),\ldots,\BY(\la^{r-1})).
\end{equation}
Given a cell $b \in \BY({\boldsymbol{\la}})$, define $\beta(b)=k$ if $b \in \BY({\boldsymbol{\la}}^{k-1})$ and $\on{ct}(b)=j-i$ if $b$ is in the $i$th row and
$j$th column of $\BY(\la^k)$. 
There is a bijection ${\boldsymbol{\la}} \mapsto S({\boldsymbol{\la}})$ from the set of $r$-partitions of $n$ to the set of irreducible $\Gamma_n$-modules such that $S(\boldsymbol{\la})$ has a basis $p_B$ indexed by standard Young tableaux $B$ on ${\boldsymbol{\la}}$, and $p_B$ is determined up to scalars by the equations (see, for example,~\cite[Equation~(2.16)]{GRI}):
\begin{equation}\label{act_JM_S_la}
JM_{\Gamma_n,i} \cdot p_B= \on{ct}(B(i))p_B,\, \epsilon_i \cdot p_B=\eta^{\beta(B(i))}p_B.
\end{equation}

We can consider $S(\boldsymbol{\la}) \otimes {\bf{h}}$ as a module over $({\bf{h}} \otimes \BC\Gamma_n) \ltimes S^\bullet \mathfrak{h}^*$ via the trivial action of the augmentation ideal of $S^\bullet \mathfrak{h}^*$. 
Let $\Delta(\boldsymbol{\la}):=\on{Ind}^{H_{n,r}}_{({\bf{h}} \otimes \BC\Gamma) \ltimes S^\bullet \mathfrak{h}^*} (S(\boldsymbol{\la}) \otimes {\bf{h}})$ be the induced module (sometimes called the standard module corresponding to ${\boldsymbol{\la}}$). 
Recall that the algebra $H_{n,r}$ is graded via 
\begin{equation*}
\on{deg}x_j=1,\, \on{deg}y_j=-1,\, \on{deg}\Gamma_n=\on{deg}{\bf{h}}=0.
\end{equation*}
This grading induces a grading on $\Delta(\boldsymbol{\la})$. 
The following lemma describes all the {\em{graded}} endomorphisms of our modules $\Delta(\bla)$.
\begin{Lemma}
We have $\on{End}^{\mathrm{gr}}_{H_{n,r}}(\Delta(\boldsymbol{\la}))={\bf{h}}$. 
\begin{proof}
Note that $\Delta({\boldsymbol{\la}})_0=S({\boldsymbol{\la}}) \otimes {\bf{h}}$ and $\Delta({\boldsymbol{\la}})_0$ generates $\Delta({\boldsymbol{\la}})$ over $H_{n,r}$. Since we consider graded isomorphisms,  $\Delta({\boldsymbol{\la}})_0$ is mapped to itself. Now the claim follows from the equality $\on{End}_{{\bf{h}} \otimes \BC\Gamma_n}(S({\boldsymbol{\la}}) \otimes {\bf{h}})={\bf{h}}$.
\end{proof}

\end{Lemma}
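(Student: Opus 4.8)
The plan is to carry out the two-step reduction already sketched in the statement: first show that the degree-zero component $\Delta(\bla)_0$ equals $S(\bla)\otimes{\bf h}$, is concentrated in the bottom degree, and generates $\Delta(\bla)$ as an $H_{n,r}$-module, so that a graded endomorphism necessarily restricts to (and is determined by) an endomorphism of $\Delta(\bla)_0$; and second, identify $\on{End}_{{\bf h}\otimes\BC\Gamma_n}(S(\bla)\otimes{\bf h})$ with ${\bf h}$ via Schur's lemma.

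For the first step I would invoke the PBW theorem for $H_{n,r}$: as a graded ${\bf h}$-module $H_{n,r}\cong S^\bullet\mathfrak{h}\otimes_{\BC}\BC\Gamma_n\otimes_{\BC}S^\bullet\mathfrak{h}^*\otimes_{\BC}{\bf h}$, where the $x_j$ span $\mathfrak{h}$ in degree $+1$ and the $y_j$ span $\mathfrak{h}^*$ in degree $-1$. Since $\Delta(\bla)=\on{Ind}^{H_{n,r}}_{({\bf h}\otimes\BC\Gamma_n)\ltimes S^\bullet\mathfrak{h}^*}(S(\bla)\otimes{\bf h})$ and the augmentation ideal of $S^\bullet\mathfrak{h}^*$ annihilates $S(\bla)\otimes{\bf h}$, PBW gives an isomorphism of graded ${\bf h}$-modules $\Delta(\bla)\cong S^\bullet\mathfrak{h}\otimes_{\BC}\bigl(S(\bla)\otimes{\bf h}\bigr)$ with $S(\bla)\otimes{\bf h}$ placed in degree $0$. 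Hence $\Delta(\bla)$ is concentrated in non-negative degrees, $\Delta(\bla)_0=S(\bla)\otimes{\bf h}$, and, because $S^\bullet\mathfrak{h}$ is generated over $\BC$ by its degree-$1$ part $\mathfrak{h}\subset H_{n,r}$, the subspace $\Delta(\bla)_0$ generates $\Delta(\bla)$ over $H_{n,r}$. Consequently any $H_{n,r}$-linear endomorphism $\phi$ of degree $0$ maps $\Delta(\bla)_0$ into $\Delta(\bla)_0$ and is uniquely determined by $\phi|_{\Delta(\bla)_0}$.

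For the second step, $\phi|_{\Delta(\bla)_0}$ is ${\bf h}$-linear and commutes with $\Gamma_n\subset H_{n,r}$ (which acts in degree $0$ preserving $\Delta(\bla)_0$), hence lies in $\on{End}_{{\bf h}\otimes\BC\Gamma_n}(S(\bla)\otimes{\bf h})$. Since $S(\bla)$ is an irreducible $\BC\Gamma_n$-module and $\BC$ is algebraically closed, Schur's lemma gives $\on{End}_{\BC\Gamma_n}(S(\bla))=\BC$, and base change along the free extension $\BC\hookrightarrow{\bf h}$ yields $\on{End}_{{\bf h}\otimes\BC\Gamma_n}(S(\bla)\otimes{\bf h})={\bf h}$. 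Conversely, every $f\in{\bf h}$ acts on $\Delta(\bla)$ by multiplication, which is $H_{n,r}$-linear since ${\bf h}$ is central and of degree $0$ since $\on{deg}{\bf h}=0$; this gives ${\bf h}\hookrightarrow\on{End}^{\mathrm{gr}}_{H_{n,r}}(\Delta(\bla))$, and combined with the reverse inclusion above we get the asserted equality. (Alternatively one can replace the generator argument by Frobenius reciprocity for induction, noting that $S^\bullet\mathfrak{h}^*$ acts by zero on $\Delta(\bla)_0$, so degree-$0$ $H_{n,r}$-endomorphisms of $\Delta(\bla)$ correspond to $({\bf h}\otimes\BC\Gamma_n)$-endomorphisms of $S(\bla)\otimes{\bf h}$.)

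I do not expect a genuine obstacle here; the only two points deserving a word of care are the PBW identification of the graded components of $\Delta(\bla)$ (which pins down both $\Delta(\bla)_0$ and the generation statement) and the use of \emph{absolute} irreducibility of $S(\bla)$ over $\Gamma_n$, i.e. that the base field is $\BC$. Both are standard, so the argument is essentially bookkeeping around Schur's lemma.
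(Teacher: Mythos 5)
Your proof is correct and follows the same route as the paper's: identify $\Delta(\bla)_0=S(\bla)\otimes{\bf h}$ as a generating degree-zero piece, restrict a graded endomorphism there, and conclude via $\on{End}_{{\bf h}\otimes\BC\Gamma_n}(S(\bla)\otimes{\bf h})={\bf h}$. You have merely filled in the PBW and Schur's-lemma details that the paper leaves implicit.
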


Recall the algebra 
\begin{multline*}
Q_{n,r}:=Z(H_{n,r})_0/\sum_{i>0} Z(H_{n,r})_{-i} Z(H_{n,r})_{i}=
\\ =Z(H_{n,r})/(b \in Z(H_{n,r})_i,\, i \neq 0)
\end{multline*}
of functions on schematic fixed points $(\on{Spec}Z(H_{n,r}))^{\BT}$.

Note that the action of $Z(H_{n,r})_0$ on $\Delta(\boldsymbol{\la})_0$ factors through $Q_{n,r}$ so we obtain a homomorphism 

\begin{equation*}
\phi\colon Q_{n,r} \ra \bigoplus_{\boldsymbol{\la}}\on{End}^{\mathrm{gr}}_{H_{{n,r}}}(\Delta({\boldsymbol{\la}}))=\bigoplus_{\bla}\on{\bf{h}}=E.
\end{equation*}

\begin{Rem}
\emph{In \cite{GO0} Gordon defines so-called baby Verma modules over certain quotients of $H_{n,r}$ (called restricted Cherednik algebras of $\Gamma_n$). Homomorphism $\phi$ can also be  defined  by replacing $\Delta({\boldsymbol{\la}})$ by the (universal analogs) of baby Verma modules and acting by $Q_{n,r}$ on them (compare with \cite{MM}).}
\end{Rem}

We have constructed the homomorphism $\phi$, let us now describe generators of the algebra $Q_{n,r}$.

\begin{Def}\label{OD_elements_DEF!}
Dunkl-Opdam operators $u_i$, $i=1,\ldots,n$ are the following elements of $H_{n,r}$:
\begin{equation}\label{DO_forms}
u_i:=\frac{1}{r}y_ix_i-\kappa \sum_{j>i}(ij)\zeta_{i,j}+p(\epsilon_i)=\frac{1}{r}x_iy_i+\kappa JM_{\Gamma_n,i}+p(\eta^{-1}\epsilon_i),
\end{equation}
where the last equality follows from the fact that $$[x_i,y_i]=-\kappa\sum_{j \neq i}\sum_{p=0}^{r-1}(ij)\epsilon_i^p\epsilon_j^{-p}-c(\epsilon_i)$$  together with the equality $c(\epsilon_i)=rp(\eta^{-1} \epsilon_i)-rp(\epsilon_i)$.

\end{Def}

\begin{Rem}

\emph {Note  that the element $u_i$ identifies with the element $\frac{z_i}{r}$ defined in \cite[Section 3.2]{MM}.}
\end{Rem}

\begin{Lemma}\label{symm_dunkl_central}
The subalgebra ${\bf{h}}[u_1,\ldots,u_n]^{S_n} \subset H_{n,r}$ is central.
\end{Lemma}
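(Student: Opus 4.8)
The plan is to show that each symmetric polynomial $f(u_1,\ldots,u_n)$ commutes with the generators $x_j$, $y_j$ and $\Gamma_n$ of $H_{n,r}$, and for this it suffices to understand the commutators $[u_i, x_j]$ and $[u_i,y_j]$ and the conjugation action of $\Gamma_n$ on the $u_i$. The key structural fact I would establish first is that the Dunkl--Opdam elements $u_i$ together with $\BC\Gamma_n$ generate a copy of (a degeneration of) the affine Hecke algebra: more precisely, the $u_i$ commute pairwise, $[u_i,u_j]=0$, and one has ``intertwining'' relations of the form $s_i u_i = u_{i+1} s_i - \kappa \zeta_{i,i+1}$ (and $s_k u_i = u_i s_k$ for $k\neq i-1,i$), together with the relations expressing how $\epsilon_j$ conjugates $u_i$ (namely $\epsilon_j$ commutes with $u_i$ for $i\neq j$, and also with $u_j$ since $p(\epsilon_j)$, $JM_{\Gamma_n,j}$ and $x_jy_j$ are all $\epsilon_j$-invariant — here one uses $\epsilon_j x_j \epsilon_j^{-1} = \eta x_j$, $\epsilon_j y_j\epsilon_j^{-1}=\eta^{-1}y_j$). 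These relations are exactly the ones verified in the literature (e.g.\ \cite{GGOR}, \cite{MM}, and for the DAHA-type statement \cite{webster}); I would either cite them or carry out the short direct check using \eqref{rel_xx_yy}, \eqref{comm_x_y_H}, \eqref{comm_x_y_dif_H}.

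Granting these relations, the argument proceeds in two steps. \textbf{Step 1: $\Gamma_n$-invariance.} From the conjugation formulas above, $s_i$ permutes $u_i \leftrightarrow u_{i+1}$ up to the correction term $\kappa\zeta_{i,i+1}$, while each $\epsilon_j$ fixes every $u_i$. Hence for $f \in \BC[z_1,\ldots,z_n]^{S_n}$, the element $f(u_1,\ldots,u_n)$ is literally $\Gamma_n$-invariant: the $\epsilon_j$ act trivially, and $s_i f(u_1,\ldots,u_n) s_i^{-1} = f(u_1,\ldots,u_n)$ because the correction terms $\kappa\zeta_{i,i+1}$ that appear when moving $s_i$ past the $u$'s cancel by symmetry of $f$ — this is the standard computation showing symmetric functions in Jucys--Murphy-type elements are central in an affine Hecke algebra, so I would invoke \cite[Theorem 6.5]{L0} (cited in the excerpt for exactly this purpose in the degenerate affine Hecke algebra $R(n)$) after transporting the statement through the subalgebra generated by $\BC S_n$ and the $u_i$. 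So $f(u_1,\ldots,u_n)$ commutes with all of $\BC\Gamma_n$.

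\textbf{Step 2: commuting with $x_j$ and $y_j$.} The commutators $[u_i, x_j]$ and $[u_i,y_j]$ are, by a direct computation from \eqref{DO_forms} and the defining relations, $\Gamma_n$-linear combinations of the $x$'s (resp.\ $y$'s): schematically $[u_i, x_j] = \delta_{ij}\,x_j + \kappa\,(\text{terms }x_k\,\sigma)$ with $\sigma\in\Gamma_n$, i.e.\ the operators $[\,\cdot\,, x_j]$ act on the span of $\{x_k g : g\in\Gamma_n\}$, and similarly for $y_j$. The precise statement I want is that $\sum_i u_i$ acts on the $x$'s essentially as the grading operator plus a $\Gamma_n$-piece, and more generally that $x_j \mapsto$ the tuple $(u_1,\ldots,u_n)$ satisfies exactly the same intertwining relations as $z_j$'s do in the (polynomial representation of the) affine Hecke setup. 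Then symmetry of $f$ forces $[f(u_1,\ldots,u_n), x_j]=0$: one moves $f$ past $x_j$ one $u_i$ at a time, and the error terms organize into $f$ evaluated at a permuted tuple minus $f$ at the original tuple, which vanishes. The cleanest way to run this is to observe that the subalgebra generated by $\BC\Gamma_n$, the $u_i$, and the $x_j$ (resp.\ $y_j$) is a quotient of a degenerate cyclotomic DAHA in which symmetric functions of the $u_i$ are manifestly central by the abstract Hecke-algebra argument; again \cite{webster} and \cite{MM} set up precisely this dictionary, and I would cite it.

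\textbf{Main obstacle.} The genuinely delicate point is Step 2, keeping track of the $\Gamma_n$-correction terms in $[u_i,x_j]$ and $[u_i,y_j]$: unlike the plain $S_n$-case, the terms involve the projectors $\zeta_{i,j}$ and the twists $\epsilon_i^p\epsilon_j^{-p}$, and one must check that after summing over $i$ these still conspire (by $S_n$-symmetry of $f$) to cancel. I expect the shortest rigorous route is not to grind this out by hand but to reduce to the known centrality of symmetric polynomials in the Bernstein--Zelevinsky-type generators of the relevant degenerate DAHA/Hecke algebra — which is why the excerpt is careful to recall \cite[Theorem 6.5]{L0} and \cite[Theorem 1]{Br}. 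The remaining work is then bookkeeping identifying $u_i$ with the corresponding Jucys--Murphy/Bernstein elements, which is exactly the content of the two Remarks following Definition \ref{OD_elements_DEF!}.
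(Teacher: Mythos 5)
Your primary route --- reduce everything to the known intertwining relations for the Dunkl--Opdam elements and then cite the literature --- is exactly what the paper does: its entire proof is a citation of \cite[Theorem 3.4]{MM}, with the remark that the statement can also be read off from the presentation of $H_{n,r}$ in \cite{webster}. Step 1 of your sketch ($\epsilon_j$ commutes with every $u_i$, and $s_i$ swaps $u_i\leftrightarrow u_{i+1}$ up to the correction $\kappa\zeta_{i,i+1}$, so symmetric polynomials are $\Gamma_n$-invariant) is correct and is the standard argument.

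There is, however, a real slip in your Step 2 that would break the direct verification as written. You assert schematically that $[u_i,x_j]=\delta_{ij}\,x_j+\kappa(\cdots)$ and that $\sum_i u_i$ acts on the $x$'s ``essentially as the grading operator plus a $\Gamma_n$-piece.'' If that were true, then $[e_1(u_1,\ldots,u_n),x_j]=x_j+\kappa(\cdots)$ and the diagonal term would not cancel under symmetrization, contradicting the lemma. The point you are missing is that the diagonal term is $\tfrac{\hbar}{r}\delta_{ij}x_j$ (it comes from the $-\hbar$ in relation (\ref{comm_x_y_H})), and the lemma is stated for $H_{n,r}=\mathcal{H}_{n,r}/(\hbar)$, where this term vanishes identically. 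Indeed, for $\hbar\neq 0$ the element $\sum_i u_i$ is (up to a central shift) the Euler/grading element and symmetric functions of the $u_i$ are \emph{not} central; centrality is special to the $\hbar=0$ quotient. At $\hbar=0$ every term of $[u_i,x_j]$ is proportional to $\kappa$ or to the $c_l$, and these are the $\Gamma_n$-twisted correction terms that cancel after summing over $i$ by $S_n$-symmetry of $f$ --- this cancellation is the content of \cite[Theorem 3.4]{MM}. So: keep the citation as the proof (as the paper does), but if you insist on the direct check, delete the grading-operator claim and make the role of $\hbar=0$ explicit.
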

\begin{proof}
This follows from~\cite[Theorem 3.4]{MM}, the statement can be also deduced from the presentation of $H_{n,r}$ given in~\cite{webster}.

\end{proof}

The following lemma describes generators of the algebra $Q_{n,r}$, see Proposition \ref{conj_cartan_surj} 
for an alternative argument (that covers a much more general situation). We are grateful to Gwyn Bellamy for explaining this lemma to us.
\begin{Lemma}\label{gen_of_Q!}
Classes of elements $e_k(u_1,\ldots,u_n)$, $k=1,\ldots,n$ generate the algebra $Q_{n,r}$.
\end{Lemma}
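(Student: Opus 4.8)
The plan is to show that the classes $[e_k(u_1,\ldots,u_n)]$, $k=1,\ldots,n$, already generate $Q_{n,r}$ by comparing them with the restricted (baby Verma) picture, where the statement is a theorem of Gordon. First I would recall that $Q_{n,r}$ is, by definition, the quotient $Z(H_{n,r})/(b\in Z(H_{n,r})_i,\ i\neq 0)$, so it suffices to exhibit \emph{some} generating set of $Z(H_{n,r})$ whose degree-$0$ part is generated, modulo the ideal generated by the nonzero-degree homogeneous elements, by the $e_k(u_1,\ldots,u_n)$. By Lemma~\ref{symm_dunkl_central} the subalgebra ${\bf{h}}[u_1,\ldots,u_n]^{S_n}$ is central and sits in degree $0$; together with the parameters ${\bf{h}}$ (also in degree $0$) and the obvious ``positive/negative'' central elements built from $x_j$'s and $y_j$'s, these generate $Z(H_{n,r})$ — this is essentially the content of the PBW description of $Z(H_{n,r})\simeq {\bf{e}}H_{n,r}{\bf{e}}$ (Proposition~\ref{center_via_spherical}) and is where I would invoke, or reprove in our normalization, the relevant part of \cite{EG}/\cite{GO0}.

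Next I would pass to the restricted Cherednik algebra $\overline{H}_{n,r}:=H_{n,r}/(Z(H_{n,r})_{>0}\text{-part and its negatives})$ in the sense of Gordon, whose center is precisely (a form of) $Q_{n,r}$. Over this quotient the standard modules $\Delta(\bla)$ specialize to the baby Verma modules $M(\bla)$, and the Dunkl–Opdam elements $u_i$ act on the top of $M(\bla)$ by the scalars $\kappa\,\mathrm{ct}(B(i))+a_{\beta(B(i))}$ coming from \eqref{act_JM_S_la} and the second formula in \eqref{DO_forms}. So the images of $e_k(u_1,\ldots,u_n)$ in $\bigoplus_{\bla}\on{End}^{\mathrm{gr}}_{H_{n,r}}(\Delta(\bla))=E$ are exactly the collections $(e_k(\kappa c^0_1+a_1,\ldots,\kappa c^{r-1}_{|\la^{r-1}|}+a_r))_{\bla}$ — the same vectors that appear in Proposition~\ref{emb_cohom_prop_use} and Proposition~\ref{central_act_via_cyclotom_rep}. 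The key input is then Gordon's theorem (\cite[Theorem~5.6 and its proof]{GO0}, see also \cite{MM}) that the center of the restricted Cherednik algebra is generated by the symmetric functions in the Dunkl–Opdam elements; equivalently, that the fibers of $\on{Spec}Z(H_{n,r})_{\mathfrak{a}}\to\mathfrak{a}$ at a point have their reduced structure detected by $e_1(u),\ldots,e_n(u)$. Granting this, the classes $[e_k(u_1,\ldots,u_n)]$ generate $Q_{n,r}$.

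Alternatively — and this is the route I would actually write up, since the excerpt flags Proposition~\ref{conj_cartan_surj} as covering ``a much more general situation'' — I would deduce the lemma from a general surjectivity statement for the Cartan subquotient (``$B$-algebra'') map: the natural map from $\on{Sym}$(degree-$0$ central generators) onto $Q_{n,r}$ is surjective because $Q_{n,r}=Z(H_{n,r})_0/\sum_{i>0}Z(H_{n,r})_{-i}Z(H_{n,r})_i$ kills all products of strictly positive with strictly negative central elements, and modulo those products the only degree-$0$ central elements surviving are polynomials in the $u_i$'s (and the parameters ${\bf{h}}$, which act as scalars so contribute nothing new to $Q_{n,r}$ as a ${\bf{h}}$-algebra). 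Concretely: take any homogeneous generating set $\{g_\alpha^+\}\cup\{g_\beta^-\}\cup\{e_k(u)\}\cup{\bf{h}}$ of $Z(H_{n,r})$ with $\deg g_\alpha^+>0$, $\deg g_\beta^-<0$; any element of $Z(H_{n,r})_0$ is a ${\bf{h}}$-polynomial in these whose degree-$0$ part is a sum of a polynomial in the $e_k(u)$ and terms each containing at least one $g^+$ and (by degree balance) at least one $g^-$, hence lying in $\sum_{i>0}Z(H_{n,r})_{-i}Z(H_{n,r})_i$. Therefore $[e_k(u_1,\ldots,u_n)]$ generate $Q_{n,r}$.

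The main obstacle is the input that $Z(H_{n,r})$ is generated by ${\bf{h}}[u_1,\ldots,u_n]^{S_n}$ together with the ``extreme'' central elements supported in positive/negative degrees — i.e., that there are no exotic degree-$0$ central elements that are not already polynomials in the Dunkl–Opdam symmetric functions once we quotient by the positive$\times$negative products. For $r=1$ this is classical (the center of $H_{n,1}$ is a polynomial algebra and its degree-$0$ part over ${\bf{h}}$ is $\BC[e_1(u),\ldots,e_n(u)]$ by \cite{EG}); for general $r$ it follows from the explicit description of $Z(H_{n,r})$ via the cyclic-quiver variety $\CX_0(n,r)_{\mathfrak{a}}$ (Remark~\ref{dual_as_quiv_remark}) plus the known ring-of-invariants description of its coordinate ring, but spelling this out carefully — matching the grading on the quiver side with $\on{deg}x=1,\ \on{deg}y=-1$ and checking that the degree-$0$ invariants are generated by the $u_i$-symmetric functions — is the step that needs genuine care. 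I would therefore defer the hard part to Proposition~\ref{conj_cartan_surj} and present this lemma as the special case, exactly as the excerpt suggests.
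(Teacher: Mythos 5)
Your second route is essentially the paper's argument, and your degree bookkeeping (any $\BT$-degree-zero monomial in a homogeneous generating set either is a polynomial in the degree-zero generators or contains both a positive- and a negative-degree factor and hence dies in $\sum_{i>0}Z(H_{n,r})_{-i}Z(H_{n,r})_i$) is sound. But the step you explicitly defer --- that the degree-zero generators can be taken to be symmetric functions in the Dunkl--Opdam elements, with no ``exotic'' degree-zero central generators --- is the entire content of the lemma, and the paper does carry it out rather than outsourcing it. The mechanism is short: filter $H_{n,r}$ by $\on{deg}x_i=\on{deg}y_i=1$ (parameters and $\Gamma_n$ in degree $0$). By \cite[Theorem 3.3]{EG} one has $\on{gr}Z(H_{n,r})={\bf{h}}[\mathfrak{h}\oplus\mathfrak{h}^*]^{\Gamma_n}$, which by Weyl/\cite{wang} is generated by the polarized power sums $\sum_i(x_i^r)^a(y_i^r)^b(x_iy_i)^c$. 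Such a generator has $\BT$-degree $r(a-b)$, so in $\on{gr}Q_{n,r}$ only those with $a=b$ survive, and these equal $\sum_i(x_iy_i)^{ra+c}=\on{gr}\big(r^{ra+c}\sum_i u_i^{ra+c}\big)$; a standard filtered-algebra argument then lifts generation from $\on{gr}Q_{n,r}$ to $Q_{n,r}$. This is exactly the input you would need to legitimize your choice of generating set $\{g^+_\alpha\}\cup\{g^-_\beta\}\cup\{e_k(u)\}\cup{\bf{h}}$, so without it your write-up is a reduction, not a proof.

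Two smaller remarks. Your first route (baby Vermas and Gordon's theorem) is close to circular as stated --- Gordon's generation statement for the center of the restricted algebra is essentially the assertion being proved, transported through the specialization --- though it does match the paper's citation of \cite[Theorem 5.5]{MM} as the source of the argument. And your proposal to defer entirely to Proposition~\ref{conj_cartan_surj} is legitimate: the paper itself flags that proposition as an alternative argument covering a more general situation (surjectivity of $H^*_{A\times G_{\bf{v}}}(\on{pt})\twoheadrightarrow \BC[(\on{Spec}H_*^{A\times(G_{\bf{v}})_{\CO}}(\CR_{{\bf{v}},{\bf{w}}}))^{\BT}]$ via the grading of dressed monopole operators), but that route requires first identifying $Q_{n,r}$ with the Coulomb-branch side and the $u_i$ with the $c_i(V)*1$, i.e.\ Proposition~\ref{iden_Coulomb_Cherednik!!}, so it is not free either.
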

\begin{proof}
The claim follows from the proof of ~\cite[Theorem 5.5]{MM}. Let us repeat the argument. Recall that $Q_{n,r}$ is the quotient of the algebra $Z(H_{n,r})$. Consider the following filtration on $H_{n,r}$:
\begin{equation*}
\on{deg}x_i=\on{deg}y_i=1,\, \on{deg}\kappa=\on{deg}c_j=\on{deg}\Gamma_n=0. 
\end{equation*}
We have 
\begin{equation*}
\on{gr}H_{n,r}= \Big(\BC \Gamma_n \ltimes S^{\bullet}(\mathfrak{h} \oplus \mathfrak{h}^*)\Big) \otimes {\bf{h}}.
\end{equation*}

We need to prove that classes of the elements $\sum_{i=1}^nu_i^{ra+c}$, $a,c \in \BZ_{\geqslant 0}$ do generate the algebra $Q_{n,r}$. To see that it is enough to show that the elements $\on{gr}\Big(\sum_{i=1}^{n}u_i^{ra+c}\Big)=\on{gr}\Big(\sum_i(x_iy_i)^{ra+c}\Big)$ do generate $\on{gr}Q_{n,r}$.Note  that (by Theorem 3.3 from \cite{EG}) $\on{gr}Q_{n,r}$ is the quotient of
\begin{multline*}
Z(H_{n,r})={\bf{h}}[x_1,\ldots,x_n,y_1,\ldots,y_n]^{S_n \ltimes (\BZ/r\BZ)^n}=\\
={\bf{h}}[x_1y_1,\ldots,x_ny_n,x_1^r,\ldots,x_n^r,y_1^r,\ldots,y_n^r].
\end{multline*}
In particular, it is generated by $\{\sum_{i=1}^{n}(x_i^r)^a(y_i^r)^b(x_iy_i)^c,\,a,b,c \in \BZ_{\geqslant 0}\}$ \newline (\cite[Lemma~11.17]{EG}, \cite[Lemma~1]{wang} see also Section~\ref{fixed_points_are_free_section} for more detailed discussion of the generators of $\on{gr}Q_{n,r}$).
It remains to note that the class of $\sum_{i=1}^{n}(x_i^r)^a(y_i^r)^b(x_iy_i)^c$ in $\on{gr}Q_{n,r}$ is zero if $a \neq b$ and for $a=b$ we have $\sum_{i=1}^{n}(x_i^r)^a(y_i^r)^a(x_iy_i)^c=\sum_{i=1}^n(x_iy_i)^{ra+c}$.
The claim follows.
\end{proof}

\begin{Lemma}\label{act_u_L_0}
Let $B$ be a Young tableau on $\boldsymbol{\la}$ and recall that $p_B \in S(\boldsymbol{\la})$ is the corresponding vector.
The element $u_i$ acts on $p_{B}$ via the multiplication by 
\begin{equation*}
\kappa \on{ct}(B(i))+p(\eta^{\beta(B(i))-1}).
\end{equation*}
\end{Lemma}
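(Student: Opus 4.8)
The plan is to compute directly the action of the Dunkl--Opdam element $u_i$ on the lowest-degree piece $\Delta(\boldsymbol{\la})_0 = S(\boldsymbol{\la}) \otimes {\bf{h}}$, using the second formula for $u_i$ in Definition~\ref{OD_elements_DEF!}, namely
\begin{equation*}
u_i = \tfrac{1}{r}x_iy_i + \kappa JM_{\Gamma_n,i} + p(\eta^{-1}\epsilon_i).
\end{equation*}
First I would observe that on $\Delta(\boldsymbol{\la})_0 = S(\boldsymbol{\la})\otimes {\bf{h}}$ the augmentation ideal of $S^\bullet\mathfrak{h}^*$ acts by zero, so $y_i$ kills $S(\boldsymbol{\la})\otimes{\bf{h}}$ and hence $x_iy_i$ acts as $0$ on this piece. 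Therefore only the last two terms survive, and $u_i$ acts on $p_B$ as $\kappa JM_{\Gamma_n,i} + p(\eta^{-1}\epsilon_i)$.

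Next I would invoke the explicit description of the $\Gamma_n$-module $S(\boldsymbol{\la})$ recorded in~(\ref{act_JM_S_la}): on the basis vector $p_B$ attached to a standard Young tableau $B$ on $\boldsymbol{\la}$ one has $JM_{\Gamma_n,i}\cdot p_B = \on{ct}(B(i))\,p_B$ and $\epsilon_i\cdot p_B = \eta^{\beta(B(i))}p_B$. Since $p(q)$ is a polynomial, it follows that $p(\eta^{-1}\epsilon_i)$ acts on $p_B$ by the scalar $p(\eta^{-1}\cdot\eta^{\beta(B(i))}) = p(\eta^{\beta(B(i))-1})$. Combining the two contributions, $u_i$ acts on $p_B$ by $\kappa\,\on{ct}(B(i)) + p(\eta^{\beta(B(i))-1})$, which is exactly the claim. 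One small point to check carefully is that $y_i$ really does annihilate all of $\Delta(\boldsymbol{\la})_0$ and not merely $S(\boldsymbol{\la})\otimes 1$: this is immediate since ${\bf{h}}$ is a ring of scalars commuting with $\mathfrak{h}^*$ in $H_{n,r}$, so $\Delta(\boldsymbol{\la})_0$ is exactly the ${\bf{h}}$-span of $S(\boldsymbol{\la})$ inside the induced module and $y_i$ acts ${\bf{h}}$-linearly and kills the generating subspace.

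I do not anticipate a serious obstacle here: the only subtlety is bookkeeping with the two equivalent expressions for $u_i$, where one must use the form with $x_iy_i$ (degree $0$ on the lowest piece only because $y_i$ acts first and kills it) rather than the form with $y_ix_i$. Using $\tfrac1r y_ix_i$ instead would require commuting past $x_i$ and would reintroduce the correction terms from~(\ref{comm_x_y_H}); the equality of the two forms of $u_i$ precisely absorbs those corrections, so picking the right representative is what makes the computation a one-line evaluation of scalars. Hence the proof is just the substitution above together with~(\ref{act_JM_S_la}).
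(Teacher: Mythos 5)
Your proposal is correct and is essentially the paper's own argument (the paper's proof is the one-line remark that the claim follows from the second expression for $u_i$ in~(\ref{DO_forms}) together with~(\ref{act_JM_S_la})); you have simply spelled out the details, including the correct observation that one must use the form $\tfrac1r x_iy_i+\kappa JM_{\Gamma_n,i}+p(\eta^{-1}\epsilon_i)$ so that $y_i$ kills $S(\boldsymbol{\la})\otimes{\bf{h}}$ and only the Jucys--Murphy and $p(\eta^{-1}\epsilon_i)$ terms contribute.
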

\begin{proof}
Follows from
the definition of $u_i$ (see~\ref{DO_forms}) together with~(\ref{act_JM_S_la}).
\end{proof}

\begin{Prop}\label{prop_inj_Q_use}
The homomorphism $\phi\colon Q_{n,r} \ra \bigoplus_{\boldsymbol{\la}}\on{End}_{H_{n,r}}^{\mathrm{gr}}(\Delta(\boldsymbol{\la}))$ becomes isomorphism after tensoring by $F=\on{Frac}({\bf{h}})$. This homomorphism is injective. It sends generators $[e_k(u_1,\ldots,u_n)]$ to the collection 
\begin{multline*}
(e_{k}(\kappa c^0_1+p(1),\ldots,\kappa c^{0}_{|\la^0|}+ p(1),\ldots \\ \ldots, \kappa c^{r-1}_1+p(\eta^{r-1}),\ldots,\kappa c^{r-1}_{|\la^{r-1}|}+p(\eta^{r-1})))_{\boldsymbol{\la} \in \CP(r,n)},
\end{multline*}
where $c^l_1,c^l_2,\ldots,c^l_{|\la^l|}$ is the multiset of contents of boxes of the diagram $\BY(\la^l)$.
\end{Prop}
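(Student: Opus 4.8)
The plan is to follow the pattern of Proposition~\ref{psi_tilda_inj!}, assembling the lemmas proved above. First I would verify that the classes $[e_k(u_1,\ldots,u_n)]$ make sense in $Q_{n,r}$: by Lemma~\ref{symm_dunkl_central} the element $e_k(u_1,\ldots,u_n)$ lies in the central subalgebra $\mathbf{h}[u_1,\ldots,u_n]^{S_n}\subset Z(H_{n,r})$, and since $\deg u_i=0$ it sits in $Z(H_{n,r})_0$ and descends to a class in $Q_{n,r}$; by Lemma~\ref{gen_of_Q!} these classes generate $Q_{n,r}$, so it is enough to compute $\phi$ on them. The $\bla$-component $\phi([e_k(u_1,\ldots,u_n)])_{\bla}\in\mathbf{h}=\on{End}^{\mathrm{gr}}_{H_{n,r}}(\Delta(\bla))$ is, by construction of $\phi$, the scalar by which $e_k(u_1,\ldots,u_n)$ acts on $\Delta(\bla)_0=S(\bla)\otimes\mathbf{h}$. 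By Lemma~\ref{act_u_L_0} each $u_i$ acts diagonally in the basis $\{p_B\}$ with eigenvalue $\kappa\,\on{ct}(B(i))+p(\eta^{\beta(B(i))-1})$, so $e_k(u_1,\ldots,u_n)$ acts by the $k$-th elementary symmetric function of the multiset $\{\kappa\,\on{ct}(B(i))+p(\eta^{\beta(B(i))-1}):1\le i\le n\}$; as $i$ runs over $1,\ldots,n$ the cells $B(i)$ exhaust $\BY(\bla)$, so this multiset equals $\{\kappa c^l_j+p(\eta^l):0\le l\le r-1,\ 1\le j\le|\la^l|\}$, which gives the displayed formula (this also re-confirms that the scalar is independent of $B$, as it must be since $e_k(u_1,\ldots,u_n)$ is central).

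It then remains to prove injectivity of $\phi$ and that $\phi\otimes_{\mathbf{h}}F$ is an isomorphism. By the Appendix (Section~\ref{fixed_points_are_free_section}) the algebra $Q_{n,r}$ is a free $\mathbf{h}$-module of rank $|\CP(r,n)|$, and $E\cong\mathbf{h}^{\oplus|\CP(r,n)|}$ is free of the same rank; hence it is enough to show $\phi\otimes F$ is surjective, and injectivity of $\phi$ then follows because $Q_{n,r}$ is torsion-free over $\mathbf{h}$. For surjectivity I would first observe that the substitution $a_i:=p(\eta^{i-1})$ defines an isomorphism of $\BC$-algebras $\mathbf{k}=\BC[\kappa,a_1,\ldots,a_r]/(a_1+\cdots+a_r)\iso\mathbf{h}$: the identity $\sum_{l=0}^{r-1}p(\eta^l)=0$ holds identically, and the remaining assignment is an invertible Vandermonde-type linear change of the $c_l$'s. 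Under this identification the image of $\phi$ on the generators computed above coincides term by term with the image of $\iota^*$ on the generators $c_k(\CV)$ from Proposition~\ref{emb_cohom_prop_use}; since $\iota^*\otimes F$ is an isomorphism by that proposition, so is $\phi\otimes F$, and we are done.

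One may also argue surjectivity of $\phi\otimes F$ intrinsically, which is perhaps cleaner: the image is an $F$-subalgebra of $E\otimes F=F^{\oplus|\CP(r,n)|}$ generated by the tuples of the first paragraph, and over $F$ these tuples separate the multipartitions. Indeed $p(\eta^l)\neq p(\eta^{l'})$ in $F$ for $l\neq l'$ (a direct Vandermonde computation), so from the multiset $\{\kappa c^l_j+p(\eta^l)\}$ one reads off, for each $l$, the content multiset $\{c^l_j\}_j$ and hence the partition $\la^l$ (a Young diagram is determined by its multiset of contents), hence $\bla$; and an $F$-subalgebra of $F^{\oplus N}$ separating the $N$ coordinate projections is all of $F^{\oplus N}$. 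The genuinely non-formal ingredients are thus the diagonalisation of the Dunkl--Opdam elements (Lemma~\ref{act_u_L_0}) and the flatness statement of the Appendix; given those, the proposition is essentially a bookkeeping exercise, and the point most likely to require care is matching the two normalisations of parameters so that the image of $\phi$ literally agrees with that of $\iota^*$.
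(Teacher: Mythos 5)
Your proof is correct and follows essentially the same route as the paper: compute the image of the generators via Lemma~\ref{act_u_L_0}, use the freeness of $Q_{n,r}$ of rank $|\CP(r,n)|$ from the Appendix to reduce everything to surjectivity of $\phi\otimes F$, and obtain that surjectivity by matching generators with $\iota^*$ from Proposition~\ref{emb_cohom_prop_use} under the parameter identification $a_i=p(\eta^{i-1})$. Your alternative intrinsic surjectivity argument (the tuples separate multipartitions since $p(\eta^l)\neq p(\eta^{l'})$ in $F$ and a Young diagram is determined by its content multiset) is also valid and makes the proposition self-contained, but it is not needed for the paper's argument.
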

\begin{proof}
The proof repeats the argument in the proof of Proposition~\ref{psi_tilda_inj!}. The only difference is that we use Appendix \ref{fixed_points_are_free_section} (flatness of $Q_{n,r}$ over ${\bf{h}}$)  together with   the fact that the fiber of $Q_{n,r}$ over a generic point is $\BC^{\oplus |\CP(r,n)|}$ (follows from \cite{GO0}) instead of \cite[Theorem 1]{Br} and Lemma~\ref{act_u_L_0} instead of Proposition ~\ref{central_act_via_cyclotom_rep}.
\end{proof}

\section{Proof of Theorem~\ref{MAIN_THEOREM}}\label{proof_of_main_section}

\subsection{Proof of Theorem~\ref{MAIN_THEOREM}}
Let us now recall the statement of Theorem~\ref{MAIN_THEOREM}.
\begin{Th}\label{prf_of_main_th_arg}
After the identification of parameters~(\ref{identify_all_parameters}) the graded algebras 
\begin{equation*}
H^*_A(\mathfrak{M}(n,r)),\, Z(R^r(n))^{JM},\, Q_{n,r},\, \BC[\on{Spec}(H^{A \times (\on{GL}_n)_{\CO}}_*(\CR_{n,r}))^{\mathbb{T}}] 
\end{equation*}
are isomorphic. Isomorphisms above identify generators as follows: 
\begin{equation}\label{fun_gen}
c_k(\mathcal{V})=[e_k(z_1,\ldots,z_n)]=[e_k(u_1,\ldots,u_n)]=[m_k].
\end{equation}
\end{Th}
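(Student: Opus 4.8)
The plan is to prove Theorem~\ref{prf_of_main_th_arg} by realizing all four algebras inside the single algebra
\begin{equation*}
E=\bigoplus_{\bla \in \CP(r,n)} \BC[\mathfrak{a}]
\end{equation*}
and checking that, after the identification of parameters~\eqref{identify_all_parameters}, the four subalgebras coincide and the four families of generators map to one and the same collection of elements of $E$. The embeddings to use are $\iota^*\colon H^*_A(\mathfrak{M}(n,r)) \hookrightarrow E$ from Proposition~\ref{emb_cohom_prop_use}, $\psi\colon Z(R^r(n))^{JM} \hookrightarrow E$ from Proposition~\ref{psi_tilda_inj!}, and $\phi\colon Q_{n,r} \hookrightarrow E$ from Proposition~\ref{prop_inj_Q_use} (after identifying ${\bf{h}}\cong{\bf{k}}\cong\BC[\mathfrak{a}]$ via~\eqref{identify_all_parameters}); for the fourth algebra one uses the isomorphism $Z(H_{n,r}) \iso H_*^{A\times \on{GL}(V)_{\CO}}(\CR_{n,r})$ obtained by composing Proposition~\ref{center_via_spherical} and Proposition~\ref{iden_Coulomb_Cherednik!!}.

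First I would compare the images of the first three algebras. By Propositions~\ref{emb_cohom_prop_use} and~\ref{psi_tilda_inj!}, $\iota^*$ and $\psi$ send $c_k(\CV)$ and $[e_k(z_1,\ldots,z_n)]$ respectively to the very same element
\begin{equation*}
\bigl(e_{k}(\kappa c^0_1+a_1,\ldots,\kappa c^{0}_{|\la^0|}+ a_1,\ldots,\kappa c^{r-1}_1+ a_r,\ldots,\kappa c^{r-1}_{|\la^{r-1}|}+a_r)\bigr)_{\bla \in \CP(r,n)}\in E,
\end{equation*}
$k=1,\ldots,n$, and by Proposition~\ref{prop_inj_Q_use} the map $\phi$ sends $[e_k(u_1,\ldots,u_n)]$ to the element obtained from this one by substituting $a_i=p(\eta^{i-1})$, i.e. to the same element after~\eqref{identify_all_parameters}. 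Since $H^*_A(\mathfrak{M}(n,r))$ is generated over $\BC[\mathfrak{a}]$ by the $c_k(\CV)$ (by~\cite{MN}), $Z(R^r(n))^{JM}$ by the $[e_k(z_1,\ldots,z_n)]$ (by~\cite{Br}), and $Q_{n,r}$ by the $[e_k(u_1,\ldots,u_n)]$ (Lemma~\ref{gen_of_Q!}), the images $\iota^*(H^*_A(\mathfrak{M}(n,r)))$, $\psi(Z(R^r(n))^{JM})$ and $\phi(Q_{n,r})$ are all equal to the $\BC[\mathfrak{a}]$-subalgebra $\mathcal{A}\subset E$ generated by the collection above. As $\iota^*$, $\psi$, $\phi$ are injective, each of the three algebras is canonically isomorphic to $\mathcal{A}$, and the resulting isomorphisms identify $c_k(\CV)$, $[e_k(z_1,\ldots,z_n)]$ and $[e_k(u_1,\ldots,u_n)]$ pairwise, which gives the first equalities in~\eqref{fun_gen}.

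Next I would bring in the Coulomb-branch algebra. Composing Proposition~\ref{center_via_spherical} with Proposition~\ref{iden_Coulomb_Cherednik!!} gives an isomorphism $Z(H_{n,r}) \iso H_*^{A\times \on{GL}(V)_{\CO}}(\CR_{n,r})$ which, under precisely the identification of parameters $\kappa=\kappa,\ a_i=p(\eta^{i-1})$ of~\eqref{identify_all_parameters}, sends $e_k(u_1,\ldots,u_n)$ to $m_k=c_k(V)*1$. This isomorphism is compatible with the $\BT$-action coming from the grading~\eqref{Z_grading_on _H} on the left and the $\BT$-action on the Coulomb branch of~\cite[Section~3]{BFN} on the right, so applying the schematic-fixed-point functor (Proposition~\ref{shematic_fixed_for_affine}) yields
\begin{equation*}
Q_{n,r}=\BC\bigl[(\on{Spec}Z(H_{n,r}))^{\BT}\bigr] \iso \BC\bigl[\on{Spec}(H_*^{A\times(\on{GL}_n)_{\CO}}(\CR_{n,r}))^{\BT}\bigr]
\end{equation*}
carrying $[e_k(u_1,\ldots,u_n)]$ to $[m_k]$. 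Composing with the isomorphisms of the previous paragraph identifies all four algebras and establishes~\eqref{fun_gen}. Finally, these isomorphisms carry $c_k(\CV)$, $[e_k(z_1,\ldots,z_n)]$, $[e_k(u_1,\ldots,u_n)]$, $[m_k]$ to one another, and each of these generators sits in the same $\BZ$-degree in its respective algebra (degree $2k$, with $\kappa$ and the $a_i$ of degree $2$), so the isomorphisms are automatically isomorphisms of $\BZ$-graded algebras.

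The genuine content is packaged into the four input propositions, so the step that deserves the most care is their verification, and among them the analysis of $Q_{n,r}$ is the subtlest. There one must produce the Dunkl--Opdam elements $u_i$, check that the $e_k(u_1,\ldots,u_n)$ are central (Lemma~\ref{symm_dunkl_central}) and generate $Q_{n,r}$ (Lemma~\ref{gen_of_Q!}), and establish the flatness of $Q_{n,r}$ over ${\bf{h}}$ (Appendix~\ref{fixed_points_are_free_section}); it is this flatness, together with the description of a generic fiber via~\cite{GO0}, that upgrades the localized isomorphism of Proposition~\ref{prop_inj_Q_use} to an honest injection with the claimed image (and similarly for $\psi$ via~\cite{Br} and for $\iota^*$ via Atiyah--Bott localization and freeness of $H^*_A(\mathfrak{M}(n,r))$). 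Once these are in hand, the passage to Theorem~\ref{prf_of_main_th_arg} is the essentially formal matching of generator-formulas and composition of isomorphisms carried out above.
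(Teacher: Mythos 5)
Your proposal is correct and follows essentially the same route as the paper: the paper's proof of Theorem~\ref{prf_of_main_th_arg} consists precisely of invoking Propositions~\ref{emb_cohom_prop_use}, \ref{psi_tilda_inj!}, \ref{prop_inj_Q_use} to match the images of the generators inside $E$, and citing \cite[Section 4]{webster} (i.e.\ Proposition~\ref{iden_Coulomb_Cherednik!!}) for $e_k(u_1,\ldots,u_n)=m_k$. Your write-up merely makes explicit the bookkeeping (equality of images as the common subalgebra $\mathcal{A}\subset E$, $\BT$-equivariance of the Cherednik--Coulomb identification, and the degree count showing the isomorphisms are graded) that the paper leaves implicit.
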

\begin{proof}
The claim follows from Propositions~\ref{emb_cohom_prop_use}, \ref{psi_tilda_inj!}, \ref{prop_inj_Q_use}. The equality $e_k(u_1,\ldots,u_n)=m_k$ follows from \cite[Section 4]{webster}.
\end{proof}

\begin{Rem}
{\emph { Recall that $\mathfrak{M}_0(n,r)^!$ and its deformation have  realization as certain quiver varieties (see Remark \ref{dual_as_quiv_remark}). It is a natural question to describe functions $m_k$ in ``quiver terms''. 
For $r=1$ these functions actually appear in Section \ref{Hikita_Hilbert} and are given by $(X,Y,\gamma,\delta) \mapsto e_k(\al_1,\ldots,\al_n)$, where $\al_1,\ldots,\al_n$ are roots of the characteristic polynomial of $YX \in \on{End}(V)$. Functions $m_k$ can be similarly described for arbitrary $r$, see \cite[Section 6]{TO} for details (for $r=l>1$  there is a minor computational error in this paper that should be fixed, that's why we decided not to go into details here). Let us finally mention that the algebra generated by $\{m_k,~k=1,\ldots,n\}$ defines an integrable system on the Coulomb branch $\CM(n,r)$. This Coulomb branch can be identified with a certain Cherkis bow variety and this integrable system can be naturally described in these terms (see \cite{NAYU} for details, integrable systems are denoted by $\varpi_C$, $\Psi$ in loc. cit.).}}

\end{Rem}

\section{Possible generalizations}\label{general_section!}

\subsection{Changing the quiver} Let $I=(I_0,I_1)$ be a finite quiver and let ${\bf{v}}=(v_i)_{i \in I_0}$ be a  dimension vector for $I$ and  ${\bf{w}}=(w_i)_{i \in I_0}$ be a framing. We can consider the corresponding (smooth) Nakajima quiver variety that we denote  $\mathfrak{M}({\bf{v}},{\bf{w}})$ (see Section \ref{naka_quiver_def}). There is a natural torus $A$ acting  on $\mathfrak{M}({\bf{v}},{\bf{w}})$. We can  consider the symplectically dual variety $\mathcal{M}({\bf{v}},{\bf{w}})_{\mathfrak{a}}$ that can be described as the spectrum of the algebra $H^{A \times (G_{\bf{v}})_{\CO}}_*(\CR_{{\bf{v}},{\bf{w}}})$ of equivariant homology of the variety of triples $\CR_{{\bf{v}},{\bf{w}}}$ corresponding to $I$, ${\bf{v}}$, ${\bf{w}}$ (see \cite{BFN}). Let $\BT$ be $(\BC^\times)^{|I_0|}$. We have a natural action of $\BT$ on $H_*^{A \times (G_{\bf{v}})_{\CO}}(\CR_{{\bf{v}},{\bf{w}}})$ (see \cite[Section 3(v)]{BFN}). 
Recall that $\mathfrak{M}({\bf{v}},{\bf{w}})=\mu^{-1}(0)^{\mathrm{st}}/G_{\bf{v}}$ so  $H^*_A(\mathfrak{M}({\bf{v}},{\bf{w}}),\BC)=H^*_{A \times G_{\bf{v}}}(\mu^{-1}(0)^{\mathrm{st}},\BC)$ and we have the natural {\em{surjective}} (see \cite{MN}) homomorphism: 
\begin{equation}\label{hom_for_quiv!}
H^*_{A \times G_{\bf{v}}}(\on{pt},\BC) \twoheadrightarrow    H^*_{A \times G_{\bf{v}}}(\mu^{-1}(0)^{\mathrm{st}},\BC)=H^*_A(\mathfrak{M}({\bf{v}},{\bf{w}}),\BC).
\end{equation}
Similarly, we can consider the natural composition 
\begin{equation}\label{hom_for-Coulomb!}
H^*_{A \times G_{\bf{v}}}(\on{pt},\BC) \subset   H_*^{A \times (G_{\bf{v}})_{\CO}}(\CR_{{\bf{v}},{\bf{w}}}) \twoheadrightarrow \BC[ \on{Spec}(H_*^{A \times (G_{\bf{v}})_{\CO}}(\CR_{{\bf{v}},{\bf{w}}}))^{\BT}]
\end{equation}
that is also surjective (see Proposition \ref{conj_cartan_surj} below). 

The equivariant version of the following conjecture was formulated in \cite[Conjecture 5.26]{HKW}.

\begin{Conj}\label{Conj_HN_expl}
The kernel of (\ref{hom_for_quiv!}) is equal to the kernel of (\ref{hom_for-Coulomb!}) i.e.  they define the same closed subschemes of $\on{Spec}H^*_{A \times G_{\bf{v}}}(\on{pt},\BC)$.
\end{Conj}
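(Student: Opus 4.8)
The plan is to extend the ``universal embedding into $E$'' strategy of Section~\ref{main_idea_of_the_proof} from the Jordan quiver to an arbitrary quiver $I$. I will assume, as happens in the Gieseker case, that $A$ acts on $\mathfrak{M}(\vv,\ww)$ with finitely many fixed points; write $\mathcal{F}$ for the index set of $\mathfrak{M}(\vv,\ww)^{A}$ ($\mathcal{F}=\CP(r,n)$ in the Jordan case, read off in general from the analogue of Lemma~\ref{1}), and set $E:=\bigoplus_{F\in\mathcal{F}}\BC[\mathfrak{a}]$. The aim is to realize \emph{both} quotient algebras of $H^{*}_{A\times G_{\vv}}(\on{pt},\BC)$ occurring in (\ref{hom_for_quiv!}) and (\ref{hom_for-Coulomb!}) as the \emph{same} subalgebra of $E$, with the universal Chern classes $c_{k}(V_{i})$ going to the same tuples on both sides; this forces the two surjections to have a common kernel, which is Conjecture~\ref{Conj_HN_expl}.

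For the topological side one repeats Section~\ref{equivariant_cohomology_embedding}. Since $H^{*}_{A}(\mathfrak{M}(\vv,\ww),\BC)$ is a free $\BC[\mathfrak{a}]$-module (Bialynicki-Birula, cf.~\cite{NA_quant_affine}), Atiyah--Bott localization gives an embedding $\iota^{*}\colon H^{*}_{A}(\mathfrak{M}(\vv,\ww),\BC)\hookrightarrow H^{*}_{A}(\mathfrak{M}(\vv,\ww)^{A},\BC)=E$ that is an isomorphism after $\otimes_{\BC[\mathfrak{a}]}\BC(\mathfrak{a})$, and the composite of (\ref{hom_for_quiv!}) with $\iota^{*}$ sends $c_{k}(\CV_{i})$ to the tuple $(e_{k}(\text{the $\mathfrak{a}$-weights of }\CV_{i}|_{F}))_{F\in\mathcal{F}}$, the weights being explicit linear functions on $\mathfrak{a}$ determined by the fixed-point data. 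As the classes $c_{k}(\CV_{i})$ generate $H^{*}_{A}(\mathfrak{M}(\vv,\ww),\BC)$ over $\BC[\mathfrak{a}]$ (surjectivity of (\ref{hom_for_quiv!}), \cite{MN}), this pins down the image of the left-hand quotient inside $E$.

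The Coulomb side is the substantial part; one needs three ingredients. First, the flatness of $\BC[\on{Spec}(H_{*}^{A\times(G_{\vv})_{\CO}}(\CR_{\vv,\ww}))^{\BT}]$ over $\BC[\mathfrak{a}]$, generalizing Appendix~\ref{fixed_points_are_free_section}: embed $\CM(\vv,\ww)_{\mathfrak{a}}$ $A$-equivariantly into a linear representation of $A$, intersect with the fixed subspace, and bound the Hilbert function of the fibres. Second, the identification of the generic fibre $\CM(\vv,\ww)_{a}^{\BT}$ ($a\in\mathfrak{a}$ generic) with a \emph{reduced} scheme of $|\mathcal{F}|$ points, which follows from Proposition~\ref{Y_smooth_implies_fixed_smooth} once the fibre is known to be a smooth affine symplectic variety with finite $\BT$-fixed locus. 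Third, the computation, along the lines of \cite[Section~4]{webster}, of the restriction of $c_{k}(V_{i})*1$ to each fixed point as $e_{k}$ of an explicit collection of $\mathfrak{a}$-weights. The first two ingredients show that $\BC[\on{Spec}(H_{*}^{A\times(G_{\vv})_{\CO}}(\CR_{\vv,\ww}))^{\BT}]$ is a free $\BC[\mathfrak{a}]$-module of rank $|\mathcal{F}|$ and yield an embedding $\phi\colon\BC[\on{Spec}(H_{*}^{A\times(G_{\vv})_{\CO}}(\CR_{\vv,\ww}))^{\BT}]\hookrightarrow E$ which is an isomorphism after localizing at finitely many ``walls''; the third, together with surjectivity of (\ref{hom_for-Coulomb!}) (Proposition~\ref{conj_cartan_surj}), pins down the image of the right-hand quotient inside $E$. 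At this stage the conjecture reduces to the purely combinatorial assertion that for every $F\in\mathcal{F}$ and every vertex $i\in I_{0}$ the multiset of $\mathfrak{a}$-weights of $\CV_{i}|_{F}$ coincides with the multiset of $\mathfrak{a}$-weights attached to $F$ on the Coulomb side; for the Jordan quiver this is exactly the coincidence of box-contents with Dunkl--Opdam eigenvalues after the substitution $a_{i}=p(\eta^{i-1})$. Granting it, $\iota^{*}$ and $\phi$ have the same image in $E$, so $\phi^{-1}\circ\iota^{*}$ is an isomorphism $H^{*}_{A}(\mathfrak{M}(\vv,\ww),\BC)\simeq\BC[\on{Spec}(H_{*}^{A\times(G_{\vv})_{\CO}}(\CR_{\vv,\ww}))^{\BT}]$ carrying $c_{k}(\CV_{i})$ to $c_{k}(V_{i})*1$, hence compatible with the two surjections from $H^{*}_{A\times G_{\vv}}(\on{pt},\BC)$; their kernels therefore agree.

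The hard part will be the first two ingredients: proving flatness of the schematic $\BT$-fixed points of the Coulomb branch over $\mathfrak{a}$ and reducedness of the generic fibre. For the Jordan quiver these rested on the rational Cherednik algebra presentation and Gordon's baby Verma modules, which have no evident counterpart for a general quiver, so one needs an intrinsic Coulomb-branch argument (or a uniform treatment via the Cherkis bow varieties of \cite{NAYU}). A secondary difficulty is the combinatorial matching above: one would like a conceptual reason for it, for instance by reducing both fixed-point computations to the abelianized quiver, where Conjecture~\ref{Conj_HN_expl} degenerates to the hypertoric Hikita statement of \cite{HI}.
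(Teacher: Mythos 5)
The statement you are asked to prove is stated in the paper as Conjecture~\ref{Conj_HN_expl}, and the paper does \emph{not} prove it: Section~\ref{general_section!} only sketches a strategy, which is essentially word-for-word the one you describe (embed both quotients of $H^*_{A\times G_{\bf{v}}}(\on{pt},\BC)$ into the fixed-point algebra, use that $c_k(\CV_i)$ and $m_{k,i}$ generate the respective images by \cite{MN} and Proposition~\ref{conj_cartan_surj}, and match their restrictions to fixed points). So your proposal takes the same approach as the paper's intended one, and it correctly isolates the same open ingredients the paper does.

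However, it is not a proof, and you should be explicit that what you have is a conditional reduction. Concretely: (i) flatness of $\BC[\on{Spec}(H_*^{A\times(G_{\bf{v}})_\CO}(\CR_{{\bf{v}},{\bf{w}}}))^{\BT}]$ over $\BC[\mathfrak{a}]$ is precisely the paper's Conjecture~\ref{conj_flatness_gen} and is open for general quivers; the Appendix~\ref{fixed_points_are_free_section} argument uses the Cherednik/wreath-product presentation of the Coulomb branch, which has no known analogue beyond the Jordan and cyclic cases, and the paper even warns that the analogous statement can fail for general symplectic singularities, so your proposed ``embed into a linear representation and bound the Hilbert function'' step is not a routine generalization. (ii) Your standing hypothesis that $\mathfrak{M}({\bf{v}},{\bf{w}})^A$ is finite is an extra assumption not present in Conjecture~\ref{Conj_HN_expl}; when the fixed locus has positive-dimensional components the target of $\iota^*$ is no longer $\bigoplus_F\BC[\mathfrak{a}]$ and the whole ``common subalgebra of $E$'' mechanism needs to be reformulated. (iii) The combinatorial matching of the $\mathfrak{a}$-weights of $\CV_i|_F$ with the eigenvalues of $m_{k,i}$ at the corresponding $\BT$-fixed point of a generic fiber $\CM({\bf{v}},{\bf{w}})_{\bf{a}}$ is exactly the content of Theorem~\ref{MAIN_THEOREM} in the Jordan case and is the substantive open computation in general; the paper's Section~\ref{general_section!} reduces the conjecture to this same matching, assuming (i) and the existence of a resolution of $\CM({\bf{v}},{\bf{w}})_{\mathfrak{a}}$. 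In short, your write-up is a faithful account of the intended program, but each of its three ``ingredients'' is an unproven statement, one of which is itself a named conjecture in the paper, so the argument does not establish Conjecture~\ref{Conj_HN_expl}.
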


\begin{Rem}
{\emph{If $I$ is a Jordan quiver then Conjecture~\ref{Conj_HN_expl} follows from Theorem~\ref{MAIN_THEOREM}. 
}}
\end{Rem}

The approach used in this paper has a chance to be generalized to (some) other quivers. We start with the following conjecture.

\begin{Conj}\label{conj_flatness_gen}
The algebra $\BC[\on{Spec}( H_*^{A \times (G_{\bf{v}})_{\CO}}(\CR_{{\bf{v}},{\bf{w}}}))^{\BT}]$ is flat over $\BC[\mathfrak{a}]$.
\end{Conj}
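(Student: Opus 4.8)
The plan is to reduce the statement to a Hilbert‑series inequality and then to establish that inequality by a representation‑theoretic argument modelled on Sections~\ref{rep_theory_of_cyclotomic}--\ref{schem_fixed_Q_section}. Write $\mathcal{B}:=H_*^{A\times (G_{\bf{v}})_{\CO}}(\CR_{{\bf{v}},{\bf{w}}})$, so that $\CM({\bf{v}},{\bf{w}})_{\mathfrak{a}}=\on{Spec}\mathcal{B}$ and $Q:=\BC[\on{Spec}\mathcal{B}^{\BT}]$. The algebra $\mathcal{B}$ carries two commuting gradings: the cohomological ($\BC^\times_\hbar$-) grading, which is non‑negative, has finite‑dimensional pieces, and places $\BC[\mathfrak{a}]\subset\mathcal{B}$ in strictly positive degrees; and the $\BT$-weight grading coming from the decomposition of $\CR_{{\bf{v}},{\bf{w}}}$ over the components of $\on{Gr}_{G_{\bf{v}}}$. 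By the torus version of Proposition~\ref{shematic_fixed_for_affine}, $Q=\mathcal{B}/(b\in\mathcal{B}_\chi:\chi\neq 0)$; it is $\BC^\times_\hbar$-graded, bounded below, with finitely generated graded pieces over $\BC[\mathfrak{a}]$. For such a module over the graded polynomial ring $\BC[\mathfrak{a}]$, flatness is equivalent to freeness and, by the graded local criterion, to the equality of Hilbert series $H_Q(t)=H_{\BC[\mathfrak{a}]}(t)\cdot H_{Q/\BC[\mathfrak{a}]_+Q}(t)$, i.e. to the special fibre $\BC[\CM({\bf{v}},{\bf{w}})^{\BT}]$ being ``as small as possible''.

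Generic flatness makes $Q$ locally free over a dense open $\mathfrak{a}^\circ\subset\mathfrak{a}$, so off a divisor $H_Q(t)=H_{\BC[\mathfrak{a}]}(t)\cdot P(t)$ for a fixed $P(t)=\sum_d p_d t^d$, the common Hilbert series of the generic fibres $\BC[\CM({\bf{v}},{\bf{w}})_a^{\BT}]$ (by Proposition~\ref{Y_smooth_implies_fixed_smooth} and Assumption~\ref{Fixed_points_finite}, for $a\in\mathfrak{a}^\circ$ this fibre is a finite reduced scheme, so $P(t)$ simply records the $\BC^\times_\hbar$-weights of the finitely many $\BT$-fixed points). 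Upper semicontinuity of fibre dimension in each degree gives $\dim_\BC\BC[\CM({\bf{v}},{\bf{w}})^{\BT}]_d\geq p_d$ for all $d$, so the content of Conjecture~\ref{conj_flatness_gen} is exactly the reverse inequality $\dim_\BC\BC[\CM({\bf{v}},{\bf{w}})^{\BT}]_d\leq p_d$.

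To prove this upper bound I would pass to the quantized Coulomb branch $\mathcal{A}_\hbar=H_*^{A\times((G_{\bf{v}})_{\CO}\rtimes\BC^\times_\hbar)}(\CR_{{\bf{v}},{\bf{w}}})$, flat over $\BC[\mathfrak{a}][\hbar]$, and to its Cartan subquotient ($B$-algebra) $B(\mathcal{A}_\hbar)$ with respect to the $\BT$-grading; since $\mathcal{A}_\hbar/(\hbar)=\mathcal{B}$, freeness of $B(\mathcal{A}_\hbar)$ over $\BC[\mathfrak{a}][\hbar]$ would give both $Q=B(\mathcal{A}_\hbar)/(\hbar)$ and the desired $\BC[\mathfrak{a}]$-freeness of $Q$. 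To establish that freeness one constructs, for each $\BT$-fixed point $p$ of the generic fibre (equivalently, each element of the combinatorial set indexing $\CM({\bf{v}},{\bf{w}})_a^{\BT}$), a ``universal standard module'' $\Delta(p)$ over $\mathcal{A}_\hbar$, free over $\BC[\mathfrak{a}][\hbar]$ of graded rank prescribed by $p$, and shows that acting by $B(\mathcal{A}_\hbar)$ on these modules realizes
\begin{equation*}
\phi\colon B(\mathcal{A}_\hbar)\ \hookrightarrow\ \bigoplus_p\on{End}^{\mathrm{gr}}_{\mathcal{A}_\hbar}\big(\Delta(p)\big)=\bigoplus_p\BC[\mathfrak{a}][\hbar],
\end{equation*}
with $\phi$ injective and an isomorphism after inverting finitely many walls in $\BC[\mathfrak{a}]$ (this also yields surjectivity of (\ref{hom_for-Coulomb!}), cf. Proposition~\ref{conj_cartan_surj}). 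Injectivity of $\phi$ together with the matching of generic ranks and the freeness of the right‑hand side then forces $B(\mathcal{A}_\hbar)$ to be free of that rank. The $\Delta(p)$ are the Coulomb‑branch category $\mathcal{O}$ analogues of the modules $\Delta(\bla)$ of Section~\ref{schem_fixed_Q_section}; for type $A$ quivers they, and the required faithfulness, are available through \cite{KTWWY,KTWWY2} and \cite{webster}, so the approach has a chance of settling Conjecture~\ref{conj_flatness_gen} at least in type $A$.

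The main obstacle is precisely this last step for a general quiver: constructing the universal standard modules $\Delta(p)$ and, above all, proving that they are \emph{free over the whole parameter space} $\BC[\mathfrak{a}]$ rather than just over a generic point, and that $B(\mathcal{A}_\hbar)$ embeds into their graded endomorphism algebras. This is where the body of the paper genuinely relies on the Jordan‑quiver‑specific representation theory of rational Cherednik and cyclotomic Hecke algebras; in general one would need either a systematic development of Coulomb‑branch representation theory or a substitute --- for instance a direct ``Hatano‑type'' computation (in the spirit of \cite{HA} and Appendix~\ref{fixed_points_are_free_section}) of the graded dimension of $\mathcal{B}_0$ modulo the ideal generated by the nonzero $\BT$-weight classes, matched against $P(t)$. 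A secondary technical point, already visible in Remark~\ref{dual_as_quiv_remark}, is extending identifications from the smooth locus of the family to the whole of $\mathfrak{a}$ by normality and codimension‑two arguments.
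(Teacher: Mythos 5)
The statement you are proving is Conjecture~\ref{conj_flatness_gen}, which the paper leaves open in general: the only case it actually establishes is the Jordan quiver, via the combinatorial estimate of Appendix~\ref{fixed_points_are_free_section} (graded Nakayama plus the upper bound (\ref{dim_fiber_main_estimate}) on the special fibre). So there is no proof in the paper to compare against, and your proposal does not close the gap either --- as you yourself concede in the final paragraphs. The reduction you perform is sound and is essentially the same skeleton as the appendix: $Q$ is a bounded-below $\BC^\times_\hbar$-graded module with finite-dimensional pieces over the positively graded ring $\BC[\mathfrak{a}]$, so flatness is equivalent to freeness, the generic rank is computed from the reduced finite fixed-point scheme of a generic fibre, and semicontinuity reduces everything to the upper bound $\dim_\BC\BC[\CM({\bf{v}},{\bf{w}})^{\BT}]_d\leq p_d$. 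Your ``Hatano-type'' fallback is exactly what the paper does for the Jordan quiver, where $\CM(n,r)=\BA^{2n}/\Gamma_n$ has an explicit presentation; for a general quiver no such presentation of the Coulomb branch is available, so that route does not currently go through.

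The genuine gap is the one you name: the construction of the universal standard modules $\Delta(p)$ over $\mathcal{A}_\hbar$, their freeness over all of $\BC[\mathfrak{a}][\hbar]$ (not just generically), and the injectivity of the evaluation map $\phi$ from the $B$-algebra. Injectivity of $\phi$ is not a formality --- in the paper's special case it is precisely what Propositions~\ref{psi_tilda_inj!} and~\ref{prop_inj_Q_use} prove, and both of those proofs \emph{already use} flatness of the source over the base (via \cite{Br} for $Z(R^r(n))^{JM}$ and via Appendix~\ref{fixed_points_are_free_section} for $Q_{n,r}$) to upgrade generic surjectivity to injectivity. So as written your argument is circular at that point: you cannot derive freeness of $B(\mathcal{A}_\hbar)$ from injectivity of $\phi$ and then justify injectivity of $\phi$ by a rank count that presupposes freeness. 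One would instead need an independent torsion-freeness or faithfulness statement for the action of $B(\mathcal{A}_\hbar)$ on the $\Delta(p)$ over the walls in $\mathfrak{a}$, and that is exactly the quiver-specific representation-theoretic input (Cherednik/cyclotomic Hecke theory for the Jordan quiver, truncated shifted Yangians in type $A$) that is missing in general. The identification $B(\mathcal{A}_\hbar)/(\hbar)\simeq Q$ is fine since $\hbar$ has $\BT$-weight zero, but it transfers the problem rather than solving it.
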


\begin{Rem}
{\emph{Note that when $I$ is the Jordan quiver then Conjecture \ref{conj_flatness_gen} follows from Appendix \ref{fixed_points_are_free_section}, note also that the ``dual'' statement to the Conjecture \ref{conj_flatness_gen} is indeed true and holds by \cite[Theorem 7.3.5]{NA_quant_affine}.}}
\end{Rem}

\begin{Rem}
{\emph For our purposes, it is enough to prove that \newline $\BC[ \on{Spec}( H_*^{A \times (G_{\bf{v}})_{\CO}}(\CR_{{\bf{v}},{\bf{w}}}))^{\BT}]$ is torsion free over $\BC[\mathfrak{a}]$.}
\end{Rem}

\begin{Warning}
The statement analogous to the Conjecture \ref{conj_flatness_gen} may fail for arbitrary symplectic singularities.   
\end{Warning}

Recall now that by \cite{MN} the algebra $H^*_A(\mathfrak{M}({\bf{v}},{\bf{w}}))$ that appears at the LHS of Conjecture \ref{Conj_HN_expl} is generated over $H^*_A(\on{pt})$ by the Chern classes of tautological bundles $c_k(\CV_i)$. It turns out that the  ``dual'' statement can also be proven (without any restrictions on the quiver $I$) i.e., that the algebra $\BC[\on{Spec}(H_*^{A \times (G_{\bf{v}})_{\CO}}(\CR_I({\bf{v}},{\bf{w}})))^{\BT}]$ of schematic fixed points is generated over $H^*_A(\on{pt})$ by the classes of $m_{k,i}:=c_k(V_i) * 1$. This is equivalent to the following proposition, whose proof was explained to us by Ben Webster and Alex Weekes and  will appear in their joint work with Joel Kamnitzer and Oded Yacobi.

\begin{Prop}\label{conj_cartan_surj}
The natural embedding $H^*_{A \times G_{\bf{v}}}(\on{pt}) \subset H_*^{A \times (G_{\bf{v}})_{\CO}}(\CR_{{\bf{v}},{\bf{w}}})$ induces surjection $H^*_{A \times G_{\bf{v}}}(\on{pt}) \twoheadrightarrow \BC[(\on{Spec}H_*^{A \times (G_{\bf{v}})_{\CO}}(\CR_{{\bf{v}},{\bf{w}}}))^{\BT}]$.
\end{Prop}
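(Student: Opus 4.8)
The strategy is to reduce the statement to a concrete computation inside the Borel–Moore homology of the variety of triples, using the description of $H_*^{A \times (G_{\bf{v}})_{\CO}}(\CR_{{\bf{v}},{\bf{w}}})$ as a subquotient built out of the equivariant homology of the affine Grassmannian slices. First I would recall from \cite{BFN} the localization/embedding
\begin{equation*}
H_*^{A \times (G_{\bf{v}})_{\CO}}(\CR_{{\bf{v}},{\bf{w}}}) \hookrightarrow H_*^{A \times (G_{\bf{v}})_{\CO}}(\CR_{{\bf{v}},{\bf{w}}}) \otimes_{H^*_{A \times G_{\bf{v}}}(\on{pt})} \on{Frac}(H^*_{A \times G_{\bf{v}}}(\on{pt})),
\end{equation*}
whose target is a sum of copies of the fraction field indexed by the coweight lattice of $G_{\bf{v}}$; concretely, an element is a collection of rational functions $(f_\lambda)_\lambda$ satisfying certain integrality/pole conditions, and the class $m_{k,i} = c_k(V_i)*1$ is the collection whose $\lambda$-component is $e_k$ of the $A\times T_{\bf{v}}$-weights of $V_i$ twisted by $\lambda$. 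The point is that the torus $\BT$ acts by rescaling each coordinate of the section $s$, hence acts on $H_*^{A \times (G_{\bf{v}})_{\CO}}(\CR_{{\bf{v}},{\bf{w}}})$ with weights governed by the monomial grading, and the schematic $\BT$-fixed locus is, by Proposition \ref{shematic_fixed_for_affine}, the quotient by the ideal generated by the nonzero-weight part.

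The key step is then to identify explicitly this quotient. I expect that $\BC[(\on{Spec}H_*^{A \times (G_{\bf{v}})_{\CO}}(\CR_{{\bf{v}},{\bf{w}}}))^{\BT}]$ is a subquotient (a ``$B$-algebra'' in the sense of Remark \ref{quantized_Hikita_Nakajima}) that is spanned over $H^*_{A \times G_{\bf{v}}}(\on{pt})$ by the images of the degree-zero monomials in the natural generators of the Coulomb branch algebra (the ``dressed monopole operators'' $r_{\bf{n}}$ and the Weyl-invariant polynomials in the equivariant parameters of $G_{\bf{v}}$). In the degree-zero part, every such monomial either dies (if it involves a genuine monopole operator, which carries nonzero $\BT$-weight because it changes the coweight $\lambda$) or survives and lies in the image of $H^*_{A \times G_{\bf{v}}}(\on{pt})$ up to the relations. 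This mirrors exactly the computation in the proof of Lemma \ref{gen_of_Q!}: there $\on{gr}Q_{n,r}$ is the quotient of ${\bf{h}}[x_iy_i, x_i^r, y_i^r]^{\dots}$ by the ideal killing the monomials with $a\neq b$, leaving precisely the Weyl-invariant polynomials in the $x_iy_i$, i.e. in the equivariant parameters. So the plan is: (1) write down a spanning set of the Coulomb branch algebra refined by $\BT$-weight; (2) observe that the only weight-zero survivors after passing to schematic fixed points come from $H^*_{A\times G_{\bf{v}}}(\on{pt})^{W_{\bf{v}}} \subset H^*_{A\times G_{\bf{v}}}(\on{pt})$; (3) conclude surjectivity.

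The main obstacle will be step (1)–(2): making precise, for a general quiver, which products of monopole operators and polynomial generators span the algebra, and checking that a weight-zero product of a monopole operator $r_{\bf{n}}$ with $r_{-\bf{n}}$ (which \emph{does} have weight zero) already lies in the subring generated by the polynomial part. For the Jordan quiver this is exactly the relation $r_1 r_{-1} = \prod_i (b - a_i)$ displayed in the Example after Proposition \ref{iden_Coulomb_Cherednik!!}; in general one needs the analogous factorization of $r_{\bf{n}} r_{-\bf{n}}$ into a polynomial in the equivariant parameters, which follows from the abelianization/flag-variety description of the monopole operators in \cite{BFN}. Once that factorization is in hand, an easy induction on the ``size'' of the coweight $\bf{n}$ shows every weight-zero element is a polynomial in the equivariant parameters modulo the fixed-point ideal, giving the surjection. (As noted in the excerpt, the full details of this argument are to appear in the forthcoming work of Kamnitzer, Webster, Weekes and Yacobi; here I only indicate the shape of the proof.)
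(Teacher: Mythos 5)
Your proposal follows essentially the same route as the paper, whose proof is two lines: it cites the generation theorem of \cite[Proposition 3.1]{weekes1} (the Coulomb branch algebra is generated over $H^*_{A \times G_{\bf{v}}}(\on{pt})$ by the dressed minuscule monopole operators) and observes that these operators carry nonzero $\BT$-weight, hence vanish in the schematic fixed-point quotient. One correction, which simplifies your argument: the ``main obstacle'' you identify --- needing the factorization $r_{\bf{n}}r_{-\bf{n}}=\prod(\cdots)$ to handle weight-zero products of monopole operators --- is not actually needed for surjectivity. By Proposition \ref{shematic_fixed_for_affine} the fixed-point algebra is $B/(b \in B_i,\ i \neq 0)$, the quotient by the ideal generated by \emph{all} homogeneous elements of nonzero weight; a product such as $r_{\bf{n}}r_{-\bf{n}}$ contains the nonzero-weight factor $r_{\bf{n}}$ and therefore already lies in that ideal (equivalently, in $\sum_{i>0}B_{-i}B_i$), so it maps to zero with no relation invoked. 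Once generation is granted, every weight-zero element of the algebra is, modulo this ideal, a polynomial in the equivariant parameters, and surjectivity follows. The factorization identity would only become relevant if you wanted to compute the \emph{kernel} of the surjection, which is the content of Conjecture \ref{Conj_HN_expl} itself rather than of this proposition.
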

\begin{proof}
The claim follows from \cite[Proposition 3.1]{weekes1} (see also  \cite[Remark 6.7]{BFN}) using that the dressed minuscule monopole operators have a nonzero degree with respect to $\BT$ so their images  in $\BC[(\on{Spec}H_*^{A \times (G_{\bf{v}})_{\CO}}(\CR_{{\bf{v}},{\bf{w}}}))^{\BT}]$ are zero.  
\end{proof}

Assume now that Conjecture~\ref{conj_flatness_gen} holds. Assume also that there exists a resolution of singularities $\widetilde{\mathcal{M}}({\bf{v}},{\bf{w}})_{\mathfrak{a}} \ra \mathcal{M}({\bf{v}},{\bf{w}})_{\mathfrak{a}}$  (see \cite{weekes} for the discussion). It induces the morphism $\widetilde{\mathcal{M}}({\bf{v}},{\bf{w}})_{\mathfrak{a}}^{\BT} \ra \mathcal{M}({\bf{v}},{\bf{w}})_{\mathfrak{a}}^{\BT}$ that gives us the embedding $\BC[\mathcal{M}({\bf{v}},{\bf{w}})_{\mathfrak{a}}^{\BT}] \subset \BC [ \widetilde{\mathcal{M}}({\bf{v}},{\bf{w}})_{\mathfrak{a}}^{\BT}]$, the fact that this is indeed an embedding follows from Conjecture \ref{conj_flatness_gen}. 
We can also consider the embedding $H^*_A(\mathfrak{M}({\bf{v}},{\bf{w}})) \subset H^*_A(\mathfrak{M}({\bf{v}},{\bf{w}})^A)$ that corresponds to the restriction $\mathfrak{M}({\bf{v}},{\bf{w}})^A \subset \mathfrak{M}({\bf{v}},{\bf{w}})$. Using that $c_k(\CV_i)$, $m_{k,i}$ are generators of our algebras it then remains to show that the images of $c_k(\CV_i)$, $m_{k,i}$ under these embeddings  coincide.

\begin{Rem}
\emph{Let us point out that even without assuming Conjecture \ref{conj_flatness_gen} one can consider the image of $\BC[\mathcal{M}({\bf{v}},{\bf{w}})_{\mathfrak{a}}^{\BT}] \ra \BC[\widetilde{\mathcal{M}}({\bf{v}},{\bf{w}})_{\mathfrak{a}}^{\BT}]$ and if the images of $c_k(\CV_i)$, $m_{k,i}$ coincide then we obtain a surjective homomorphism of algebras  \newline $\BC[\mathcal{M}({\bf{v}},{\bf{w}})_{\mathfrak{a}}^{\BT}] \twoheadrightarrow H^*_A(\mathfrak{M}({\bf{v}},{\bf{w}}),\BC)$ that is an isomorphism generically. Conjecture \ref{conj_flatness_gen} implies that this surjection is an isomorphism.}
\end{Rem}

Let us finally  note that to show that the images of $c_k(\CV_i)$, $m_{k,i}$ coincide, it is enough to do the following. For a generic ${\bf{a}} \in \mathfrak{a}$ we need to construct a bijection 
\begin{equation*}
\mathfrak{M}({\bf{v}},{\bf{w}})^A \iso \CM({\bf{v}},{\bf{w}})_{\bf{a}}^{\BT},\, p \mapsto p'  
\end{equation*}
such that 
\begin{equation*}
e_k(\al_1,\ldots,\al_{v_i})=m_{k,i}(p'),
\end{equation*}
where $\al_1,\ldots,\al_{v_i}$ are eigenvalues of ${\bf{a}} \in \mathfrak{a}$ acting on $\CV_i|_{p'}$ and $m_{k,i}$ is considered as a function on $\CM({\bf{v}},{\bf{w}})$. In this paper we do exactly this for $\mathfrak{M}({\bf{v}},{\bf{w}})=\mathfrak{M}(n,r)$.

\appendix

\section{Flatness of schematic fixed points: approach of Hikita and Hatano}\label{fixed_points_are_free_section}
The goal of this appendix is to give a self-contained proof of the fact that the algebra $Q_{n,r}$ of functions on the schematic fixed points 
\begin{equation*}
(\on{Spec}Z(H_{n,r}))^{\BT} = \mathcal{M}(n,r)_{\mathfrak{a}}^{\BT}
\end{equation*}
is a flat (hence, free) ${\bf{h}}$-module of rank $|\CP(r,n)|$. Let us first of all note that by the graded Nakayama lemma together with the fact that $\on{dim}_{F}(Q_{n,r} \otimes_{{\bf{h}}} F)=|\CP(r,n)|$ (this follows from \cite{GO0}, see also \cite[Section 5]{TO}) in order to prove this fact it is enough to show that 
\begin{equation*}
\on{dim}_{\BC} Q_{n,r}/(\kappa,c_1,\ldots,c_{r-1}) \leqslant |\CP(r,n)|
\end{equation*} 
i.e. that 
\begin{equation}\label{dim_fiber_main_estimate}
\on{dim}_{\BC} \BC[(\BA^{2n}/\Gamma_n)^{\BT}] \leqslant |\CP(r,n)|.    
\end{equation}
The goal of this section is to prove the inequality (\ref{dim_fiber_main_estimate}). Our argument simply follows papers ~\cite{HI} (for $r=1$ case) and ~\cite{HA} (in general) but is much shorter since we do not need any explicit formulas for the multiplication rule of the elements of $\BC[(\BA^{2n}/\Gamma_n)^{\BT}]$ and only need to estimate the dimension of this algebra from above since the estimate  from below follows from the deformation argument (so we only need~\cite[Lemma~2.5]{HI} for $r=1$ case and ~\cite[Lemma 2.1.4]{HA} for general $r$).
We start from the case $r=1$ i.e. from the case when $\Gamma_n=S_n$.
\subsection{Hilbert scheme case ($r=1$)}\label{hilb_scheme_flatness}

Let us recall some notation (we follow \cite{HI}).

\begin{Def}
An unordered sequence $\La=(a_1,b_1)\ldots (a_l,b_l)$ with $(a_i,b_i) \in \BZ_{\geqslant 0}^{2} \setminus \{(0,0)\}$ is called bipartite partition of $(a,b) \in \BZ^2_{\geqslant 0} \setminus \{(0,0)\}$ if $\sum_{i=1}^l a_i=a$, $\sum_{i=1}^l b_i=b$. We set $\ell(\La)=l$, $|\La|=(a,b)$.
\end{Def}

We have a natural surjection 
\begin{multline*}
\BC[S^{n+1}(\BA^2)]=\BC[x_1,\ldots,x_{n+1},y_1,\ldots,y_{n+1}]^{S_{n+1}} \twoheadrightarrow \\ \twoheadrightarrow \BC[x_1,\ldots,x_n,y_1,\ldots,y_n]^{S_n}=\BC[S^n(\BA^2)]    
\end{multline*}
and denote by $S$ the inverse limit (in the category of graded algebras)
\begin{equation*}
S:=\underset{\longleftarrow}{\on{lim}}\, \BC[S^n(\BA^2)].  
\end{equation*}

We denote by $m_{\La} \in S$ the symmetrization of the monomial $x_1^{a_1}y_1^{b_1}\ldots x_l^{a_l}y_l^{b_l}$. 
We have:
\begin{equation*}
S=\BC[m_{(a,b)}\,|\, (a,b) \in \BZ_{\geqslant 0}^2 \setminus \{(0,0)\}].    
\end{equation*}
For $(a,b) \in \BZ_{\geqslant 0}^2 \setminus \{(0,0)\}$ we set $(a,b)\La:=(a,b)(a_1,b_1)\ldots (a_l,b_l)$. If $(a,b)=(a_i,b_i)$ for some $i \in \{1,2,\ldots,l\}$ we set 
\begin{equation*}
\La \setminus (a,b):=(a_1,b_1)\ldots (a_{i-1},b_{i-1})(a_{i+1},b_{i+1})\ldots (a_l,b_l).
\end{equation*}
We set $\ol{S}:=S/(m_{(a,b)},\, a \neq b)$ and denote by $\ol{m}_{\La}$ the image of $m_{\La}$ in $\ol{S}$. Note that directly from the definitions for every $n \in \BZ_{\geqslant 1}$ we have a surjective homomorphism $\ol{S} \twoheadrightarrow \BC[(S^n(\BA^2))^{\BT}]$ which sends every $\ol{m}_{\La}$ with $\ell(\La)>n$ to zero.
The following lemma is clear.
\begin{Lemma}\label{mult_by_a_b}
Let $\La$ be a bipartite partition. 
We have 

\begin{equation*}
m_{(a,b)}m_{\La}=km_{(a,b)\La}+\sum_{(i,j) \in \La}k_{(i,j)}m_{(a+i,b+j)\La \setminus (i,j)}
\end{equation*}
for some $k, k_{(i,j)} \in \BZ_{>0}$.

\end{Lemma}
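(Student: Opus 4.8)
The plan is to unwind the definitions and compute the product directly. I would realize $S$ inside $\BC[x_1,y_1,x_2,y_2,\dots]$, with $S_\infty$ permuting the pairs $(x_i,y_i)$, so that $m_{(a,b)}=\sum_i x_i^a y_i^b$ and, up to a fixed positive-integer normalization depending only on the multiplicities of the parts of $\La$, $m_\La=\sum_\phi\prod_{k=1}^{\ell(\La)}x_{\phi(k)}^{a_k}y_{\phi(k)}^{b_k}$, the sum taken over injections $\phi$ from the set of parts of $\La$ into the index set $\{1,2,\dots\}$.

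Then I would expand $m_{(a,b)}\,m_\La$ over pairs consisting of a term $x_j^a y_j^b$ from the first factor and a term $\prod_k x_{\phi(k)}^{a_k}y_{\phi(k)}^{b_k}$ from the second. If $j$ is not in the image of $\phi$, the product monomial has shape $(a,b)\La$; if $j=\phi(k_0)$ for the (unique) such $k_0$, then $x_j^a y_j^b\cdot x_{\phi(k_0)}^{a_{k_0}}y_{\phi(k_0)}^{b_{k_0}}=x_j^{a+a_{k_0}}y_j^{b+b_{k_0}}$ and the product monomial has shape $(a+a_{k_0},b+b_{k_0})\big(\La\setminus(a_{k_0},b_{k_0})\big)$. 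Collecting terms by shape yields
\[
m_{(a,b)}\,m_\La=k\,m_{(a,b)\La}+\sum_{(i,j)\in\La}k_{(i,j)}\,m_{(a+i,b+j)\,\La\setminus(i,j)}
\]
with all coefficients in $\BZ_{\geqslant 0}$.

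Finally I would check positivity: $k\neq 0$ because one can always choose $j$ outside the finite image of $\phi$, and $k_{(i,j)}\neq 0$ because one can take $j=\phi(k)$ with $(a_k,b_k)=(i,j)$; since every monomial symmetric function occurring has nonnegative integer coefficients, coincidences among the shapes $(a+i,b+j)\,\La\setminus(i,j)$ for distinct parts $(i,j)$ only add positive contributions and never cancel. Hence $k,k_{(i,j)}\in\BZ_{>0}$, as claimed. The only mildly delicate point is keeping track of the combinatorial normalization constants relating "symmetrization" to the monomial symmetric functions; otherwise this is a routine expansion and there is no real obstacle — which is why the statement is asserted to be clear.
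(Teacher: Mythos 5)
Your proof is correct, and it is exactly the routine expansion the authors have in mind: the paper itself offers no argument for this lemma, stating only that it ``is clear.'' Writing $m_{(a,b)}=\sum_j x_j^ay_j^b$ and splitting the product over whether the index $j$ lies in the support of a given term of $m_\La$ is the intended verification, and your handling of the normalization constants and of possible coincidences among the shapes $(a+i,b+j)\,\La\setminus(i,j)$ (which, all coefficients being nonnegative, can only reinforce positivity) is sound.
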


For a partition $\la=(\la_1,\la_2,\ldots,\la_l)$ we denote by $(\la,0)$ the bipartite partition $(\la_1,0),\ldots,(\la_l,0)$. 

This lemma is~\cite[Lemma 2.5]{HI}.
\begin{Lemma}\label{basis_of_ol_S}
$\{\ol{m}_{(\la,0)(0,1)^{|\la|}}\,|\, \la~\text{is partition}\}$ spans $\ol{S}$.
\end{Lemma}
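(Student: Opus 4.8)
The plan is to reduce the statement to a closure property and then to a normal-form computation built on Lemma~\ref{mult_by_a_b}. Since $S$ is generated as a $\BC$-algebra by the elements $m_{(a,b)}$, its quotient $\ol{S}=S/(m_{(a,b)}\,:\,a\neq b)$ is generated by the classes $\ol{m}_{(p,p)}$, $p\geqslant 1$. Write $V\subseteq\ol{S}$ for the $\BC$-linear span of $\{\ol{m}_{(\la,0)(0,1)^{|\la|}}\,:\,\la\text{ a partition}\}$; then $1=\ol{m}_{\emptyset}\in V$ (empty partition), and every element of $\ol{S}$ is a linear combination of products $\ol{m}_{(p_1,p_1)}\cdots\ol{m}_{(p_k,p_k)}$, so it suffices to prove
\begin{equation*}
\ol{m}_{(p,p)}\cdot V\subseteq V\qquad\text{for all }p\geqslant 1,
\end{equation*}
after which $V=\ol{S}$ follows by induction on $k$. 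By Lemma~\ref{mult_by_a_b} with $(a,b)=(p,p)$ and $\La=(\la,0)(0,1)^{|\la|}$, the product $\ol{m}_{(p,p)}\cdot\ol{m}_{(\la,0)(0,1)^{|\la|}}$ is a $\BZ$-combination of classes $\ol{m}_{\La'}$ with $|\La'|=(p+|\la|,p+|\la|)$, so everything reduces to the single claim that \emph{every $\ol{m}_{\La}$ with $|\La|=(d,d)$ lies in $V$}.

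To prove this claim I would argue by induction, using relations in $\ol{S}$ coming from Lemma~\ref{mult_by_a_b} together with the vanishings $\ol{m}_{(p,0)}=\ol{m}_{(0,p)}=0$ for $p\geqslant 1$. Every monomial occurring in such a relation lies in a single bidegree, so one may work degree by degree, and $\ol{m}_{\La}=0$ automatically unless $|\La|=(d,d)$ for some $d$. A bipartite partition of $(d,d)$ has the target form $(\la,0)(0,1)^{|\la|}$ precisely when it has no part with both coordinates positive and all its parts of shape $(0,\ast)$ equal $(0,1)$. If $\La=(i,j)\La'$ has a mixed part $(i,j)$ (i.e. $i,j\geqslant 1$), expand $m_{(i,0)}\cdot m_{(0,j)\La'}$ by Lemma~\ref{mult_by_a_b}: its left-hand side vanishes in $\ol{S}$, so $\ol{m}_{\La}$ is, up to a nonzero scalar, a $\BZ$-combination of $\ol{m}_{(i,0)(0,j)\La'}$ (where the mixed part is split into two non-mixed ones) and of terms $\ol{m}_{(i+a,\,b)(0,j)\,\La'\setminus(a,b)}$, $(a,b)\in\La'$. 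Likewise, if $\La=(0,q)\La'$ with $q\geqslant 2$, expanding $m_{(0,1)}\cdot m_{(0,q-1)\La'}$ rewrites $\ol{m}_{\La}$ in terms of $\ol{m}_{(0,1)(0,q-1)\La'}$ (a more fragmented $(0,\ast)$-part) and correction terms obtained by absorbing the extra $(0,1)$ into one of the remaining parts.

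The real content, and the main obstacle, is the bookkeeping: one must exhibit a statistic $\Phi(\La)$ (a single integer, or a lexicographically ordered tuple) that vanishes exactly on target form and strictly decreases under both moves, so that iterating them terminates. The naive candidate ``number of parts with both coordinates positive, then $\sum_{(0,j)\in\La}(j-1)$'' is not monotone as it stands: the first move can turn a part $(0,b)$ of $\La'$ into a mixed part $(i,b)$, and the second can turn a part $(\la_s,0)$ into a mixed part $(\la_s,1)$, so the two reductions feed into each other. One therefore needs a finer statistic — for instance a dominance-type refinement comparing the multisets of parts — and must check, against the explicit positive coefficients of Lemma~\ref{mult_by_a_b}, that every term produced by the two moves is strictly $\Phi$-smaller; this case analysis is exactly what is carried out in~\cite[Lemma~2.5]{HI}. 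Granting it, induction on $\Phi$ gives that every $\ol{m}_{\La}$ with $|\La|=(d,d)$ lies in $V$, hence $\ol{m}_{(p,p)}\cdot V\subseteq V$ and $V=\ol{S}$; together with the surjection $\ol{S}\twoheadrightarrow\BC[(S^n(\BA^2))^{\BT}]$, which kills every $\ol{m}_{\La}$ with $\ell(\La)>n$, this yields the spanning set of size $|\CP(n)|$ used in~(\ref{dim_fiber_main_estimate}).
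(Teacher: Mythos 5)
Your setup is fine, but the proof has a genuine gap exactly where the lemma has its content. After reducing to the claim that every $\ol{m}_{\La}$ with $|\La|=(d,d)$ lies in the span $V$ (which is essentially a restatement of the lemma), you propose two rewriting moves — splitting a mixed part $(i,j)$ via $m_{(i,0)}\cdot m_{(0,j)\La'}$ and splitting $(0,q)$ via $m_{(0,1)}\cdot m_{(0,q-1)\La'}$ — and then concede that you cannot exhibit a statistic that strictly decreases under both moves, deferring the termination argument to \cite[Lemma~2.5]{HI}. Since that reference \emph{is} this lemma, nothing has been proved: the correction terms of your first move really can create new mixed parts (a part $(0,b)$ of $\La'$ absorbs $(i,0)$ and becomes $(i,b)$), and the correction terms of your second move can do the same to parts $(a,0)$, so with your choice of moves the obvious orders do not terminate and no replacement is supplied.

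The paper avoids this by choosing the vanishing relations differently, in two cleanly ordered stages. First, if $\La$ contains any part $(a,b)$ with $a\neq b$, one expands $0=\ol{m}_{(a,b)}\ol{m}_{\La\setminus(a,b)}$: the term $\ol{m}_{\La}$ appears with a nonzero coefficient and \emph{every} other term has length $\ell(\La)-1$, so induction on $\ell(\La)$ reduces everything to elements $\ol{m}_{(a_1,a_1)(a_2,a_2)\cdots}$ with all parts diagonal. Second, within the class $\ol{m}_{(a_1,b_1)\cdots(a_l,b_l)(0,1)^m}$ with $a_i\geqslant b_i$, one expands $0=\ol{m}_{(a_1,b_1-1)}\ol{m}_{(a_2,b_2)\cdots(a_l,b_l)(0,1)^{m+1}}$ (the factor vanishes because $a_1\geqslant b_1>b_1-1$); the term $\ol{m}_{(a_1,b_1)\cdots(a_l,b_l)(0,1)^m}$ arises when $(a_1,b_1-1)$ merges with one of the $(0,1)$'s, with nonzero coefficient, while every other term still satisfies $a_j\geqslant b_j$ and has $\sum_j b_j$ (taken over the non-$(0,1)$ parts) strictly smaller. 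Induction on $d=\sum_j b_j$ then terminates at $d=0$, i.e.\ at the elements $\ol{m}_{(\la,0)(0,1)^{m}}$, which are either of the target form or vanish for bidegree reasons. The key device you are missing is to multiply by the off-diagonal (hence zero) element $\ol{m}_{(a_1,b_1-1)}$ against the remaining parts with an extra $(0,1)$ appended, rather than splitting parts into $(i,0)$ and $(0,j)$; this is what makes a single monovariant work. To repair your write-up you would need to either adopt this two-stage induction or actually produce and verify the finer statistic you allude to.
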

\begin{proof}
Let us first of all note that the functions $\ol{m}_{(a,a)(b,b)(c,c)\ldots}$ span $\ol{S}$. Indeed to prove this it is enough to show that every $\ol{m}_{\La}$ can be obtained as a linear combination of  $\ol{m}_{(a,a)(b,b)(c,c)\ldots}$. This can be proved by the induction on $\ell(\La)$ using Lemma \ref{mult_by_a_b} together with the fact that $\ol{m}_{(a,b)}=0$ for $a \neq b$.

It remains to show that every $\ol{m}_{(a,a)(b,b)(c,c)\ldots}$ can be expanded in terms of $\ol{m}_{(\la,0)(0,1)^{|\la|}}$.
To see that it is enough to show that every $\ol{m}_{(a_1,b_1)\ldots (a_l,b_l)(0,1)^m}$ with $a_i \geqslant b_i$ can be obtained as a linear combination of $\ol{m}_{(\la,0)(0,1)^{|\la|}}$. We prove this by the induction on $d=\sum_{i=1}^l b_i$. For $d=0$ the claim is clear. 

For the induction step without losing the generality, we can assume that $b_1>0$. Using Lemma~\ref{mult_by_a_b} we obtain:
\begin{multline*}
m_{(a_1,b_1-1)}m_{(a_2,b_2)\ldots (a_l,b_l)(0,1)^{m+1}}
=k_1m_{(a_1,b_1)\ldots (a_l,b_l)(0,1)^{m}}+\\+k_0m_{(a_1,b_1-1)(a_2,b_2)\ldots (a_l,b_l)(0,1)^{m+1}}+
\sum_{i=2}^l k_i m_{(a_2,b_2)\ldots (a_1+b_i,b_1+b_i-1)\ldots(a_l,b_l)(0,1)^{m+1}}  
\end{multline*}
for some $k_0,k_1,\ldots,k_l \in \BZ$ with $k_1 \neq 0$. Induction hypothesis together with the fact that $\ol{m}_{(a_1,b_1-1)}$ finish the proof.
\end{proof}

\begin{Cor}
The image of $\{\ol{m}_{(\la,0)(0,1)^{|\la|}},|\, \ell(\la)+|\la| \leqslant n\}$ spans $\BC[(S^n(\BA^2))^{\BT}]=\BC[(\BA^{2n}/S_n)^{\BT}]$. In particular, we have 
\begin{equation*}
\on{dim}\BC[(\BA^{2n}/S_n)^{\BT}] \leqslant |\CP(n)|.    
\end{equation*}
\end{Cor}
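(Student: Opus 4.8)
The plan is to deduce the statement directly from Lemma~\ref{basis_of_ol_S} together with the surjection $\ol{S}\twoheadrightarrow\BC[(S^n(\BA^2))^{\BT}]$ constructed above, and then to finish with an elementary count of partitions.

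First I would record that, by Lemma~\ref{basis_of_ol_S}, the elements $\ol{m}_{(\la,0)(0,1)^{|\la|}}$ with $\la$ running over all partitions span $\ol{S}$. Recall that the surjection $\ol{S}\twoheadrightarrow\BC[(S^n(\BA^2))^{\BT}]$ sends $\ol{m}_{\La}$ to $0$ whenever $\ell(\La)>n$. Since the bipartite partition $(\la,0)(0,1)^{|\la|}$ has exactly $\ell(\la)$ pairs coming from $(\la,0)$ and $|\la|$ pairs equal to $(0,1)$ (none of which is $(0,0)$), its length is $\ell(\la)+|\la|$; hence $\ol{m}_{(\la,0)(0,1)^{|\la|}}$ maps to $0$ in $\BC[(S^n(\BA^2))^{\BT}]$ as soon as $\ell(\la)+|\la|>n$. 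Applying the surjection to the spanning family from Lemma~\ref{basis_of_ol_S}, we conclude that $\BC[(S^n(\BA^2))^{\BT}]=\BC[(\BA^{2n}/S_n)^{\BT}]$ is spanned by the images of those $\ol{m}_{(\la,0)(0,1)^{|\la|}}$ with $\ell(\la)+|\la|\leqslant n$, which is the first assertion.

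For the inequality it then suffices to check that the number of partitions $\la$ satisfying $\ell(\la)+|\la|\leqslant n$ equals $|\CP(n)|$. I would prove this by exhibiting a bijection with $\CP(n)$: to such a $\la=(\la_1,\ldots,\la_l)$ one associates the partition $\mu$ of $n$ whose parts are $\la_1+1,\ldots,\la_l+1$ together with $n-\ell(\la)-|\la|$ parts equal to $1$; this is indeed a partition of $(\ell(\la)+|\la|)+(n-\ell(\la)-|\la|)=n$. The inverse takes $\mu\in\CP(n)$, deletes all its parts equal to $1$, and subtracts $1$ from each of the remaining parts; one checks these are mutually inverse using that each $\la_i+1$ is $\geqslant 2$. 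Since the spanning family produced in the previous paragraph has exactly this many members, $\on{dim}\BC[(\BA^{2n}/S_n)^{\BT}]\leqslant|\CP(n)|$ follows.

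There is essentially no serious obstacle here: all the genuine work sits in Lemma~\ref{basis_of_ol_S}, and the Corollary is a formal consequence. The only points needing a little care are the identification $\ell\big((\la,0)(0,1)^{|\la|}\big)=\ell(\la)+|\la|$ and the verification that the two maps between $\{\la:\ell(\la)+|\la|\leqslant n\}$ and $\CP(n)$ displayed above are genuinely inverse to one another.
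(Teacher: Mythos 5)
Your argument is correct and follows exactly the same route as the paper: apply Lemma~\ref{basis_of_ol_S}, observe that $\ol{m}_{(\la,0)(0,1)^{|\la|}}$ dies in the quotient when $\ell(\la)+|\la|>n$, and count the survivors via the bijection $\la\mapsto\hat\la$ (your description in terms of parts $\la_i+1$ plus $n-\ell(\la)-|\la|$ parts equal to $1$ is the same bijection the paper writes in exponential notation). Nothing to add.
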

\begin{proof}
Clearly the elements $\ol{m}_{(\la,0)(0,1)^{|\la|}}$ with  $\ell(\la)+|\la| > n$ lie in the kernel of $\ol{S} \twoheadrightarrow \BC[(S^n(\BA^2))^{\BT}]$. Now the first claim follows from Lemma \ref{basis_of_ol_S}. It remains to note that we have a bijection 
\begin{equation*}
\{\la\,|\, \ell(\la)+|\la| \leqslant n\} \iso \CP(n)    
\end{equation*}
that sends a partition $\la=(1^{\al_1}2^{\al_2}\ldots )$ to the partition $\hat{\la} \in \CP(n)$ given by 
\begin{equation*}
\hat{\la}=1^{n-\ell(\la)-|\la|}2^{\al_1}3^{\al_2}\ldots i^{\al_{i-1}} \ldots.
\end{equation*}
\end{proof}

\subsection{General case ($r$ is arbitrary)}

Let us now generalize the arguments of Section~\ref{hilb_scheme_flatness} to the arbitrary $r \in \BZ_{\geqslant 1}$. We follow~\cite{HA}.
We start with some notation. Recall that $\BC[\BA^{2n}/\Gamma_n]$ is nothing else but 
\begin{multline*}
\BC[x_1,\ldots,x_n,y_1,\ldots,y_n]^{S_n \ltimes (\BZ/r\BZ)^n} \simeq \\
\Big(\BC[x'_1,\ldots,x'_n,y'_1,\ldots,y'_n,z'_1,\ldots,z'_n]/(x'_1y'_1-(z'_1)^r,\ldots,x'_ny'_n-(z'_n)^r)\Big)^{S_n}    
\end{multline*}
where the isomorphism is given by 
\begin{equation*}
x_i' \mapsto x_i^r,\, 
y_i' \mapsto y_i^r,\,
z'_i \mapsto x_iy_i.
\end{equation*}

\begin{Rem}
{\em{Geometrically isomorphism above corresponds to the identification $\BC[\BA^{2n}/\Gamma_n] \simeq S^n(\BA^2/(\BZ/r\BZ))$.}}
\end{Rem}
Set  \begin{multline*}
    I_n :=(x'_1y'_1-(z'_1)^r,\ldots,x'_ny'_n-(z'_n)^r)\subset \\ \subset \BC[x'_1,\ldots,x'_n,y'_1,\ldots,y'_n,z'_1,\ldots,z'_n], \end{multline*}
and
\begin{equation*}
S':=\underset{\longleftarrow}{\on{lim}}\, \BC[x'_1,\ldots,x'_n,y'_1,\ldots,y'_n,z'_1,\ldots,z'_n]^{S_n},\, I:=\underset{\longleftarrow}{\on{lim}}\,I_n,\, S:=S'/I.
\end{equation*}

For every tripartition $\La=(a_1,b_1,c_1)(a_2,b_2,c_2),\ldots,(a_l,b_l,c_l)$ let $m'_\la \in S'$ be the symmetrization of the monomial \newline $(x_1')^{a_1}(y_1')^{b_1}(z_1')^{c_1}\ldots (x_l')^{a_l}(y_l')^{b_l}(z_l')^{c_l}$, we set $\ell(\La):=l$.
We denote by $m_\La \in S$ the image of $m'_\La$.
The following lemma is clear.
\begin{Lemma}
The set $\{m'_{\La}\,|\, \La \text{-tripartition}\}$ spans $S'$. So 
\begin{equation*}
\{m_{\La}\,|\,\La=(a_1,b_1,c_1)\ldots,\, c_i \leqslant r-1\}
\end{equation*}
spans $S$.
\end{Lemma}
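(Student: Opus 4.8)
The plan is to deduce both spanning statements from elementary invariant theory together with the description of $S'$ as an inverse limit. For fixed $n$, write $R_n:=\BC[x'_1,\ldots,x'_n,y'_1,\ldots,y'_n,z'_1,\ldots,z'_n]^{S_n}$ for the ring of invariants of the polynomial ring in $n$ triples of variables under the $S_n$-action permuting the triples, so that $S'=\underset{\longleftarrow}{\on{lim}}\,R_n$ as graded algebras. Since monomials form a $\BC$-basis of the polynomial ring which $S_n$ permutes, and any $S_n$-invariant polynomial must have equal coefficients on monomials in one $S_n$-orbit, the $S_n$-orbit sums of monomials form a $\BC$-basis of $R_n$. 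These orbit sums are precisely the images in $R_n$ of the elements $m'_{\La}$ with $\ell(\La)\leqslant n$: a monomial $\prod_i(x'_i)^{a_i}(y'_i)^{b_i}(z'_i)^{c_i}$ is recorded by the multiset of its nonzero exponent triples, a multiset of size at most $n$, and this gives a bijection between $S_n$-orbits of monomials and tripartitions $\La$ with $\ell(\La)\leqslant n$. Moreover the transition map $R_{n+1}\to R_n$ (setting the $(n+1)$-st triple $(x'_{n+1},y'_{n+1},z'_{n+1})$ to $0$) carries the image of $m'_{\La}$ to the image of $m'_{\La}$ when $\ell(\La)\leqslant n$ and to $0$ when $\ell(\La)=n+1$.

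To pass to the inverse limit, note that with the natural grading each nonzero exponent triple contributes a positive amount to the degree, so every graded component of $R_n$ is finite dimensional, and once $n$ exceeds the (finite) maximal value of $\ell(\La)$ attained in a fixed degree, all further transition maps are isomorphisms on that component. Hence each graded component of $S'$ equals the corresponding component of $R_n$ for $n\gg 0$, and is therefore spanned by finitely many of the $m'_{\La}$; this proves that $\{m'_{\La}\}$ spans $S'$. For $S=S'/I$ it then suffices, since the images of the $m'_{\La}$ span $S$, to show that the image of each $m'_{\La}$ in $S$ lies in the span of those $m_{\La}$ with all last entries $c_i\leqslant r-1$. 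In $S$ one has $x'_iy'_i=(z'_i)^r$, so for a monomial $M=\prod_{i=1}^{l}(x'_i)^{a_i}(y'_i)^{b_i}(z'_i)^{c_i}$, writing $c_i=q_i r+c_i'$ with $0\leqslant c_i'\leqslant r-1$ and substituting $(z'_i)^r\equiv x'_iy'_i$ repeatedly yields $M\equiv\prod_{i=1}^{l}(x'_i)^{a_i+q_i}(y'_i)^{b_i+q_i}(z'_i)^{c_i'}$ modulo $I$. As the generators of $I$ are $S_n$-stable, symmetrization commutes with this substitution, so $m'_{\La}$ maps to a nonzero scalar multiple of $m_{\tilde{\La}}$ with $\tilde{\La}=(a_1+q_1,b_1+q_1,c_1')\cdots(a_l+q_l,b_l+q_l,c_l')$, all of whose last entries are $\leqslant r-1$.

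There is essentially no obstacle here: the lemma is \emph{clear} as the paper states, and the argument is formal. The only points calling for a line of care are the two just flagged — that the orbit sums span (indeed form a basis of) the invariant subring, and that each graded piece of the inverse limit $S'$ is exhausted by $R_n$ for $n$ large — both immediate once the grading is fixed so that a nonzero exponent triple has positive degree.
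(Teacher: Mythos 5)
Your argument is correct, and the paper itself offers no proof (it declares the lemma ``clear''); what you have written is precisely the standard argument the authors have in mind: orbit sums of monomials form a basis of each invariant ring $R_n$, the graded pieces of the inverse limit stabilize because every nonzero exponent triple has positive degree, and the relation $x_i'y_i'\equiv (z_i')^r$ reduces each $c_i$ below $r$ at the cost of a positive scalar (the stabilizer index, since distinct triples such as $(1,1,0)$ and $(0,0,r)$ may collide after reduction --- a point you correctly absorb into the ``nonzero scalar multiple''). Nothing further is needed.
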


\begin{Lemma}\label{mult_by_abc}
Let $\La$ be a tripartition and $(a,b,c) \in \BZ^3_{\geqslant 0} \setminus \{(0,0,0)\}$. Then we have 
\begin{equation*}
m'_{(a,b,c)}m'_{\La}=? \cdot m'_{(a,b,c)\La} + \sum_{(i,j,k) \in \La} ? \cdot m'_{(a+i,b+j,c+k)\La \setminus (i,j,k)}.  
\end{equation*}
with $?$ being some positive numbers.
As a corollary  we have
\begin{multline*}
m_{(a,b,c)}m_{\La} =\,? \cdot m_{(a,b,c)\La} + \sum_{(i,j,k) \in \La,\, c+k \leqslant r-1} ? \cdot m_{(a+i,b+j,c+k)\La \setminus (i,j,k)}+\\
+\sum_{(i,j,k) \in \La,\, c+k \geqslant r} ? \cdot m_{(a+i+1,b+j+1,c+k-r)\La \setminus (i,j,k)}.
\end{multline*}
with $?$ being some positive numbers.  
\end{Lemma}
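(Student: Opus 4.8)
The plan is to first establish the displayed identity in $S'$ by a direct expansion of the product of two symmetrized monomials, and then to obtain the quotient statement by reducing modulo $I$. Since $S'$ is an inverse limit of the invariant rings $\BC[x'_1,\ldots,x'_n,y'_1,\ldots,y'_n,z'_1,\ldots,z'_n]^{S_n}$ and multiplication in $S'$ is performed componentwise, it suffices to prove the $S'$-identity in each finite stage with $n$ large (say $n\geqslant \ell(\La)+1$, so that there is room for a fresh part) and to check that the coefficients produced there are independent of $n$ in that range, so that they assemble into an element of $S'$; this last point is routine once one normalizes the $m'_{\bullet}$ to be sums of \emph{distinct} monomials in an $S_n$-orbit (i.e. monomial symmetric functions).

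For the finite-stage computation, write $m'_{(a,b,c)}$ as the sum over one-element supports $p$ of the monomials $(x'_p)^{a}(y'_p)^{b}(z'_p)^{c}$, and likewise $m'_{\La}$ as the sum over its $S_n$-orbit of monomials. Multiplying, the product is a sum of monomials, each obtained by superimposing a single fresh part $(a,b,c)$ placed at some variable $p$ onto a monomial $M$ of shape $\La$. Two cases occur: either $p$ is not in the support of $M$, and then the resulting monomial has shape $(a,b,c)\La$; or $p$ carries a part $(i,j,k)$ of $\La$, and the exponents there become $(a+i,b+j,c+k)$, so the resulting monomial has shape $(a+i,b+j,c+k)\,\La\setminus (i,j,k)$. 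Collecting monomials according to their $S_n$-orbit yields the claimed expansion, and all coefficients are \emph{positive} because the process only adds monomials together — there is no cancellation. This is the exact analogue of Lemma~\ref{mult_by_a_b}, with the extra variable $z'$ along for the ride, and the precise values of the coefficients are irrelevant for us.

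To pass to $S=S'/I$, recall that $I$ is generated by the elements $x'_py'_p-(z'_p)^r$, $p=1,\ldots,n$, which form a single $S_n$-orbit; hence $I$ is $S_n$-stable and symmetrization descends to $S$. For any monomial with $z'$-exponent $e\geqslant r$ at a position $p$ one has the congruence $(x'_p)^{\alpha}(y'_p)^{\beta}(z'_p)^{e}\equiv (x'_p)^{\alpha+1}(y'_p)^{\beta+1}(z'_p)^{e-r}\pmod I$, since the difference equals $(z'_p)^{e-r}\big((z'_p)^r-x'_py'_p\big)$ times a monomial, and lies in $I$. Applying this to the image in $S$ of each monomial on the right-hand side of the $S'$-identity: the parts $(a,b,c)$ and $(i,j,k)$ have $z'$-exponent at most $r-1$, so a combined part $(a+i,b+j,c+k)$ has $z'$-exponent $c+k\leqslant 2r-2<2r$; hence a single reduction step suffices, turning exactly the terms with $c+k\geqslant r$ into terms of shape $(a+i+1,b+j+1,c+k-r)\,\La\setminus(i,j,k)$ and leaving the others untouched. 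Positivity of the coefficients is preserved throughout, which gives the corollary.

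The computation presents no real obstacle; the only points requiring minor care are the bookkeeping of the inverse limit — keeping the coefficients $n$-independent, best handled by working with monomial symmetric functions — and the compatibility of reduction modulo $I$ with $S_n$-symmetrization, which holds precisely because $I$ is generated by an $S_n$-orbit.
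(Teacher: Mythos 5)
Your proof is correct and is exactly the argument the paper has in mind: the paper states Lemma~\ref{mult_by_abc} without proof (just as Lemma~\ref{mult_by_a_b} is declared ``clear''), and the intended justification is precisely your direct expansion of the product of symmetrized monomials into the ``disjoint support'' and ``merged part'' terms, followed by one application of the relation $x'_py'_p\equiv (z'_p)^r \pmod{I}$ to the terms with $c+k\geqslant r$. Your added care about normalizing the $m'_{\bullet}$ as monomial symmetric functions so the coefficients are $n$-independent, and about the $S_n$-stability of $I$, supplies details the paper leaves implicit.
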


For $\La=(a_1,b_1,c_1)\ldots (a_l,b_l,c_l)$ we set $\on{deg}\La:=\sum_{i=1}^la_i-\sum_{i=1}^{l}b_i$.
Let $J \subset S$ be the ideal generated by $\{m_{\La}\,|\, \on{deg}\La \neq 0\}$. We set $\ol{S}:=S/J$.
We denote by $\ol{m}_{\La} \in \ol{S}$ the image of $m_{\La}$.

For an $r$-tuple of partitions ${\boldsymbol{\la}}=(\la^0,\la^1,\ldots,\la^{r-1})$ and a collection of nonnegative numbers ${\bf{p}}=(p_1,\ldots,p_{r-1})$ we define tripartition to be denoted by the symbol
\begin{multline*}
(\boldsymbol{\la},{\bf{p}}):=(\la^0_1,0,0)\ldots (\la^0_{\ell(\la^0)},0,0),(0,0,1)^{p_1},(\la_1^1,0,1),\ldots,(\la^1_{\ell(\la^1)},0,1),\ldots\\ \ldots,(0,0,r-1)^{p_{r-1}},(\la^{r-1}_1,0,r-1),\ldots,(\la^{r-1}_{\ell(\la^{r-1})},0,r-1).
\end{multline*}
Recall that $\ell({\boldsymbol{\la}})=\sum_{i=0}^{r-1}\ell(\la^i)$, $|{\boldsymbol{\la}}|=\sum_{i=0}^{r-1}|\la^i|$. We set $|{\bf{p}}|:=p_1+\ldots+p_{r-1}$.
The following lemma is \cite[Lemma 2.1.4]{HA}.
\footnote{Hatano's statement had a minor typo, Lemma \ref{span_by_bold_lambda} is a corrected version of \cite[Lemma 2.1.4]{HA}.}
\begin{Lemma}\label{span_by_bold_lambda}
The set $\{\ol{m}_{({\boldsymbol{\la}},{\bf{p}})(0,1,0)^{|{\boldsymbol{\la}}|}}\}$ spans $\ol{S}$.
\end{Lemma}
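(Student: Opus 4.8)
The plan is to mimic the $r=1$ argument of Lemmas~\ref{mult_by_a_b}--\ref{basis_of_ol_S}, now using the triple multiplication rule of Lemma~\ref{mult_by_abc} in place of Lemma~\ref{mult_by_a_b}, in two stages: first I reduce the known spanning set of $\ol{S}$ to monomials all of whose parts are \emph{balanced}, and then I rewrite a balanced monomial into the advertised normal form by an induction.

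\emph{Stage 1: reduction to balanced parts.} By the Lemma immediately preceding Lemma~\ref{mult_by_abc}, the classes $\ol{m}_{\La}$ with every part $(a_i,b_i,c_i)$ satisfying $c_i\leqslant r-1$ span $\ol{S}$; I claim that those all of whose parts are \emph{balanced}, i.e. of the form $(a,a,c)$ with $0\leqslant c\leqslant r-1$, already span $\ol{S}$. This follows by induction on $\ell(\La)$: if $\La=(a_1,b_1,c_1)\La''$ has an unbalanced part, say $a_1\neq b_1$, then $\on{deg}(a_1,b_1,c_1)=a_1-b_1\neq 0$, hence $\ol{m}_{(a_1,b_1,c_1)}=0$ in $\ol{S}$, and multiplying by $\ol{m}_{\La''}$ and expanding by Lemma~\ref{mult_by_abc} exhibits a positive multiple of $\ol{m}_{\La}$ as a linear combination of classes of monomials of length $\ell(\La)-1$ (all of whose parts still satisfy $c_i\leqslant r-1$, since the wrap-around rule sends an exponent $c_i+c_j\geqslant r$ to $c_i+c_j-r\leqslant r-2$), so the induction hypothesis applies. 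The base cases $\ell(\La)\leqslant 1$ are trivial.

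\emph{Stage 2: normal form.} I will show, more generally, that $\ol{m}_{\La}$ lies in the span of the monomials $\ol{m}_{(\boldsymbol{\la},{\bf{p}})(0,1,0)^{|\boldsymbol{\la}|}}$ whenever $\La$ can be written in the form $(a_1,b_1,c_1)\cdots(a_l,b_l,c_l)(0,1,0)^m$ with $a_i\geqslant b_i$, $c_i\leqslant r-1$, and $m=\sum_{i=1}^l(a_i-b_i)$ (equivalently $\on{deg}\La=0$, so that $\ol{m}_{\La}$ need not vanish); a balanced monomial is the case $m=0$. I induct lexicographically on the pair $\bigl(\sum_{i=1}^l b_i,\ l\bigr)$. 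If $\sum_i b_i=0$, then $\La=(a_1,0,c_1)\cdots(a_l,0,c_l)(0,1,0)^m$ with $m=\sum_i a_i$; recording each part $(a_i,0,c_i)$ with $a_i>0$ in $\la^{c_i}$ and each part $(0,0,c_i)$ in ${\bf{p}}$ presents $\La$ exactly as $(\boldsymbol{\la},{\bf{p}})(0,1,0)^{|\boldsymbol{\la}|}$ (note $|\boldsymbol{\la}|=\sum_i a_i=m$), so there is nothing to do. Otherwise pick $i$, say $i=1$, with $b_1>0$; since $a_1\geqslant b_1\geqslant 1$ we get $\on{deg}(a_1,b_1-1,c_1)=a_1-b_1+1>0$, hence $\ol{m}_{(a_1,b_1-1,c_1)}=0$. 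Multiplying this by $\ol{m}_{R}$, where $R=(a_2,b_2,c_2)\cdots(a_l,b_l,c_l)(0,1,0)^{m+1}$ (the extra $(0,1,0)$ being exactly what makes $\on{deg}\bigl((a_1,b_1-1,c_1)R\bigr)=0$), and expanding by Lemma~\ref{mult_by_abc}, one term, obtained by merging $(a_1,b_1-1,c_1)$ with a copy of $(0,1,0)$, is a positive multiple of $\ol{m}_{\La}$, while every other term is either the ``new part'' monomial $(a_1,b_1-1,c_1)R$, whose value of $\sum b_i$ has dropped by $1$, or a merge of $(a_1,b_1-1,c_1)$ with some $(a_t,b_t,c_t)$, which lowers $\sum b_i$ by $1$ when no wrap-around occurs and keeps $\sum b_i$ fixed while strictly lowering $l$ when it does. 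In every case the new pair is lexicographically smaller than $\bigl(\sum_i b_i,\ l\bigr)$, and all the monomials involved stay in the family ($a$-entries dominate $b$-entries throughout, and $c$-entries stay $\leqslant r-1$), so the induction hypothesis applies and expresses $\ol{m}_{\La}$ in the required form. This proves the Lemma.

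The only place where the $r=1$ proof does not transcribe verbatim — and thus the main obstacle — is the wrap-around branch of Lemma~\ref{mult_by_abc}: merging two parts with $c_i+c_j\geqslant r$ raises both the $a$- and the $b$-coordinate by one, so it does not decrease $\sum_i b_i$, which is why Hikita's single induction on $\sum_i b_i$ has to be promoted to the double induction above. The point to check carefully is precisely that a wrap-around merge strictly decreases the number $l$ of non-$(0,1,0)$ parts while preserving both $\on{deg}\La=0$ and the bound $c_i\leqslant r-1$; granting this, the rest is the same bookkeeping as in the proof of Lemma~\ref{basis_of_ol_S}.
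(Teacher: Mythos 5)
Your proof is correct and follows essentially the same two-stage argument as the paper: first reduce to monomials with balanced parts using the vanishing of $\ol{m}_{(a,b,c)}$ for $a\neq b$, then expand $0=\ol{m}_{(a_1,b_1-1,c_1)}\ol{m}_{R}$ via Lemma~\ref{mult_by_abc} to isolate a positive multiple of $\ol{m}_{\La}$. The only (immaterial) difference is that the paper replaces your lexicographic induction on the pair $(\sum_i b_i,\,l)$ by an induction on the single quantity $\sum_i b_i + l$, which handles the wrap-around branch in exactly the same way.
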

\begin{proof}
We start from proving that elements $\ol{m}_{(a_1,a_1,c_1)(a_2,a_2,c_2)\ldots }$ span $\ol{S}$.  It is enough to show that every $m_{\La}$ can be presented as a linear combination of  \newline $\ol{m}_{(a_1,a_1,c_1)(a_2,a_2,c_2)\ldots }$. We can assume that there exists $(a,b,c) \in \La$ such that $a \neq b$ (otherwise there is nothing to prove).  From Lemma~\ref{mult_by_abc} it follows that 
\begin{equation*}
0=\ol{m}_{(a,b,c)}\ol{m}_{\La \setminus (a,b,c)}=k\ol{m}_{\La}+\sum_{\La',\, \ell(\La')=\ell(\La)-1} ? \cdot    \ol{m}_{\La'} 
\end{equation*}
and the claim follows by the induction on the length of $\La$. 

It remains to show that every element $\ol{m}_{(a_1,a_1,c_1)\ldots (a_l,a_l,c_l)}$ can be written as a linear combination of $\ol{m}_{({\boldsymbol{\la}},{\bf{p}})(0,1,0)^{|{\boldsymbol{\la}}|}}$. We prove a more general statement: that every element  $\ol{m}_{(a_1,b_1,c_1)\ldots (a_l,b_l,c_l)(0,1,0)^{k}}$ with $a_i \geqslant b_i$ can be written as a linear combination of $\ol{m}_{({\boldsymbol{\la}},{\bf{p}})(0,1,0)^{|{\boldsymbol{\la}}|}}$. We prove this claim by the induction on $b+l$, where $b:=\sum_{i=1}^l b_i$. Let us, first of all, note that we can assume that $\sum_{i=1}^l a_i=b+k$. For $b=0$ we must have $b_i=0$ for every $i$ and then there is nothing to prove. Suppose now that $b>0$. Without losing the generality we can assume that $b_1>0$. By Lemma~\ref{mult_by_abc} (also using that $a_1 \geqslant b_1 > b_1-1$) we have 
\begin{multline*}
0=\ol{m}_{(a_1,b_1-1,c_1)}\ol{m}_{(a_2,b_2,c_2)\ldots (a_l,b_l,c_l)(0,1,0)^{k+1}}=\\
= ? \cdot \ol{m}_{(a_1,b_1-1,c_1)(a_2,b_2,c_2)\ldots (a_l,b_l,c_l)(0,1,0)^{k+1}}+u\ol{m}_{(a_1,b_1,c_1)\ldots (a_l,b_l,c_l)(0,1,0)^{k}}+\\
+\sum_{c_1+c_i \leqslant r-1}?  \cdot \ol{m}_{(a_2,b_2,c_2)\ldots (a_i+a_1,b_i+b_1-1,c_i+c_1)\ldots (a_l,b_l,c_l)(0,1,0)^{k+1}}+\\
\sum_{c_i+c_1 \geqslant r}? \cdot \ol{m}_{(a_2,b_2,c_2)\ldots (a_i+a_1+1,b_i+b_1,c_i+c_1-r)\ldots (a_l,b_l,c_l)(0,1,0)^{k+1}}
\end{multline*}
with $u \in \BZ_{>0}$.
\end{proof}
Now the claim follows from the induction hypothesis.

\begin{Cor}
The image of the set $\{\ol{m}_{({\boldsymbol{\la}},{\bf{p}})(0,1,0)^{|{\boldsymbol{\la}}|}}\,|\, \ell({\boldsymbol{\la}})+|{\boldsymbol{\la}}|+|{\bf{p}}|\leqslant n\}$ spans $\BC[\BA^{2n}/\Gamma_n]$. In particular 
\begin{equation*}
\on{dim}\BC[\BA^{2n}/\Gamma_n] \leqslant |\CP(r,n)|.    
\end{equation*}
\end{Cor}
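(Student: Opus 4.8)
The plan is to deduce the corollary as a direct combinatorial consequence of Lemma~\ref{span_by_bold_lambda}. First I would recall the two facts already in place: (i) there is a surjective graded homomorphism $\ol{S} \twoheadrightarrow \BC[\BA^{2n}/\Gamma_n]=\BC[(\BA^{2n}/\Gamma_n)^{\BT}]$ (obtained by passing from the inverse limit $S$ to its $n$-th truncation and then killing the graded pieces of nonzero $\on{deg}$, exactly as in the $r=1$ discussion preceding Lemma~\ref{basis_of_ol_S}), and (ii) under this map the image of $\ol{m}_\La$ is zero whenever $\ell(\La)>n$, since such a symmetrized monomial already vanishes in $\BC[x'_1,\ldots,x'_n,\ldots]^{S_n}$. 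Combining (i), (ii) with Lemma~\ref{span_by_bold_lambda}, the images of the finitely many elements $\ol{m}_{({\boldsymbol{\la}},{\bf{p}})(0,1,0)^{|{\boldsymbol{\la}}|}}$ with $\ell({\boldsymbol{\la}})+|{\boldsymbol{\la}}|+|{\bf{p}}| \leqslant n$ span $\BC[\BA^{2n}/\Gamma_n]$: indeed the length of the tripartition $({\boldsymbol{\la}},{\bf{p}})(0,1,0)^{|{\boldsymbol{\la}}|}$ is exactly $\ell({\boldsymbol{\la}})+|{\bf{p}}|+|{\boldsymbol{\la}}|$, so all the remaining spanning vectors map to $0$.

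The second step is purely a counting argument: I would exhibit a bijection
\begin{equation*}
\{({\boldsymbol{\la}},{\bf{p}})\,|\, \ell({\boldsymbol{\la}})+|{\boldsymbol{\la}}|+|{\bf{p}}|\leqslant n\} \iso \CP(r,n),
\end{equation*}
generalizing the $r=1$ bijection $\la \mapsto \hat\la$ used in the Corollary after Lemma~\ref{basis_of_ol_S}. The idea is that an $r$-multipartition ${\boldsymbol{\mu}}=(\mu^0,\ldots,\mu^{r-1})$ of $n$ records, for each colour $l$, both the ``large parts'' ($\geqslant 2$) and the number of parts equal to $1$; the data $({\boldsymbol{\la}},{\bf{p}})$ is designed to encode the large parts in ${\boldsymbol{\la}}$ (reading $\la^l$ as the parts of $\mu^l$ shifted down by $1$, so a part of size $a$ of $\mu^l$ with $a\geqslant 2$ becomes a part of size $a-1\geqslant 1$ of $\la^l$, contributing $1$ to $\ell(\la^l)$ and $a-1$ to $|\la^l|$) and the multiplicities of $1$'s for colours $1,\ldots,r-1$ in ${\bf{p}}$, with the leftover ``slack'' $n-\ell({\boldsymbol{\la}})-|{\boldsymbol{\la}}|-|{\bf{p}}|$ absorbed as the number of $1$'s of colour $0$. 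One checks directly that $\sum_l |\mu^l| = \ell({\boldsymbol{\la}}) + |{\boldsymbol{\la}}| + |{\bf{p}}| + (\text{number of }1\text{'s of colour }0) = n$ and that the map is invertible, so both sets have cardinality $|\CP(r,n)|$; hence $\on{dim}_{\BC}\BC[\BA^{2n}/\Gamma_n] \leqslant |\CP(r,n)|$.

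I do not expect a serious obstacle here: the only point requiring care is verifying that the bijection above is well-defined in both directions and that $\ell({\boldsymbol{\la}})+|{\boldsymbol{\la}}|+|{\bf{p}}|\leqslant n$ exactly corresponds to leaving room for a (possibly empty) block of $1$'s in colour $0$. One should also double-check the bookkeeping that the tripartition $({\boldsymbol{\la}},{\bf{p}})(0,1,0)^{|{\boldsymbol{\la}}|}$ has length $\ell({\boldsymbol{\la}})+|{\bf{p}}|+|{\boldsymbol{\la}}|$ (the trailing $(0,1,0)^{|{\boldsymbol{\la}}|}$ contributes $|{\boldsymbol{\la}}|$ parts, the $(0,0,l)^{p_l}$ blocks contribute $|{\bf{p}}|$, and the genuine parts of the $\la^l$'s contribute $\ell({\boldsymbol{\la}})$), which is what makes the vanishing in step one kick in at precisely the right threshold. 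Everything else is a formal consequence of Lemma~\ref{span_by_bold_lambda} and the surjection $\ol{S}\twoheadrightarrow\BC[\BA^{2n}/\Gamma_n]$.
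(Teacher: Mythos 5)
Your proposal is correct and follows essentially the same route as the paper's own proof: it invokes Lemma~\ref{span_by_bold_lambda}, observes that $\ell\bigl(({\boldsymbol{\la}},{\bf{p}})(0,1,0)^{|{\boldsymbol{\la}}|}\bigr)=\ell({\boldsymbol{\la}})+|{\boldsymbol{\la}}|+|{\bf{p}}|$ so that the remaining spanning vectors die under $\ol{S}\twoheadrightarrow\BC[(\BA^{2n}/\Gamma_n)^{\BT}]$, and then counts via the same shift-by-one bijection with $\CP(r,n)$ (you describe it in the inverse direction, absorbing the slack as $1$'s of colour $0$, exactly as the paper does). No gaps.
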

\begin{proof}
Note that $\ell(({\boldsymbol{\la}},{\bf{p}})(0,1,0)^{|\boldsymbol{\la}|})=\ell({\boldsymbol{\la}})+|\boldsymbol{\la}|+|{\bf{p}}|$ so the elements $\ol{m}_{({\boldsymbol{\la}},{\bf{p}})(0,1,0)^{|\boldsymbol{\la}|}}$ with $\ell({\boldsymbol{\la}})+|\boldsymbol{\la}|+|{\bf{p}}|>n$ lie in the kernel of $\ol{S} \twoheadrightarrow \BC[\BA^{2n}/\Gamma_n]$. Now  the first claim  follows from Lemma~\ref{span_by_bold_lambda}. It remains to note that we have a bijection 
\begin{equation*}
\{({\boldsymbol{\la}},{\bf{p}})\,|\, \ell({\boldsymbol{\la}})+|\boldsymbol{\la}|+|{\bf{p}}| \leqslant n\} \iso \CP(r,n) \end{equation*}
that sends $({\bla},{\bf{p}})$ with $\la^i=1^{\al_1^i}2^{\al_2^i}\ldots$ to the multipartition $\hat{\bla}$ given by:
\begin{equation*}
\hat{\la}^0=1^{n-\ell(\bla)-|\bla|-|{\bf{p}}|}2^{\al_1^0}3^{\al_2^0}\ldots,~\text{and}~\hat{\la}^i=1^{p_i}2^{\al_1^i}3^{\al_2^i}\ldots~\text{for}~i=1,\ldots,r-1.
\end{equation*}
The inverse map sends an $r$-partition ${\boldsymbol{\mu}}\in \mathcal{P}(r,n)$ with $\mu^i=1^{\beta_1^i}2^{\beta_2^i}\ldots$ to the pair $({\boldsymbol{\la}},{\bf{p}})$ given by:
\begin{equation*}
\la^i=1^{\beta^i_2}2^{\beta^i_3}\ldots k^{\beta^i_{k+1}}\ldots,\, p_j=\beta^j_1~\text{for}~i=0,1,\ldots,r-1,\,j=1,\ldots,r-1.
\end{equation*}

\end{proof}

\begin{bibdiv}
\begin{biblist} *{labels={numerical}}


\bib{AB}{article}{
    author={ Atiyah,  Michael Francis}, 
    author={Bott, Raoul },
    title={The moment map and equivariant cohomology, Topology},
    year={1984},
    pages={1–28},
    journal={}
    number={1}
    volume={23}
    }

\bib{BEA}{article}{

  title={Symplectic singularities},
  author={Beauville, Arnaud} ,
  journal={Inventiones mathematicae},
  year={2000},
  volume={139},
  pages={541-549}

}



\bib{BLPW}{article}{
    author = {Braden, Tom},
    author={Licata, Anthony},
    author={Proudfoot, Nicholas},
    author={Webster, Ben},
    year = {2014},
    pages = {75-179},
    title = {Quantizations of conical symplectic resolutions II: category $\mathcal O$ and symplectic duality},
    volume = {2016},
    journal = {Asterisque}
}

\bib{BLPPW}{article}{
 title = {Localization algebras and deformations of Koszul algebras},
  author={Braden, Tom},
  author={Licata, Anthony}, author={Phan, Christopher}, author={Proudfoot, Nicholas},
  author={Webster, Ben},
  journal={Selecta Mathematica},
  volume={17},
  number={3},
  pages={533-572},
  year={2011},
  publisher={Springer}
}

\bib{BFN}{article}{
    title={
    Towards a mathematical definition of Coulomb branches of $3$-dimensional $\CN=4$ gauge theories, II},
    author={Braverman, Alexander},
    author={Finkelberg, Michael},
    author={Nakajima, Hiraku},
   
    title = {Towards a mathematical definition of Coulomb branches of $3$-dimensional $\mathcal{N} = 4$ gauge theories, II},

    journal = {Adv. Theor. Math. Phys},
    volume = {22},
    pages = {1071--1147},
    year = {2018}
}

\bib{BEF}{article}{
  title={Cyclotomic double affine Hecke algebras},
  author={ Braverman, Alexander},
  author={ Etingof, Pavel},
  author={ Finkelberg, Michael},
  journal={Annales Scientifiques De L Ecole Normale Superieure},
  year={2020},
  volume={53},
  pages={1249-1312}
}

\bib{Br}{article}{
    title={Centers of degenerate cyclotomic Hecke algebras and parabolic category $O$},
    author={Brundan,Jonathan},
    journal={An Electronic Journal of the American Mathematical Society},
    volume={12},
    pages={236-259},
    year={2008}
}

\bib{BK}{article}{
author = {Brundan, Jonathan}, 
author={Kleshchev, Alexander},
year = {2006},
month = {06},
pages = {},
title = {Schur-Weyl duality for higher levels},
volume = {14},
journal = {Selecta Mathematica},

}


  \bib{CWS}{article}{
title={Quantization of the minimal nilpotent orbits and the equivariant Hikita conjecture}, 
author={Chen, Xiaojun},
author={Weiqiang, He},
author={Sirui, Yu},
year={2023},
eprint={https://arxiv.org/abs/2302.13249},
}


\bib{Drinfeld}{article}
    {
    author={Drinfeld, Vladimir},
    title={On algebraic spaces with an action of $\mathbb{G}_m$},
    year = {2013},
    eprint={https://arxiv.org/abs/1308.2604}
}


\bib{EG}{article}{
    title={Symplectic reflection algebras, Calogero-Moser space, and deformed Harish-Chandra homomorphism},
    author={Etingof, Pavel},
    author={Ginzburg, Victor},
    journal={Invent. math.},
    volume={147},
    pages={243-348},
    date={2002}
}

\bib{FO}{article}{
    title={Fixed point schemes},
    author={Fogarty, John},
    journal={American Journal of Mathematics},
    pages={35-51},
    date={1973},
    volume={95},
    number={1}
}


\bib{GK}{article}{
  title={Poisson deformations of symplectic quotient singularities},
  author={Ginzburg, Victor}
  author={Kaledin, Dmitry },
  journal={Advances in Mathematics},
  year={2002},
  volume={186},
  pages={1-57}
}

\bib{GO0}{article}{
author = {Gordon, Iain},
year = {2002},
pages = {},
title = {Baby Verma modules for rational Cherednik algebras},
volume = {35},
journal = {Bulletin of the London Mathematical Society},
}

    
    



\bib{GRI}{article}{
    title = {Unitary representations of cyclotomic rational Cherednik algebras},
    journal = {Journal of Algebra},
    volume = {512},
    pages = {310-356},
    year = {2018},
    issn = {0021-8693},
    author = {Stephen Griffeth},
}

\bib{HA}{article}{
  title={The cohomology of framed moduli spaces and the coordinate ring of torus fixed points of quotient singularities}, 
      author={Hatano, Kohei},
      year={2021},
      eprint={arXiv:2109.14655},
     }

\bib{HI}{article}{
    title={An algebra-geometric realization of the cohomology ring of Hilbert scheme of points in the affine plane},
    author={Hikita, Tatsuyuki},
    volume={2017},
    journal={Int. Math. Res. Not.},
    number={8},
    pages={2538-2561},
    date={2017}}
 
 \bib{HKW}{article}{
title={BFN Springer Theory}, 
      author={ Hilburn, Justin},
    author={Kamnitzer, Joel },
    author= {Weekes, Alex },
      year={2020},
      eprint={2004.14998},

}

    
\bib{Iversen}{article}{
     title={A fixed point formula for action of tori on algebraic varieties},
  author={Iversen, Birger},
  journal={Inventiones mathematicae},
  volume={16},
  number={3},
  pages={229--236},
  year={1972},
  publisher={Springer}
}    
  


    


\bib{KTWWY}{article}{
    author={ Kamnitzer, Joel}, 
    author={ Tingley, Peter},
    author={ Webster, Ben},
    author={ Weekes, Alex},
    author={ Yacobi, Oded}, 
    title={Highest weights for truncated shifted Yangians and product monomial crystals.},
    journal={J. Comb. Algebra},
    volume={3},
    number={3},
    pages={237–303},
    year={2019},
    label={KTWWY1}
  
}  
\bib{KTWWY2}{article}{ 

    author = {Kamnitzer, Joel},
    author={Tingley, Peter}, 
    author={Webster, Ben},
    author={Weekes, Alex},
    author={Yacobi, Oded},
    journal={Proceedings of the London Mathematical Society},
  volume={119},
  number={5},
  pages={1179--1233},
  year={2019},
  publisher={Wiley Online Library}
    title = {On category $\mathcal{O}$ for affine Grassmannian slices and categorified tensor products}
    journal={Proc. London Math. Soc}
    label={KTWWY2}
    
    }

\bib{Qhikita}{article}{
    author = {Kamnitzer, Joel},
    author={McBreen, Michael},
    author={Proudfoot, Nicholas},
    title = {The quantum Hikita conjecture},
    
    journal = {Adv. Math.},
    volume = {390},
    
    year = {2021}
    }

\bib{joel_new}{article}{
author = {Kamnitzer, Joel},
title = {Symplectic resolutions, symplectic duality, and Coulomb branches},
journal = {Bulletin of the London Mathematical Society},
volume = {54},
number = {5},
pages = {1515-1551},
year = {2022}

}

\bib{KONA}{article}{
    title={Quantized Coulomb branches of Jordan quiver  gauge theories and cyclotomic rational Cherednik algebras},
  author={Kodera, Ryosuke }, 
  author={Nakajima, Hiraku },
  journal={Proceedings of Symposia in Pure
                        Mathematics},
   volume={98},
  pages={49},
  year={2018}
}

    

\bib{L0}{article}{
    title={Cuspidal local systems and graded Hecke algebras, I},
    journal={Inst. Hautes
    \'Etudes Sci. Publ. Math.},
    volume={67},
    number={},
    pages={145-202},
    author={Lusztig, George},
    year={1988}
}

%

\bib{MM}{article}{
    author = {Martino, Maurizio},
	
	journal = {Journal of Algebra},
	language = {English},
	number = {Complete},
	pages = {209-224},
	title = {Blocks of restricted rational Cherednik algebras for $G(m,d,n)$},
	volume = {397},
	year = {2014},
    }

\bib{MN}
    {article}{
    title = {Kirwan surjectivity for quiver varieties},
    author = {McGerty, Kevin },
    author=  {Nevins, Thomas },
    year = {2018},
    
    volume = {212},
    pages = {161--187},
    journal = {Inventiones Mathematicae},
    issn = {0020-9910},
    publisher = {Springer},
    number = {1},
}

\bib{MU}{article}{
    title = {The idempotents of the symmetric group and Nakayama's conjecture},
    journal = {Journal of Algebra},
    volume = {81},
    number = {1},
    pages = {258-265},
    year = {1983},
    author = {Murphy, Gwendolen E.}
}

\bib{NA0}{book}{
    author={Nakajima, Hiraku},
    title={Lectures on Hilbert schemes of points on surfaces},
    number={18},
  year={1999},
  publisher={American Mathematical Soc.}

}

\bib{NA_quant_affine}{article}{
  title={Quiver varieties and finite dimensional representations of quantum affine algebras},
  author={Nakajima, Hiraku},
  journal={Journal of the American Mathematical Society},
  year={2001},
  volume={14},
  pages={145-238}
}

\bib{NAYO}{article}{
author = {Nakajima, Hiraku},
author={Yoshioka, Kota},
year = {2003},
month = {12},
pages = {},
title = {Lectures on Instanton Counting},
volume = {38},
journal = {Algebraic Structures and Moduli Spaces}}


    
    

\bib{NAYU}{article}{
author={Nakajima, Hiraku},
author={Yuuya Takayama},
title={Cherkis bow varieties and Coulomb branches of quiver gauge theories of affine type $A$},
journal={Selecta Mathematica},
volume={23},
year={2017},
pages={2553-2633}
}

\bib{NAM4}{article}{
    author={Namikawa, Yoshinori},
    title = {Flops and Poisson Deformations of Symplectic Varieties},
     journal ={Publ. Res. Inst. Math. Sci.},
     volume = {44},
     year = {2008},
     number={2},
     pages={259-314}
}

\bib{NAM2}{article}{
    author = {Namikawa, Yoshinori},
    title = {{Poisson deformations of affine symplectic varieties, II}},
    volume = {50},
    journal = {Kyoto Journal of Mathematics},
    number = {4},
    publisher = {Duke University Press},
    pages = {727 -- 752},
    year = {2010},
}




\bib{TO}{article}{
    author = {Przezdziecki, Tomasz},
    year = {2016},
    pages = {},
    title = {The combinatorics of $\mathbb{C}^*$-fixed points in generalized Calogero-Moser spaces and Hilbert schemes},
    volume = {556},
    journal = {Journal of Algebra},
    }


      

\bib{SVV}{article}{
    author = {Shan,  Peng },
    author = {Varagnolo,  Michela },
    author = {Vasserot,  Eric},
    title = {{On the center of quiver Hecke algebras}},
    volume = {166},
    journal = {Duke Mathematical Journal},
    number = {6},
    publisher = {Duke University Press},
    pages = {1005 -- 1101},
    year = {2017},
    }

\bib{Pasha}{article}{
    author={Shlykov, Pavel},
    title={Hikita conjecture for the minimal nilpotent orbit},
  eprint={https://arxiv.org/pdf/1903.12205.pdf},
  year={2019},
  label={Sh19}
}

\bib{VA}{article}{
    title={Sur l’anneau de cohomologie du schéma de Hilbert de $\mathbb{C}^2$},
       volume={332},
   number={1},
   journal={Comptes Rendus de l’Académie des Sciences - Series I - Mathematics},
   publisher={Elsevier BV},
   author={Vasserot, Eric},
   year={2001},
   pages={7–12}
}

\bib{wang}{article}{
    author = {Wang, Weiqiang},
    year = {2000},
   eprint={https://arxiv.org/pdf/math/9912104.pdf},
    pages = {},
    title = {Hilbert schemes, wreath products, and the McKay correspondence}
    
}

\bib{webster0}{article}{
    author = {Webster, Ben},
    year = {2013},
    pages = {},
    title = {Rouquier's conjecture and diagrammatic algebra},
    volume = {5},
    journal = {Forum of Mathematics, Sigma},

}

\bib{webster16}{article}{
  title={Koszul duality between Higgs and Coulomb categories O},
  author={Webster, Ben},
  year={2016},
  eprint={https://arxiv.org/abs/1611.06541}
}

\bib{webster}{article}{
    title={Representation theory of the cyclotomic Cherednik algebra via the Dunkl-Opdam subalgebra},
    author={Webster, Ben},
    journal={New York Journal of Mathematics},
    pages={1017–1047},
    date={2019},
    volume={25}
    }
    
\bib{webster2}{article}{
    title={Coherent sheaves and quantum Coulomb branches II: quiver gauge theories and knot homology},
  author={Webster, Ben},
  eprint={https://arxiv.org/abs/2211.02099},
  year={2022}
}

\bib{weekes1}{article}{
    title={Generators for Coulomb branches of quiver gauge theories}, 
      author={ Weekes, Alex},
      year={2019},
      eprint={https://arxiv.org/abs/1903.07734}
      
}

\bib{weekes}{article}{
    title = {Quiver gauge theories and symplectic singularities},
    journal = {Advances in Mathematics},
    volume = {396},
    pages = {108185},
    year = {2022},
    author = {Weekes, Alex}
}

\bib{WI}{article}{
    title={Collisions of Calogero-Moser particles and an adelic Grassmannian (With an Appendix by I.G. Macdonald)},
    author={Wilson, George},
    journal={Invent math 133},
    pages={1--41},
    date={1998},
    }



\end{biblist}
\end{bibdiv}

\end{document}